\setlist[description]{leftmargin=1cm,labelindent=0.5cm}
\title{\scshape 
A Projection Characterization and Symmetry Bootstrap for Elements of a von Neumann Algebra that are Nearby Commuting Elements
\sffamily }
\author{\scshape \sffamily David Herrera
}
\newtheorem{prop}{Proposition}[section]
\newtheorem{corollary}[prop]{Corollary}
\newtheorem{thm}[prop]{Theorem}
\newtheorem{lemma}[prop]{Lemma}
\theoremstyle{definition}
\newtheorem{remark}[prop]{Remark}
\newtheorem{example}[prop]{Example}
\newtheorem{defn}[prop]{Definition}
\newcommand{\R}{\mathbb{R}}
\newcommand{\C}{\mathbb{C}}
\newcommand{\N}{\mathbb{N}}
\newcommand{\Z}{\mathbb{Z}}
\newcommand{\bS}{\mathbb{S}}
\renewcommand{\ll}{\langle}
\newcommand{\rl}{\rangle}
\renewcommand{\Im}{\operatorname{Im}}
\renewcommand{\Re}{\operatorname{Re}}
\newcommand{\id}{\operatorname{id}}
\renewcommand{\H}{{\mathcal H}^{N-1}}
\renewcommand{\L}{{\mathscr L}}
\newcommand{\diag}{\operatorname{diag}}
\newcommand{\diam}{\operatorname{diam}}
\newcommand{\dist}{\operatorname{dist}}
\newcommand{\tr}{\operatorname{tr}}
\newcommand{\bp}{\begin{pmatrix}}
\newcommand{\ep}{\end{pmatrix}}
\newcommand{\A}{{\mathcal A}}
\newcommand{\B}{{\mathcal B}}
\newcommand{\vp}{\varphi}
\renewcommand{\H}{{\mathcal H}}
\newcommand{\dark}{}
\begin{document}
\dark
\large

\maketitle

\abstract{We define a symmetry map $\varphi$ on a unital $C^\ast$-algebra $\mathcal A$ 
to be an $\R$-linear map on $\mathcal A$ that generalizes transformations on matrices like: transpose, adjoint, complex-conjugation, conjugation by a unitary matrix, and their compositions. We include an overview of such symmetry maps on unital $C^\ast$-algebras.
We say that $A\in\A$ is $\varphi$-symmetric if $\varphi(A)=A$, $A$ is $\varphi$-antisymmetric if $\varphi(A)=-A$, and $A$ has a $\zeta=e^{i\theta}$ $\varphi$-phase symmetry if $\varphi(A)=\zeta A$. 

Our main result is a new projection characterization of two operators $U$ (unitary), $B$ that have nearby commuting operators $U'$ (unitary), $B'$.  
This can be used to ``bootstrap'' symmetry from operators $U, B$ that are nearby some commuting operators $U', B'$ to prove the existence of nearby commuting operators $U'', B''$ which satisfy the same symmetries/antisymmetries/phase symmetries as $U, B$, provided that the symmetry maps and symmetries/antisymmetries/phase symmetries satisfy some mild conditions.
We also prove a version of this for $X=U$ self-adjoint instead of unitary.

As a consequence of the prior literature and the results of this paper, we prove Lin's theorem with symmetries: If a $\varphi$-symmetric matrix $A$ is almost normal ($\|[A^\ast, A]\|$ is small), then it is nearby a $\varphi$-symmetric normal matrix $A'$. We also extend this further to include rotational and dihedral symmetries.

We also obtain bootstrap symmetry results for two and three almost commuting self-adjoint operators. 
As a corollary, we resolve a conjecture of \cite{loring2015k} for two almost commuting self-adjoint matrices in the Atland-Zirnbauer symmetry classes related to topological insulators.
}

\section{Introduction}
Lin's theorem (\cite{lin1996almost}) for matrices states that there is a function $\epsilon(\delta)$ such that $\epsilon(\delta)\to 0$ as $\delta\to 0^+$ so that for any matrix $A\in M_n(\C)$ with operator norm satisfying $\|A\| \leq 1$ (alias ``contraction''), there exists a normal matrix $A' \in M_n(\C)$ so that $\|A'-A\| \leq \epsilon(\|[A^\ast , A]\|)$, holding independently of $n$.  
Kachkovskiy and Safarov gave an explicit form for the function $\epsilon(\delta)$ which for finite dimensional $C^\ast$-algebras is:
\begin{thm}(\cite{kachkovskiy2016distance})
There is a universal constant $C_{KS}$ with the following property:
Let $A \in \A$, where $\A$ is a finite dimensional $C^\ast$-algebra. This can be expressed as $A = X+iY$ for unique $X, Y\in \A$ self-adjoint. Then there is a normal $A' \in \A$ expressible as $A'=X'+iY'$ for $X', Y'\in\A$ commuting self-adjoint operators such that 
\[\|A'-A\| \leq C_{KS}\|[A^\ast,A]\|^{1/2},\]
\[\|X'-X\|, \|Y'-Y\| \leq C_{KS}\|[X,Y]\|^{1/2}.\]
\end{thm}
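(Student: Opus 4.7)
By the classification of finite-dimensional $C^\ast$-algebras, $\A$ is a finite direct sum of full matrix algebras; all quantities appearing in the statement respect this decomposition, so I would reduce immediately to the case $\A = M_n(\C)$ with bounds independent of $n$. The identity $[A^\ast, A] = -2i[X,Y]$ gives $\|[A^\ast, A]\| = 2\|[X,Y]\|$, so the two bounds are equivalent up to a factor of $\sqrt{2}$ in $C_{KS}$, and I would concentrate on producing commuting self-adjoint $X', Y'$ with $\|X-X'\|, \|Y-Y'\| \lesssim \|[X,Y]\|^{1/2}$; setting $A' := X' + iY'$ then yields the normal matrix in the first estimate.

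\textbf{Construction via spectral partition.} Write $\delta := \|[X,Y]\|^{1/2}$. Partition $\R$ into consecutive half-open intervals $\{I_j\}$ of length $\delta$ with midpoints $x_j$, form the spectral projections $P_j := \chi_{I_j}(X)$ (which give a resolution of the identity), and define
\[
X' := \sum_j x_j P_j, \qquad Y' := \sum_j P_j Y P_j.
\]
Both are self-adjoint; $\|X - X'\| \leq \delta/2$ is automatic since $\|(X - x_j)P_j\| \leq \delta/2$; and $[X', Y'] = 0$ because $X'$ is a function of the mutually orthogonal $P_j$ while $Y'$ is block-diagonal in this decomposition.

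\textbf{Off-diagonal estimate.} The heart of the proof is to bound $\|Y - Y'\| = \bigl\| \sum_{j \ne k} P_j Y P_k \bigr\|$. Cutting $[X, Y]$ on both sides and using $XP_j = P_j X$ gives
\[
(x_j - x_k)\, P_j Y P_k = P_j[X,Y]P_k + (x_j P_j - XP_j)\,Y P_k + P_j Y\,(XP_k - x_k P_k),
\]
and combined with $\|(X - x_j)P_j\| \leq \delta/2$ this yields $\|P_j Y P_k\| \lesssim \delta^2/|x_j - x_k|$ whenever $|x_j - x_k| \gg \delta$, and $\|P_j Y P_k\| \leq \|Y\|$ otherwise. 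The off-diagonal operator norm can then be controlled by a Schur row-sum on the nonzero blocks, of which there are at most $\|X\|/\delta$ many in each row.

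\textbf{Main obstacle.} The naive row-sum bound produces $\|Y - Y'\| \lesssim \delta \sum_{m=1}^{\|X\|/\delta} 1/m \lesssim \delta \log(1/\delta)$, missing the target $O(\delta)$ by a logarithm. Removing this logarithm is the genuine technical content of Kachkovskiy--Safarov, and I expect it to be the main difficulty. Two standard routes: (i) average over random translations of the partition so that pathological near-resonances between blocks occur only on a small set of shifts, or (ii) replace $\chi_{I_j}$ by a smooth partition of unity $\{\phi_j\}$ with $\|\phi_j'\|_\infty \lesssim 1/\delta$ (making $\|[\phi_j(X), Y]\| \lesssim \delta$) and use a Helffer--Sj\"ostrand-type integral representation to bypass the row-sum entirely. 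Carrying either variant through with a universal constant independent of $n$ is, in my estimation, the whole fight; given $X', Y'$, the normal approximant $A' = X' + iY'$ and its norm estimate follow immediately.
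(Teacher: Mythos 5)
First, a point of context: the paper cites this theorem from \cite{kachkovskiy2016distance} and does not prove it, so there is no in-paper proof to compare against. Your proposal is a free-standing attempt, and it has a fundamental gap.

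The gap is not a logarithm. Your construction $Y' = \sum_j P_j Y P_j$ fails because the \emph{nearest-neighbor} blocks $P_j Y P_{j\pm 1}$ are not controlled by the commutator at all. The Sylvester/Davis--Kahan estimate $\|P_j Y P_k\| \lesssim \|[X,Y]\|/\dist(I_j,I_k)$ gives nothing when the gap $\dist(I_j,I_k)$ is zero, and for adjacent intervals all you have is $\|P_j Y P_{j\pm 1}\| \leq \|Y\| = O(1)$. You acknowledge this in the off-diagonal estimate ("$\leq \|Y\|$ otherwise"), but your ``Main obstacle'' row-sum $\delta\sum_{m\geq 1}1/m$ silently replaces that $O(1)$ term by $O(\delta)$. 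The honest row-sum is $O(\|Y\|) + \delta\log(1/\delta)$, dominated by the $O(1)$ piece; a two-by-two example such as $X = \operatorname{diag}(0,\delta)$, $Y = \left(\begin{smallmatrix}0&1\\1&0\end{smallmatrix}\right)$ with the partition chosen so $0$ and $\delta$ land in distinct cells gives $Y' = 0$ and $\|Y - Y'\| = 1$. In short, the pinching approach gives $O(1)$ error, not $O(\delta\log(1/\delta))$, and no sharpening of the estimate rescues it. This is precisely the reason Lin's theorem is hard, and the paper says so in the introduction: "although $B$ almost commutes with $U$, it is a basic fact that $B$ rarely then almost commutes with the spectral projections of $U$."

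Your two proposed fixes do not reach the difficulty either. Replacing $\chi_{I_j}$ by a smooth partition $\{\phi_j\}$ and using a Helffer--Sj\"ostrand type bound yields $\bigl\|\sum_j \phi_j(X) Y \phi_j(X) - Y\bigr\| \lesssim \|[X,Y]\|/\delta$ --- this is exactly the localization Lemma \ref{KS localization}/\ref{localization lemma} of the present paper --- but $\sum_j \phi_j(X) Y \phi_j(X)$ does \emph{not} commute with any step function of $X$, so it produces no commuting pair. Averaging over random translates of the partition does not remove the $O(1)$ nearest-neighbor block either, since for a fixed shift there is always some pair of adjacent cells with an uncontrolled coupling. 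The actual content of Lin's theorem (and its quantitative Kachkovskiy--Safarov form) is the construction of \emph{substitute} projections $F_j$ that are sandwiched between genuine spectral projections of $X$ and nonetheless almost commute with $Y$ --- that is the role of the projections in Lemma \ref{proj requirement sa} and \ref{proj-converse sa} here --- together with a nontrivial lifting/iteration argument to produce them. Your outline treats that as a technicality when it is in fact the entire theorem.
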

The two inequalities are equivalent (up to a multiplicative factor).   
A natural question to ask is if we know certain things about $A$ then what can be known about $A'$. It was proven by Loring and S{\o}rensen (\cite{loring2016almost}) that if $A$ is a transpose-symmetric matrix then $A'$ can be chosen to be transpose-symmetric. This corresponds to $X', Y'$ being real if $X, Y$ are real. In \cite{loring2014almost}, they showed that if $A$ is a real matrix then $A'$ can be chosen to be real. This corresponds to $X, X'$ being real and $Y, Y'$ being purely imaginary.

More generalizations of symmetries for almost normal matrices and also other types of almost commuting matrices have been considered for their applications to symmetries in physics, in particular for applications to topological insulators. See \cites{loring2015k, loring2014quantitative, loring2016almost, loring2014almost, loring2013almost, hastings2010almost} and the examples later in this section.

\vspace{0.1in}

This paper is composed of three main parts. In Section \ref{Symmetry Maps section}, we will discuss what we call symmetry maps of unital $C^\ast$-algebras in great depth. Symmetry maps are $\R$-linear transformations that generalize conjugation by a unitary,  complex-conjugation, taking the transpose, taking the adjoint, and compositions of these matrix operations. 

We end the first part of the paper with Section \ref{Lin's Theorem with Linear Symmetries} by proving Lin's theorem on almost normal elements with linear symmetries. That is, let $\A$ be a unital $C^\ast$-algebra and ${S}$ be a collection of linear symmetry maps on $\A$. Suppose that ${S}$ satisfies certain commutativity conditions (which we call being admissible) and suppose that $\A$ has a symmetry-preserving form of stable rank 1: that $S$-symmetric elements of $\mathcal A$ are nearby invertible $S$-symmetric elements of $\mathcal A$. We show that Lin's theorem with symmetries holds for this case as a consequence of the known symmetry results of Lin's theorem.

This is done by reducing the problem to either Lin's theorem for unital $C^\ast$-algebras with stable rank 1 as proven by Friis and R{\o}rdam (\cite{friis1996almost}) or Lin's theorem with a single order 2 anti-multiplicative linear symmetry map when the unital $C^\ast$-algebra has a symmetry-respecting version of stable rank 1 as proven by Loring and S{\o}rensen (\cite{loring2016almost}). Symmetry reductions are discussed in Section \ref{Symmetry Reductions}. 

In the second part of this paper, we prove a projection characterization of two operators in a von Neumann algebra that are nearby commuting operators of the same type, where one of the matrices is normal with spectrum in the unit circle or the real line. Section \ref{Spectral Projection Inequalities and Localization Operators} discusses some technical results concerning symmetry-respecting spectral projections and localizer operators. Section \ref{Almost Reducing Projections and Nearly Commuting Operators} discusses the construction of and use of the projections $F$ in both directions of the bootstrap.

In the third part of this paper, we use the results from Section \ref{Almost Reducing Projections and Nearly Commuting Operators} to derive new and rather general symmetry results for almost commuting operators and for almost normal operators.
In Section \ref{SymmBootstrap}, we obtain a symmetry bootstrap for two and three almost commuting self-adjoint operators, Lin's theorem with linear and conjugate-linear symmetries (which provide a reflection symmetry), and a symmetry bootstrap for a unitary operator which includes reflection, rotational (by an angle that is a rational multiple of $2\pi$), and dihedral phase symmetries. 
In Section \ref{rot-dih Lin section} we use the polar decomposition and the symmetry bootstrap for a unitary matrix to prove Lin's theorem with rotational and with dihedral phase symmetries in Theorem \ref{Rotational-Dihedral Lin}.

\vspace{0.1in}

We now discuss the main concepts underlying the bootstrap construction.
The idea underlying the projection characterization is simple: if we know that some operators $U$ (unitary), $B$ are nearby commuting operators $U'$ (unitary), $B'$, then we can extract certain spectral projections $F'$ from $U'$, which then necessarily commute with $B'$. By rotating the range of $F'$ into the range of some spectral projections of $U$, we obtain projections $F$ that contain and are contained in some spectral projections of $U$ and have the property that they almost commute with $B$. (They also have some other technical properties needed to make the rest of the construction work properly.) 
See Lemma \ref{proj requirement} and Lemma \ref{proj requirement sa}.

So, using the fact that there are some commuting operators $U', B'$ nearby, we obtain projections $F$ that almost commute with $B$ and that serve as substitutes for the spectral projections of $U$ over arcs in the unit circle. This is non-trivial because although $B$ almost commutes with $U$, it is a basic fact that $B$ rarely then almost commutes with the spectral projections of $U$. 

These constructed projections $F$ (or any projections satisfying certain technical properties) can be used to construct nearby commuting matrices $U'', B''$.
So, we obtain the result that the existence of certain projections is a characterization of operators that are nearby commuting operators of the same type. This  construction relies on the fact that the spectrum of the normal operator $U$ belongs to a $1$-dimensional set.

If the type of the first operators is unitary we denote them as $U, U'$. If the type is instead self-adjoint, we refer to them as $X, X'$. There is not a requirement on the type of $B$. See Lemma \ref{proj-converse} and Lemma \ref{proj-converse sa}. 

For the reader interested in matrices, one should view all unital $C^\ast$-algebras and von Neumann algebras as finite dimensional and hence isomorphic to direct sums of matrix algebras.

\begin{example}
We now illustrate how we can bootstrap a reflection symmetry of a unitary. For simplicity, we assume that the operators are matrices. Suppose that $U, B$ are both real matrices with $U$ unitary. This means that if $\phi(A)=\overline{A}$ is the conjugation map then $\phi(U)=U, \phi(B)=B$. We write the spectral decomposition \[U = \sum_{z\in \sigma(U)} zE_z,\] 
where $E_z=E_{\{z\}}(U)$ is the spectral projection of $U$ onto its $z$-eigenspace. Then because $\phi$ is a positive, conjugate-linear, and multiplicative map, 
\[\phi(U) = \sum_{z\in \sigma(U)} \overline{z}\phi(E_z)\]
and the $\phi(E_z)$ are projections. Further, $\phi(I)=I$ so the projections $\phi(E_z)$ form a resolution of the identity. Since the spectral decomposition is unique, we see that $U$ being real is equivalent to $\phi(E_z) = E_{\overline{z}}$.
What this tells us is that in order for $U''$ to be $\phi$-symmetric, what we need is that there is a certain relationship determined by the conjugate-linear symmetry map $\phi$ between the spectral projections for $U''$ of conjugate eigenvalue. 

So, brushing aside some technicalities, by the projection characterization of nearly commuting matrices, let $F$ be the projections obtained that are substitutes for the spectral projections of $U$ whose ranges are localized with respect to the spectrum of $U$ in the portion of the unit circle that is in the upper half-plane. 
We, however, do not use the projection characterization to obtain the certain projections $F$ for the portion of the spectrum in the lower half-plane. We instead use 
the images of the projections $F$ under $\phi$. 
We need to construct certain projections $G$ to ``fill in the gaps'' of the spectrum to obtain a spectral resolution of $U''$.
Then because $\phi$ has order 2, $\phi$ interchanges $F$ and $\phi(F)$ so that the constructed matrix $U''$ will be real.

By perturbing $B$ to a certain matrix that is localized with respect to the spectrum of $U$ in a way that commutes with $U''$, we can obtain a matrix $B_0$ that commutes with $U''$ and is nearby $B$ and hence almost real. Then defining $B''=\frac12(B_0+\overline{B_0})$ provides a real matrix that commutes with $U''$ and is near $B$. 
In our proof, however, we do not need to symmetrize because the map that transforms $B$ into $B''=B_0$ commutes with $\phi$ so  that $B''$ is automatically $\phi$-symmetric because $B$ is.

Note that the type of symmetry of $B$ is not really important because the map $B \mapsto B''=B_0$ is independent of $B$ and the type of symmetry of $B$. For instance, if $B$ is purely imaginary so that $\phi(B)=-B$ then as in the previous case, our map $B \mapsto B''=B_0$ will commute with $\phi$ so that $B''$ is automatically $\phi$-antisymmetric when $B$ is.
\end{example}

\begin{example}
The Symmetry Bootstrap method from the previous example works when the symmetry does not ``descend to the individual spectral projections'' but instead generically only permutes far-away spectral projections due to acting as a non-trivial orthogonal transformation of $\R$ or $\C=\R^2$. 

For instance, consider self-adjoint matrices $X, B$ with $\varphi$ a linear symmetry of order $2$ (e.g. the transpose) such that $\varphi(X)=-X$ and $\varphi(B) = B$. Writing $X = \sum_{x\in \sigma(X)} xE_x$ with $\sigma(X) \subset \R$, we see that $\varphi(X) = -X$ is equivalent to $\varphi(E_x) = E_{-x}$.

Thus, we can use the commuting matrices $X', B'$ provided by Lin's theorem and construct the requisite projections $F$ from spectral projections on $(0, \infty)$. Then the projections used to construct $X''$ will be the $F$ and $\varphi(F)$. We then use the rest of the Symmetry Bootstrap method as outlined in the previous example to prove the existence of commuting $X'', B''$ satisfying  $\varphi(X'')=-X''$, $\varphi(B)=B$. 

However, if $\varphi(X)=X$, $\varphi(B)=B$ then our method cannot take any nearby commuting $X', B'$ and construct $\varphi$-symmetric commuting $X'', B''$. This is because the requirement on $X$ is equivalent to $\varphi(E_x)=E_x$. So, the property of $X$ being $\varphi$-symmetric descended to its spectral projections. We are not aware of a way to make the projections $F$ be $\varphi$-symmetric in this case. 
\end{example}

As corollaries of our Symmetry Bootstrap method, we obtain various new results concerning structured almost commuting matrices.
We state our main symmetry bootstrap results in the general context of von Neumann algebras satisfying certain symmetric stable rank 1 properties. However, we will state simplified forms here in the introduction for matrices. Theorem \ref{Symmetry Lin} implies:
\begin{thm}\label{Symmetry LinMatrix}
There exists a function $\epsilon(\delta)$ independent of $n$ with $\lim_{\delta \to 0}\epsilon(\delta) =0$ 
that satisfies the following property: If $X, Y \in M_n(\C)$ are self-adjoint contractions, then there are commuting self-adjoint $X'', Y''\in M_n(\C)$ that satisfy
\[\|X''-X\|, \|Y''-Y\| \leq \epsilon(\|[X, Y]\|)\]
where the $X'', Y''$ can be chosen with additional structure if $X, Y$ have additional structure as in one of the following cases:
\begin{description} 
\item[\underline{$(+,+)$}:] If there are unitaries $W_\alpha\in M_n(\C)$ that commute with $X, Y$ then $X'', Y''$ can be chosen to commute with all the $W_\alpha$ simultaneously.
\item[\underline{$(+,-)$}:] If a single unitary matrix $W\in M_n(\C)$ commutes with $X$ and anti-commutes with $Y$ then $X'', Y''$ can be chosen to commute and anti-commute with this unitary $W$, respectively. 
\item[\underline{$(-,-)$}:] If a single unitary matrix $W\in M_n(\C)$ anti-commutes with $X, Y$ then $X'', Y''$ can be chosen to anti-commute with this unitary $W$.
\item[\underline{$(+,+)$T}:] If a single unitary matrix $W\in M_n(\C)$ satisfies $ W X W^\ast=X^T$, $W Y W^\ast=Y^T$ then $X'', Y''$ can be chosen to also satisfy $W X'' W^\ast=(X'')^T$, $W Y'' W^\ast=(Y'')^T$.
\item[\underline{$(+,-)$T}:] If a single unitary matrix $W\in M_n(\C)$ satisfies $W X W^\ast=X^T$, $ W Y W^\ast=-Y^T$ then $X'', Y''$ can be chosen to also satisfy $W X'' W^\ast=(X'')^T$, $W Y'' W^\ast=-(Y'')^T$.
\item[\underline{$(-,-)$T}:] If a single unitary matrix $W\in M_n(\C)$ satisfies $ W X W^\ast=-X^T$, $W Y W^\ast=-Y^T$ then $X'', Y''$ can be chosen to also satisfy $W X'' W^\ast=-(X'')^T$, $W Y'' W^\ast=-(Y'')^T$.
\end{description}
Moreover, one can simultaneously satisfy case $(+,+)$, and/or multiple instances of case $(+,+)$\textbf{T}, and/or one instance of one of the other cases ($(+,-)$, $(-,-)$, $(+,-)$\textbf{T}, $(-,-)$\textbf{T}) provided that the associated symmetry maps of each case commute with each associated symmetry map of the other case(s).
\end{thm}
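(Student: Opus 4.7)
My plan is to derive this by combining the Kachkovskiy--Safarov estimate with the self-adjoint projection characterization in Lemmas \ref{proj requirement sa} and \ref{proj-converse sa}, handling the six single-symmetry cases in three groups according to how the given symmetry map $\varphi$ acts on the spectra of $X$ and $Y$.

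In the groups $(-,-)$ and $(-,-)\mathbf{T}$ we have $\varphi(X)=-X$ and $\varphi(Y)=-Y$, so $\varphi$ induces the reflection $x\mapsto -x$ on both spectra. I would apply Kachkovskiy--Safarov to $(X,Y)$ without imposing $\varphi$ to obtain a nearby commuting pair $(X',Y')$, extract via Lemma \ref{proj requirement sa} a family $\{F_j\}$ of projections, each sandwiched between spectral projections of $X$ over intervals of $(0,\infty)\cap\sigma(X)$ and almost commuting with $Y$, and then take $\{\varphi(F_j)\}$ for the corresponding intervals in $(-\infty,0)\cap\sigma(X)$. A final $\varphi$-(anti)invariant edge projection near $0$ closes off the resolution. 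Feeding this symmetric partition into Lemma \ref{proj-converse sa} produces $(X'',Y'')$ with $\varphi(X'')=-X''$ and $\varphi(Y'')=-Y''$ because the construction is functorial in its input projections and the reflection is built into the family. For the cases $(+,-)$ and $(+,-)\mathbf{T}$, where only $\sigma(Y)$ is reflected by $\varphi$, I would swap the roles of $X$ and $Y$ in the preceding argument and bootstrap from the positive half of $\sigma(Y)$ instead; this is the template of Example~2 with the two self-adjoint inputs exchanged.

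For the groups $(+,+)$ and $(+,+)\mathbf{T}$, $\varphi$ acts trivially on both spectra, so $\varphi$ fixes every individual spectral projection of $X$ and of $Y$. As Example~2 warns, the projection bootstrap cannot manufacture $\varphi$-symmetric substitutes here, so I would instead produce the commuting pair directly in an already-symmetric ambient: for $(+,+)$, apply ordinary Kachkovskiy--Safarov inside the $W_\alpha$-commutant, which is itself a finite-dimensional $C^\ast$-algebra; for $(+,+)\mathbf{T}$, invoke the Loring--Sørensen symmetric Lin theorem for a single order-$2$ anti-multiplicative linear symmetry in an algebra with symmetry-respecting stable rank~$1$ (\cite{loring2016almost}). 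Setting $(X'',Y''):=(X',Y')$ then suffices in these groups.

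The simultaneous part of the theorem is handled by chaining these three strategies: the commutativity hypothesis on the symmetry maps lets me first apply any $(+,+)$ and $(+,+)\mathbf{T}$ reductions to land in an ambient in which those symmetries are automatic, and then run the reflection bootstrap inside that ambient. The main obstacle I anticipate is executing Lemma \ref{proj requirement sa} equivariantly with respect to the already-installed $(+,+)$-type symmetries, so that the projections $\{F_j\}$ simultaneously (i) sit between the prescribed spectral projections of $X$ (or $Y$), (ii) almost commute with the other self-adjoint operator, and (iii) respect every additional $(+,+)$-type symmetry carried over from the baseline. A secondary technical point is arranging the edge projection at $0$ to be $\varphi$-(anti)invariant for the reflection symmetry while also respecting all of the other symmetries, which will require some care in the joint $0$-eigenspace of the operators.
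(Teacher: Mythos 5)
Your proposal is essentially the paper's approach: the reflection cases are handled by the projection characterization (Lemmas~\ref{proj requirement sa} and~\ref{proj-converse sa}) fed with a nearby commuting pair obtained from a prior Lin-type theorem, and the non-reflecting cases $(+,+)$ and $(+,+)\mathbf{T}$ are dispatched by ordinary Lin / Loring--S{\o}rensen after a subalgebra reduction. The main organizational difference is that the paper packages this once and for all in Theorem~\ref{Symmetry Lin} via the normal-operator encoding $A=X+iY$ and Proposition~\ref{translation}, applies Theorem~\ref{Linear Lin} to produce an $A'$ already respecting all linear symmetries, and then runs the self-adjoint bootstrap (Theorem~\ref{self-adjoint bootstrap}) once on $\Im(A),\Re(A)$; this handles all six cases and the simultaneous case uniformly and keeps track of which operator carries the antisymmetry automatically (it is always $\Im(A)$ after the translation). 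You instead do the case analysis directly on $(X,Y)$, which works, but costs you the small bookkeeping you flag: choosing which of $X,Y$ to cut along in each group, and making the edge construction near~$0$ respect $\varphi$ (in the paper this is automatic because the construction in Lemma~\ref{proj-converse sa} centers a window $\omega^0$ at~$0$, forcing $E_{\omega^0}(X)$ to be $\varphi$-fixed). Two small imprecisions worth noting: the $(+,+)\mathbf{T}$ reduction makes $\tau$ have order~$2$ on the $\tau^2$-fixed subalgebra but does not make the $\tau$-symmetry ``automatic''---you still need the Loring--S{\o}rensen theorem as a base case inside that ambient before bootstrapping; and the edge projection is required to be $\varphi$-\emph{invariant}, not antiinvariant, which is the only option for a nonzero projection. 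Both are resolved exactly as the paper does, so your roadmap is sound.
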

\begin{remark}
Consider the case of simultaneously having the symmetry $(+,+)$ with $W_\alpha$, one instance of $(+,+)$\textbf{T} with unitary $W_1$, and one instance of $(+,-)$ with $W_2$. Define the symmetry maps $\varphi_\alpha(A) = W_\alpha^\ast A W_\alpha$, $\varphi(A) = W_1^\ast A^T W_1$, and $\phi(A) = W_2^\ast A^\ast W_2$. The simultaneous symmetries corresponds to $A$ being $\{\varphi_\alpha, \varphi\}\cup\{\phi\}$-symmetric.

We  require $\varphi, \phi$ to commute with each $\varphi_\alpha$ and for $\varphi, \phi$ to commute with each other. Theorem \ref{Symmetry Lin} implies that Lin's theorem holds in this case. See Proposition \ref{translation} and the following remark for more about converting between the symmetries of two almost commuting self-adjoint matrices and a single almost normal matrix.
\end{remark}

\begin{remark}
Let each of $U, V$ be a unitary or an anti-unitary operator on a Hilbert space $\mathcal H$. For $W = U, V$, let $f_W(A) = W^{-1} A W$ and $g_W(A) = W^{-1} A^\ast W$ define symmetry maps on $\mathcal A = B(\mathcal H)$. Then $f_U, f_V$ commute when for all $A \in B(\mathcal H)$,
\[U^{-1} \left(V^{-1}AV\right) U = V^{-1} \left(U^{-1}AU\right)V,\]
\[\Leftrightarrow \;\; \left(VUV^{-1}U^{-1}\right)^{-1}A\left(VUV^{-1}U^{-1}\right)  = A.\]
Hence, the multiplicative commutator: $VUV^{-1}U^{-1}$ is a unitary that commutes with every operator in $B(\mathcal H)$. Therefore, it is some multiple $\zeta\in \C$ with $|\zeta|=1$ of the identity operator and hence $UV = \zeta VU$. That is, $U, V$ commute up to a phase $\zeta$.

By Example \ref{examples}(\ref{herm conj lin}), $f_W(A^\ast)=f_W(A)^\ast$ and $g_W(A^\ast)=g_W(A)^\ast$ so this same reasoning shows that the symmetry maps defined above commute if and only if the respective unitary/anti-unitary operators commute up to a phase.
\end{remark}

\begin{remark}
The separate cases of $(+,+)$, $(+,+)$\textbf{T}, $(+,-)$\textbf{T} are already known in the cases of $W = I$:
Lin's theorem is $(+,+)$ with $W = I$. Loring and S{\o}rensen's result (\cite[Theorem 6.10]{loring2016almost}) that two almost commuting real self-adjoint matrices are nearby commuting real self-adjoint matrices is $(+,+)$\textbf{T} with $W = I$. They further showed $(+,+)$\textbf{T} with $W^2=I$. Loring and S{\o}rensen's result (\cite[Theorem 1.2]{loring2014almost}) that a real almost normal matrix is nearby a real normal matrix is equivalent to $(+,-)$\textbf{T} with $W = I$. 

Versions of Lin's theorem with the self-dual symmetry correspond to different cases of our above result for $W = \Gamma$ defined in Example \ref{examples} below.
\end{remark}

\begin{remark}
The general case of $(+,+)$ can be easily reduced to Lin's theorem on the unital $C^\ast$-subalgebra of matrices in $M_n(\C)$ that commute with each of the $W_\alpha$.

Likewise, the general case of $(+,+)$\textbf{T }can be similarly reduced to the result of Loring and S{\o}rensen in \cite{loring2016almost} as follows: Consider the linear anti-multiplicative symmetry map $\varphi(A)=  W^\ast A^T W$. Then $\varphi(A) = A$ if and only if $A^T = WAW^\ast$. We need to show that if $X, Y$ are $\varphi$-symmetric then $X'', Y''$ can be chosen to be $\varphi$-symmetric as well. Note that 
\[\varphi^2(A) = W^\ast (W^\ast A^T W)^T W = W^\ast W^T A (W^\ast)^T W.\]
So, defining $W_0 = (W^\ast)^T W = \overline{W}W$, we see that $\varphi^2(A) = W_0^\ast A W_0$ is a linear multiplicative symmetry map.
Moreover, if $\varphi(A)=A$ then $\varphi^2(A)=A$ so we can restrict to finding a nearby commuting matrix in the unital $C^\ast$-subalgebra $\mathcal B$ of $\varphi^2$-symmetric matrices which is an invariant subspace for $\varphi$ on which $\varphi|_{\mathcal B}$ has order 2.
(Section \ref{Symmetry Reductions} contains a more general version of this reduction of symmetry maps and unital $C^\ast$-subalgebras.)

This means that solving Lin's theorem on $M_n(\C)$ with the linear anti-multiplicative symmetry $X^T = WXW^\ast, Y^T = WYW^\ast$ is equivalent to solving Lin's theorem on a certain 
finite dimensional $C^\ast$-subalgebra $\mathcal B$ with the linear anti-multiplicative symmetry map $\varphi|_{\mathcal B}$ of order $2$. This is a case addressed by \cite{loring2016almost}. 

See our Theorem \ref{Linear Lin} in Section \ref{Lin's Theorem with Linear Symmetries} below for the details of the reduction of Lin's theorem with linear symmetries to the already known results.
\end{remark}

For the sake of the application that we discuss next, Theorem \ref{Symmetry Lin} also implies:
\begin{thm}\label{Lin sym class}
There exists a function $\epsilon(\delta)$ independent of $n$ with $\lim_{\delta \to 0}\epsilon(\delta) =0$ 
that satisfies the following property: If $X, H \in M_n(\C)$ are self-adjoint contractions then there are commuting self-adjoint $X'', H''\in M_n(\C)$ that satisfy
\[\|X''-X\|, \|H''-H\| \leq \epsilon(\|[X, H]\|)\]
where the $X'', H''$ can be chosen with additional structure if $X, H$ have additional structure as in one of the following cases:
\begin{description} 
\item[\underline{$\{\mathcal W_\alpha, \mathcal T_\beta\}$}:] Suppose that $\{\mathcal W_\alpha\}_\alpha$, $\{\mathcal T_\beta\}_\beta$ (which may be empty) are unitaries, anti-unitaries (resp.) on $\C^n$ such that each $\mathcal T_\beta$ commutes with all the maps in $\{\mathcal W_\alpha\}_\alpha$, $\{\mathcal T_\beta\}_\beta$ up to a phase. 
Suppose that all the maps in $\{\mathcal W_\alpha\}_\alpha$, $\{\mathcal T_\beta\}_\beta$ commute with both $X, H$. Then the $X'', H''$ can be chosen to commute with all the $\mathcal W_\alpha, \mathcal T_\beta$ simultaneously.
\item[\underline{$\mathcal C$}:] In addition to the assumptions of case \textbf{\underline{$\{\mathcal W_\alpha, \mathcal T_\beta\}$}},
let $\mathcal C$ be an anti-unitary map on $\C^n$ that commutes with each of the $\mathcal W_\alpha, \mathcal T_\beta$ up to a phase. 
If $\mathcal C$ commutes with $X$ and anti-commutes with $H$ then $X'', H''$ can be chosen so that $\mathcal C$ commutes with $X''$ and $\mathcal C$ anti-commutes with $H''$ in addition to the results in \underline{$\{\mathcal W_\alpha, \mathcal T_\beta\}$}.
\item[\underline{$\mathcal S$}:] In addition to the assumptions of case \textbf{\underline{$\{\mathcal W_\alpha, \mathcal T_\beta\}$}}, let $\mathcal S$ be a unitary map on $\C^n$ that commutes with each of the $\mathcal W_\alpha, \mathcal T_\beta$ up to a phase. If $\mathcal S$ commutes with $X$ and anti-commutes with $H$ then $X'', H''$ can be chosen so that $\mathcal S$ commutes with $X''$ and $\mathcal S$ anti-commutes with $H''$ in addition to the results in \underline{$\{\mathcal W_\alpha, \mathcal T_\beta\}$}.
\end{description}
\end{thm}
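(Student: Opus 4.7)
The plan is to reduce Theorem \ref{Lin sym class} to the already-stated Theorem \ref{Symmetry Lin} by translating each unitary or anti-unitary symmetry appearing in the hypotheses into one of the symmetry-map cases listed there. For each unitary $\mathcal W$ (including every $\mathcal W_\alpha$, and $\mathcal S$ if present), I would define the linear multiplicative symmetry $\varphi_{\mathcal W}(A) = \mathcal W^{-1} A \mathcal W$; then $\mathcal W$ commutes (resp.\ anti-commutes) with a self-adjoint $A$ iff $\varphi_{\mathcal W}(A) = A$ (resp.\ $-A$). For each anti-unitary $\mathcal T$ (every $\mathcal T_\beta$, and $\mathcal C$ if present), I would define the conjugate-linear multiplicative symmetry $\phi_{\mathcal T}(A) = \mathcal T^{-1} A \mathcal T$. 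Writing $\mathcal T = VK$ with $V$ unitary and $K$ the entrywise complex conjugation, a direct computation gives $\phi_{\mathcal T}(A) = V^T\, \overline{A}\, \overline{V}$; for self-adjoint $A$ one has $\overline{A} = A^T$, so $\phi_{\mathcal T}(A) = U A^T U^{\ast}$ with $U = V^T$ unitary. This exhibits $\phi_{\mathcal T}$ on self-adjoints as a transpose-type symmetry of exactly the form used in the ``\textbf{T}'' cases of Theorem \ref{Symmetry Lin}.

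Under this dictionary the four settings of Theorem \ref{Lin sym class} match directly to cases of Theorem \ref{Symmetry Lin}: the family $\{\mathcal W_\alpha\}$ gives instances of case $(+,+)$; the family $\{\mathcal T_\beta\}$ gives instances of case $(+,+)$\textbf{T} with $W_\beta = U_\beta^{\ast}$; the unitary $\mathcal S$ (commuting with $X$, anti-commuting with $H$) gives case $(+,-)$; and the anti-unitary $\mathcal C$ (commuting with $X$, anti-commuting with $H$) gives case $(+,-)$\textbf{T}. The hypothesis that all operators in $\{\mathcal W_\alpha\}\cup\{\mathcal T_\beta\}\cup\{\mathcal S \text{ or } \mathcal C\}$ commute pairwise up to a phase translates, via the remark following Theorem \ref{Symmetry LinMatrix}, to the statement that all associated symmetry maps pairwise commute: if $P Q = \zeta Q P$ with $|\zeta|=1$, then the two orderings of the double conjugation defining the composed symmetry maps differ by a scalar factor of $\zeta$ or $\bar\zeta$ multiplied by its conjugate, which equals $1$ because $|\zeta|=1$; this works uniformly whether $P,Q$ are unitary or anti-unitary, once one remembers that passage through an anti-linear operator may replace $\zeta$ by $\bar\zeta$. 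This is exactly the hypothesis needed to invoke the ``moreover'' clause of Theorem \ref{Symmetry Lin}, which then supplies the commuting self-adjoint pair $X'', H''$ together with the quantitative bound in terms of $\|[X,H]\|$ and all the required additional structural preservation, which translates back into the commutation and anti-commutation relations with the original $\mathcal W_\alpha, \mathcal T_\beta, \mathcal S, \mathcal C$.

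The main (though modest) technical obstacle is the computation identifying the conjugate-linear multiplicative map $\phi_{\mathcal T}$ on self-adjoint operators with an honest linear anti-multiplicative transpose-type symmetry $A \mapsto U A^T U^{\ast}$; this is where the self-adjoint assumption $\overline{A} = A^T$ is crucial, and where one must be careful that a chosen decomposition $\mathcal T = VK$ is auxiliary, the induced symmetry map $\phi_{\mathcal T}$ being intrinsic. Beyond this, everything is bookkeeping: matching every instance to its case, verifying the cross-commutations (including the mixed unitary/anti-unitary pairings), and reading off the conclusion from Theorem \ref{Symmetry Lin}.
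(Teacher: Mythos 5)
Your argument is correct and is essentially the same reduction the paper uses: translate conjugation by each unitary and anti-unitary into a symmetry map and invoke the symmetry version of Lin's theorem. The paper streamlines this slightly by defining $\varphi_\beta(A)=\mathcal T_\beta^{-1}A^\ast\mathcal T_\beta$ and $\phi_{\mathcal S}(A)=\mathcal S^{-1}A^\ast\mathcal S$ directly as linear anti-multiplicative and conjugate-linear anti-multiplicative maps (respectively) that fix $A=X+iH$, then applies Theorem \ref{Symmetry Lin} to $A$, rather than routing through the transpose identification $\phi_{\mathcal T}(A)=UA^TU^\ast$ on self-adjoints and the matrix-case bookkeeping of Theorem \ref{Symmetry LinMatrix}.
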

\begin{proof}
Let $\varphi_{\alpha}(A) = {\mathcal W_\alpha}^{-1}A {\mathcal W_\alpha}$, $\varphi_{\beta}(A) = {\mathcal T_\beta}^{-1}A^\ast {\mathcal T_\beta}$. Then the $\varphi_{\alpha}$ are linear multiplicative symmetry maps such that $\varphi_{\alpha}(X)=X, \varphi_{\alpha}(H)=H$ and the $\varphi_{\beta}$ are linear anti-multiplicative symmetry maps such that $\varphi_{\beta}(X)=X, \varphi_{\beta}(H)=H$. Hence, $\varphi_\alpha(X+iH) = X+iH, \varphi_\beta(X+iH)=X+iH$. 
For the \underline{$\{\mathcal W_\alpha, \mathcal T_\beta\}$} case, the result follows by Lin's theorem with linear symmetries because $S = \{\varphi_\alpha, \varphi_\beta\}_{\alpha, \beta}$ define an admissible set of linear symmetry maps on $M_n(\C)$.

In the \underline{$\mathcal C$} case, let $\phi_{\mathcal C}(A) = {\mathcal C}^{-1}A {\mathcal C}$ so $\phi_{\mathcal C}$ is an conjugate-linear multiplicative symmetry map with
$\phi_{\mathcal C}(X)=X$, 
$\phi_{\mathcal C}(H)=-H$. 
Hence, $\phi_{\mathcal C}(X+iH) = X+iH$.

In the \underline{$\mathcal S$} case, let
$\phi_{\mathcal S}(A) = {\mathcal S}^{-1}A^\ast {\mathcal S}$ so $\phi_{\mathcal S}$ is a conjugate-linear anti-multiplicative symmetry map with
$\phi_{\mathcal S}(X)=X, 
\phi_{\mathcal S}(H)=-H$.
Hence, $\phi_{\mathcal S}(X+iH) = X+iH$.

The desired result in the other two cases then follows from Theorem \ref{Symmetry Lin}.
\end{proof}

\begin{remark}\label{sym class application}
Theorem \ref{Lin sym class} implies the validity of Conjecture 2.2 of \cite{loring2015k} related to topological insulators. This concerns whether an almost commuting system that has certain symmetries related to the physics of the system can have such symmetries preserved ``in the atomic limit'' as described by \cite{loring2015k}. That is to say, if certain symmetries and/or antisymmetries induced by conjugation by certain unitaries and/or anti-unitaries for almost commuting matrices can be preserved when finding the nearby commuting matrices. These specific examples of conjugation symmetries are also discussed in more detail in \cite{loring2013almost}.

Note that unlike \cite{loring2015k}, we will not refer to the unitaries/anti-unitaries $\mathcal T, \mathcal C, \mathcal S$ as ``symmetries''. We will, however, use the term ``symmetry'' to refer to the map on operators which are induced by conjugation by these maps.  So, for instance, when \cite{loring2015k} speaks of the symmetries having order $2$, it is meant that each of these $\R$-linear maps are their own inverses. However for our setting, if we say that a conjugation symmetry map has order $2$ on $M_n(\C)$ then this implies that the square of the conjugating map is a multiple of the identity.

Theorem \ref{Lin sym class} provides a proof that Lin's theorem with symmetries applies to the position operator $X$ and Hamiltonian $H$ being in one of the ten Atland-Zirnbauer symmetry classes because each symmetry class of Table 1 of \cite{loring2015k} can be expressed in terms of our terminology as:
\begin{itemize}
\item Class A in 1D is simply Lin's theorem with no symmetries imposed so it is the case of \underline{$\{\mathcal W_\alpha, \mathcal T_\beta\}$} with $\{\mathcal W_\alpha\}_\alpha=\emptyset$, $\{\mathcal T_\beta\}_\beta=\emptyset$.

\item Class AIII in 1D is the case of \underline{$\mathcal S$} with $\{\mathcal W_\alpha\}_\alpha=\emptyset$, $\{\mathcal T_\beta\}_\beta=\emptyset$ and $\mathcal S^2 = I$.

\item Class AI in 1D is the case of \underline{$\{\mathcal W_\alpha, \mathcal T_\beta\}$} with $\{\mathcal W_\alpha\}_\alpha=\emptyset$, $\{\mathcal T_\beta\}_\beta=\{\mathcal T\}$ and $\mathcal T^2=I$. 

\item Class BDI in 1D is the case of \underline{$\mathcal C$} with $\{\mathcal W_\alpha\}_\alpha=\emptyset$, $\{\mathcal T_\beta\}_\beta=\{\mathcal T\}$ and $\mathcal C \mathcal T = \mathcal T \mathcal C$, $\mathcal C^2=\mathcal T^2 = I$.

Note that because $\mathcal C^{-1} = \mathcal C$ commutes with $\mathcal T$, we could have instead applied case \underline{$\mathcal S$} with $\mathcal S = \mathcal C \mathcal T$.

\item Class D in 1D is the case of \underline{$\mathcal C$} with $\{\mathcal W_\alpha\}_\alpha=\emptyset$, $\{\mathcal T_\beta\}_\beta=\emptyset$ and $\mathcal C^2=I$.

\item Class DIII in 1D is the case of \underline{$\mathcal C$} with $\{\mathcal W_\alpha\}_\alpha=\emptyset$, $\{\mathcal T_\beta\}_\beta=\{\mathcal T\}$ and $\mathcal C \mathcal T = \mathcal T \mathcal C$, $\mathcal C^2=I$, $\mathcal T^2 = -I$.

\item Class AII in 1D is the case of \underline{$\{\mathcal W_\alpha, \mathcal T_\beta\}$} with $\{\mathcal W_\alpha\}_\alpha=\emptyset$, $\{\mathcal T_\beta\}_\beta=\{\mathcal T\}$ and $\mathcal T^2=-I$.

\item Class CII in 1D is the case of \underline{$\mathcal C$} with $\{\mathcal W_\alpha\}_\alpha=\emptyset$, $\{\mathcal T_\beta\}_\beta=\{\mathcal T\}$ and $\mathcal C \mathcal T = \mathcal T \mathcal C$, $\mathcal C^2=\mathcal T^2 = -I$.

\item Class C in 1D is the case of \underline{$\mathcal C$} with $\{\mathcal W_\alpha\}_\alpha=\emptyset$, $\{\mathcal T_\beta\}_\beta=\emptyset$ and $\mathcal C^2=-I$.

\item Class CI in 1D is the case of \underline{$\mathcal C$} with $\{\mathcal W_\alpha\}_\alpha=\emptyset$, $\{\mathcal T_\beta\}_\beta=\{\mathcal T\}$ and $\mathcal C \mathcal T = \mathcal T \mathcal C$, $\mathcal C^2=-I$, $\mathcal T^2 = I$.
\end{itemize}

Work toward this conjecture has been done through the symmetry results in several of the papers mentioned above. Our theorem resolves the conjecture by solving the problem for the remaining classes AIII, BDI, DIII, CII, and CI. (Note that we actually solve the problem for all cases that have an antisymmetry: which are all the cases except A, AI, and AII.)

Our result also implies other symmetry results that are not the subject of the aforementioned conjecture. A notable example is that Lin's theorem for two purely imaginary self-adjoint matrices is $(-,-)$\textbf{T} with $W = I_n$ which appears to be new. 
\end{remark}

Much of the prior work toward this conjecture takes a ``lifting'' $C^\ast$-algebraic approach. Our approach is more in the spirit of work of this subject by Davidson (\cite{davidson1985almost}) and Hastings (\cite{hastings2009making}) devoted to Lin's theorem (without symmetries). 
Our symmetry bootstrap is inspired by work done by Davidson (\cite{davidson1985almost}) and Voiculescu (\cite{voiculescu1983asymptotically}) but as far as the author knows their arguments have been used only to show that certain counter-examples were not actually nearby commuting matrices. (The index approach of Choi and by Exel and Loring have been the more recent approach to these type of counter-examples.) We apply the projection approach to matrices that are not counter-examples and hence obtain the framework of the bootstrap. 

Our Corollary \ref{sa normal sym} implies the following automatic symmetry result for three almost commuting self-adjoint matrices. Note that the wording reflects the important fact that three almost commuting self-adjoint matrices are not necessarily nearby three commuting self-adjoint matrices (\cites{voiculescu1981remarks, davidson1985almost}).

\begin{corollary}\label{bootstrap 3 matrices}
If $X_1, X_2, X_3 \in M_n(\C)$ are self-adjoint contractions such that there are commuting self-adjoint $X_1', X_2', X_3'\in M_n(\C)$ that satisfy
\[\varepsilon = \max\left(\|X_1'-X_1\|, \|X_2'-X_2\|, \|X_3'-X_3\|\right)\]
then there are commuting self-adjoint $X_1'', X_2'', X_3''$ that satisfy
\begin{align*}
\|X_1''-X_1\|&, \|X_2''-X_2\|, \|X_3''-X_3\|\\
&\leq 65\varepsilon^{1/4} + C_{KS}\max\left(\|[X_1,X_2]\|, \|[X_1,X_3]\|, \|[X_2,X_3]\|\right)^{1/2},
\end{align*}
and can be chosen with additional structure as in one of the following cases:
\begin{description} 
\item[\underline{$(+,+,-)$}:] If a unitary matrix $W \in M_n(\C)$ satisfies \[WX_1W^\ast=X_1, \,\, WX_2W^\ast=X_2, \;\; WX_3W^\ast=-X_3\] then $X_1'', X_2'', X_3''$ can be chosen so that $WX_1''W^\ast=X_1''$, $WX_2''W^\ast=X_2''$,\\ $WX_3''W^\ast=-X_3''$.
\item[\underline{$(+,-,-)$}:] If a unitary matrix $W \in M_n(\C)$ satisfies \[WX_1W^\ast=X_1,\;\; WX_2W^\ast=-X_2,\;\; WX_3W^\ast=-X_3\]
then $X_1'', X_2'', X_3''$ can be chosen so that
$WX_1''W^\ast=X_1''$, $WX_2''W^\ast=-X_2''$,\\ $WX_3''W^\ast=-X_3''$.
\item[\underline{$(-,-,-)$}:] If a unitary matrix $W \in M_n(\C)$ satisfies \[WX_1W^\ast=-X_1,\;\; WX_2W^\ast=-X_2,\;\; WX_3W^\ast=-X_3\] 
then $X_1'', X_2'', X_3''$ can be chosen so that
$WX_1''W^\ast=-X_1''$, $WX_2''W^\ast=-X_2''$, \\$WX_3''W^\ast=-X_3''$.
\item[\underline{$(+,+,-)$T}:] If a unitary matrix $W \in M_n(\C)$ satisfies \[WX_1W^\ast=X_1^T,\;\; WX_2W^\ast=X_2^T,\;\; WX_3W^\ast=-X_3^T\] 
then $X_1'', X_2'', X_3''$ can be chosen so that
$WX_1''W^\ast=(X_1'')^T$, $WX_2''W^\ast=(X_2'')^T$,\\ $WX_3''W^\ast=-(X_3'')^T$.
\item[\underline{$(+,-,-)$T}:] If a unitary matrix $W \in M_n(\C)$ satisfies \[WX_1W^\ast=X_1^T, \;\; WX_2W^\ast=-X_2^T,\;\; WX_3W^\ast=-X_3^T\] 
then $X_1'', X_2'', X_3''$ can be chosen so that
$WX_1''W^\ast=(X_1'')^T$, $WX_2''W^\ast=-(X_2'')^T$, \\$WX_3''W^\ast=-(X_3'')^T$.
\item[\underline{$(-,-,-)$T}:] If a unitary matrix $W \in M_n(\C)$ satisfies \[WX_1W^\ast=-X_1^T,\;\; WX_2W^\ast=-X_2^T,\;\; WX_3W^\ast=-X_3^T\]
then $X_1'', X_2'', X_3''$ can be chosen so that
$WX_1''W^\ast=-(X_1'')^T$, $WX_2''W^\ast=-(X_2'')^T$,\\ $WX_3''W^\ast=-(X_3'')^T$.
\end{description}
\end{corollary}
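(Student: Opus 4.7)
The plan is to reduce each of the six cases to Corollary \ref{sa normal sym}, which handles one self-adjoint operator together with a normal operator, by setting $X = X_1$ and $N = X_2 + iX_3$. The identities $[N^\ast, N] = 2i[X_2, X_3]$ and $[X_1, N] = [X_1, X_2] + i[X_1, X_3]$ show that $N$ is almost normal and almost commutes with $X_1$, while $N' := X_2' + iX_3'$ is exactly normal, commutes with $X_1'$, and satisfies $\|N - N'\| \leq 2\varepsilon$. Hence the pair $(X_1, N)$ sits close to the commuting self-adjoint/normal pair $(X_1', N')$, which is the form of input required by Corollary \ref{sa normal sym}.

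For each sign pattern I would then translate the condition on $(X_1, X_2, X_3)$ into a single symmetry on the pair $(X_1, N)$. In the purely conjugation cases, conjugation by $W$ sends $N$ to one of $\pm N$ or $N^\ast$: for instance, $(+,+,-)$ gives $WNW^\ast = X_2 - iX_3 = N^\ast$, encoded as the conjugate-linear multiplicative symmetry $\phi(A) = W A^\ast W^\ast$ fixing both $X_1$ and $N$; $(+,-,-)$ gives $WNW^\ast = -N$, encoded by the linear multiplicative map $A \mapsto W^\ast A W$; and $(-,-,-)$ gives anti-commutation of $W$ with both $X_1$ and $N$. The transpose cases are analogous, using $\overline{X_i} = X_i^T$ for self-adjoint $X_i$: for example $(+,+,-)$\textbf{T} gives $W N W^\ast = \overline{N}$, handled by the conjugate-linear multiplicative map $A \mapsto W^\ast \overline{A} W$, while $(+,-,-)$\textbf{T} gives $WNW^\ast = -N^T$, handled by a linear anti-multiplicative map built from $A \mapsto W^\ast A^T W$. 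In every case the resulting symmetry on $(X_1, N)$ is $\R$-linear and is multiplicative or anti-multiplicative, so it lies in the framework to which Corollary \ref{sa normal sym} applies.

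Applying Corollary \ref{sa normal sym} then produces commuting $X_1''$ self-adjoint and $N''$ normal close to $X_1, N$ while respecting the translated symmetry. Writing $N'' = X_2'' + iX_3''$ recovers self-adjoint $X_2'', X_3''$ that mutually commute (because $N''$ is normal) and commute with $X_1''$ (because $N''$ does), and the symmetry of $N''$ splits under real and imaginary parts into precisely the individual symmetries of $X_2''$ and $X_3''$ claimed in each item. Combining the estimate delivered by Corollary \ref{sa normal sym} with $\|N - N'\| \leq 2\varepsilon$ and $\|[N^\ast, N]\|, \|[X_1, N]\| \leq 2 \max_{i<j}\|[X_i, X_j]\|$, together with $\|X_i'' - X_i\| \leq \|N'' - N\|$ for $i = 2, 3$, yields the stated bound $65\varepsilon^{1/4} + C_{KS} \max_{i<j}\|[X_i, X_j]\|^{1/2}$. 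The main obstacle is the case-by-case bookkeeping in the symmetry translation: one must verify that every $\pm$ combination on $(X_1, X_2, X_3)$ induces an admissible symmetry map on $(X_1, N)$ for Corollary \ref{sa normal sym}, and that the symmetry imposed on $N''$ unpacks through its real and imaginary parts to exactly the prescribed sign pattern on $X_2''$ and $X_3''$, with special care needed in the transpose cases where $W A^T W^\ast$ interacts non-trivially with the decomposition $N = X_2 + iX_3$.
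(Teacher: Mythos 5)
The overall architecture — pair one self-adjoint operator with the normal combination of the other two and invoke Corollary \ref{sa normal sym} — is indeed the route the paper intends. But there is a genuine mismatch with the hypotheses of Corollary \ref{sa normal sym} that your fixed choice $X = X_1$, $N = X_2 + iX_3$ does not repair. That corollary requires the first operator to be $\varphi$-\emph{antisymmetric} for a \emph{linear} symmetry map $\varphi$; this is essential, since the bootstrap (Theorem \ref{self-adjoint bootstrap}) works by exploiting the reflection $\varphi(E_x) = E_{-x}$ of spectral projections of $X$ about $0$, which fails if $\varphi(X) = X$. With $\varphi(A) = WAW^\ast$ (resp.\ $\varphi(A)=W^\ast A^T W$ in the T cases), $X_1$ is $\varphi$-\emph{symmetric} — not antisymmetric — in the four cases $(+,+,-)$, $(+,-,-)$, $(+,+,-)\textbf{T}$, $(+,-,-)\textbf{T}$. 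Your proposed re-encodings for those cases, e.g.\ the conjugate-linear multiplicative $\phi(A)=WA^\ast W^\ast$ that fixes both $X_1$ and $N$ in $(+,+,-)$, or the linear map for $(+,-,-)$ under which $X_1$ is fixed and $N$ is negated, do not produce the form required by Corollary \ref{sa normal sym}: the former is conjugate-linear and the latter still has $X_1$ symmetric. Neither of these is among the three admissible configurations ($X$ $\varphi$-antisymmetric with $B$ $\varphi$-symmetric, $\varphi$-antisymmetric, or $\varphi_\ast$-symmetric) for a linear $\varphi$.

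The correct repair is to choose the designated self-adjoint operator according to the sign pattern rather than always taking $X_1$. Every case has at least one minus sign, so select as $X$ some $X_i$ with $\varphi(X_i) = -X_i$ (e.g.\ $X = X_3$ in $(+,+,-)$ and $(+,+,-)\textbf{T}$; $X = X_2$ or $X_3$ in $(+,-,-)$ and $(+,-,-)\textbf{T}$; any $X_i$ in $(-,-,-)$ and $(-,-,-)\textbf{T}$), and form $B = X_j + i X_k$ from the remaining pair. Then $\varphi(B)$ equals $B$, $-B$, or $B^\ast$ depending on the signs of $X_j, X_k$, exactly landing in one of the three sub-cases of Corollary \ref{sa normal sym}; for the transpose cases $\varphi$ is the linear anti-multiplicative $A \mapsto W^\ast A^T W$, which is also permitted. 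Your commutator identities, the bound $\|B - B'\| \le 2\varepsilon$, the observation that $\|X_i'' - X_i\| \le \|B''' - B\|$ for the real/imaginary parts, and the fact that a linear $\varphi$ splits the symmetry of the normal $B'''$ into the prescribed sign pattern on $\Re(B''')$ and $\Im(B''')$ then all go through unchanged, modulo the index relabeling.
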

\begin{remark}\label{additional sym}
Suppose that three real self-adjoint matrices are nearby commuting  self-adjoint matrices. Our result is not able to answer whether or not these matrices are nearby commuting self-adjoint matrices that are real. 

However if instead at least one of the self-adjoint matrices is purely imaginary instead of real then there are nearby commuting matrices that also are real / purely imaginary (resp.). 

This shows that the symmetry in the construction in \cite{herrera2022constructing} of three commuting observables nearby the macroscopic observables for total $x$, $y$, and $z$ spin is actually a consequence of Ogata's original result. 
That is, the extra symmetry is automatic and not requiring any additional index to vanish. 

This type of phenomenon was observed for real unitaries in \cite{loring2014almost}. 
This automatic symmetry is not the case for linear symmetries in general, as is seen with three $\sharp$-symmetric self-adjoint matrices or two $\sharp$-symmetric unitaries in Theorems 1.2 and 1.6 of \cite{loring2013almost}.
\end{remark}

We also obtain another corollary:
\begin{corollary}
There exists a function $\epsilon(\delta)$ independent of $n$ with $\lim_{\delta \to 0}\epsilon(\delta) =0$ 
that satisfies the following property:

Suppose $X_1, X_2, H \in M_n(\C)$ are self-adjoint contractions such that there are commuting self-adjoint $X_1', X_2', H'\in M_n(\C)$ that satisfy
\[\varepsilon = \max\left(\|X_1'-X_1\|, \|X_2'-X_2\|, \|H'-H\|\right).\]
Suppose there are $\{\mathcal W_\alpha\}_\alpha$, $\{\mathcal T_\beta\}_\beta$ (which may be empty) of unitaries, anti-unitaries (resp.) on $\C^n$ 
such that each $\mathcal T_\beta$ commutes with all the maps in $\{\mathcal W_\alpha\}_\alpha$, $\{\mathcal T_\beta\}_\beta$ up to a phase and
such that each $\mathcal W_\alpha, \mathcal T_\beta$ commute with $X_1, X_1', X_2, X_2', H, H'$.

Then there are commuting self-adjoint $X_1'', X_2'', H''$ that satisfy
\begin{align*}
\|X_1''-X_1\|&, \|X_2''-X_2\|, \|H''-H\|\\
&\leq 45\sqrt{\varepsilon} + \epsilon\left(\max\left(\|[X_1,X_2]\|, \|[X_1,X_3]\|, \|[X_2,X_3]\|\right)+90\sqrt{\varepsilon}\right),
\end{align*}
that commute with each element of $\{\mathcal W_\alpha\}_\alpha$, $\{\mathcal T_\beta\}_\beta$, and that    can be chosen with an additional structure as in one of the following cases:
\begin{description} 
\item[\underline{$\mathcal C$}:] Let $\mathcal C$ be an anti-unitary map on $\C^n$ such that $\mathcal C$ commutes with each of the $\mathcal W_\alpha, \mathcal T_\beta$ up to a phase. If $\mathcal C$ commutes with $X_1, X_2$ and anti-commutes with $H$ then $X_1'', X_2'', H''$ can be chosen so that $\mathcal C$ commutes with $X_1'', X_2''$ and $\mathcal C$ anti-commutes with $H''$.
\item[\underline{$\mathcal S$}:] Let $\mathcal S$ be a unitary map on $\C^n$ such that $\mathcal S$ commutes with each of the $\mathcal W_\alpha, \mathcal T_\beta$ up to a phase. If $\mathcal S$ commutes with $X_1, X_2$ and anti-commutes with $H$ then $X_1'', X_2'', H''$ can be chosen so that $\mathcal S$ commutes with $X_1'', X_2''$ and $\mathcal S$ anti-commutes with $H''$.
\end{description}
\end{corollary}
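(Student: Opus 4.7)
The plan is to split the problem into two stages: first apply the self-adjoint symmetry bootstrap to $H$ together with the operator $X_1 + iX_2$, in order to absorb the anti-commutation symmetry from $\mathcal C$ (or $\mathcal S$) and produce an $H''$ that exactly anti-commutes with it; and then apply Theorem \ref{Lin sym class} inside the commutant $\{H''\}'$, where $\mathcal C$ (or $\mathcal S$) now acts by commutation on both relevant operands, to handle the remaining two-self-adjoint problem.

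Concretely, set $X := H$ and $B := X_1 + iX_2$. The hypothesis provides a commuting pair $H'$ and $X_1' + iX_2'$ within $\varepsilon$ of $X$ and $B$. Apply the self-adjoint symmetry bootstrap via Lemma \ref{proj requirement sa} and Lemma \ref{proj-converse sa} to obtain self-adjoint $H''$ and operator $B_0$ that commute exactly, with $\|H'' - H\|, \|B_0 - B\| \le 45\sqrt{\varepsilon}$. Build the projections $F$ of Lemma \ref{proj requirement sa} for $H$ over the positive half-line inside the joint commutant of $\{\mathcal W_\alpha\}_\alpha \cup \{\mathcal T_\beta\}_\beta$, and use the family $F$ together with its image $\mathcal C F \mathcal C^{-1}$ (resp.\ $\mathcal S F \mathcal S^{-1}$) and a self-conjugate spectral projection straddling $0$ as a spectral resolution for $H''$. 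The equivariance of the construction then yields $\mathcal C H'' \mathcal C^{-1} = -H''$ (resp.\ for $\mathcal S$), $\mathcal C B_0 \mathcal C^{-1} = B_0$, and $\mathcal W_\alpha, \mathcal T_\beta$ commuting with both $H''$ and $B_0$.

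Now define $\tilde X_1 := \tfrac{1}{2}(B_0+B_0^\ast)$ and $\tilde X_2 := \tfrac{1}{2i}(B_0-B_0^\ast)$. These are self-adjoint contractions, they commute with $H''$ since $B_0$ does, and they inherit the full list of $\mathcal W_\alpha$, $\mathcal T_\beta$, $\mathcal C$ (or $\mathcal S$) symmetries from $B_0$; in particular $\mathcal C$ (resp.\ $\mathcal S$) now commutes with both $\tilde X_j$. Standard estimates give $\|\tilde X_j - X_j\| \le 45\sqrt{\varepsilon}$ and, for contractions,
\[\|[\tilde X_1, \tilde X_2]\| \le \|[X_1, X_2]\| + 90\sqrt{\varepsilon}.\]
Inside the unital $C^\ast$-subalgebra $\mathcal B := \{H''\}' \subset M_n(\C)$, which is a direct sum of matrix algebras and is preserved by each of $\mathcal W_\alpha, \mathcal T_\beta, \mathcal C, \mathcal S$ (since they commute with $H''$ up to sign), apply Theorem \ref{Lin sym class} in case \underline{$\{\mathcal W_\alpha, \mathcal T_\beta\}$} with the enlarged list $\{\mathcal W_\alpha\}\cup\{\mathcal S\}$ (unitary) or $\{\mathcal T_\beta\}\cup\{\mathcal C\}$ (anti-unitary), as appropriate. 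This yields commuting self-adjoint $X_1'', X_2'' \in \mathcal B$ carrying all the listed symmetries, with $\|X_j'' - \tilde X_j\| \le \epsilon(\|[\tilde X_1, \tilde X_2]\|)$. The triangle inequality and the commutator estimate then produce the stated bound, and $X_1'', X_2'', H''$ mutually commute by construction.

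The main obstacle is the equivariant bootstrap in the first step: one must set up the projections $F$ so that both the commuting symmetries $\mathcal W_\alpha, \mathcal T_\beta$ and the anti-commuting symmetry $\mathcal C$ or $\mathcal S$ survive simultaneously. This requires $F$ to lie in $\{\mathcal W_\alpha, \mathcal T_\beta\}'$ while $\mathcal C$ (resp.\ $\mathcal S$) exchanges $\{F\}$ with a disjoint complementary family covering the negative part of the spectrum of $H$, together with a self-symmetric gap projection at $0$. This parallels and is a direct adaptation of the projection construction underlying Corollary \ref{bootstrap 3 matrices}, where the required compatibility is guaranteed precisely by the assumed commutation up to phase of $\mathcal C$ or $\mathcal S$ with every $\mathcal W_\alpha$ and $\mathcal T_\beta$.
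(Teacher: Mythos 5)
Your proposal follows the paper's route: this corollary is an application of Corollary \ref{sa normal sym} (equivalently, Theorem \ref{self-adjoint bootstrap} followed by Lin's theorem with linear symmetries inside $\{H''\}'$) with $X=H$ and $B=X_1+iX_2$, recovering $X_1'', X_2''$ as the real and imaginary parts of the resulting normal operator. One intermediate identity is stated incorrectly: since $\mathcal C$ is conjugate-linear and commutes with $X_1, X_2$, one has $\mathcal C B_0 \mathcal C^{-1} = B_0^\ast$, not $B_0$; equivalently $B_0$ is fixed by the linear anti-multiplicative map $A\mapsto \mathcal C^{-1}A^\ast\mathcal C$, which is the symmetry the bootstrap preserves. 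Your subsequent conclusion that $\mathcal C$ commutes with both $\tilde X_1=\Re(B_0)$ and $\tilde X_2=\Im(B_0)$ is nonetheless correct, so this does not affect the argument; also note that the final Lin step should cite Theorem \ref{Symmetry Lin fin dim} (or Corollary \ref{sa normal sym} directly) rather than Theorem \ref{Lin sym class}, since $\{H''\}'$ is a direct sum of matrix algebras rather than a single $M_n(\C)$.
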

\begin{remark}
This result allows us to observe relationships between the obstructions to three almost commuting self-adjoint matrices being nearby commuting self-adjoint matrices in the the Atland-Zirnbauer symmetry classes in 2D from Remark \ref{sym class application} as discussed above. We outline what this results says for each symmetry class:
\begin{itemize}
\item It is already known that three almost commuting self-adjoint matrices are not necessarily nearby commuting self-adjoint matrices. This corresponds to almost commuting matrices in Class A in 2D. 

Our result does not guarantee the existence of nearby commuting matrices in this symmetry class.

\item Class AIII in 2D is the case of \underline{$\mathcal S$} with $\{\mathcal W_\alpha\}_\alpha=\emptyset$, $\{\mathcal T_\beta\}_\beta=\emptyset$ and $\mathcal S^2 = I$.

Our result implies that the only possible obstructions for almost commuting matrices to have nearby commuting matrices in Class AIII in 2D are those of having nearby commuting matrices in Class A in 2D.

\item Class AI in 2D is the case of three almost commuting self-adjoint matrices that commute with an anti-unitary $\mathcal T$ that satisfies $\mathcal T^2=I$. It is not known that there are necessarily nearby commuting self-adjoint matrices for matrices in this symmetry class, let alone with the nearby commuting matrices being in this symmetry class.

Our result does not guarantee the existence of any nearby commuting matrices in this symmetry class.

\item Class BDI in 2D is the case of \underline{$\mathcal C$} with $\{\mathcal W_\alpha\}_\alpha=\emptyset$, $\{\mathcal T_\beta\}_\beta=\{\mathcal T\}$ and $\mathcal C \mathcal T = \mathcal T \mathcal C$, $\mathcal C^2=\mathcal T^2 = I$.

Our result implies that the only possible obstructions for almost commuting matrices to have nearby commuting matrices in Class BDI in 2D are those of having nearby commuting matrices in Class AI in 2D.

\item Class D in 2D is the case of \underline{$\mathcal C$} with $\{\mathcal W_\alpha\}_\alpha=\emptyset$, $\{\mathcal T_\beta\}_\beta=\emptyset$ and $\mathcal C^2=I$.

Our result implies that the only possible obstructions for almost commuting matrices to have nearby commuting matrices in Class D in 2D are those of having nearby commuting matrices in Class A in 2D.

\item Class DIII in 2D is the case of \underline{$\mathcal C$} with $\{\mathcal W_\alpha\}_\alpha=\emptyset$, $\{\mathcal T_\beta\}_\beta=\{\mathcal T\}$ and $\mathcal C \mathcal T = \mathcal T \mathcal C$, $\mathcal C^2=I$, $\mathcal T^2 = -I$.

Our result implies that the only possible obstructions for almost commuting matrices to have nearby commuting matrices in Class DIII in 2D are those of having nearby commuting matrices in Class AII in 2D (see below).

\item Class AII in 2D is the case of \underline{$\{\mathcal W_\alpha, \mathcal T_\beta\}$} with $\{\mathcal W_\alpha\}_\alpha=\emptyset$, $\{\mathcal T_\beta\}_\beta=\{\mathcal T\}$ and $\mathcal T^2=-I$.
It is not known that there are necessarily nearby commuting self-adjoint matrices for matrices in this symmetry class, let alone with the nearby commuting matrices being in this symmetry class.

Our result does not guarantee the existence of any nearby commuting matrices in this symmetry class.

\item Class CII in 2D is the case of \underline{$\mathcal C$} with $\{\mathcal W_\alpha\}_\alpha=\emptyset$, $\{\mathcal T_\beta\}_\beta=\{\mathcal T\}$ and $\mathcal C \mathcal T = \mathcal T \mathcal C$, $\mathcal C^2=\mathcal T^2 = -I$.

Our result implies that the only possible obstructions for almost commuting matrices to have nearby commuting matrices in Class CII in 2D are those of having nearby commuting matrices in Class AII in 2D.

\item Class C in 2D is the case of \underline{$\mathcal C$} with $\{\mathcal W_\alpha\}_\alpha=\emptyset$, $\{\mathcal T_\beta\}_\beta=\emptyset$ and $\mathcal C^2=-I$.

Our result implies that the only possible obstructions for almost commuting matrices to have nearby commuting matrices in Class C in 2D are those of having nearby commuting matrices in Class A in 2D.

\item Class CI in 2D is the case of \underline{$\mathcal C$} with $\{\mathcal W_\alpha\}_\alpha=\emptyset$, $\{\mathcal T_\beta\}_\beta=\{\mathcal T\}$ and $\mathcal C \mathcal T = \mathcal T \mathcal C$, $\mathcal C^2=-I$, $\mathcal T^2 = I$.

Our result implies that the only possible obstructions for almost commuting matrices to have nearby commuting matrices in Class CI in 2D are those of having nearby commuting matrices in Class AI in 2D.
\end{itemize}
\end{remark}

\section{Symmetry Maps}\label{Symmetry Maps section}
In this section we define and give basic definitions of what we refer to as ``symmetry maps.'' These maps will be used to provide generalizations of properties such as a matrix in $M_n(\C)$ being ``symmetric,'' ``antisymmetric,'' ``real,'' and other structure for matrices.
For this entire section $\A$ will be a unital $C^\ast$-algebra. In our applications and results in later sections, it will often be finite dimensional or a von Neumann algebra. 
\begin{defn}
Let $\varphi:\A\to \A$ be an $\R$-linear map. We say $\varphi$ is \emph{multiplicative} if
\[\varphi(AB) = \varphi(A)\varphi(B), \;\; \mbox{ for all } A, B\in \A.\]
We say $\varphi$ is \emph{anti-multiplicative} if
\[\varphi(AB) = \varphi(B)\varphi(A), \;\; \mbox{ for all } A, B\in \A.\]
We say $\varphi$ is \emph{conjugate-linear} if
\[\varphi(cA) = \overline{c}\varphi(A), \;\; \mbox{ for all } A\in \A, c \in \C.\]
We reserve the term ``linear'' to solely mean $\C$-linear.
We say that $\varphi$ is \emph{hermitian} if \[\varphi(A^\ast) = \varphi(A)^\ast, \;\; \mbox{ for all } A\in \A.\]

We say that $\varphi$ has order $n$ if $\varphi^k$, the $k$-fold composition of $\varphi$, equals $\id$ for $k = n$ and is not equal to $\id$ for any smaller $k \geq 1$. Otherwise, we say that $\varphi$ has infinite order. If $\zeta \in \C$ satisfies $|\zeta| = 1$, we define the order of $\zeta$ similarly.
\end{defn}
\begin{defn}
Let $\varphi:\A\to\A$ be a hermitian $\R$-linear map that maps non-invertible elements to non-invertible elements. We say that $\varphi$ is a \emph{symmetry} map if $\varphi$ also satisfies the following three conditions:
\begin{enumerate}
\item $\varphi(I) =I$.
\item $\varphi$ is linear or conjugate-linear.
\item $\varphi$ is multiplicative or anti-multiplicative.
\end{enumerate}

If $\varphi$ is a symmetry and $A \in \A$ satisfies $\varphi(A) = A$ then we say that $A$ is $\varphi$-symmetric. If $\varphi(A) = -A$ then we say that $A$ is $\varphi$-antisymmetric. If $\varphi(A) = \zeta A$ for $|\zeta| = 1$, then we say that $A$ is $\varphi$-$\zeta$-symmetric or that $A$ has a $\varphi$-phase symmetry with phase $\zeta$.

If ${S}$ is a set of symmetry maps and $A$ is $\varphi$-symmetric for all $\varphi \in {S}$, we say that $A$ is ${S}$-symmetric. Let $\A_{S} = \{A\in \A: \varphi(A) = A\mbox{ for all }\varphi \in {S}\}$ be the set of elements of $\A$ that are fixed by every symmetry of ${S}$.

For general statements, we will refer to the symmetries of an element as its unital phase symmetries, symmetries/antisymmetries as real phase symmetries, and all other types of phase symmetries as non-real phase symmetries.
\end{defn}

\begin{remark}
A symmetry map $\varphi$ always maps invertible elements to invertible elements. So, the condition that a symmetry maps non-invertible elements to non-invertible elements is equivalent to ``$\varphi(A)$ is invertible if and only if $A$ is invertible'', which is automatically satisfied if $\varphi$ has finite order.
\end{remark}

\begin{example}\label{examples}
We now discuss various examples. A few of these examples are from \cite{loring2015k}. The examples that are we are interested in are typically for $\A = M_n(\C)$. However, even for this case, in order to recursively apply symmetry constructions, we may have to work in a class of unital $C^\ast$-algebras that are closed under subalgebras that are closed in the suitable topology.  Thus the reader interested in symmetries on $M_n(\C)$ can assume that $\mathcal A$ is a finite dimensional $C^\ast$-algebra.

\begin{enumerate}
\item The identity map $\vp= \id$ is a symmetry map. All elements of $\A$ are $\id$-symmetric. Only $0\in \A$ is $\id$-antisymmetric.
\item The transpose map $\vp(A) = A^T$ is a linear anti-multiplicative symmetry map. It has order 2, meaning that $\vp^2(A)=\vp(\vp(A)) = A$ for all $A \in M_n(\C)$. Transpose-symmetric matrices usually are referred to as ``symmetric matrices'' and transpose-antisymmetric matrices usually are referred to as ``antisymmetric matrices''. To avoid confusion, we will always refer to an operator being symmetric or antisymmetric with respect to a specific given symmetry map.
\item For $\A = M_{n}(\C)$ for $n$ even, the dual map $\sharp$ defined by 
\[\bp A &B \\ C&D\ep^\sharp =  \bp D^T &-B^T \\ -C^T&A^T\ep\] is also a linear anti-multiplicative symmetry map with order 2. $\sharp$-symmetric matrices are referred to as self-dual.
\item The adjoint map $\phi(A) = A^\ast$ is a conjugate-linear anti-multiplicative symmetry map. $\ast$-symmetric matrices are self-adjoint and $\ast$-antisymmetric are the skew-adjoint matrices.
\item The conjugation map $\phi(A) = \overline{A}$ is a conjugate-linear multiplicative symmetry map. The conjugation-symmetric matrices are the real matrices and the conjugation-antisymmetric matrices are purely imaginary.
\item For any unitary $U \in \A,$ conjugation by $U$: $\vp(A) = U^{-1}AU$ is a linear multiplicative symmetry. The $\vp$-symmetric elements are those that commute with $U$ and the $\vp$-antisymmetric elements are those that anti-commute with $U$. If $U$ belongs to a von Neumann algebra $\mathcal A$, one can show that $\varphi$ is weakly continuous on $\mathcal A$.
\item \label{herm conj lin} An anti-unitary map $U$ on a Hilbert space $\H$ is an invertible, conjugate-linear map such that for any $v, w \in \H$,
\[\ll Uv, Uw\rl = \overline{\ll v, w \rl} = \ll w, v\rl.\]
With $v = w$, this gives $\|Uv\| = \|v\|$ for all $v$. Note then that $U^{-1}$ is also anti-unitary. 

Let $\A= B(\H)$ be the space of bounded (linear) operators on $\H$. One can show that for any $A \in \A$ it is true that $U^{-1}AU \in \A$. 
So, conjugation by $U$: $\phi(A) = U^{-1}AU$ is a weakly continuous, conjugate-linear multiplicative symmetry.
\begin{proof}
To show adjoint-invariance, We need that $U^{-1}A^\ast U = (U^{-1}AU)^\ast$. Let $v, w \in \H$. Then 
\begin{align*}
\ll (U^{-1}AU)^\ast v, w\rl &= \ll v, U^{-1}AUw\rl = \overline{\ll Uv, AUw\rl}\\
&= \overline{\ll A^\ast Uv, Uw\rl}= \ll U^{-1}A^\ast Uv, w\rl
\end{align*}
which implies what we needed to show.

This symmetry map is weakly continuous because
\[\ll v, U^{-1}AU w\rl = \overline{\ll Uv, A(Uw)\rl} = \ll A(Uw), Uv\rl.\]
\end{proof}
\item If $\Gamma = \bp 0 & I_{n/2} \\ -I_{n/2} & 0\ep$ then a matrix $A$ is (complex) symplectic if $\Gamma^{-1} A^T \Gamma = A^{-1}$. If $A$ is unitary this means that $\Gamma^{-1} \overline{U} \Gamma = U$. So, if we define the symmetry map by $\phi(A) = \Gamma^{-1} \overline{A} \Gamma$ on $M_{n}(\C)$ then $\phi$ is conjugate-linear multiplicative and a unitary matrix is symplectic if and only if it is $\phi$-symmetric.

Note that conjugation by $\Gamma$ is equal to the composition of taking the transpose and $\sharp$, which are commuting symmetry maps as observed in the proof of Lemma 4.2 of \cite{loring2014almost}.
See \cite{hall2015lie} for more about symplectic matrices.
\item Any composition of symmetry maps is a symmetry map.  In particular, $\id$, transpose, conjugation, and the adjoint form an abelian group of order four where each non-trivial symmetry has order $2$.
\end{enumerate}
\end{example}

We also mention the following symmetry map of order $N$.
\begin{example}\label{Ogata symmetry}
Consider the symmetry relevant for Ogata's theorem (\cite{ogata2013approximating}) defined first on pure tensors of $N$ matrices in $M_d(\C)$. Let $\sigma$ be a permutation of the set $\{1, \dots, N\}$, so $\sigma$ belongs to the group $S_N$. Define $S_\sigma(A_1 \otimes A_2 \otimes \cdots \otimes A_N) = A_{\sigma^{-1}(1)}\otimes \cdots, \otimes A_{\sigma^{-1}(N)}$ which permutes the factors of the pure tensor product. We extend $S_\sigma$ to a linear map on $M_d(\C)^{\otimes N} = M_{d^N}(\C)$ by linearity.

We see that this map is well-defined as follows. Consider the Hilbert-Schmidt inner product on $M_{d}(\C)$ given by $\ll A, B \rl_{HS} = \tr(A^\ast B)$. One can show that the Hilbert-Schmidt inner product on $M_{d^N}(\C)$ satisfies:
\[\ll \bigotimes_i A_i, \bigotimes_i B_i \rl_{HS} = \prod_i\ll A_i, B_i \rl_{HS}.\] 
Choose an orthonormal basis $C^i$ of matrices in $M_d(\C)$ with respect to the Hilbert-Schmidt inner product. 
We then see that all possible tensor products of matrices $C^i$ will form an orthonormal subset of $M_{d^N}(\C)$. These are also spanning, so we see that $S_\sigma$ is well-defined.

The linear maps $S_\sigma$ are induced by conjugation by a unitary. 
More specifically, let $v_i \in \C^d$ be some collection of vectors and consider the unitary $U_\sigma$ that maps $\bigotimes_i v_i$ to $\bigotimes_i v_{\sigma^{-1}(i)}$.
This map is well-defined with inverse $U_{\sigma^{-1}}$. We then see that $S_\sigma(A) = U_{\sigma}AU_{\sigma}^{-1}$. So, $S_\sigma$ is a linear multiplicative symmetry that has the same order as $\sigma$.

Now, given a matrix $A \in M_d(\C)$ define $T_N(A)$ to be the average of the $N$-fold possible tensor products of $A$ with the identity $I_d$:
\[T_N(A)=\frac{1}{N}\left(A\otimes I_d^{\otimes (N-1)} + I_d\otimes A\otimes I_d^{\otimes (N-2)}+\cdots +I_d^{\otimes (N-1)}\otimes A\right).\]
For any $\sigma \in S_N$, $T_N(A)$ is $S_\sigma$-symmetric. Note that for a matrix to satisfy such a property, it need only be $S_\sigma$ symmetric for $\sigma$ being the transposition $(1\,2)$ and the cycle $(1\,2\,\dots,\,n)$, which have order $2$ and $n$, respectively. These symmetry maps do not commute.

Let $A_1, \dots, A_m \in M_d(\C)$ and define $H_i = T_N(A_i)$.
We note that the $H_i$ are almost commuting and $S_\sigma$-symmetric for every $\sigma \in S_N$.
Using the reduction in \cite{herrera2022constructing}, Ogata's theorem implies that the $H_i$ are nearby commuting matrices which are self-adjoint when the $A_i$ are self-adjoint.

A natural question that one can ask is if a symmetry version of Ogata's theorem holds. Namely, when $H_i$ satisfy certain symmetries and antisymmetries, is it true that the $H_i$ are nearby commuting matrices that also satisfy these symmetries.

How much symmetry that is preserved for $m \geq 3$ is still not known.
One can readily show from Lin's theorem with linear symmetries (Theorem \ref{Linear Lin}) that the nearby commuting matrices can be chosen to be $S_\sigma$-symmetric when $m=2$.

In \cite{herrera2022constructing}, we showed that when $m=3$ and $d=2$ then complex conjugation-symmetry can be preserved. By Remark \ref{additional sym}, our Symmetry Bootstrap also shows that conjugation symmetry can also be preserved when $m=3$ and $d \geq 2$ as a direct consequence of Ogata's original proof if one of the matrices $H_i$ is purely imaginary and the others are real. See also \underline{$(+,+,-)$\textbf{T}}, \underline{$(+,-,-)$\textbf{T}}, and \underline{$(-,-,-)$\textbf{T}} of Corollary \ref{bootstrap 3 matrices}.
\end{example}

It is a fact that for $\mathcal A = M_n(\C)$, these are all the types of symmetry maps:
\begin{prop}
Let $\varphi$ be a symmetry map on $M_n(\C)$.
\begin{description}
\item[LM:] If $\varphi$ is a linear multiplicative symmetry map then there exists a unitary matrix $U\in M_n(\C)$ such that $\varphi(A) = U^\ast AU$ for every $A \in M_n(\C)$.
\item[LAm:] If $\varphi$ is a linear anti-multiplicative symmetry map then there exists a unitary matrix $U\in M_n(\C)$ such that $\varphi(A) = U^\ast A^TU$ for every $A \in M_n(\C)$.
\item[ClM:] If $\phi$ is a conjugate-linear multiplicative symmetry map then there exists a unitary matrix $U\in M_n(\C)$ such that $\phi(A) = U^\ast \overline{A}U$ for every $A \in M_n(\C)$.
\item[ClAm:] If $\phi$ is a conjugate-linear anti-multiplicative symmetry map then there exists a unitary matrix $U\in M_n(\C)$ such that $\phi(A) = U^\ast A^\ast U$ for every $A \in M_n(\C)$.
\end{description}
\end{prop}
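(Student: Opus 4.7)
The plan is to reduce the three other cases to the linear multiplicative case (\textbf{LM}), which is handled by the Skolem-Noether theorem together with the hermitian hypothesis.

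For \textbf{LM}, observe first that a linear multiplicative map $\varphi$ with $\varphi(I) = I$ is a unital algebra homomorphism $M_n(\C) \to M_n(\C)$. Since $M_n(\C)$ is simple, $\ker\varphi$ is a two-sided ideal containing $I$ complementarily, so must equal $0$; hence $\varphi$ is injective, and by dimension count it is an automorphism. By the Skolem-Noether theorem, there is an invertible $S \in M_n(\C)$ with $\varphi(A) = S^{-1}AS$ for all $A$. The hermitian condition $\varphi(A^\ast) = \varphi(A)^\ast$ becomes $S^{-1}A^\ast S = S^\ast A^\ast (S^\ast)^{-1}$ for every $A$, so $SS^\ast$ commutes with every element of $M_n(\C)$. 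Thus $SS^\ast = \lambda I$ for some $\lambda > 0$, and $U := \lambda^{-1/2} S$ is unitary with $\varphi(A) = U^\ast A U$.

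For \textbf{LAm}, define $\psi(A) := \varphi(A^T)$. Linearity is immediate. Anti-multiplicativity of $\varphi$ combined with $(AB)^T = B^T A^T$ gives
\[
\psi(AB) = \varphi(B^T A^T) = \varphi(A^T)\varphi(B^T) = \psi(A)\psi(B),
\]
so $\psi$ is multiplicative. Since $(A^T)^\ast = (A^\ast)^T$, $\psi$ is hermitian, and $\psi(I) = I$. The \textbf{LM} case yields a unitary $U$ with $\psi(A) = U^\ast A U$, hence $\varphi(B) = \psi(B^T) = U^\ast B^T U$. The cases \textbf{ClM} and \textbf{ClAm} are treated analogously: for \textbf{ClM} set $\psi(A) := \phi(\overline A)$, which is linear because $\phi(\overline{cA}) = \phi(\overline c\, \overline A) = c\,\phi(\overline A)$, multiplicative because $\overline{AB} = \overline A\,\overline B$, and hermitian; \textbf{LM} gives $\phi(A) = U^\ast \overline A U$. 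For \textbf{ClAm} set $\psi(A) := \phi(A^\ast)$; conjugate-linearity of $\phi$ cancels complex conjugation on scalars, anti-multiplicativity combined with $(AB)^\ast = B^\ast A^\ast$ yields multiplicativity of $\psi$, and hermicity is preserved, so $\phi(A) = U^\ast A^\ast U$.

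The main obstacle is the \textbf{LM} case, specifically justifying invocation of Skolem-Noether (ensuring $\varphi$ is an automorphism rather than only a unital homomorphism) and then converting the abstract invertible intertwiner $S$ into a unitary using the hermitian hypothesis. Once \textbf{LM} is settled, the other three cases are formal reductions; the only care needed is to check that each composition (with transpose, bar, or adjoint) sends a hermitian, unital, conjugate-linear/anti-multiplicative map to a hermitian, unital, linear, multiplicative one, which is routine.
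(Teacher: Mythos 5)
Your proposal is correct and takes essentially the same approach as the paper: both reduce \textbf{LAm}, \textbf{ClM}, and \textbf{ClAm} to \textbf{LM} by composing with the transpose, entrywise conjugation, and adjoint maps respectively, and both handle \textbf{LM} by observing that $\varphi$ is a $\ast$-automorphism of $M_n(\C)$ and therefore inner by a unitary. The paper simply cites this last step as a well-known fact, whereas you flesh it out via Skolem--Noether (yielding $\varphi(A) = S^{-1}AS$) together with the hermitian hypothesis (forcing $SS^\ast$ to be a scalar so that $S$ can be normalized to a unitary), which is a correct and standard way to establish that well-known fact.
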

\begin{proof}
By Proposition \ref{properties}\ref{isometry} below, a linear multiplicative symmetry map is a $\ast$-isomorphism of the unital $C^\ast$-algebra $M_n(\C)$. Then \textbf{LM} above is a well-known fact.

The remainder of the proof follows immediately since every map the various cases satisfies the required properties and composing the map with the appropriate symmetry map (without the unitary) produces a symmetry map in the case of \textbf{LM}. The details are left to the reader.
\end{proof}
\begin{remark}
This classification of symmetry maps is the reason that we expressed a simplified version of our results as we did in Theorem \ref{Symmetry LinMatrix}.
\end{remark}
\begin{remark}\label{Group Isomorphism}
Composing symmetry maps produces another symmetry map whose type is determined by the types of the composed maps.

Identity the symmetry map cases with elements of the abelian multiplicative group $\Z_2\times \Z_2 = \{(\pm1, \pm 1)\}$:
\[\textbf{\mbox{LM}} \leftrightarrow (1,1), \;\; \textbf{\mbox{LAm}} \leftrightarrow (1,-1) \;\; \textbf{\mbox{ClM}} \leftrightarrow (-1,1) \;\; \textbf{\mbox{ClAm}} \leftrightarrow (-1,-1).\]
Then we see that this provides an isomorphism of sorts between symmetry equivalence classes $\{ \mathbf{LM}, \mathbf{LAm}, \mathbf{ClM}, \mathbf{ClAm} \}$ and $\Z_2^2$.

Consider a set ${S}$ of commuting symmetry maps on $\A$ containing symmetry maps of all types. If we think of the symmetry maps modulo the equivalence relation of the type of symmetry, this group is roughly speaking isomorphic to $\Z_2^2 = \{ (\pm1, \pm 1)\}$. The isomorphism maps $\varphi \mapsto (a, b)$, where the $a=1$ if and only if $\varphi$ is linear and $b=1$ if and only if $\varphi$ is multiplicative.

If ${S}$ does not have symmetry maps of all types, then under the equivalence relation it is a subset of $\Z_2 \times \Z_2$. 
\end{remark}
\begin{remark}
This classification can also be expressed in terms of anti-unitary operators because the transpose can be expressed as:
$A^T=U^{-1} A^\ast U$, where $U v = \overline{v}$ performs complex conjugation of the entries of $v \in \C^n$. We obtain the alternative classification of two of the cases as:
\begin{description}
\item[LAm:] If $\varphi$ is a linear anti-multiplicative symmetry map then there exists an anti-unitary operator $U$ such that $\varphi(A) = U^{-1} A^\ast U$ for every $A \in M_n(\C)$.
\item[ClM:] If $\phi$ is a conjugate-linear multiplicative symmetry map then there exists an anti-unitary operator $U$ such that $\phi(A) = U^{-1} A U$ for every $A \in M_n(\C)$.
\end{description}
\end{remark}

We now describe two operations on symmetry maps.
\begin{defn}
Let $\varphi$ be a symmetry map on $\A$. Define $\varphi_\ast$ by $\varphi_\ast(A) = \varphi(A^\ast)$. So, $\varphi_\ast$ is the composition of taking the adjoint and $\varphi$. Because taking the adjoint is a conjugate-linear anti-multiplicative symmetry map, replacing $\varphi$ with $\varphi_\ast$ inverts its properties as a symmetry map:
\[\mbox{linear}\longleftrightarrow\mbox{conjugate-linear}, \;\;\;\;\;\mbox{multiplicative}\longleftrightarrow\mbox{anti-multiplicative}\]
\end{defn}

We now list some basic properties of symmetry maps that will be used throughout and often without comment.
\begin{prop}\label{properties}
Suppose that $\vp:\A\to\A$ is a symmetry map. Then:
\begin{enumerate}[label=(\roman*)] 
\item\label{triple product} For any $A, B \in \A$, $\vp(ABA) = \vp(A)\vp(B)\vp(A)$.
\item If $A$ and $B$ commute then $\vp(A)$ and $\vp(B)$ commute.
\item The restriction of $\vp$ to an abelian $C^\ast$-subalgebra (with or without unit) is multiplicative.
\item For any polynomial $p(z)$ with real coefficients, $\varphi(p(A)) = p(\varphi(A))$.
\item For any invertible element $A \in \A$, $\varphi(A^{-1}) = \varphi(A)^{-1}$.
\item If $A$ is self-adjoint then $\vp(A)$ is self-adjoint.
\item $\vp$ is a positive map: if $A \geq 0$ then $\vp(A)\geq 0$. 
\item\label{isometry} $\vp$ is an isometry (and hence norm-continuous):  $\|\vp(A)\| = \|A\|$ for all $A\in \A$.
\item\label{orthog projs} If $\{F_n\}$ is a finite collection of (mutually) orthogonal projections then $\vp(F_n)$ is a collection of (mutually) orthogonal projections. 
\item\label{proj inequality} If $F$ and $G$ are two projections with $F \leq G$ then $\varphi(F)$ and $\varphi(G)$ are two projections with $\varphi(F) \leq \varphi(G)$.
\item If $U$ is unitary then $\vp(U)$ is unitary as well.
\item If $N$ is normal then $\vp(N)$ is normal as well.
\item Let $f(z)$ be continuous and $N$ normal. Define $\overline{f}(z) = \overline{f(\overline{z})}$. Then 
\[\varphi(f(N))= \left\{\begin{array}{ll} 
f(\varphi(N)), &\mbox{ if }\varphi \mbox{ is linear}\\
\overline{f}(\varphi(N)), &\mbox{ if }\varphi \mbox{ is conjugate-linear}\end{array}\right..\]
\item\label{spectral proj transformation} Let $\Omega \subset \C$ be a set such that $E_{\Omega}(N)$ can be defined by weak convergence of polynomials using the Borel functional calculus. (For instance, $\Omega$ can be a point, an open or closed disk, or an open/half-closed/closed interval in $\R$ or arc in the unit circle.)

If $\mathcal A$ is a von Neumann algebra and $\varphi$ is weakly continuous then
\[\varphi(E_\Omega(N))= \left\{\begin{array}{ll} 
E_{{\Omega}}(\varphi(N)), &\mbox{ if }\varphi \mbox{ is linear}\\
E_{\overline{\Omega}}(\varphi(N)), & \mbox{ if }\varphi \mbox{ is conjugate-linear}\end{array} \right..\]
\item\label{B subalg} If ${S}$ is a set of linear multiplicative symmetry maps on $\A$ then the space $\A_{S}$ of ${S}$-symmetric elements of $\A$ forms a unital $C^\ast$-subalgebra of $\A$. 
\end{enumerate}
\end{prop}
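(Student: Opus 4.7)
The plan is a case analysis on whether $\varphi$ is linear vs.\ conjugate-linear and multiplicative vs.\ anti-multiplicative, proving the fifteen items in an order that minimizes repetition. I would begin with the algebraic identities (i)--(v). For (i), the palindrome $ABA$ produces the same expression in either multiplication convention: in the anti-multiplicative case, $\varphi(A \cdot BA) = \varphi(BA)\varphi(A) = \varphi(A)\varphi(B)\varphi(A)$. Item (ii) follows by comparing $\varphi(AB)$ with $\varphi(BA)$ in each convention, (iii) is immediate, (iv) uses $\R$-linearity (the real-coefficient hypothesis avoids any conflict with conjugate-linearity) combined with the fact that multiplicativity or anti-multiplicativity applied to a power $A^k$ yields $\varphi(A)^k$ in either case. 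For (v), apply $\varphi$ to $A A^{-1} = I$ and use $\varphi(I) = I$. Item (vi) is immediate from hermiticity, and (vii) follows by writing $A = B^2$ with $B = A^{1/2}$ self-adjoint, noting that $\varphi(B^2) = \varphi(B)^2$ in either convention and that $\varphi(B)$ is self-adjoint by (vi).

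The main obstacle is (viii), the isometry property; every subsequent item depends on it. The crux is that $\varphi$ preserves the spectrum up to complex conjugation. For $\lambda \in \C$, using (v) and the hypothesis that $\varphi$ preserves non-invertibility,
\[\lambda \in \sigma(A) \iff A - \lambda I \text{ is non-invertible} \iff \varphi(A) - c_\lambda I \text{ is non-invertible},\]
where $c_\lambda = \lambda$ in the linear case and $c_\lambda = \overline{\lambda}$ in the conjugate-linear case. Hence $\sigma(\varphi(A))$ equals $\sigma(A)$ or $\overline{\sigma(A)}$, and for self-adjoint $A$ we obtain $\|\varphi(A)\| = r(\varphi(A)) = r(A) = \|A\|$. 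For general $A$, the $C^\ast$-identity $\|A\|^2 = \|A^\ast A\|$ combined with $\varphi(A^\ast A) = \varphi(A)^\ast\varphi(A)$ (multiplicative) or $\varphi(A)\varphi(A)^\ast$ (anti-multiplicative), using (vi) to rewrite $\varphi(A^\ast) = \varphi(A)^\ast$, yields $\|\varphi(A)\|^2 = \|\varphi(A^\ast A)\| = \|A^\ast A\| = \|A\|^2$ by the self-adjoint case.

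The remaining items then fall out algebraically. For (ix), a projection satisfies $F = F^\ast = F^2$, and $\varphi(F^2) = \varphi(F)^2$ in either convention, with self-adjointness by (vi); orthogonality $FG = 0$ passes through $\varphi$, taking adjoints in the anti-multiplicative case to rectify the order reversal. Item (x) uses $F \leq G \iff G - F \geq 0$ together with (vii) and (ix). Items (xi) and (xii) follow from the palindromic identities $UU^\ast = U^\ast U = I$ and $NN^\ast = N^\ast N$. For (xiii), approximate $f$ uniformly on $\sigma(N)$ by polynomials in $z, \overline{z}$ and pass to the limit using (iv), (vi), and (viii); the conjugate-linear case produces $\overline{f}$ because $\varphi$ conjugates scalar coefficients. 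For (xiv), the indicator $\chi_\Omega$ is by hypothesis a weak limit of bounded polynomials in $N, N^\ast$, and weak continuity of $\varphi$ transfers (xiii) to the spectral projection; the set replacement $\Omega \to \overline{\Omega}$ appears precisely because $\varphi(N)$ has spectrum $\overline{\sigma(N)}$ in the conjugate-linear case. Finally, (xv) is routine: $\A_S$ contains $I$, is closed under $\C$-linear combinations and products because each $\varphi \in S$ is linear and multiplicative, closed under adjoints by hermiticity, and norm-closed by (viii).
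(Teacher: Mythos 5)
Your proposal is correct and follows essentially the same route as the paper's proof: algebraic identities first, the isometry via spectral-radius preservation plus the $C^\ast$-identity, then the functional-calculus and spectral-projection statements by polynomial approximation and weak continuity. The only (trivial) divergences are that you prove (vii) via the self-adjoint square root $A=B^2$ rather than a general factorization $A=B^\ast B$, and you deduce (x) from positivity of $G-F$ rather than from orthogonality of $F$ and $1-G$ — both are equally valid.
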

\begin{proof}
\begin{enumerate}[label=(\roman*)]
\item If $\vp$ is multiplicative then $\vp(ABC)= \vp(A)\vp(B)\vp(C)$ and if $\vp$ is anti-multiplicative then $\vp(ABC)= \vp(C)\vp(B)\vp(A)$. Setting $C=A$ gives the result.
\item If $\vp$ is multiplicative then $\vp(A)\vp(B) = \vp(AB) = \vp(BA) = \vp(B)\vp(A)$. 

Although a similar proof shows that the result holds for $\vp$ anti-multiplicative, instead consider the following argument: \\
Note that $\vp_\ast$ is multiplicative. So if $A$ and $B$ commute then $\vp_\ast(A), \vp_\ast(B)$ commute. The result follows using $\vp(A) = \vp_\ast(A)^\ast$.
\item This follows from the previous statement.
\item Write $p(z) = \sum_n a_n z^n$ for $a_n\in\R$. Then 
\[\varphi(p(A)) = \sum_n \varphi(a_nA^n) = \sum_n a_n\varphi(A)^n = p(\varphi(A)).\]
\item The identity $\varphi(A)\varphi(A^{-1}) = \varphi(1) = 1$ holds when $\varphi$ is multiplicative or anti-multiplicative. So, $\varphi(A^{-1})=\varphi(A)^{-1}$.  
\item If $A$ is self-adjoint, $A^\ast = A$, so $\varphi(A)^\ast = \varphi(A^\ast) = \varphi(A)$. So, $\varphi(A)$ is self-adjoint.
\item If $A$ is positive, there is an element $B\in \A$ so that $A = B^\ast B$. Then $\varphi(A) = \varphi(B)^\ast\varphi(B)$ if $\varphi$ is multiplicative or $\varphi(A) =\varphi(B)\varphi(B)^\ast$ if $\varphi$ is anti-multiplicative. In either case, $\varphi(A)$ is positive.

\item For $A \in \A$,  $\varphi(A-\lambda I)$ is invertible whenever $A-\lambda I$ is. If $\varphi$ is linear, $\varphi(A)$ and $A$ have the same spectrum. If $\varphi$ is conjugate-linear, the spectrum of $\varphi(A)$ is the complex conjugate of the spectrum of $A$. 
So, in either case, $\varphi(A)$ and $A$ always have the same spectral radius. 

If $\varphi$ is multiplicative, we use $\|B^\ast B\| = \|B\|^2$ to obtain
\[\|\varphi(A)\|^2 = \|\varphi(A)^\ast \varphi(A)\|=\|\varphi(A^\ast A)\|, \; \|A\|^2 = \|A^\ast A\|.\]
Then we use the fact that the spectral radius for a self-adjoint element is its norm to conclude that 
\[\|\varphi(A)\|^2 = \|\varphi(A^\ast A)\|  = \|A^\ast A\| = \|A\|^2,\] which is what we wanted to show in the case that $\varphi$ is multiplicative. We can do something similar when $\varphi$ is anti-multiplicative except that we also must use $\|BB^\ast\| = \|B\|^2$ as well.

\item Because the $F_n$ are self-adjoint and $F_n^2=F_n$, $\varphi(F_n)$ are self-adjoint and $\varphi(F_n)=\varphi(F_n^2) = \varphi(F_n)^2$. So, $\varphi(F_n)$ are projections. Orthogonality follows from $F_nF_m = F_mF_n= 0$ when $n \neq m$ so
$\varphi(F_n)\varphi(F_m) = 0$.
\item The fact that $\varphi(F)$ and $\varphi(G)$ are projections follows from the previous statement. If $F \leq G$ then $F$, $(1-G)$ are orthogonal projections. So, $\varphi(F)$, $1-\varphi(G)$ are orthogonal projections. So, $\varphi(F)\leq \varphi(G)$.
\item $\vp(U)^{\ast} = \vp(U^\ast) = \vp(U^{-1}) = \vp(U)^{-1}$. So, $\vp(U)$ is unitary.
\item $\vp(N)$ and $\vp(N^\ast) = \vp(N)^\ast$ commute if $N$ is normal. So, $\vp(N)$ is normal.

\item We first show the result for polynomial $p(z, \overline{z})=\sum_{m,n} a_{m,n}z^m \overline{z}^n$ with generic entries. Note that 
\[\overline{p}(z, \overline{z}) = \overline{\sum_{m,n} a_{m,n}\overline{z}^m z^n} = \sum_{m,n} \overline{a_{m,n}}z^m \overline{z}^n.\] So, because $A$ is normal
\begin{align*}
\varphi&(p(A, A^\ast))= \sum_{m,n}\varphi(a_{m,n}A^mA^{\ast n})\\
&=\left\{\begin{array}{ll} 
\sum_{m,n}a_{m,n}\varphi(A)^m\varphi(A)^{\ast n}= p(\varphi(A)), &\mbox{ if }\varphi \mbox{ is linear}\\\sum_{m,n}\overline{a_{m,n}}\varphi(A)^m\varphi(A)^{\ast n}=
\overline{p}(\varphi(A)), & \mbox{ if }\varphi \mbox{ is conjugate-linear}
\end{array} \right..
\end{align*}
Choosing a sequence of polynomials $p_n(z,\overline{z})$ so that $z\mapsto p_n(z, \overline{z})$ converges to $f(z)$ uniformly on $\sigma(A)$ gives the result because $z \mapsto \overline{p_n}(z, \overline{z})$ is a sequence of polynomials converging uniformly to $\overline{f}(z)$ and $\varphi$ is continuous.
\item Let $f_n(z)$ be a uniformly bounded sequence of continuous functions on $\sigma(A)$ that converge pointwise to $\chi_\Omega(z)$ on $\sigma(A)$. Then $f_n(A)$ converges weakly to $\chi_{\Omega}(A) = E_{\Omega}(A)$. So, the result then follows from the calculation in (xiii) since $\chi_{\Omega}$ is real-valued and $\varphi$ is weakly continuous.
\item Closedness follows because each $\varphi \in {S}$ is continuous. The fact that it is a unital $\ast$-subalgebra of $\A$ follows from $\varphi(I)=I$ and since if $A, B$ are $\varphi$-symmetric and $z \in \C$ then $\varphi(AB) = \varphi(A)\varphi(B)=AB$, $\varphi(zA+B) = z\varphi(A)+\varphi(B)=zA+B$, and $\varphi(A^\ast) = \varphi(A)^\ast = A^\ast$.
\end{enumerate}
\end{proof}

\begin{remark}
Let $A$ be normal, $\varphi$ weakly continuous, and $\Omega$ a ``nice'' set. 
\begin{enumerate}[label=(\roman*)]
\item If $\varphi$ is linear and $A$ is $\varphi$-symmetric then the spectral projections $E_{\Omega}(A)$ of $A$ are $\varphi$-symmetric also.
\item If $\varphi$ is linear and $A$ is $\varphi$-antisymmetric then $\varphi(E_{\Omega}(A)) = E_{-\Omega}(A)$.
\item If $\varphi$ is conjugate-linear and $A$ is $\varphi$-symmetric then $\varphi(E_{\Omega}(A)) = E_{\overline{\Omega}}(A)$.
\item If $\varphi$ is conjugate-linear and $A$ is $\varphi$-antisymmetric then $\varphi(E_{\Omega}(A)) = E_{-\overline{\Omega}}(A)$.
\end{enumerate}
\end{remark}
If $\varphi$ is conjugate-linear then multiplying by $i$ makes a $\varphi$-symmetric matrix into an $\varphi$-antisymmetric matrix and vice-versa.

\begin{remark}\label{symmetrize}
A useful trick for constructing an element that is symmetric or antisymmetric for some symmetry map is averaging. Suppose that we have an operator $H$ that is almost $\varphi$-symmetric (i.e. $\|H - \varphi(H)\|$ is small) where $\varphi$ has order $2$. Then we can form the symmetrized operator $\tilde H = \frac{1}{2}(H + \varphi(H))$ so that $\tilde H$ is $\varphi$-symmetric and $\|\tilde H - H\| \leq \frac12\|H - \varphi(H)\|$.

Similarly, if $\varphi$ has order $n$, then we define $\tilde H = \frac{1}{n}(H + \varphi(H)+ \cdots + \varphi^{n-1}(H))$. We now provide a bound for how close $\tilde H$ is to $H$.
Applying $\varphi$ repeatedly to $\varphi(H) - H$ gives 
$\|\varphi^{k+1}(H) - \varphi^k(H)\|= \|\varphi(H)-H\|$.
So, using a telescoping sum gives
\[\|\varphi^k(H) - H\|\leq k\|\varphi(H)-H\|.\]
So, the bound
\begin{align}\label{symEst}
\|\tilde H - H\| \leq \frac1n\sum_{k=1}^{n-1}\|\varphi^k(H)-H\|\leq \frac1n\sum_{k=0}^{n-1}k\|\varphi(H)-H\| =\frac{n-1}{2}\|\varphi(H)-H\|.
\end{align}

This estimate is sharp as far as the optimal dependence on $n$. 
Consider Voiculescu's unitaries $U_n, V_n$ in Equation (\ref{Vunitaries}). These matrices satisfy the property that $V_n$ is diagonal, $U_n$ is off-diagonal, and $V_n^{-1}U_nV_n = e^{-2\pi i/n}U_n$.
So, if we define the symmetry $\varphi$ as conjugation by $V_n$ then $U_n$ is approximately $\varphi$-symmetric with 
\[\|\varphi(U_n)-U_n\| = |e^{-2\pi i/n}-1|\sim \frac{2\pi}n, \mbox{ as } n \to \infty\] 
and $\varphi$ has order $n$. Now, one can show that the symmetrized operator gotten by averaging $U_n$ over the orbit of $\varphi$ is equal to zero. 
Moreover, in order for an operator to be $\varphi$-symmetric, it needs to be diagonal, so $0$ is a $\varphi$-symmetric matrix with minimal distance from $U_n$. 
This shows that our bound in (\ref{symEst}) is, up to a constant, optimal. 

Suppose that we have that  $H$ almost has a phase $\varphi$-symmetry: $\|\varphi(H) - \zeta H\|$ is small. Suppose also that $\varphi$ is linear with order $m$. In order for us to even consider finding an operator with $\zeta$ phase symmetry, we require that $\zeta^m = 1$ since otherwise $\varphi(\tilde H) = \zeta \tilde H$ is only possible if $\tilde H = 0$. 

We now can average the action of $\zeta^{-1}\varphi$ on $H$ to define \[\tilde H = \frac1m\sum_{j=0}^{m-1} \zeta^{-j}\varphi^j(H).\]
Then 
\[\varphi(\tilde H) = \frac\zeta m\sum_{j=0}^{m-1} \zeta^{-j-1}\varphi^{j+1}(H)= \frac\zeta m\sum_{k=1}^{m} \zeta^{-k}\varphi^{k}(H) = \zeta \tilde H,\]
because $\zeta^{-m}\varphi^m =\zeta^{-0}\varphi^0$. Applying the same reasoning as above but with the isometric linear map $\zeta^{-1}\varphi$ that has order $m$ (but is not a symmetry map because it does not map $I$ to itself), we obtain the estimate
\[\|\tilde H - H\| \leq \frac{m-1}{2}\|\zeta^{-1}\varphi(H)-H\|=\frac{m-1}{2}\|\varphi(H)-\zeta H\|\]
as before.
\end{remark}

We end this section by discussing the standard translation of almost normal elements of a $C^\ast$-algebra $\A$ into almost commuting self-adjoint elements of $\A$ and vice-versa as well as providing estimates. The purpose is to see how symmetries are transformed.

We will primarily focus on how the symmetries and antisymmetries are affected by these conversions. The main ideas for these symmetry conversions for examples such as the transpose, conjugation, and such can be found in the arguments of papers such as \cite{loring2016almost} and \cite{loring2015k}. 

The conversion between an almost normal $A$ and two almost commuting self-adjoint elements $X$ and $Y$ is standard.
For $A \in \A$, let $X = \Re(A) = \frac12(A+A^\ast)$ and $Y = \Im(A)= \frac{1}{2i}(A-A^\ast)$. Because $i[X,Y] = \frac12[A^\ast, A],$ we have that
\[\|[X,Y]\|= \frac12 \|[A^\ast, A]\|.\]
One obtains that $X$ and $Y$ are commuting (resp. almost commuting) if and only if $A$ is normal (resp. almost normal) since if $X', Y'$ are self-adjoint with $A' = X'+
 iY'$ then
 \[\|A'-A\| \leq 2\max(\|X'-X\|, \|Y'-Y\|)\]
 and
 \[\|X'-X\| = \|\Re(A'-A)\| \leq \|A'-A\|,\]
 \[\|Y'-Y\| = \|\Im(A'-A)\| \leq \|A'-A\|.\]

We now state how the symmetries and antisymmetries of $X$ and $Y$ are related to those of $A$. A key observation is that $\varphi(A) = \varphi(X)+\varphi(iY)$ so that if $\varphi$ is linear then 
$\varphi(A) = \varphi(X)+i\varphi(Y)$ is the decomposition of $\varphi(A)$ into its real and imaginary parts. Likewise, if $\varphi$ is conjugate-linear then
$\varphi(A) = \varphi(X)-i\varphi(Y)$.
So, if $A$ has a real phase symmetry then $X$ and $iY$ will each have a real phase symmetry also as the proposition below states. 
Note that non-real phase symmetries do not translate to symmetries of $X$ and $Y$ individually. 
\begin{prop}\label{translation}
Let $\varphi$ be a symmetry map on a unital $C^\ast$-algebra $\A$. Let $A \in \A$ and $X = \Re(A), Y = \Im(A)$. Or equivalently, consider $X, Y\in \A$ self-adjoint with $A = X + iY$. Then the following hold: 
\begin{enumerate}[label=(\roman*)] 
\item Suppose that $\varphi$ is linear. Then $A$ is $\varphi$-symmetric if and only if $X$ and $Y$ are $\varphi$-symmetric.
\item Suppose that $\varphi$ is linear. Then $A$ is $\varphi$-antisymmetric if and only if $X$ and $Y$ are $\varphi$-antisymmetric.
\item Suppose that $\varphi$ is conjugate-linear. Then $A$ is $\varphi$-symmetric if and only if $X$ is $\varphi$-symmetric and $Y$ is $\varphi$-antisymmetric.
\item Suppose that $\varphi$ is conjugate-linear. Then $A$ is $\varphi$-antisymmetric if and only if $X$ is $\varphi$-antisymmetric and $Y$ is $\varphi$-symmetric.
\end{enumerate}
\end{prop}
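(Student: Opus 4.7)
The plan is to reduce each of the four cases to the uniqueness of the decomposition of an element of $\A$ into its self-adjoint real and imaginary parts. The key input from Proposition \ref{properties} is that a symmetry map preserves self-adjointness, so $\varphi(X)$ and $\varphi(Y)$ are self-adjoint whenever $X$ and $Y$ are. Hence, whatever $\varphi(A)$ turns out to be in terms of $\varphi(X)$ and $\varphi(Y)$, that expression will automatically be a valid real/imaginary decomposition of $\varphi(A)$.

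Concretely, I would first compute $\varphi(A)$ using $\R$-linearity: $\varphi(A) = \varphi(X) + \varphi(iY)$. If $\varphi$ is linear then $\varphi(iY) = i\varphi(Y)$, while if $\varphi$ is conjugate-linear then $\varphi(iY) = -i\varphi(Y)$. In either case $\varphi(A) = \varphi(X) \pm i\varphi(Y)$, and since $\varphi(X)$ and $\varphi(Y)$ are self-adjoint, this is the unique decomposition into real and imaginary parts: $\Re(\varphi(A)) = \varphi(X)$ and $\Im(\varphi(A)) = \pm\varphi(Y)$, with the sign being $+$ in the linear case and $-$ in the conjugate-linear case.

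Then each of the four statements follows by equating real and imaginary parts in the equation $\varphi(A) = \pm A$. For instance, in case (iii), $\varphi$ is conjugate-linear and $\varphi(A) = A$ reads $\varphi(X) - i\varphi(Y) = X + iY$, so uniqueness forces $\varphi(X) = X$ and $\varphi(Y) = -Y$; conversely, if those two identities hold then $\varphi(A) = \varphi(X) - i\varphi(Y) = X + iY = A$. The remaining three cases are handled by exactly the same bookkeeping, with appropriate sign choices. I do not expect any genuine obstacle here; the entire content is the compatibility of $\varphi$ with the Cartesian decomposition, together with the sign flip on $i$ produced by conjugate-linearity.
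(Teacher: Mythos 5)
Your proposal is correct and is essentially identical to the paper's argument, which appears in the paragraph immediately preceding the proposition: the paper likewise writes $\varphi(A) = \varphi(X) + \varphi(iY)$, notes that $\varphi$ preserves self-adjointness so $\varphi(X) \pm i\varphi(Y)$ is the Cartesian decomposition of $\varphi(A)$, and reads off the four cases from uniqueness of that decomposition.
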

\begin{remark}
Observe that because $X$ and $Y$ are self-adjoint $\varphi(X) = \varphi_\ast(X)$ and $\varphi(Y) = \varphi_\ast(Y)$. 
So, the statement that $X$ and $Y$ have symmetries or antisymmetries is unchanged when replacing $\phi$ with $\varphi=\phi_\ast$. 
So, if $X$ and $Y$ both are $\phi$-symmetric and $\phi$ is conjugate linear, then the way to apply Proposition \ref{translation} is to use the only case that has both $X$ and $Y$ $\varphi$-symmetric, which is case (i). So, because $\varphi$ needs to be linear in that case, we conclude that $A = X+iY$ is $\phi_\ast$-symmetric.

In this way, we see that in the case that both $X$ and $Y$ are both $\varphi$-symmetric or are both $\varphi$-antisymmetric then $A$ has a symmetry or antisymmetry with respect to a linear symmetry. Also, if one of $X$ and $Y$ is $\varphi$-symmetric and the other is $\varphi$-antisymmetric then $A$ has a symmetry or antisymmetry with respect to a conjugate-linear symmetry map.

Another perspective on this reduction is to use what seems to be the approach in Section 4.1 of \cite{loring2015k} (so called ``Class AIII in 1D''). We calculate $\varphi$ of $A$:
\[\varphi(A) = \varphi(X) - i\varphi(Y) = X - iY = A^\ast.\]
Because symmetry maps are hermitian, this of course is equivalent to $\varphi_\ast(A) = A$. Regardless of approach, we see that the natural perspective for both $X$ and $Y$ having a symmetry is to express it as $A$ having a linear symmetry. 
\end{remark}

\section{TR rank 1 and Symmetry Reductions}\label{Symmetry Reductions}

Loring and S{\o}rensen in \cite{loring2016almost} generalized the standard lifting argument from Friis and R{\o}rdam's proof of Lin's theorem to incorporate a symmetry.  In this section we begin with a definition which extends stable rank 1 and give some of its properties to show that it generalizes properly. Because we wish to use the extension of Lin's theorem in \cite{loring2016almost}, we also wish for it to extend their definition in a way that is compatible with our symmetries.

We begin with stating the symmetry-respecting version of stable rank one given in \cite{loring2016almost}.
\begin{defn}(\cite{loring2016almost})
Let $\A$ be a unital $C^\ast$-algebra and $\tau$ a linear anti-multiplicative symmetry map on $\A$ with order $2$. Let $\A_\tau$ be the set of $\tau$-symmetric elements of $\A$.

We say that $(\A, \tau)$ has TR rank 1 if the  invertible elements of $\A_\tau$ are dense in $\A_\tau$. 
\end{defn}
\begin{remark}
In \cite{loring2016almost}, $\tau$ is called a ``reflection'' on $\A$, as a generalization of the transpose. We choose not to use this terminology to avoid confusion with the fact that $\tau$ (as a result of being linear) preserves the spectral projections of any $\tau$-symmetric normal matrix $A$. 
So, the map $\tau: E_{\{z\}}(A) \mapsto \tau(E_{\{z\}}(A)) = E_{\{z\}}(A)$ is the identity. 
If $\phi$ is a conjugate-linear symmetry for a normal matrix $A$ then $\phi$ induces a reflection on $\C$ about the real axis since $\phi: E_{\{z\}}(A) \mapsto  E_{\{\overline{z}\}}(A)$.
\end{remark}

\begin{defn}
Let $\A$ be a unital $C^\ast$-algebra and ${S}$ be a collection of symmetry maps on $\A$. 

Let $\A_{S} = \{A\in \A: \varphi(A) = A \mbox{ for all }\varphi \in {S}\}$ be the set of ${S}$-symmetric elements of $\A$.
We say that $(\A, S)$ has TR rank 1 if the invertible elements of $\A_{S}$ are dense in $\A_{S}$. 
\end{defn}
We use the same terminology as \cite{loring2016almost} to indicate that we are really discussing the same definition but for a collection of symmetry maps of various types.

We note the following result whose statement and proof are generalizations of those of Lemma 6.1 of \cite{loring2016almost}.
\begin{thm}\label{finite TR rank 1}
Suppose that $\A$ is a finite dimensional $C^\ast$-algebra and ${S}$ is a collection of symmetry maps on $\A$. Then $(\A, S)$ has TR rank 1.
\end{thm}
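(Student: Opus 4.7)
The plan is to perturb $A \in \A_{S}$ by a small real scalar multiple of the identity to arrive at an invertible element that still lies in $\A_{S}$.

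First I would check that $\A_{S}$ is closed under translation by $tI$ for $t \in \R$. By the definition of a symmetry map, every $\varphi \in S$ satisfies $\varphi(I) = I$, and $\varphi$ is either linear or conjugate-linear; in either case $\varphi(tI) = tI$ whenever $t \in \R$. Thus for every $\varphi \in S$ and every $A \in \A_{S}$, one has $\varphi(A + tI) = A + tI$, so $A + tI \in \A_{S}$.

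Next, since $\A$ is a finite-dimensional $C^\ast$-algebra, it decomposes as a finite direct sum of full matrix algebras, and every element has a finite spectrum. In particular $\sigma(A) \cap \R$ is a finite subset of $\R$. Given $\delta > 0$, I can choose $t \in (0, \delta)$ with $-t \notin \sigma(A)$, so that $A + tI$ is invertible in $\A$. Combined with the previous paragraph, $A + tI$ is an invertible element of $\A_{S}$ with $\|(A + tI) - A\| = t < \delta$, which gives the desired density.

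The argument is essentially an algebraic triviality once one notes the two ingredients (real scaling preserves $\A_{S}$; finite spectrum allows avoidance of $-t$), so there is no real obstacle; this mirrors the argument of Lemma~6.1 of \cite{loring2016almost}, whose proof in the single linear anti-multiplicative order-$2$ setting uses the same perturbation $A \mapsto A + tI$. The one point worth stating carefully is why taking $t \in \R$ is forced: a complex $t$ would not in general be preserved by a conjugate-linear $\varphi \in S$, so the hypothesis that $S$ may mix linear and conjugate-linear symmetries is exactly what restricts us to real translations, and this restriction is harmless because $\sigma(A) \cap \R$ is finite.
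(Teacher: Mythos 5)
Your proof is correct and takes essentially the same route as the paper: both perturb $A$ by a small real multiple of the identity, use $\varphi(I)=I$ (and linearity or conjugate-linearity over $\R$) to stay inside $\A_S$, and use finiteness of $\sigma(A)$ to make the perturbed element invertible. The paper uses $t = 1/n$ for $n$ large; your version with a generic small real $t$ avoiding $-\sigma(A)\cap\R$ is the same argument.
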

\begin{proof}
Let $A\in\A_{S}$. Because $\sigma(A)$ is finite, the operator $A_n=A+\frac1nI$ is invertible for $n$ large. Because $\varphi(I)=I$ for all $\varphi\in {S}$, $A_n \in \A_{S}$. This is all we needed to show since $A_n \to A$ as $n \to \infty$.
\end{proof}

We first prove the following result which states that restricting our attention to a von Neumann subalgebra of a von Neumann algebra can be seen as  requiring certain symmetries be satisfied. 
\begin{prop}
Suppose that $\A$ is a von Neumann algebra.

For a collection of 
weakly continuous linear multiplicative symmetry maps ${S}$ on $\A$, the set $\A_{S}$ of fixed points of these symmetry maps is a von Neumann subalgebra of $\A$.

Conversely, for any von Neumann subalgebra $\B$ of $\A$ there is a collection of weakly continuous linear multiplicative symmetry maps $S_{\B}$ so that $\B = \A_{S_{\B}}$. 
\end{prop}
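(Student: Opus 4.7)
My approach to the first assertion combines Proposition \ref{properties}\ref{B subalg} with a weak-closure argument. By that proposition, $\A_S$ is already a unital $C^\ast$-subalgebra of $\A$, so it suffices to show that $\A_S$ is weakly closed. Each $\varphi \in S$ is weakly continuous by assumption, so the map $A \mapsto \varphi(A) - A$ is weakly continuous on $\A$, and its zero set $\{A \in \A : \varphi(A) = A\}$ is weakly closed. Writing $\A_S = \bigcap_{\varphi \in S} \{A : \varphi(A) = A\}$ as an intersection of weakly closed sets, I conclude that $\A_S$ is weakly closed, and hence a von Neumann subalgebra of $\A$.

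For the converse, my construction of $S_\B$ proceeds through conjugation by unitaries in the relative commutant. Let $\mathcal{U}$ denote the unitary group of the relative commutant $\B' \cap \A$, and for each $U \in \mathcal{U}$ set $\varphi_U(A) = U^{-1} A U$. As noted in Example \ref{examples}, conjugation by a unitary in a von Neumann algebra is a weakly continuous linear multiplicative symmetry map on that algebra, so each $\varphi_U$ qualifies. Taking $S_\B = \{\varphi_U : U \in \mathcal{U}\}$, the inclusion $\B \subseteq \A_{S_\B}$ is automatic, since $U \in \B'$ forces $\varphi_U(B) = B$ for every $B \in \B$. For the reverse inclusion, $A \in \A_{S_\B}$ precisely when $A$ commutes with every unitary in $\B' \cap \A$; since a von Neumann algebra equals the norm-closed linear span of its unitaries, this is equivalent to $A$ commuting with all of $\B' \cap \A$, giving $\A_{S_\B} = (\B' \cap \A)' \cap \A$.

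The main technical step is therefore the relative double commutant identity $\B = (\B' \cap \A)' \cap \A$. When $\B$ contains the center of $\A$, this reduces fiberwise to the ordinary von Neumann bicommutant theorem on each direct-integral component of $\A$, since $\A$ then decomposes along its center in a way compatible with $\B$. In the general case I would enlarge $S_\B$ with additional weakly continuous linear multiplicative symmetry maps that fix $\B$ pointwise; natural candidates are conjugations implemented by unitaries in the commutant $\B'$ computed inside a faithful normal representation $\A \subseteq B(\H)$ that happen to normalize $\A$, and thus restrict to weakly continuous $\ast$-automorphisms of $\A$ fixing $\B$. Verifying that such an enlargement cuts the fixed-point set down to exactly $\B$ is the main obstacle I expect, as it is essentially a Galois-type assertion about the group of symmetries of $\A$ that fix $\B$, and requires showing that there are enough normalizing unitaries in $\B'$ to generate $\B'$ weakly.
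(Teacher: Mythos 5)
Your proof of the first direction matches the paper's: invoke Proposition \ref{properties}\ref{B subalg} for the $C^\ast$-subalgebra structure, then observe weak closure using weak continuity of each $\varphi \in S$.

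For the converse you follow the same route as the paper — taking $S_\B$ to be conjugations by unitaries of $\B' \cap \A$, so that $\A_{S_\B} = (\B' \cap \A)' \cap \A$ — and you correctly single out the relative double commutant identity $\B = (\B' \cap \A)' \cap \A$ as the crucial step. You are right to flag it: this identity is not a consequence of the von Neumann double commutant theorem, which relates $\B$ to $(\B')'$ with both commutants computed in $B(\H)$, not inside $\A$. The paper's own proof invokes ``the von Neumann double commutant theorem'' at exactly this point, with the commutant $\mathcal R'$ explicitly defined as a subset of $\A$, and so elides the same issue you identify. Your two proposed remedies do not close it: if $\A$ is a factor, every unital $\B$ already contains the center $\C I$, so the ``contains the center'' hypothesis is vacuous there, and the identity still fails for any irreducible von Neumann subalgebra $\B \subsetneq \A$ with $\B' \cap \A = \C I$ (then $(\B' \cap \A)' \cap \A = \A$); and enlarging $S_\B$ by conjugations implemented by unitaries of $\B'$ computed in $B(\H)$ that normalize $\A$ requires there to be enough such unitaries whose restrictions to $\A$ cut the fixed-point set down to $\B$, which is exactly the Galois-type assertion you say you cannot verify. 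So the gap you identify is genuine, and it is present in the paper's proof as well.
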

\begin{proof}
By Proposition \ref{properties}\ref{B subalg}, $\A_{S}$ is a unital $C^\ast$-subalgebra. Because each symmetry in ${S}$ is weakly continuous, $\A_{S}$ is weakly closed so it is also a von Neumann subalgebra.

Conversely, suppose that $\B$ is a von Neumann subalgebra of $\A$. Then by the von Neumann double commutant theorem, $\B = \B''$, where if $\mathcal R \subset \A$ is a set then $\mathcal R' = \{A \in A: [A, B] = 0 \mbox{ for all }B \in \mathcal R\}$ is a von Neumann subalgebra of $\A$. Any von Neumann subalgebra is generated by unitaries, so there is a collection of unitaries $\{U_\alpha\}_\alpha \subset \B'$ so that 
$\B = \{U_\alpha\}_\alpha'$. Now, define $\varphi_\alpha(A) = U_\alpha^{\ast}AU_\alpha$. Then 
 the $\varphi_\alpha$ are linear multiplicative symmetry maps and $\B = \A_{\{\varphi_\alpha\}_\alpha}$.
\end{proof}
Note that these symmetry maps may not all commute.
The assumption that the symmetry maps commute is essential when performing symmetrizations or performing constructions based on a particular symmetry (assuming that we want to retain other symmetries). 
If our constructions do not reference the symmetry maps, then we expect there to be no requirement that the symmetry maps commute in order for an element constructed from another object to retain the symmetries when we only multiply elements $A$ and $B$ in the form $ABA$ or $AB + BA$, perform certain applications of the functional calculus, or take weak closures.

Also, if we have a collection of symmetry maps ${S}$ on $\A$ and $\varphi$ is also a symmetry map on $\A$ then a sufficient condition for $\A_{S}$ to be an invariant set under $\varphi$ is for $\varphi$ to commute with each symmetry of ${S}$. In the following result, this is the only place where we use any commutativity of elements of ${S}$.

When ${S}$ contains anti-multiplicative symmetry maps, $\A_{S}$ will generally no longer be closed under multiplication (and hence will not be a subalgebra). 
If ${S}$ contains conjugate-linear symmetry maps then $\A_{S}\neq \{0\}$ is not even closed under multiplication by non-real constants.
We now make the definition:
\begin{defn}Let $\A$ be a unital $C^\ast$-algebra and ${S}$ a collection of symmetry maps on $\A$. We say that ${S}$ is an admissible collection of symmetry maps if for each type of symmetry map in ${S}$ that is not linear multiplicative, there is a symmetry map in ${S}$ of that type which commutes with every symmetry map in ${S}$.

Linear multiplicative symmetry maps are not required to commute with each other.
\end{defn}

Before continuing with an extension of Lin's theorem for linear symmetries, we prove a result showing that having many symmetry maps can be reduced to the case where there are at most two symmetry maps of different types.
\begin{prop}\label{S0 Reduction Prop}
Suppose that ${S}$ is an admissible collection of symmetry maps on a unital $C^\ast$-algebra $\A$. There exist collections $S_0$ and $S_1$ of symmetry maps on $\A$ with the following properties.
\begin{enumerate}[label=(\roman*)]
\item $S_1$ is a finite commutative set of at most 2 symmetry maps with different types, neither of which is linear multiplicative. Each symmetry in $S_1$ commutes with each symmetry of $S_0$. 
\item Each symmetry of $S_1$ either belongs to ${S}$ or is a composition of two symmetry maps of ${S}$ of different types that are not linear multiplicative.
\item $S_0$ consists of only linear multiplicative symmetry maps.
\item Any symmetry in $S_0$ belongs to ${S}$ or is composition of at most three symmetry maps of $S_1$.
\item The restriction of each symmetry in $S_1$ to $\A_{S_0}$ has order $2$.
\item $A \in \A$ is ${S}$-symmetric if and only if $A \in \A_{S_0}$ and $A$ is $S_1$-symmetric. 
\item $(\A, S)$ has TR rank 1 if and only if $(\A_{S_0}, S_1)$ has $TR$ rank 1.
\item \label{vNSymReduction} If $\A$ is a von Neumann algebra and ${S}$ consists of weakly continuous symmetry maps then $\A_{S_0}$ is also a von Neumann subalgebra of $\A$ and $S_1$ consists of weakly continuous symmetry maps.
\end{enumerate}
\end{prop}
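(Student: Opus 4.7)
My plan is to exploit the group isomorphism from Remark \ref{Group Isomorphism} between the four symmetry-map types and $\Z_2 \times \Z_2$, so that constructions respecting ``type'' reduce to bookkeeping in a small abelian group. First I would partition $S$ by type and, using admissibility, select for each non-linear-multiplicative type that appears in $S$ a distinguished representative $\tau$ that commutes with every element of $S$. The set $S_1$ will consist of at most two such representatives of distinct non-LM types (choosing any two if all three non-LM types appear), and $S_0$ will be built from the LM symmetries in $S$ together with targeted compositions designed to recover the non-LM data modulo $S_1$.

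Concretely, if there are $k \in \{0,1,2,3\}$ non-LM types in $S$, I would pick $\min(k,2)$ of them and put their central representatives $\tau_1$, and possibly $\tau_2$, into $S_1$. For $S_0$, I would start with the LM symmetries already in $S$ and then, for each non-LM $\varphi' \in S$ whose type matches some $\tau_j \in S_1$, append $\tau_j \circ \varphi'$; if a third non-LM type is present in $S$ but not represented in $S_1$, for each such $\varphi'$ I append $\tau_1 \circ \tau_2 \circ \varphi'$. By the isomorphism in Remark \ref{Group Isomorphism}, matching non-LM types compose to LM and the product of the three distinct non-LM types is LM, so each of these appended maps is linear multiplicative, which gives (iii) and (iv).

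The remaining properties I would verify as follows. Conditions (i) and (ii) are immediate because $S_1$ consists of at most two elements chosen directly from $S$ of distinct non-LM types, each central in $S$ and hence commuting with every composition of $S$-maps belonging to $S_0$. For (v), taking $\varphi' = \tau_j$ in the construction puts $\tau_j^2$ into $S_0$, so $\tau_j|_{\A_{S_0}}$ is an involution. For (vi), the forward direction is clear since $S_0 \cup S_1$ consists of compositions of $S$-maps; for the converse, if $A \in \A_{S_0}$ is $S_1$-symmetric and $\varphi' \in S$ matches some $\tau_j \in S_1$, then $\tau_j(\varphi'(A)) = A$ together with $\tau_j(A) = A$ and the involution property give $\varphi'(A) = A$, and the three-type case is handled analogously with $\tau_1 \circ \tau_2 \circ \varphi' \in S_0$. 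Property (vii) then follows because Proposition \ref{properties}\ref{B subalg} makes $\A_{S_0}$ a unital $C^\ast$-subalgebra in which (vi) identifies $\A_S$ with $(\A_{S_0})_{S_1}$. Property (viii) reduces to weak continuity and weak closedness of fixed-point sets, both preserved under finite compositions.

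The main technical subtlety I anticipate is the three-non-LM-type case: $S_1$ is capped at two elements but we must still capture the ``third'' type's information. This is exactly where the group isomorphism from Remark \ref{Group Isomorphism} is essential, since it guarantees $\tau_1 \circ \tau_2$ falls into the unused non-LM type, and hence can be combined with any third-type $\varphi'$ to produce an LM element suitable for $S_0$. Ensuring that $\A_{S_0}$ is invariant under each $\tau_j \in S_1$ likewise relies crucially on $\tau_j$ being central in $S$, which is precisely what admissibility provides; without centrality the targeted composition $\tau_j \circ \varphi'$ would not reliably recover $\varphi'$-symmetry inside $\A_{S_0}$.
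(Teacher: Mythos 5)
Your proposal is correct and follows essentially the same route as the paper: partition $S$ by type into the $\Z_2\times\Z_2$-labelled subsets, pick central representatives (guaranteed by admissibility) for the non-LM types present, put at most two of them into $S_1$, and form $S_0$ by composing each non-LM element of $S$ with the appropriate representative(s) to land in the LM type. One small wrinkle: in your verification of the converse of (vi) you invoke the ``involution property'' to pass from $\tau_j(\varphi'(A))=A$ and $\tau_j(A)=A$ to $\varphi'(A)=A$, but applying $\tau_j$ again only yields $\tau_j^2(\varphi'(A))=A$, and one cannot use $\tau_j^2|_{\A_{S_0}}=\mathrm{id}$ on $\varphi'(A)$ without first knowing $\varphi'(A)\in\A_{S_0}$; the clean step is simply injectivity of $\tau_j$ (an isometry), or equivalently commutativity $\tau_j\circ\varphi'=\varphi'\circ\tau_j$ so that $\varphi'(\tau_j(A))=A$ gives $\varphi'(A)=A$ directly — the paper avoids this entirely by composing on the other side, $\varphi\circ\varphi_\alpha\in S_0$.
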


\begin{remark}
In the case that $S = \emptyset$, this statement is entirely trivial because being $\emptyset$-symmetric means nothing and $\A_\emptyset = \A$.

In the case that ${S}$ only consists of linear multiplicative symmetry maps, there is an equivalence made between $(\A, S)$ and $\A_{S}$. The statement that $(\A_{S}, \emptyset)$ has TR rank 1 is just that $\A_{S}$ has stable rank one.

In the case that ${S}$ only consists of linear symmetry maps but some are anti-multiplicative, there is a linear anti-multiplicative symmetry map $\tau \in {S}$ so that there is an equivalence made between $(\A, S)$ and $(\A_{S_0}, \tau)$. Also, $\tau$ has order $2$ on $\A_{S_0}$. In fact, any such linear anti-multiplicative symmetry in ${S}$ that satisfies the admissibility condition will do.

In the case that ${S}$ only consists of multiplicative symmetry maps but not all are linear, there is a conjugate-linear multiplicative symmetry map $\phi \in {S}$ such that there is an equivalence between $(\A, S)$ and $(\A_{S_0}, \phi)$. Also, $\phi$ has order $2$ on $\A_{S_0}$.
In fact, any such conjugate-linear multiplicative symmetry in ${S}$ that satisfies the admissibility condition will do.

In the case that ${S}$ consists of linear multiplicative symmetry maps and symmetry maps of at least two other types, then there are two (commuting) symmetry maps $\tau, \phi \in {S}$ of different types other than linear multiplicative so that there is an equivalence made between
$(\A, S)$ and $(\A_{S_0}, \{\tau, \phi\})$. Also, $\tau$ and $\phi$ have order $2$ on $\A_{S_0}$.
We can actually choose $\tau$ and $\phi$ to be any two symmetry maps with different types other than linear multiplicative that satisfy the admissibility condition.

In all these cases the symmetry maps in $S_1$ commute with all the symmetry maps of $S_0 \cup S_1$ and the symmetry maps in $S_1$ have order $2$ on $\mathcal A_{S_0}$.
\end{remark}
\begin{remark}
Compare to Remark \ref{Group Isomorphism}, thinking of ${S}$, modulo type, as a subset of $\Z_2 \times \Z_2$. The construction in this proposition can then be thought of as first forming the group generated by ${S}$, modulo type, in $\Z_2\times \Z_2$. Then we let $S_1$ be representatives of the cosets $S_{(1, -1)}, S_{(-1, 1)}, S_{(-1,-1)}$ but we can omit one if two are present since two cosets can be used to get a representative of the other.
\end{remark}
\begin{proof}
Partition ${S}$ into disjoint subsets $S_{(1,1)}$, $S_{(1,-1)}$, $S_{(-1,1)}$, $S_{(-1,-1)}$ based on type: linear multiplicative, linear anti-multiplicative, conjugate-linear multiplicative, and conjugate-linear anti-multiplicative, respectively.

If $T$ is a set of symmetry maps and $\varphi_0$ is a symmetry, let $T\circ \varphi_0 = \{\varphi \circ \varphi_0 : \varphi \in T\}$.
We construct $S_0$ and $S_1$ in different cases that depend on which of these partitioned subsets are non-empty.

\begin{enumerate}
\item Suppose that $S = S_{(1,1)}$. Then let $S_0 = S$ and $S_1 = \emptyset$. (Note that ${S}$ can be empty.)
\item Suppose that $S = S_{(1,1)} \cup S_\alpha$, where $S_\alpha \neq \emptyset$ and the multi-index $\alpha$ equals one of $(1,-1)$, $(-1,1)$, $(-1,-1)$. 

Let $\varphi_\alpha$ be one element from $S_\alpha$ that commutes with every element of $S$.
Then $S_0 = S_{(1,1)}\cup S_\alpha \circ \varphi_\alpha$ and $S_1 = \{\varphi_\alpha\}$.
\item Suppose that $S = S_{(1,1)} \cup S_\alpha \cup S_\beta$, where $S_\alpha, S_\beta \neq \emptyset$ and the multi-indices $\alpha, \beta = (1,-1), (-1,1), (-1,-1)$ are not equal. 

Let $\varphi_\alpha, \varphi_\beta$ each be an element from $S_\alpha, S_\beta$, respectively, that commute with every element of $S$. We can choose $S_1$ to equal one of $\{\varphi_\alpha, \varphi_\beta\}, \{\varphi_\alpha\circ \varphi_\beta, \varphi_\beta\}, \{\varphi_\alpha, \varphi_\beta \circ\varphi_\alpha\}$.

Let $S_0 = S_{(1,1)}\cup (S_\alpha \circ \varphi_\alpha) \cup (S_\beta\circ\varphi_\beta)$.

\item Suppose that $S = S_{(1,1)} \cup S_{(1,-1)} \cup S_{(-1, 1)} \cup S_{(-1,-1)}$ and $S_{\alpha}\neq \emptyset$ for each $\alpha = (1,-1)$, $(-1,1)$, $(-1,-1)$. 

Let $\alpha, \beta$ be any two distinct multi-indices in $(1,-1)$, $(-1,1)$, $(-1,-1)$. Let $\gamma$ denote the other multi-index. Let $\varphi_\alpha, \varphi_\beta$ each be an element from $S_\alpha, S_\beta$, respectively, that commute with every element of $S$. Set $S_1 = \{\varphi_\alpha, \varphi_\beta\}$.

Let $S_0 = S_{(1,1)} \cup (S_\alpha \circ \varphi_\alpha) \cup (S_\beta \circ \varphi_\beta) \cup (S_\gamma \circ \varphi_\alpha\circ \varphi_\beta)$.
\end{enumerate}

So, we have defined $S_0$ and $S_1$ in each case. We now verify the statements of the Proposition.
\begin{enumerate}[label=(\roman*)]
\item We note that if $\varphi_\alpha$ and $\varphi_\beta$ are two symmetry maps of different types that are not linear multiplicative then $\varphi_\alpha \circ \varphi_\beta$ does not have the same type as either $\varphi_\alpha$ or $\varphi_\beta$. So, this guarantees the types of $S_1$. Because ${S}$ is admissible, the set $S_1$ is well-defined and contains symmetry maps that commute with every symmetry map in $S$ and in $S_0$.
\item This is clear from the construction.
\item This follows as a result of two facts. Note that the composition of two symmetry maps of the same type always has type: linear multiplicative. For Case 4, $\varphi_\alpha \circ \varphi_\beta$ has the same type as the elements of $S_\gamma$.
\item This is clear from the construction.
\item Notice that because all the symmetry maps in $S_1$ commute with the symmetry maps in ${S}$, each symmetry and product of symmetry maps of $S_1$ map elements of $\A_{S_0}$ into itself. So, the restriction of a symmetry in $S_1$ to $\A_{S_0}$ is well-defined.
Case 1 is trivial.

Consider Case 2. Because $\varphi_\alpha \in S_\alpha,$ it follows that $\varphi_\alpha^2 \in S_0$. So, for every $A \in \A_{S_0}$, $\varphi_\alpha^2(A) = A$. 

Consider Case 3. By the argument above, $\varphi_\alpha^2$ and $\varphi_\beta^2$ equal the identity on $\A_{S_0}$. So, $(\varphi_\alpha \circ \varphi_\beta)^2=\varphi_\alpha^2 \circ \varphi_\beta^2$ equals the identity on $\A_{S_0}$ as well.

Consider Case 4. As in Case 3, $\varphi_\alpha^2$ and $\varphi_\beta^2$ equal the identity on $\A_{S_0}$.

So, we have shown that this holds for each case.

\item
We now explain why the elements of $\A_{S_0}$ that are $S_1$-symmetric are exactly the elements of $\A$ that are ${S}$-symmetric. Clearly, any element $A \in \A$ that is ${S}$-symmetric will also belong to $\A_{S_0}$ and be $S_1$-symmetric. 

Conversely, suppose that $A \in \A_{S_0}$ is $S_1$-symmetric. Because $S_{(1,1)}\subset S_0$, we only need to show that $\varphi(A) = A$ for any symmetry $\varphi$ in $S \setminus S_{(1,1)}$. We go through each case. 

\begin{enumerate}[label=\arabic*.]
\item This case  is trivial.

\item  Since $\varphi\circ \varphi_\alpha \in S_0$, it follows that $\varphi(\varphi_\alpha(A)) = A$. However, since $A$ is $S_1$-symmetric, $\varphi_\alpha(A) = A$. So, $\varphi(A) = A$ as desired.

\item The key observation is that regardless of the choice of $S_1$, being $S_1$-symmetric is equivalent to being $\{\varphi_\alpha, \varphi_\beta\}$-symmetric. For instance, if $S_1 = \{\varphi_\alpha\circ \varphi_\beta, \varphi_\beta\}$ then because $A$ is $S_1$-symmetric, $\varphi_\beta(A) = A$ so 
\[\varphi_\alpha(A) = \varphi_\alpha(\varphi_\beta(A)) = (\varphi_\alpha\circ\varphi_\beta)(A)=A.\] 
The other case is similar.
So, it follows that $A$ is $\varphi_\alpha$-symmetric and $\varphi_\beta$-symmetric. 

Now, $\varphi$ must belong to $S_\alpha$ or $S_\beta$, so because $(S_\alpha \circ \varphi_\alpha) \cup (S_\beta \circ \varphi_\beta) \subset S_0,$ by the argument in Case 2, it follows that $A$ is $\varphi$-symmetric. This is what we wanted to show.

\item  Since $A$ is $S_1$-symmetric, it is $\{\varphi_\alpha, \varphi_\beta, \varphi_\alpha\circ\varphi_\beta\}$-symmetric. Now, $\varphi$ must belong to one of $S_\alpha$, $S_\beta$, $S_\gamma$ so because $(S_\alpha \circ \varphi_\alpha) \cup (S_\beta \circ \varphi_\beta) \cup (S_{\gamma}\circ \varphi_\alpha\circ \varphi_\beta)\subset S_0,$ by the argument in Case 2, it follows that $A$ is $\varphi$-symmetric. This is what we wanted to show.
\end{enumerate}

\item By (v) and Proposition\ref{properties}\ref{B subalg}, $\B=\A_{S_0}$ is a unital $C^\ast$-subalgebra. 

Now, suppose that $(\A, S)$ has TR rank 1. Let $B \in \B$ be $S_1$-symmetric. Then $B$ is ${S}$-symmetric so there is a sequence of invertible elements $A_n \in \A$ that are ${S}$-symmetric and are converging to $B$. But the $A_n$  belong to $\B$ and are $S_1$-symmetric. This shows that $B$ has TR rank 1.

Conversely, suppose $(\B, S_1)$ has TR rank 1. Let $A \in \A$ be ${S}$-symmetric. Then $A\in \B$ is $S_1$-symmetric, so there is a sequence of invertible $B_n \in \B$ that are $S_1$-symmetric converging to $A$. But the $B_n$ are ${S}$-symmetric. This shows that $(A,S)$ has TR rank 1.

\item $S_0, S_1$ consist of weakly continuous symmetry maps because the composition of finitely many weakly continuous maps is weakly continuous. Consequently, $\A_{S_0}$ is a von Neumann subalgebra of $\A$. 

This concludes the verification of the last part of the Proposition.
\end{enumerate}
\end{proof}

Note that a conjugate-linear phase symmetry is very similar to just a conjugate-linear symmetry in that they both induce a reflection on $\C$. For instance, suppose that $\phi$ is a conjugate-linear phase symmetry for a normal matrix $A$ with phase $\zeta$. Let $E_z = E_{\{z\}}(A)$. Then
\[\sum_z \overline{z} \phi(E_z)= \phi(\sum_zzE_z) = \zeta A = \sum_z \zeta z E_z.\]
So, \[\sum_z \overline{\zeta z} \phi(E_z) = \sum_z  z E_z = \sum_z  \overline{\zeta z} E_{\overline{\zeta z}}.\]
Thus, $\phi(E_z) = E_{\overline{\zeta z}}$. This corresponds to a rotation followed by a reflection. However that is the same as a reflection about a different angle since: $\overline{\zeta z}=\zeta^{-1/2}\overline{\zeta^{1/2} z}.$

In particular, if $B = \zeta^{1/2}A$ then 
\[\phi(B) = \phi(\zeta^{1/2}A) = \zeta^{-1/2}\phi(A)=\zeta^{1/2}A = B.\] So, by rotating we transformed a conjugate-linear phase symmetry into a conjugate-linear symmetry.

Another thing to note is that one does not always have a generalization of TR rank 1 for non-unital phase symmetries even in the finite dimensional case. For instance, consider the example of a linear antisymmetry for normal $n \times n$ matrix $A$. The non-zero eigenvalues must come in $x \leftrightarrow -x$ pairs with eigenspaces of the same dimension. So, if $n$ is odd, then necessarily $A$ has $0$ as an eigenvalue which is not removable due to the antisymmetry.  This implies that no small perturbation of $A$ can be invertible while maintaining the antisymmetry. 

Similar statements about a phase symmetry $\zeta$ can be made, but the relevant number is not $2$ (so ``even'' and ``odd'') but the order of $\zeta$. So, all this is to say that there are technicalities involved with such phase symmetries. In particular, one cannot translate an element $A$ to $A+cI$ while preserving phase symmetry.

\section{Lin's Theorem with Linear Symmetries}
\label{Lin's Theorem with Linear Symmetries}

In this section, we will prove Lin's theorem with linear symmetries. 

As noted in \cite{loring2015k}, Lin already proved his result with no symmetries. By the reduction in the last section, we will reduce the case of linear multiplicative symmetries to this.
We will reduce the case of linear symmetries where some of them are anti-multiplicative to the following result of Loring and S{\o}rensen. 

The Friis and R{\o}rdam proof of Lin's theorem shows:
\begin{thm}\label{Lin sr0}(\cite{friis1996almost})
Let $\A$ be a unital $C^\ast$-algebra so that $\A$ has stable rank $1$.

There exists a function $\epsilon_1(\delta)$ independent of $\A$ with $\lim_{\delta \to 0}\epsilon_1(\delta) =0$ 
that satisfies the following property: If $A \in \A$ is a contraction, then there is a normal $A'\in \A$ that satisfies
\[\|A'-A\| \leq \epsilon_1(\|[A^\ast,A]\|).\]
\end{thm}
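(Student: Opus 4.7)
The plan is to prove the theorem by contradiction using a standard sequence-algebra compactness argument, reducing the question to a lifting problem in a quotient of a product of $C^\ast$-algebras. Suppose the conclusion fails. Then there exist $\delta_0 > 0$, a sequence of unital $C^\ast$-algebras $\A_n$ each of stable rank one, and contractions $A_n \in \A_n$ with $\|[A_n^\ast, A_n]\| \to 0$ but $\inf\{\|N - A_n\| : N \in \A_n \text{ normal}\} \geq \delta_0$ for all $n$. Form the asymptotic sequence algebra
\[\mathcal Q \;=\; \prod_{n \in \N}\A_n \;\Big/\; \bigoplus_{n\in\N}\A_n,\]
and let $\widetilde A \in \mathcal Q$ be the class of $(A_n)$. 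Since commutators with $A_n^\ast$ vanish in norm, $\widetilde A$ is a normal contraction in $\mathcal Q$.

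The next step is to invoke the key structural lemma of Friis and R{\o}rdam: if $\mathcal B$ is a unital $C^\ast$-algebra of stable rank one and $J \normeq \mathcal B$ is a closed two-sided ideal, then every normal element of $\mathcal B/J$ lifts to a normal element of $\mathcal B$. One applies this with $\mathcal B = \prod_n \A_n$ and $J = \bigoplus_n \A_n$, observing that stable rank one passes from the $\A_n$ to the $\ell^\infty$-product since invertibility and approximation by invertibles can both be checked coordinatewise (with uniform norm control coming from the fact that a bounded sequence of invertibles with uniformly bounded inverses is invertible in the product). Lifting $\widetilde A$ yields a normal $(N_n) \in \prod_n \A_n$ with $N_n - A_n \to 0$, contradicting the lower bound $\delta_0$ and giving the existence of $\epsilon_1$ by the standard equivalence between ``$\epsilon$-$\delta$'' statements and such compactness arguments.

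The technical heart, and the main obstacle, is the normal lifting lemma itself. To prove it, I would follow the Friis--R{\o}rdam strategy: start from an arbitrary self-adjoint lift $X$ of $\Re(\widetilde A)$ and an arbitrary lift $b$ of $\widetilde A - \Re(\widetilde A)$, then perturb within the ideal to eliminate the obstruction to normality. The stable rank one hypothesis enters decisively because it allows one to approximate elements by invertibles and hence to replace polar-decomposition-like factorizations with genuine unitary conjugations inside the algebra. Concretely, one uses that the universal $C^\ast$-algebra of a normal contraction (namely $C(\overline{\D})$) enjoys a projectivity property relative to ideals in stable rank one algebras: approximate $\ast$-homomorphisms from $C(\overline{\D})$ into the quotient lift to honest $\ast$-homomorphisms into the ambient algebra, and the image of the coordinate function gives the desired normal lift. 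The delicate point is constructing the unitary adjustments needed for this projectivity; these are produced from stable rank one by successively refining an approximate lift, using spectral cut-offs by continuous functional calculus and invertibility of small perturbations, until a true normal element is obtained.
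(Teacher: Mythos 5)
This statement is cited from Friis and R{\o}rdam \cite{friis1996almost} and is not re-proved in the paper, so there is no internal argument to compare against; the review below addresses your sketch on its own terms. Your broad strategy --- negate the statement, form the asymptotic sequence algebra $\mathcal Q = \prod_n \A_n / \bigoplus_n \A_n$, observe that the class of $(A_n)$ is normal in $\mathcal Q$, and invoke a normal-lifting result --- is indeed the Friis--R{\o}rdam approach, and the contrapositive compactness reduction is carried out correctly.

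Two steps, however, are left in a state where they cannot be checked. First, the claim that stable rank one passes from the $\A_n$ to $\prod_n \A_n$ is asserted but not proved. Your parenthetical remark only restates what is needed (uniformly bounded inverses), not why density of invertibles in each coordinate delivers it: approximating $a_n$ by an invertible $b_n$ gives no control whatsoever on $\|b_n^{-1}\|$. The missing step is a functional-calculus ``gapping'' argument --- replace $b_n$ by $c_n = u_n \max(|b_n|, \epsilon)$ where $u_n$ is the unitary polar part of $b_n$, so that $\|c_n - a_n\| < 2\epsilon$ while $\|c_n^{-1}\| \le 1/\epsilon$ uniformly in $n$. This quantitative refinement is precisely why Friis and R{\o}rdam work with property (IR) rather than stable rank one per se and why they verify (IR) is stable under $\ell^\infty$-products; the remark following the theorem in the paper is pointing at exactly this issue.

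Second, and more seriously, you invoke as a black box the claim that every normal element of $\mathcal B / J$ lifts to a normal element of $\mathcal B$ when $\mathcal B$ has stable rank one, and you describe its proof via a ``projectivity property'' of $C(\overline{\mathbb D})$, saying that approximate $\ast$-homomorphisms from $C(\overline{\mathbb D})$ into the quotient lift to honest ones into the ambient algebra. This is both overclaimed and internally muddled. The algebra $C(\overline{\mathbb D})$ is \emph{not} projective, and not semiprojective in the ordinary sense --- $\overline{\mathbb D}$ is two-dimensional, and unconditional normal lifting fails in general (the Calkin algebra furnishes the standard obstruction); this is exactly why Lin's theorem is deep. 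Friis and R{\o}rdam prove a lifting statement \emph{relative to} (IR)-algebras, and that statement carries further technical hypotheses (their formulation controls the spectrum of the normal element away from filling the whole disk, after which a scaling step arranges this in the application). Moreover, in the lifting problem one has an \emph{exact} $\ast$-homomorphism from $C(\overline{\mathbb D})$ into the quotient and seeks an exact lift; the phrase ``approximate homomorphisms into the quotient lift to honest ones'' conflates the lifting problem with the stability problem for approximate representations. The perturbation-within-the-ideal argument using stable rank one to supply unitaries is the entire substance of the theorem, and your sketch does not give enough detail for a reader to believe it would close.
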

\begin{remark}
\cite{friis1996almost} states this result for unital $C^\ast$-algebras satisfying what they call property (IR), but after Definition 3.1 it is stated that having stable rank implies (IR). So, we state this result in terms of what we will find useful.
\end{remark}

There also is this extension by Kachkovskiy and Safarov, which is a corollary of their main theorem of \cite{kachkovskiy2016distance} and its Remark 1.2.
\begin{thm}\label{Lin KS}(\cite{kachkovskiy2016distance})
Let $\A$ be a von Neumann algebra with stable rank $1$.

There exists a constant $C_{KS}>0$ independent of $\A$ with the following property: If $A \in \A$, then there is a normal $A'\in \A$ that satisfies
\[\|A'-A\| \leq C_{KS}\|[A^\ast,A]\|^{1/2}.\]
\end{thm}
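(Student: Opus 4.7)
The plan is to derive Theorem \ref{Lin KS} by adapting the proof of the finite-dimensional Kachkovskiy--Safarov theorem stated at the beginning of the paper to the von Neumann setting. The key observation is that the stable rank $1$ assumption is the only structural feature one loses in moving from finite-dimensional $C^\ast$-algebras to more general von Neumann algebras; imposing it directly allows the finite-dimensional argument to go through essentially verbatim.

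First, by rescaling I may assume $A$ is a contraction, and set $\delta = \|[A^\ast,A]\|$. Writing $A = X + iY$ with $X, Y \in \A$ self-adjoint contractions satisfying $\|[X,Y]\| = \delta/2$, it suffices to find commuting self-adjoint $X', Y' \in \A$ with $\|X'-X\|, \|Y'-Y\| = O(\sqrt{\delta})$; then $A' = X' + iY'$ is the desired normal approximant. This is just the standard conversion between almost normal elements and pairs of almost commuting self-adjoint elements as in Proposition \ref{translation}.

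Second, one might be tempted to reduce to a finite-dimensional $C^\ast$-subalgebra of $\A$ containing approximations to $X$ and $Y$ and apply the finite-dimensional version directly. However, general von Neumann algebras with stable rank $1$ (such as type II$_1$ factors) contain no weakly dense finite-dimensional subalgebras, so such a naive reduction fails. Instead, I would adapt the proof of Kachkovskiy and Safarov directly to the von Neumann setting. Their argument constructs the normal approximant using only (i) Borel functional calculus and spectral projections of $X$ and $Y$, (ii) localizer-type operators built from these projections, and (iii) small invertible perturbations obtained from the stable rank $1$ property. All three ingredients are available in any von Neumann algebra with stable rank $1$, so the proof carries over essentially word for word. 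This is exactly what is asserted by Remark 1.2 of \cite{kachkovskiy2016distance}.

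The main obstacle is verifying that each step of the KS argument extends without quantitative loss. In particular, the spectral decompositions used in their proof are discrete in the finite-dimensional setting but must be replaced with Borel spectral projections in the general von Neumann setting; one needs to check that their comparison lemmas for sublevel sets of $X$ and $Y$ still apply, and that their explicit local perturbations still exist (which is precisely where stable rank $1$ enters). Since all the numerical constants in their proof arise from universal geometric lemmas about subsets of $\R$ and $\R^2$ that make no reference to the ambient algebra, this adaptation should yield the same universal constant $C_{KS}$.
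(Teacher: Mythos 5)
The paper does not prove Theorem~\ref{Lin KS} at all: it is stated purely as a citation of \cite{kachkovskiy2016distance}, with the paper remarking only that it is ``a corollary of their main theorem \ldots and its Remark~1.2.'' So there is no proof in the paper to compare yours against. Your proposal, once the exposition is stripped away, also ultimately reduces to citing the same source --- you explicitly end by writing ``This is exactly what is asserted by Remark~1.2 of \cite{kachkovskiy2016distance}.'' To that extent you and the paper are doing the same thing, namely pointing at Kachkovskiy and Safarov.

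What you have added in between is not a proof but a plan whose correctness you never verify, and its framing is somewhat backward. You describe the task as ``adapting the proof of the finite-dimensional Kachkovskiy--Safarov theorem \ldots to the von Neumann setting,'' but the main theorem of \cite{kachkovskiy2016distance} is already stated and proved at the level of von Neumann algebras; the finite-dimensional statement quoted at the beginning of the paper is the specialization, not the source. Treating the finite-dimensional case as primary and the von Neumann case as a ``carry over essentially word for word'' invites exactly the kind of optimism (``this adaptation should yield the same universal constant'') that a real proof would have to replace with checked estimates. Your three-ingredient summary of the KS construction --- Borel functional calculus and spectral projections, localizer operators built from them, small invertible perturbations from stable rank~1 --- is a plausible sketch of that paper's method, but asserting that each step ``extends without quantitative loss'' and that the constants ``arise from universal geometric lemmas'' is precisely the content that needs to be established rather than assumed. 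None of this is a gap in the mathematics of the cited theorem, but it does mean your text is a précis of someone else's proof plus a citation, rather than a proof; as a response to a request for a proof it would need either the omitted estimates or the honest acknowledgment (which the paper itself makes) that the statement is being imported wholesale from \cite{kachkovskiy2016distance}.
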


Loring and S{\o}rensen extended Lin's theorem when there is a single linear anti-multiplicative symmetry of order 2:
\begin{thm}\label{Lin reflection}(\cite{loring2016almost})
Let $\A$ be a unital $C^\ast$-algebra and $\tau$ a linear anti-multiplicative symmetry map on $\A$ with order $2$ so that $(\A, \tau)$ has TR rank $1$.

There exists a function $\epsilon_2(\delta)$ independent of $\A$ and $\tau$ with $\lim_{\delta \to 0}\epsilon_2(\delta) =0$  
that satisfies the following property: If $A \in \A$ is a $\tau$-symmetric contraction, then there is a normal $\tau$-symmetric $A'\in \A$ that satisfies
\[\|A'-A\| \leq \epsilon_2(\|[A^\ast,A]\|).\]
\end{thm}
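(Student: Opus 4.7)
The plan is to adapt the sequence-algebra lifting argument of Friis and R{\o}rdam (\cite{friis1996almost}) to the $\tau$-equivariant setting, using TR rank $1$ in place of ordinary stable rank $1$. I argue by contradiction: suppose the conclusion fails uniformly, so there exist $\epsilon_0 > 0$ and a sequence of triples $(\A_n, \tau_n, A_n)$ with each $(\A_n, \tau_n)$ of TR rank $1$, each $A_n$ a $\tau_n$-symmetric contraction, and $\|[A_n^\ast, A_n]\| \to 0$, yet $\dist(A_n, \mathcal N_n) \geq \epsilon_0$, where $\mathcal N_n$ is the set of $\tau_n$-symmetric normal elements of $\A_n$. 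Form the quotient $\B = \prod_n \A_n / \bigoplus_n \A_n$ and let $\tau$ be the symmetry induced coordinatewise; linearity, anti-multiplicativity, $\tau(I)=I$, and order $2$ all pass to the quotient. The image $a \in \B$ of $(A_n)$ is then a $\tau$-symmetric normal contraction. It suffices to lift $a$ to a $\tau$-symmetric normal element of $\prod_n \A_n$, since this produces $\tau_n$-symmetric normal contractions within $o(1)$ of $A_n$, contradicting the assumption.

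Since $a$ is normal, approximate it in norm by a $\tau$-symmetric normal element with finite spectrum, $a_0 = \sum_{j=1}^{M} \lambda_j P_j$, where $\{P_j\}$ are pairwise orthogonal projections summing to $I$. By Proposition \ref{properties}, because $\tau$ is linear and $a$ is $\tau$-symmetric, each $P_j$ is $\tau$-symmetric. It therefore suffices to lift $\{P_1, \dots, P_M\}$ to a pairwise orthogonal family $\{Q_1, \dots, Q_M\}$ of $\tau$-symmetric projections in $\prod_n \A_n$ with $\sum_j Q_j = I$, for then $\sum_j \lambda_j Q_j$ lifts $a_0$ and is $\tau$-symmetric normal. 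For a single $\tau$-symmetric projection $P \in \B$: pick any self-adjoint lift $\tilde P$, replace it by $\tfrac12(\tilde P + \tau(\tilde P))$ to make it $\tau$-symmetric, then apply a real-coefficient cutoff polynomial $p$ approximating $\chi_{(1/2,\infty)}$ on $\sigma(\tilde P)$; the result is close to an honest projection and remains $\tau$-symmetric because real polynomials in a $\tau$-symmetric self-adjoint element are $\tau$-symmetric regardless of the multiplicativity type of $\tau$. Where the spectrum has no clean gap near $1/2$, TR rank $1$ of $(\A_n, \tau_n)$ furnishes a $\tau_n$-symmetric invertible perturbation of arbitrarily small norm that opens such a gap pointwise in $n$, after which the cutoff yields a genuine $\tau$-symmetric projection.

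The orthogonalization is inductive. Given $Q_1, \dots, Q_{k-1}$ already pairwise orthogonal, $\tau$-symmetric, and lifting $P_1, \dots, P_{k-1}$, set $R_k = I - \sum_{i<k} Q_i$. Lift $P_k$ as above to a $\tau$-symmetric almost-projection $\hat P_k$ and compress: $R_k \hat P_k R_k$. This compression preserves $\tau$-symmetry by the triple-product identity in Proposition \ref{properties}\ref{triple product}: $\tau(R_k \hat P_k R_k) = \tau(R_k)\tau(\hat P_k)\tau(R_k) = R_k \hat P_k R_k$, which holds uniformly across multiplicative and anti-multiplicative $\tau$. Since $\sum_{i<k} P_i$ is orthogonal to $P_k$ in $\B$, the image of $R_k \hat P_k R_k$ in $\B$ is still $P_k$, so it is an almost-projection; one further cutoff produces a true $\tau$-symmetric projection $Q_k \leq R_k$, hence automatically orthogonal to the earlier $Q_i$. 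For the last index take $Q_M = I - \sum_{j<M} Q_j$, which is $\tau$-symmetric and completes the resolution of the identity.

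The main obstacle is the interaction of the two demands in the inductive step: the cutoff must both produce a genuine projection and stay inside the compression by $R_k$ while preserving $\tau$-symmetry. This is precisely where the TR rank $1$ hypothesis is essential, supplying the $\tau$-symmetric version of the small invertible perturbations that Friis--R{\o}rdam use to open spectral gaps. The sequence-algebra framework yields an $\epsilon_2(\delta)$ with $\epsilon_2(\delta) \to 0$ as $\delta \to 0^+$ that is uniform in $(\A, \tau)$ but inexplicit; extracting a quantitative rate would require effective $\tau$-equivariant projection perturbation estimates, which lie outside the compactness argument sketched here.
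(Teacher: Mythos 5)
The paper does not prove this theorem; it is imported from Loring and S{\o}rensen (\cite{loring2016almost}) and used as a black box, so there is no in-paper proof to compare against. Evaluating your sketch on its own terms, there is a genuine gap at the crux, and it is not the one you flag at the end.

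The step ``since $a$ is normal, approximate it in norm by a $\tau$-symmetric normal element with finite spectrum'' is unjustified, and it is essentially the entire content of the theorem. In a quotient $\B = \prod_n \A_n / \bigoplus_n \A_n$, a normal element is not in general close to any normal element of finite spectrum: that would require a real-rank-zero-type approximation property for normals in $\B$, which does not follow from (TR) rank $1$ of the $\A_n$. The spectrum of $a$ can be the entire closed disk, and producing a nearby finite-spectrum normal $a_0$ is equivalent in difficulty to the lifting problem you are trying to solve. Put differently, if you already had this approximation, the rest of your argument would indeed close things out --- your $\tau$-equivariant projection steps are correct in concept: symmetrizing a self-adjoint lift via $\tfrac12(\tilde P + \tau(\tilde P))$ still lifts $P$ because $\tau(P)=P$; a real-coefficient cutoff polynomial preserves $\tau$-symmetry by Proposition \ref{properties}(iv) regardless of whether $\tau$ is multiplicative or anti-multiplicative; and $\tau(R_k\hat P_k R_k)=R_k\hat P_k R_k$ by the triple-product identity. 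But the reduction to that situation is where the theorem lives, and you have put it in as a hypothesis.

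For context, the argument in \cite{friis1996almost}, and its $\tau$-equivariant adaptation in \cite{loring2016almost}, is not a sequence-algebra compactness argument and does not route through finite-spectrum approximation in a quotient. It is a direct, quantitative construction: overlay a grid of small cells on the disk, use stable rank $1$ (more precisely the weaker property (IR)) to perturb $A$ so that its spectral mass clears the grid lines, then assemble approximate spectral projections cell by cell. The $\tau$-equivariant version reproves each of these small-perturbation and projection-adjustment lemmas so that the perturbations can be chosen $\tau$-symmetric, which is exactly where TR rank $1$ enters. You have the role of TR rank $1$ right (it supplies $\tau$-symmetric small invertible perturbations), but you have placed it inside a framework whose initial reduction is unsupported, and you have misidentified the hard step as the orthogonalization induction, which is in fact the routine part.
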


We extend this to the following.
\begin{thm}\label{Linear Lin}
Suppose that $\A$ is a unital $C^\ast$-algebra and ${S}$ is an admissible collection of linear symmetry maps on $\A$  so that $(\A,S)$ has TR rank 1. 

Let $\epsilon_\ell(\delta) = \max(\epsilon_1(\delta), \epsilon_2(\delta))$, where $\epsilon_1$ and  $\epsilon_2$ are functions that satisfy the statement of Theorem \ref{Lin sr0} and Theorem \ref{Lin reflection}, respectively. 
We can assume that they are both monotonically increasing by the argument in Theorem 2.12 of \cite{filonov2011relation}.

Then if $A \in \A$ is an ${S}$-symmetric contraction, there is a normal ${S}$-symmetric $A' \in \A$ that satisfies
\[\|A'-A\| \leq \epsilon_\ell(\|[A^\ast,A]\|).\]
\end{thm}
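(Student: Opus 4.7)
The plan is to reduce to the two already-known cases (no symmetry, Theorem \ref{Lin sr0}; a single linear anti-multiplicative order-$2$ symmetry, Theorem \ref{Lin reflection}) by invoking Proposition \ref{S0 Reduction Prop}. Since $S$ is admissible and consists entirely of linear symmetry maps, none of its elements are conjugate-linear, so the decomposition from Proposition \ref{S0 Reduction Prop} yields an $S_0$ consisting of linear multiplicative symmetries and an $S_1$ consisting of at most one linear anti-multiplicative symmetry. Set $\mathcal B = \mathcal A_{S_0}$, which is a unital $C^\ast$-subalgebra of $\mathcal A$ by Proposition \ref{properties}\ref{B subalg}. By part (vii) of Proposition \ref{S0 Reduction Prop}, $(\mathcal B, S_1)$ has TR rank $1$, and by part (vi) the $S$-symmetric elements of $\mathcal A$ are exactly the $S_1$-symmetric elements of $\mathcal B$.

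Now fix an $S$-symmetric contraction $A \in \mathcal A$, so $A \in \mathcal B$ and $A$ is $S_1$-symmetric. Split into the two cases for $S_1$.

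\textbf{Case 1:} $S_1 = \emptyset$. Then TR rank $1$ for $(\mathcal B, \emptyset)$ is just stable rank $1$ of $\mathcal B$, so Theorem \ref{Lin sr0} applied inside $\mathcal B$ produces a normal $A' \in \mathcal B$ with $\|A'-A\| \leq \epsilon_1(\|[A^\ast,A]\|) \leq \epsilon_\ell(\|[A^\ast,A]\|)$. Since $A' \in \mathcal B = \mathcal A_{S_0}$ and $S_1$ is empty, $A'$ is automatically $S$-symmetric by part (vi).

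\textbf{Case 2:} $S_1 = \{\tau\}$ for some linear anti-multiplicative symmetry map. Part (v) of Proposition \ref{S0 Reduction Prop} tells us the restriction $\tau|_{\mathcal B}$ has order $2$, and because $\tau$ commutes with every element of $S_0$, $\tau$ maps $\mathcal B$ into itself, so $\tau|_{\mathcal B}$ is a genuine linear anti-multiplicative order-$2$ symmetry map on $\mathcal B$. Since $(\mathcal B, \tau|_{\mathcal B})$ has TR rank $1$ and $A$ is a $\tau$-symmetric contraction in $\mathcal B$, Theorem \ref{Lin reflection} produces a normal $\tau$-symmetric $A' \in \mathcal B$ with $\|A'-A\| \leq \epsilon_2(\|[A^\ast,A]\|) \leq \epsilon_\ell(\|[A^\ast,A]\|)$. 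Then $A' \in \mathcal B$ and $A'$ is $S_1$-symmetric, hence $S$-symmetric by part (vi).

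I expect no real obstacle: the substantive content has been packaged into Proposition \ref{S0 Reduction Prop} and the two cited theorems. The only point requiring care is verifying that when we restrict $\tau$ to $\mathcal B$ we still have the hypotheses of Theorem \ref{Lin reflection} — namely, $\tau|_{\mathcal B}$ is well-defined on $\mathcal B$ (which needs $\tau$ to preserve $\mathcal B$, guaranteed by the admissibility-driven commutativity), has order $2$ on $\mathcal B$ (Proposition \ref{S0 Reduction Prop}(v)), and $\mathcal B$ is a unital $C^\ast$-algebra with TR rank $1$ relative to $\tau|_{\mathcal B}$. All of this has been arranged in the reduction proposition, so the proof is essentially a one-line invocation in each case followed by the observation that $\epsilon_\ell$ dominates both $\epsilon_1$ and $\epsilon_2$.
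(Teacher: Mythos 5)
Your proof is correct and takes essentially the same approach as the paper: both reduce via Proposition \ref{S0 Reduction Prop} to the already-known Theorems \ref{Lin sr0} and \ref{Lin reflection}, splitting on whether $S_1$ is empty or a single linear anti-multiplicative symmetry. Your version spells out more explicitly which parts of the reduction proposition are invoked, but the substance is identical.
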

\begin{proof}
Suppose that all the $\varphi\in {S}$ are linear multiplicative symmetries. Then by Proposition \ref{S0 Reduction Prop}, $\A_{S} = \{A\in \A: \varphi(A)=A\mbox{ for all }\varphi\in {S}\}$ is a unital $C^\ast$-subalgebra of $\A$ that has stable rank one. 

Let $A\in\A$ be an ${S}$-symmetric contraction. It then belongs to $\A_{S}$. So, by Theorem \ref{Lin sr0} for $\A_{S}$, there is a normal $A'\in \B$ with
\[\|A'-A\| \leq \epsilon_1(\|[A^\ast,A]\|).\]
Then $A'$ is also ${S}$-symmetric.

\vspace{0.05in}

Suppose now that some $\tau \in {S}$ is anti-multiplicative. Let $S_0$ and $S_1=\{\tau\}$ be as in Proposition \ref{S0 Reduction Prop}. Then $(\A_{S_0}, \tau)$ is a unital $C^\ast$-subalgebra of $\A$ that has TR rank 1 and $\tau^2=\id$ on $\A_{S_0}$. 

Let $A\in\A$ be an ${S}$-symmetric contraction. It then belongs to $\A_{S_0}$ and is $\tau$-symmetric. So, by Theorem \ref{Lin reflection} for $(\A_{S_0}, \tau)$ there exists a normal $A' \in \A_{S_0}$ that is $\tau$-symmetric with
\[\|A'-A\| \leq \epsilon_2(\|[A^\ast,A]\|).\]
Then $A'$ is also ${S}$-symmetric.
\end{proof}
\begin{remark}
By Proposition \ref{translation}(i), we see that this also provides a result for almost commuting self-adjoint elements of a finite dimensional $C^\ast$-algebra with linear symmetries.
\end{remark}

Using Theorem \ref{Lin KS} we also have
\begin{thm}\label{Sym Lin KS}
Let $\A$ be a von Neumann algebra and ${S}$ a collection of weakly continuous linear multiplicative symmetries on $\A$. Suppose that $(\A, S)$ has TR rank 1.

Let $C_{KS}>0$ be the universal constant in Theorem \ref{Lin KS}. If $A \in \A$ is ${S}$-symmetric, then there is a normal $A'\in \A$ that is ${S}$-symmetric and satisfies
\[\|A'-A\| \leq C_{KS}\|[A^\ast,A]\|^{1/2}.\]
\end{thm}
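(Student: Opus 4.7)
The plan is to reduce directly to Theorem \ref{Lin KS} applied to the fixed-point algebra $\A_{S}$. The first step is to verify that $\A_{S}$ is a von Neumann subalgebra of $\A$. By Proposition \ref{properties}\ref{B subalg}, since every $\varphi \in S$ is linear multiplicative, $\A_{S}$ is a unital $C^\ast$-subalgebra of $\A$. The additional hypothesis that each $\varphi$ is weakly continuous ensures that $\A_{S} = \bigcap_{\varphi \in S}\ker(\varphi - \id)$ is weakly closed, hence $\A_{S}$ is a von Neumann subalgebra. Note that, unlike Theorem \ref{Linear Lin}, no admissibility hypothesis is needed because $S$ contains only linear multiplicative symmetries, which Proposition \ref{properties}\ref{B subalg} handles without any commutation assumption.

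The second step is to translate the TR rank 1 hypothesis. By definition, $(\A, S)$ has TR rank 1 means the invertible elements of $\A_{S}$ are dense in $\A_{S}$. Since $\A_{S}$ is itself a unital $C^\ast$-algebra, this is precisely the condition that $\A_{S}$ has stable rank 1 in the classical sense.

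Now given an $S$-symmetric element $A \in \A$, we have $A \in \A_{S}$, and since every $\varphi \in S$ is hermitian we also have $A^\ast \in \A_{S}$, so $[A^\ast, A]$ lies in $\A_{S}$. Because the inclusion $\A_{S} \hookrightarrow \A$ is isometric, the norm of $[A^\ast, A]$ is the same whether computed in $\A_{S}$ or $\A$. Applying Theorem \ref{Lin KS} inside the von Neumann algebra $\A_{S}$ (which has stable rank 1) produces a normal element $A' \in \A_{S}$ with
\[\|A' - A\| \leq C_{KS}\|[A^\ast, A]\|^{1/2},\]
and since $A' \in \A_{S}$ it is automatically $S$-symmetric, as required.

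There is no substantive obstacle: the argument is a direct specialization of the first case of Theorem \ref{Linear Lin}, with two small upgrades, namely that weak continuity of the symmetries is used to promote ``unital $C^\ast$-subalgebra'' to ``von Neumann subalgebra'' so that Theorem \ref{Lin KS} (rather than Theorem \ref{Lin sr0}) can be invoked, yielding the sharper square-root bound with the universal constant $C_{KS}$.
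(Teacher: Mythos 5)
Your proposal is correct and follows essentially the same route as the paper: identify $\A_{S}$ as a von Neumann subalgebra with stable rank 1 and apply Theorem \ref{Lin KS}. The only cosmetic difference is that the paper cites Proposition \ref{S0 Reduction Prop}\ref{vNSymReduction} to get the von Neumann subalgebra claim in one step, whereas you assemble it directly from Proposition \ref{properties}\ref{B subalg} plus weak continuity, which amounts to the same thing.
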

\begin{proof}
By Proposition \ref{S0 Reduction Prop}\ref{vNSymReduction}, $\A_{S}$ is a von Neumann algebra with stable rank 1. So, the result follows from Theorem \ref{Lin KS}.
\end{proof}

\section{Spectral Projection Inequalities and Localization Operators}
\label{Spectral Projection Inequalities and Localization Operators}

In this section we review some results for projections that will be useful in the next section. We also develop symmetry-respecting localization operators which are an important technical part of forming nearby commuting operators from projections.

Before beginning our results for this section, we state the Davis-Khan theorem (\cite{davis1970rotation}) for perturbation of spectral projections as stated and proved in \cite{bhatia1983perturbation} and nicely summarized in Section 11 of \cite{bhatia1997and}. Naturally, this result applies when $A, B$ are in any von Neumann algebra.
\begin{thm}\label{DK theorem}
(\cite{bhatia1983perturbation})
Suppose that $A, B$ are normal elements of $B(\H)$, where $\H$ is some Hilbert space. Let $K_A, K_B \subset \C$ be sets with distance $\delta$. Let $E_A = E_{K_A}(A), E_B = E_{K_B}(B)$. Then there is a universal constant $c > 0$ so that
\[\|E_AE_B\| \leq \frac{c}\delta\|A-B\|.\]
If $K_A$ and $K_B$ are separated by a strip of width $\delta$, then we can use $c = 1$. In particular, this holds if $K_A, K_B$ are intervals in $\R$ separated by distance $\delta$.
\end{thm}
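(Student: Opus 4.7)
The plan is the classical Sylvester-equation approach. Because $A$ and $B$ are normal, each commutes with its own spectral projections, so setting $X = E_A E_B$ one obtains the identity
\[
A X - X B \;=\; A E_A E_B - E_A E_B B \;=\; E_A A E_B - E_A B E_B \;=\; E_A(A-B)E_B,
\]
and in particular $\|AX - XB\| \le \|A-B\|$. The problem is thereby reduced to controlling the inverse of the Sylvester map $X \mapsto AX - XB$ on operators $\on{Ran}(E_B) \to \on{Ran}(E_A)$, where $A|_{\on{Ran}(E_A)}$ and $B|_{\on{Ran}(E_B)}$ are normal with spectra in $K_A$ and $K_B$ respectively.

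For the strip-separated case with the sharp constant $c=1$, I would first apply a rotation and translation to reduce to $K_A \subset \{\Re(z) \ge \delta/2\}$ and $K_B \subset \{\Re(z) \le -\delta/2\}$; these operations preserve $\|A-B\|$ and the relevant spectral projections. Since $A^\ast$ also commutes with $E_A$ and $B^\ast$ with $E_B$, taking the adjoint of the displayed identity, adding it to the original, and dividing by $2$ gives
\[
\Re(A)\, X - X\, \Re(B) \;=\; E_A\, \Re(A-B)\, E_B.
\]
By the spectral theorem one has $\|e^{-t\Re(A)} E_A\| \le e^{-t\delta/2}$ and $\|E_B e^{t\Re(B)}\| \le e^{-t\delta/2}$ for all $t \ge 0$, so the integral
\[
X \;=\; \int_0^\infty e^{-t\Re(A)}\, E_A\, \Re(A-B)\, E_B\, e^{t\Re(B)}\, dt
\]
converges in norm to a solution of the Sylvester equation (differentiating under the integral recovers the identity), and its norm is bounded by $\int_0^\infty e^{-t\delta}\|\Re(A-B)\|\,dt \le \delta^{-1}\|A-B\|$, yielding $c=1$.

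For arbitrary $K_A, K_B$ at distance $\delta$, no separating line need exist, so the sharp constant $1$ must be replaced by some larger universal $c$. I would handle this via Bhatia's argument from \cite{bhatia1983perturbation}: choose a smooth cutoff $\psi: \C \to [0,1]$ equal to $1$ on a neighborhood of $K_A$, equal to $0$ on a neighborhood of $K_B$, with first-derivative bounds of order $1/\delta$, then apply a Helffer--Sj\"ostrand-type functional-calculus estimate for normal operators to obtain $\|\psi(A) - \psi(B)\| \le (c/\delta)\|A-B\|$. Because $E_A \psi(A) = E_A$ and $\psi(B) E_B = 0$, we have
\[
E_A E_B \;=\; E_A\bigl(\psi(A) - \psi(B)\bigr) E_B,
\]
which immediately yields the claimed bound. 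The main obstacle is tracking the universal constant $c$ in this plane-geometric reduction once a single separating line is unavailable; since the theorem is attributed to Bhatia's paper, this final step is essentially an invocation of his argument.
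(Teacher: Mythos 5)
The paper does not prove this theorem; it is stated as a cited result from Bhatia, Davis, and McIntosh (\cite{bhatia1983perturbation}), so there is no in-paper argument to compare against. Your reconstruction is essentially the standard route taken in that reference, and it is sound modulo one wording slip.

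The slip: the statement ``taking the adjoint of the displayed identity, adding it to the original, and dividing by $2$'' does not produce the claimed equation. The adjoint of $AX - XB = E_A(A-B)E_B$ is $X^\ast A^\ast - B^\ast X^\ast = E_B(A^\ast - B^\ast)E_A$, which is an equation in $X^\ast = E_B E_A$, not in $X$. What you want instead is to observe that, because $A$ and $B$ are normal, $A^\ast$ commutes with $E_A$ and $B^\ast$ with $E_B$, so the same one-line computation gives a second identity directly in $X$, namely $A^\ast X - X B^\ast = E_A(A^\ast - B^\ast)E_B$; averaging this with the original yields $\Re(A)\,X - X\,\Re(B) = E_A\,\Re(A-B)\,E_B$ as desired. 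You already cite the needed commutation fact, so the fix is local.

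With that fix, the Rosenblum-integral argument for the strip case is complete once one adds the (easy, but not stated) uniqueness claim: the Sylvester map $X \mapsto \Re(A)X - X\Re(B)$ is injective on operators of the form $X = E_A X E_B$, since $e^{-t\Re(A)} X e^{t\Re(B)} = X$ for all $t\ge 0$ forces $\|X\| \le e^{-t\delta}\|X\| \to 0$; this is what identifies the integral with $E_A E_B$ rather than merely with \emph{a} solution. For the general two-dimensional case, Bhatia--Davis--McIntosh use a Fourier-multiplier / operator-Lipschitz estimate rather than a Helffer--Sj\"ostrand formula to bound $\|\psi(A)-\psi(B)\|$ for the cutoff $\psi$; the structure of the argument is the same, so this is a matter of which functional-calculus machinery one invokes, and your final reduction $E_AE_B = E_A(\psi(A)-\psi(B))E_B$ is exactly right.
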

We now give the following useful  consequence:
\begin{lemma}\label{three projs lemma}
Suppose that $A, B$ are normal elements of $B(\H)$, where $\H$ is some Hilbert space. Let $K_1, K_2, K_3 \subset \C$ be such that $K_1$ and $K_3$ have distance $\delta$. Then for the universal constant $c > 0$ as in Theorem \ref{DK theorem},  
\[\|E_{K_1}(A)E_{K_2}(B)E_{K_3}(A)\| \leq \frac{4c}\delta\|A-B\|.\]
If $K_1, K_2, K_3$ are intervals in $\R$ so that $\inf K_2\in K_1$ and $\sup K_2 \in K_3$ then $c$ can be chosen to be $1$.
\end{lemma}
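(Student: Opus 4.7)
The plan is to split the middle spectral projection $E_{K_2}(B)$ into two pieces according to which of $K_1, K_3$ the set $K_2$ is far from at each point, and then apply Theorem \ref{DK theorem} to each piece separately, absorbing the leftover projection using $\|E\|\leq 1$.

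More concretely, define
\[K_2^{(1)} = \{z \in K_2 : \dist(z, K_1) \geq \delta/2\}, \qquad K_2^{(3)} = K_2 \setminus K_2^{(1)}.\]
By the triangle inequality, every point of $K_2^{(3)}$ lies within $\delta/2$ of $K_1$, hence at distance $> \delta/2$ from $K_3$. Since $E_{K_2^{(1)}}(B)$ and $E_{K_2^{(3)}}(B)$ are orthogonal projections summing to $E_{K_2}(B)$, I would write
\[E_{K_1}(A)E_{K_2}(B)E_{K_3}(A) = E_{K_1}(A)E_{K_2^{(1)}}(B)E_{K_3}(A) + E_{K_1}(A)E_{K_2^{(3)}}(B)E_{K_3}(A),\]
and bound each term by dropping a projection of norm $\leq 1$ and invoking Theorem \ref{DK theorem} with distance $\delta/2$:
\[\|E_{K_1}(A)E_{K_2^{(1)}}(B)E_{K_3}(A)\| \leq \|E_{K_1}(A)E_{K_2^{(1)}}(B)\| \leq \frac{c}{\delta/2}\|A-B\|,\]
and symmetrically for the other term. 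Adding gives the claimed $\frac{4c}{\delta}\|A-B\|$.

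For the real-interval refinement, assume without loss of generality that $K_1$ lies to the left of $K_3$, so $\inf K_3 - \sup K_1 = \delta$. Pick the splitting point $m = (\sup K_1 + \inf K_3)/2$ and set $K_2^{(1)} = K_2 \cap [m, \infty)$ and $K_2^{(3)} = K_2 \cap (-\infty, m)$. The hypotheses $\inf K_2 \in K_1$ and $\sup K_2 \in K_3$ ensure both pieces are genuine subintervals of $K_2$. Now $K_1$ and $K_2^{(1)}$ are separated by the vertical strip $\{\sup K_1 < \Re z < m\}$ of width $\delta/2$, and similarly for $K_2^{(3)}$ and $K_3$; so the strip form of Theorem \ref{DK theorem} applies with $c=1$, and the same decomposition yields the total bound $\frac{4}{\delta}\|A-B\|$.

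The only mild subtlety is making sure the two pieces $K_2^{(1)}, K_2^{(3)}$ in each case are genuinely Borel (hence their spectral projections are defined) and truly complementary inside $K_2$; both are immediate from the definitions. No deeper obstacle arises — the argument is essentially a dyadic-type splitting trick that turns a three-projection product into two two-projection Davis–Kahan bounds.
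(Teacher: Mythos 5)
Your proof is correct and follows essentially the same argument as the paper's: split $K_2$ according to whether $\dist(z, K_1) \geq \delta/2$, drop one outer projection from each resulting term, and apply Theorem \ref{DK theorem} at distance $\delta/2$. Your midpoint cut $m=(\sup K_1+\inf K_3)/2$ in the interval case gives exactly the same split point as the paper's distance-based cut, so the two proofs coincide up to relabeling the pieces.
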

\begin{proof}
Let $K_{2,1} = \{z \in K_2: d(z, K_1) < \delta/2\}$ and $K_{2,3} = \{z \in K_2: d(z, K_1) \geq \delta/2\}$ be a decomposition of $K_2$ into a piece that is close to $K_1$ and a piece not close to $K_1$. Note that $d(K_1, K_{2,3})\geq \delta/2$ and $d(K_1, K_{2,1}) \leq \delta/2$ so that $d(K_3, K_{2,1}) \geq \delta/2$.

So, by Theorem \ref{DK theorem} applied to $K_{2,1}, K_3$ and $K_{1}, K_{2,3}$,
\begin{align*}
\|E_{K_1}(A)&E_{K_2}(B)E_{K_3}(A)\| \\
&\leq  \|E_{K_1}(A)E_{K_{2,1}}(B)E_{K_3}(A)\| + \|E_{K_1}(A)E_{K_{2,3}}(B)E_{K_3}(A)\|\\
&\leq\|E_{K_{2,1}}(B)E_{K_3}(A)\| + \|E_{K_1}(A)E_{K_{2,3}}(B)\|\\
&\leq \frac{2c}{\delta/2}\|A-B\|.
\end{align*}

Suppose that $K_1, K_2, K_3$ are intervals as specified.
then  $\overline{K_1} = [a,b]$, $\overline{K_3} = [b+\delta,c]$  but $\overline{K_2}=[\alpha, \beta]$ where $\alpha \in K_1$ and $\beta \in K_3$. Then $\overline{K_{2,1}}=[\alpha,b+\delta/2]$ and $\overline{K_{2,3}}=[b+\delta/2,\beta]$. So, $K_1, K_{2,3}$ and $K_3, K_{2,1}$ are separated by a strip of width $\delta/2$ so we can use $c=1$.
\end{proof}

We state the standard lemma concerning perturbing an almost projection to a projection, such as in Lemma 8.3.7 in \cite{strung2021introduction}. We stress the way that the projection is gotten by how we state this result.
\begin{lemma}\label{Strung lemma}
(\cite{strung2021introduction})
Let $\A$ be a unital $C^\ast$-algebra and let $X \in \A$ be self-adjoint such that $\|X\| \geq 1/2$ and $\|X^2 - X\| < 1/4$. Then there is a projection $F$ in $\A$, expressed as a real function $f$ of $X$ with $f(0)=0, f(1) = 1$, such that $\|F-X\| \leq 2\|X^2-X\|$.
\end{lemma}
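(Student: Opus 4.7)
The plan is to apply continuous functional calculus to the self-adjoint element $X$. The essential observation is that the hypothesis $\|X^2-X\|<1/4$ forces a spectral gap at $1/2$. Indeed, by the spectral mapping theorem $\sigma(X^2-X)=\{\lambda(\lambda-1):\lambda\in\sigma(X)\}$, and the polynomial $g(\lambda)=\lambda(\lambda-1)$ attains its minimum value $-1/4$ on $\R$ precisely at $\lambda=1/2$. Thus $|g(\lambda)|<1/4$ for every $\lambda\in\sigma(X)$ excludes $\lambda=1/2$, and by compactness of $\sigma(X)$ there is $\delta>0$ with $\sigma(X)\subset(-\infty,\tfrac12-\delta]\cup[\tfrac12+\delta,\infty)$.

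With this gap in hand, define $f:\R\setminus\{1/2\}\to\{0,1\}$ by $f(\lambda)=0$ for $\lambda<1/2$ and $f(\lambda)=1$ for $\lambda>1/2$. The function $f$ is locally constant, hence continuous on $\sigma(X)$, and clearly $f(0)=0$, $f(1)=1$. Set $F:=f(X)$ via continuous functional calculus. Because $f=f^2=\overline{f}$ on $\sigma(X)$, the element $F$ is a self-adjoint idempotent, i.e.\ a projection expressed as a real function of $X$.

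For the norm estimate, the functional calculus gives $\|F-X\|=\sup_{\lambda\in\sigma(X)}|f(\lambda)-\lambda|$ and $\|X^2-X\|=\sup_{\lambda\in\sigma(X)}|\lambda(\lambda-1)|$, so it suffices to verify the pointwise inequality $|f(\lambda)-\lambda|\leq 2|\lambda(\lambda-1)|$ on $\R\setminus\{1/2\}$. For $\lambda<1/2$ the left side equals $|\lambda|$ while $2|\lambda(\lambda-1)|=2|\lambda|\cdot|1-\lambda|\geq|\lambda|$ since $|1-\lambda|>1/2$; the case $\lambda>1/2$ is symmetric, using $|\lambda|>1/2$ to bound $2|\lambda||1-\lambda|\geq|1-\lambda|$. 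Passing back through the functional calculus yields $\|F-X\|\leq 2\|X^2-X\|$.

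There is no serious obstacle here — the argument is essentially a one-variable real-analysis inequality passed through the continuous functional calculus, with the only subtle point being the recognition that the constant $1/4$ in the hypothesis is exactly what is needed to open a gap at $1/2$. The hypothesis $\|X\|\geq 1/2$ plays no role in the norm bound itself; it is the natural supplementary condition ensuring that $F$ is nontrivial, since it combined with the spectral gap forces $\sigma(X)\cap(\tfrac12,\infty)\neq\emptyset$ and hence $1\in\sigma(F)$.
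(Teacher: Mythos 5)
Your proof is correct and is essentially the standard argument; the paper does not reprove this lemma, it simply cites Lemma~8.3.7 of \cite{strung2021introduction}, so there is no internal proof to compare against. Your spectral-gap observation, the choice $f=\chi_{(1/2,\infty)}$, and the pointwise verification $|f(\lambda)-\lambda|\le 2|\lambda(\lambda-1)|$ on $\sigma(X)$ are exactly the standard route, and your remark that the hypothesis $\|X\|\ge 1/2$ is irrelevant to the norm bound but guarantees $F\ne 0$ (since $\|X^2-X\|<1/4$ already forces $\sigma(X)\subset(1/2-1/\sqrt2,\,1/2)\cup(1/2,\,1/2+1/\sqrt2)$, so a spectral point of modulus $\ge 1/2$ must lie to the right of $1/2$) is accurate and worth noting. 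One tiny stylistic point: when you say the conclusion ``forces $1\in\sigma(F)$'', you are implicitly using that $F$ is a projection so $\sigma(F)\subset\{0,1\}$ and $F\ne0$ gives $1\in\sigma(F)$; this is fine, just a compressed step.
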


We also state the following projection perturbation lemma from \cite{davidson1985almost}. Because we wish to make use of symmetries, we show that the construction respects symmetries.
\begin{lemma}\label{almost proj}
Let $\A$ be a unital $C^\ast$-algebra and let ${S}$ be a collection of symmetry maps on $\A$. Suppose that $E, F', G$ are projections in $\A$ that are ${S}$-symmetric. Suppose that $E \leq G$ and $\|(1-F')E\|, \|(1-G)F'\| \leq \varepsilon$ so that the range of $E$ is approximately contained in the range of $F'$ and the range of $F'$ is approximately contained in the range of $G$.

Then there exists a projection $F \in \A$ that is ${S}$-symmetric so that $E \leq F \leq G$ and $\|F'-F\| \leq 5\varepsilon$.
\end{lemma}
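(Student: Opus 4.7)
Set $Z := (G-E)F'(G-E)$. Then $Z$ is self-adjoint with $0 \leq Z \leq G-E$, and it is $S$-symmetric by the triple-product identity of Proposition \ref{properties}\ref{triple product}. Because $EZ = ZE = 0$ and $(1-G)Z = Z(1-G) = 0$, any real continuous function $f$ with $f(0) = 0$ satisfies $Ef(Z) = f(Z)E = 0$ and $(1-G)f(Z) = f(Z)(1-G) = 0$. The plan is to choose $f$ so that $P := f(Z)$ is a projection and set $F := E + P$; the orthogonality relations above then immediately give $E \leq F \leq G$, and $F$ is $S$-symmetric because real-valued continuous functional calculus commutes with each $\varphi \in S$ by Proposition \ref{properties}.

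\textbf{Key estimate.} The next step is to show $\|Z^2 - Z\| \leq 2\varepsilon^2$. Using $(F')^2 = F'$ and $G-E = 1 - E - (1-G)$,
\[F'(G-E)F' - F' = -F'EF' - F'(1-G)F',\]
so sandwiching by $(G-E)$ gives
\[Z^2 - Z = -(G-E)F'(1-G)F'(G-E) - (G-E)F'EF'(G-E).\]
Each summand has the form $TT^\ast$: the first with $T = (G-E)F'(1-G)$ (using $(1-G)^2 = 1-G$) and the second with $T = (G-E)F'E$ (using $E^2 = E$). The hypothesis $\|(1-G)F'\| \leq \varepsilon$ bounds $\|T\|$ by $\varepsilon$ in the first case. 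In the second, since $(G-E)E = 0$ one has $(G-E)F'E = -(G-E)(1-F')E$, so $\|T\| \leq \|(1-F')E\| \leq \varepsilon$. The $C^\ast$-identity $\|TT^\ast\| = \|T\|^2$ then promotes each bound to $\varepsilon^2$, giving $\|Z^2 - Z\| \leq 2\varepsilon^2$. Consequently $\sigma(Z) \subset [0,\alpha] \cup [1-\alpha,1]$ with $\alpha \leq 4\varepsilon^2$, and a continuous $f$ equal to $0$ on $[0,\alpha]$ and $1$ on $[1-\alpha,1]$ provides the desired projection $P$ with $\|P - Z\| \leq 4\varepsilon^2$; equivalently, one can apply Lemma \ref{Strung lemma} to $Z$.

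\textbf{Norm bookkeeping.} Finally, decompose $F - F'$ into the nine blocks $P_i(F - F')P_j$ relative to $(P_1, P_2, P_3) = (E, G-E, 1-G)$. Since $F$ is diagonal in this decomposition, the six off-diagonal blocks equal $-P_iF'P_j$ and each has norm $\leq \varepsilon$: those involving $1-G$ directly from $\|F'(1-G)\|, \|(1-G)F'\| \leq \varepsilon$, and those between $E$ and $G-E$ from the identity $(G-E)F'E = -(G-E)(1-F')E$ used above. The three diagonal blocks are $E - EF'E$, of norm $\leq \|(1-F')E\|^2 \leq \varepsilon^2$ via the $C^\ast$-identity applied to $(1-F')E$; the middle block $P - Z$, of norm $\leq 4\varepsilon^2$ by construction; and $-(1-G)F'(1-G) = -(1-G)F' \cdot F'(1-G)$, of norm $\leq \varepsilon^2$ again by the $C^\ast$-identity. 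The operator norm of $F - F'$ is bounded by the operator norm of the $3\times 3$ scalar matrix of block norms, which splits as $\varepsilon(J-I) + D$ with $\|J-I\| = 2$ and $\|D\| \leq 4\varepsilon^2$, giving $\|F - F'\| \leq 2\varepsilon + 4\varepsilon^2 \leq 5\varepsilon$ whenever $\varepsilon \leq 3/4$; for larger $\varepsilon$ the bound is trivial since $\|F - F'\| \leq 2$.

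\textbf{Main obstacle.} The substantive step is upgrading the naive $O(\varepsilon)$ bound on $\|Z^2 - Z\|$ to $O(\varepsilon^2)$ by recognizing both defect terms as $TT^\ast$ products and invoking the $C^\ast$-identity; this is what makes the rounding of $Z$ to the projection $P$ cheap enough to fit inside the $5\varepsilon$ budget, rather than costing another $O(\varepsilon)$ as a naive bound would give. The $S$-symmetry preservation, the sandwich $E \leq F \leq G$, and the $3\times 3$ block-matrix norm bookkeeping then all follow routinely once the correct $Z$ is identified.
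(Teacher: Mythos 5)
Your construction, estimate, and bookkeeping are exactly Davidson's argument, which is the proof the paper outlines (the paper sketches the construction and defers to Davidson for the norm estimates; you reproduce them in full, including the $TT^\ast$ trick giving $\|Z^2-Z\|\le 2\varepsilon^2$, and the $S$-symmetry discussion matches). One minor wrinkle in the case split: the fallback bound for large $\varepsilon$ should be $\|F-F'\|\le 1$ rather than $\le 2$, since a difference of two projections is a self-adjoint contraction; this covers all $\varepsilon\ge 1/5$, which is what is actually needed because the functional-calculus step requires the spectral gap $4\varepsilon^2<1-4\varepsilon^2$ to be nonempty, i.e.\ $\varepsilon<1/(2\sqrt{2})\approx 0.354$, rather than the $\varepsilon\le 3/4$ threshold that your inequality $2\varepsilon+4\varepsilon^2\le 5\varepsilon$ alone would suggest.
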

\begin{proof}
We outline the construction in the proof of \cite{davidson1985almost} to show that it respects symmetries. The other statements are shown by Davidson. Note that because $E, F', G$ are projections in $\A$, we can view them as projections on some Hilbert space $\H$. 

Davidson assumes that $\varepsilon < 1/5$ then writes $F'$ as a block matrix $(F_{ij})$ with respect to $E\H \oplus (G-E)\H \oplus (1-G)\H$. 
Then he notes that the self-adjoint operator $F_{22}$ is almost a projection with spectrum in $[0, 4\varepsilon^2]\cup [1-4\varepsilon^2, 1]$. 
Then it is stated that there is a projection $P$ on $(G-E)\H$ such that $\|F_{22} - P\| < 4\varepsilon^2<\varepsilon.$ 

Comparing the estimate in the proof of Lemma \ref{Strung lemma}, one see that $P$ can be gotten as a real, continuous function $f$ of $F_{22}$. Then Davidson sets $F = E + P$.

We note that if $E, F', G$ are ${S}$-symmetric then so is $\tilde F_{22} = (G-E)F'(G-E)\in\A$ and hence $P = f(\tilde F_{22})$ as well. So, $F=E+P$ is ${S}$-symmetric.
\end{proof}

\vspace{0.05in}

We now proceed to discussing localization operators.
We utilize some of the notation from \cite{hastings2009making}. The idea for constructing nearby commuting operators from $A$ and $B$ given projections $F_k$ is to first ``localize'' $B$ with respect to the spectrum of $A$ then to project $B$ onto the certain projections by ``pinching''. This is the map $B \mapsto B''$. 

We first begin with the following localization lemma from \cite{kachkovskiy2016distance}. How we will use it will be similar to the localization lemmas of Section 5 of  \cite{herrera2020hastings}. In particular, we use that same method of localization for a normal element as in Corollary 5.4 of \cite{herrera2020hastings}.

\begin{lemma}\label{KS localization}(\cite{kachkovskiy2016distance}) Let $\rho\geq 0$ be a $C^2(\R)$ function supported in $[-1,1]$ such that if  $\rho_n(x) = \rho(x-n)$ then $\sum_n \rho_n^2 = 1.$ Let $X, Y \in B(\H)$ be self-adjoint operators on a Hilbert space $\H$. 

Define $Y' = \sum_n \rho_n(X)Y\rho_n(X)$. Then $Y'$ is self-adjoint and satisfies the following properties:
\begin{enumerate}[label=(\roman*)] 
\item $\|EY'E\| \leq \|EYE\|$ for any spectral projection $E$ of $X$.
\item $\|[X, Y']\| \leq \|[X,Y]\|$
\item $\|Y'-Y\|\leq C_{\rho}\|[X,Y]\|$, where $C_\rho$ is a constant only depending on $\rho$. 
\end{enumerate}
\end{lemma}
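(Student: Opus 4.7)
The plan is to handle the three properties in sequence, with the main technical work concentrated in (iii). Self-adjointness of $Y'$ is immediate: each summand $\rho_n(X)Y\rho_n(X)$ is self-adjoint because $\rho_n$ is real-valued and $X,Y$ are self-adjoint; since $X \in B(\mathcal H)$ is bounded, only finitely many $\rho_n(X)$ are nonzero (those for which $[n-1,n+1]$ meets $\sigma(X)$), so the sum reduces to a finite sum of self-adjoint operators.

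For properties (i) and (ii) I would first extract the general inequality
\[\Bigl\|\sum_n \rho_n(X)\,T\,\rho_n(X)\Bigr\| \leq \|T\|, \qquad T \in B(\mathcal H),\]
proved by pairing with unit vectors $u,v$ and applying Cauchy--Schwarz:
\[\Bigl|\bigl\langle u,\sum_n \rho_n(X) T \rho_n(X) v\bigr\rangle\Bigr| \leq \|T\|\Bigl(\sum_n \|\rho_n(X) u\|^2\Bigr)^{1/2}\Bigl(\sum_n \|\rho_n(X) v\|^2\Bigr)^{1/2},\]
where each factor equals $1$ by the functional calculus identity $\sum_n \rho_n(X)^2 = I$. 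Property (i) then follows because a spectral projection $E$ of $X$ commutes with each $\rho_n(X)$, so $E Y' E = \sum_n \rho_n(X)(EYE)\rho_n(X)$, and the inequality applies with $T = EYE$. Property (ii) follows from the Leibniz identity $[X, \rho_n(X) Y \rho_n(X)] = \rho_n(X)[X,Y]\rho_n(X)$ (using $[X, \rho_n(X)] = 0$), summed over $n$, with the inequality applied to $T = [X,Y]$.

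The heart of the lemma is (iii). Using the algebraic identity $ABA - \tfrac12(A^2B + BA^2) = -\tfrac12[A,[A,B]]$ with $A = \rho_n(X)$ and $B = Y$, and summing using $\sum_n \rho_n(X)^2 = I$, I would rewrite
\[Y' - Y = -\tfrac12 \sum_n [\rho_n(X),[\rho_n(X), Y]].\]
Then converting via the Fourier representation $\rho_n(X) = \int \hat\rho(\xi)\,e^{-in\xi}\,e^{i\xi X}\,d\xi$, combined with the Duhamel estimate $\|[e^{i\xi X}, Y]\| \leq |\xi|\,\|[X,Y]\|$ (and hence, by Jacobi and $[e^{i\xi X}, e^{i\eta X}] = 0$, $\|[e^{i\xi X},[e^{i\eta X}, Y]]\| \leq 2\min(|\xi|,|\eta|)\,\|[X,Y]\|$) together with the Poisson identity $\sum_n e^{-in(\xi+\eta)} = 2\pi \sum_{k \in \Z} \delta(\xi+\eta - 2\pi k)$ yields $\|Y' - Y\| \leq C_\rho \|[X,Y]\|$ with
\[C_\rho \leq 2\pi \sum_{k \in \Z} \int |\hat\rho(\xi)|\,|\hat\rho(2\pi k - \xi)|\,\min(|\xi|,|2\pi k - \xi|)\,d\xi.\]
The main obstacle will be verifying that $C_\rho$ is finite and independent of $\|X\|$: a naive termwise estimate on the $n$-sum would produce a bound proportional to $\|X\|$ (since $O(\|X\|)$ values of $n$ contribute), so the cancellation encoded in Poisson summation is essential. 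Finiteness of $C_\rho$ is exactly where the $C^2$-hypothesis on $\rho$ enters, yielding the decay $|\hat\rho(\xi)| \lesssim (1+|\xi|)^{-2}$, which makes the $k = 0$ integrand decay like $|\xi|^{-3}$ and forces the tail $k \neq 0$ to contribute $O(1/k^2)$ per term since at least one of $|\xi|$, $|2\pi k - \xi|$ is bounded below by $\pi|k|$. Thus $C_\rho$ depends only on $\rho$, as claimed.
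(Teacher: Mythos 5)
Your proposal is correct, and your treatment of self-adjointness, (i), and (ii) (the unital completely positive map $T\mapsto\sum_n\rho_n(X)T\rho_n(X)$, commutation of $E$ and of $X$ with each $\rho_n(X)$, and the Leibniz identity) coincides with what the paper does. For (iii), however, you take a genuinely different route. The paper writes $Y'-Y=\sum_n[\rho_n(X),Y]\rho_n(X)$, inserts the Fourier representation of $\rho_n$ only in the left factor, and then kills the $\|X\|$-dependence by the uniform bound $\bigl\|\sum_n\rho_n(X)e^{-ink}\bigr\|\le\sqrt2$, which follows from the support hypothesis $\supp\rho\subset[-1,1]$ (at most two translates are nonzero at any point and their squares sum to $1$). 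Combined with $\|[e^{ikX},Y]\|\le|k|\,\|[X,Y]\|$ this gives the explicit constant $C_\rho\le\frac{\sqrt2}{\sqrt{2\pi}}\int|k\widehat\rho(k)|\,dk$, which the paper then evaluates numerically for its particular $\rho$; the argument is elementary (no distributions) and, as the paper emphasizes, does not need $Y$ self-adjoint. Your route instead symmetrizes via $\rho_n Y\rho_n-\tfrac12(\rho_n^2Y+Y\rho_n^2)=-\tfrac12[\rho_n,[\rho_n,Y]]$, Fourier-expands both factors, and extracts the cancellation in $n$ from Poisson summation, paying for the $\|X\|$-independence with the decay $|\widehat\rho(\xi)|\lesssim(1+|\xi|)^{-2}$ coming from the $C^2$ hypothesis rather than from the support condition. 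This works and is self-contained, but two points deserve care if you write it out: the Dirac-comb identity must be applied rigorously (reduce to the one-variable function $G(u)=\int\widehat\rho(\xi)\widehat\rho(u-\xi)[e^{i\xi X},[e^{i(u-\xi)X},Y]]\,d\xi$, check continuity and decay of $G$, and note that the dual sum $\sum_n[\rho_n(X),[\rho_n(X),Y]]$ is a finite sum since $X$ is bounded), and your constant is less explicit than the paper's, which matters because the paper later needs a concrete numerical value of $C_\rho$ (it derives $C_\rho<11.758$ for its chosen bump function).
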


We will provide an estimate for $C_\rho$ for $\rho$ even. Note that the proof only uses that $\rho$ is smooth enough that it is continuous and 
\[\|Y'-Y\|\leq\|\sum_n[\rho_n(X),Y]\rho_n(X)\| \leq C_\rho \|[X,Y]\|.\]
As we will see, this inequality does not require $Y$ to be self-adjoint.
Inspired by the method in \cite{kachkovskiy2016distance}, let \[\rho_n(x) =\frac1{\sqrt{2\pi}}\int\widehat{\rho}(k) e^{ik(x-n)}dk.\] Then 
\begin{align*}
\sum_n[\rho_n(X),Y]\rho_n(X) = \frac1{\sqrt{2\pi}}\int\widehat{\rho}(k)[e^{ikX}, Y]\left(\sum_n\rho_n(X)e^{-ink}\right)dk
\end{align*}
Recall that for each $x \in \R$, there are at most two values of $n$ so that $\rho_n(x)> 0$ and the sum of their squares equals 1. Now, recall that if $a, b \in \C$ with $|a|^2+|b|^2 = 1$ then
\[|a+b|\leq |a|+|b|\leq \sqrt{2(|a|^2+|b|^2)}=\sqrt2.\]
So, applying this to $a, b$ being the non-zero $\rho_n(x)e^{-ink}$, we obtain a bound for the norm of the sum in the parenthesis of $\sqrt2$.
Also, by Example 5 in Section 1.1 and Theorem 1.3.1 of \cite{aleksandrov2016operator}, we have \[\|[e^{ikX}, Y]\|\leq |k|\|[X,Y]\|,\]
which does not require $Y$ to be self-adjoint.
So, altogether
\[\|\sum_n[\rho_n(X),Y]\rho_n(X)\| \leq \frac{\sqrt2}{\sqrt{2\pi}}\int|k\widehat{\rho}(k)|dk\cdot\|[X,Y]\| \leq \sqrt{2}C_1\,\|[X,Y]\|.\]

We now choose $\rho$ to be piecewise $C^2$ and $\rho'' \in L^{\infty}$. By the proof of Corollary 2 in Section 3.2.23 of \cite{bratteli1979operator}, 
\[C_1 \leq \left(\frac\pi2\int|\rho''(x)-\rho'(x)|^2dx\right)^{1/2}.\]
Note that the statement of the cited result requires $\rho\in C^2$, however our conditions are sufficient so that we can apply Fourier transform identities to the Fourier transform of $\rho''$, which is what is used in its proof.

So, we construct $\rho$ as follows. Let \[p(x) = 1 - 35x^4 + 84x^5 - 70x^6 + 20x^7 = (x-1)^4(1 + 4x + 10x^2 + 20x^3).\] Then $p$ is a polynomial with $p(0) = 1, p(1)=0$ and $p^{(k)}(0)=0$, $p^{(k)}(1)=0$ for $1 \leq k \leq 3$. Also, $0 \leq p\leq 1$ on $[0,1]$, the only zero of $p(x)$ on $[0, \infty)$ is at $x=1$, and 
\begin{equation}\label{p-identity}
p(x) = 1-p(-x+1).
\end{equation}
Let 
\begin{align}\label{rho def}
\rho(x) = \left\{\begin{array}{ll} \sqrt{p(x)}, & 0 \leq x \leq 1\\
\sqrt{1-p(x+1)}, & -1 \leq x \leq 0\\
0, & |x|>1 \\\end{array}\right.
\end{align}
Then $\rho$ is supported on $[-1,1]$, has values in $[0,1]$, is piecewise $C^2$ with $\rho'' \in L^\infty$, and satisfies $\rho^2(x-1)+\rho^2(x)=1$ on $[0,1]$. So, $\sum_n\rho^2(x-n) = 1$ for all $x\in \R$.
Note that $\rho$ is even because if $|x|>1$, $\rho(x)=0$ and by (\ref{p-identity}) if for $x\in(-1,0]$, \[\rho(-x) =\sqrt{p(-x)}= \sqrt{1-p(x+1)} = \rho(x).\]

Note that 
\[\sqrt{p(x)} = (x-1)^2\sqrt{1 + 4x + 10x^2 + 20x^3},\] so that $\sqrt{p(x)}$ is smooth in a neighborhood of $[0,1]$ because the radicand is at least $1$ on $[0, \infty)$. 
Since $\sqrt{p(1)} = 0$ and $\sqrt{p(x)}'|_{x=1} = 0$, $\rho$ is $C^1$ on $(0, \infty)$.

Because $p(x)=1-35x^4+\cdots,$ we have that $\sqrt{p(x)}$ is smooth in a neighborhood of the origin and has first and second derivatives at the origin equal to zero.
Hence, by evenness,  $\rho'' \in L^\infty$ and $\rho''$ is piecewise smooth.

By a numerical calculation 
, 
we get $C_1< 8.314$ so 
\begin{align}\label{C_rho}
C_\rho < 11.758.
\end{align}

We will need the following version/application of that lemma:
\begin{lemma}\label{localization lemma} Let $\rho\geq 0$ be the function  defined in Equation (\ref{rho def}). Let $\A$ be a unital $C^\ast$-algebra and $A, B \in \A$ with $A$ self-adjoint. Let $\Delta > 0$.

Define $\L_A^\Delta(B) = \sum_n \rho_n\left(\frac2\Delta A\right)B\rho_n\left(\frac2\Delta A\right)$. Then $\L_A^\Delta$ is a completely positive map with norm $1$ that satisfies the following properties:
\begin{enumerate}[label=(\roman*)] 
\item $\|\L_A^\Delta(B)\| \leq \|B\|, \L_A(I) = I$
\item For any $C \in \A$ that commutes with $A$, $[C, \L_A^\Delta(B)] = \L_A^\Delta([C,B])$. Consequently, $\|[C,\L_A^\Delta(B)]\| \leq \|[C,B]\|$.
\item $\|\L_A^\Delta(B)-B\|\leq \frac{2C_{\rho}}{\Delta}\|[A,B]\|$, where $C_\rho<12$. 
\item\label{localization} If $\Omega_1, \Omega_2 \subset \R$ are sets such that $\dist(\Omega_1, \Omega_2) \geq \Delta$ then \[E_{\Omega_2}(A)\L_A^\Delta(B)E_{\Omega_1}(A) = 0.\]
\item If $X_1$ and $X_2$ are commuting self-adjoint elements of $\A$ and $\Delta_1, \Delta_2 > 0$ then $\L_{X_1}^{\Delta_1}$ and $\L_{X_2}^{\Delta_2}$ commute.
\item\label{localiation symmetric property} If $\varphi$ is a symmetry map on $\A$ then $\varphi(\L_A^\Delta(B)) = \L_{\varphi(A)}^\Delta(\varphi(B))$. Also, $\L_{-A}^\Delta = \L_A^\Delta$.
\end{enumerate}
\end{lemma}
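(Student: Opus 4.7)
The plan is to verify each of the six properties using continuous functional calculus on the self-adjoint operator $\frac{2}{\Delta}A$ together with properties of the partition-of-unity function $\rho$. Throughout I write $T_n = \rho_n(\frac{2}{\Delta}A)$, so each $T_n$ is self-adjoint, $\sum_n T_n^2 = I$, and the sum defining $\L_A^\Delta$ is locally finite since $\rho$ has compact support in $[-1,1]$ and $\sigma(A)$ is bounded. The map $B \mapsto \sum_n T_n B T_n$ is then in Kraus form and hence completely positive; combined with the unitality $\L_A^\Delta(I) = \sum_n T_n^2 = I$, this yields (i) (a unital completely positive map on a $C^\ast$-algebra is contractive). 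Property (ii) follows term-by-term, since any $C$ commuting with $A$ commutes with each $T_n$ via functional calculus, and then $[C, T_n B T_n] = T_n[C,B]T_n$. Property (iii) is essentially done in the paragraphs preceding the lemma: applying the commutator estimate $\|\sum_n [\rho_n(X), Y]\rho_n(X)\| \leq C_\rho \|[X,Y]\|$ with $X = \frac{2}{\Delta}A$ and $Y = B$ gives $\|\L_A^\Delta(B)-B\| \leq \frac{2C_\rho}{\Delta}\|[A,B]\|$, with the explicit bound $C_\rho<12$ recorded in \eqref{C_rho}.

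For (v), commuting self-adjoint $X_1, X_2$ generate an abelian $C^\ast$-algebra on which all $\rho_m(\frac{2}{\Delta_1}X_1)$ and $\rho_n(\frac{2}{\Delta_2}X_2)$ mutually commute, so expanding either order of composition produces the same doubly-indexed sum. For (vi), because each $\rho_n$ is a real-valued continuous function and $A$ is self-adjoint, Proposition \ref{properties} gives $\varphi(T_n) = \rho_n(\frac{2}{\Delta}\varphi(A))$ whether $\varphi$ is linear or conjugate-linear. Since the expression $T_n B T_n$ has the sandwich form $ABA$, Proposition \ref{properties}\ref{triple product} handles multiplicative and anti-multiplicative $\varphi$ uniformly: $\varphi(T_n B T_n) = \varphi(T_n)\varphi(B)\varphi(T_n)$. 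Summing over the (locally finite) $n$ gives $\varphi(\L_A^\Delta(B)) = \L_{\varphi(A)}^\Delta(\varphi(B))$. The identity $\L_{-A}^\Delta = \L_A^\Delta$ follows from the evenness of $\rho$: $\rho_n(-\frac{2}{\Delta}A) = \rho(-\frac{2}{\Delta}A - n) = \rho(\frac{2}{\Delta}A + n) = \rho_{-n}(\frac{2}{\Delta}A)$, and reindexing $n \mapsto -n$ recovers the original sum.

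The main obstacle I anticipate is (iv). The spectral support of $T_n$ lies in the closed interval $I_n = [\frac{\Delta(n-1)}{2}, \frac{\Delta(n+1)}{2}]$, of length exactly $\Delta$; since the hypothesis is $\dist(\Omega_1, \Omega_2) \geq \Delta$ rather than a strict inequality, the boundary case must be handled carefully, and this is where the evenness of $\rho$ is not enough: one must also use that $\rho(\pm 1) = 0$. The plan is to argue as follows for each $n$. If $\Omega_1$ meets the interior of $I_n$ at some $\lambda_1$, then any $\lambda_2 \in \Omega_2 \cap I_n$ would satisfy $|\lambda_1-\lambda_2| < \Delta$, contradicting $\dist(\Omega_1,\Omega_2) \geq \Delta$; hence $\Omega_2 \cap I_n = \emptyset$ and $T_n E_{\Omega_2}(A) = 0$ by functional calculus. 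Otherwise $\Omega_1 \cap I_n \subseteq \partial I_n$, and since $\rho$ vanishes at $\pm 1$, the scalar function $\lambda \mapsto \chi_{\Omega_1}(\lambda) \rho_n(\frac{2\lambda}{\Delta})$ is identically zero, so $E_{\Omega_1}(A) T_n = 0$. Either way the term $E_{\Omega_2}(A) T_n B T_n E_{\Omega_1}(A)$ vanishes, and summing over the finitely many nonzero $n$ produces $E_{\Omega_2}(A)\L_A^\Delta(B) E_{\Omega_1}(A) = 0$.
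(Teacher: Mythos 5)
Your proof is correct and follows essentially the same route as the paper: Kraus form plus unitality for (i), term-by-term commutation for (ii), the already-established commutator bound for (iii), functional-calculus support considerations for (iv), commutativity of the Kraus operators for (v), and Proposition \ref{properties}\ref{triple product} plus evenness for (vi). The one place you go slightly further afield is (iv): the paper's argument simply observes that $r_n(x) = \rho_n(\frac{2}{\Delta}x)$ is nonzero precisely on the \emph{open} interval $((n-1)\Delta/2, (n+1)\Delta/2)$ of length $\Delta$ (exactly because $\rho$ vanishes at $\pm 1$, as you noted), so two points from $\Omega_1$ and $\Omega_2$ at distance $\geq \Delta$ cannot both lie in it; your case distinction on whether $\Omega_1$ meets the interior of $I_n$ or only its boundary is sound but spells out the same observation in a more roundabout way.
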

\begin{proof}
Because we can embed a $W^\ast$-algebra generated by $\A$ into some $B(\H)$ in an isometric and weakly continuous way, this result then applies to its natural setting of $A, B$ belonging to $\A$ since $\L_A(B)\in \A$ and (iv) makes sense when we view the the spectral projections of $A$ belong to the extension of $\A$. We also note again that (iii) of  Lemma \ref{KS localization} does not require that $Y$ be self-adjoint.

Applying Lemma \ref{KS localization} to $X=\frac2{\Delta}A$ and $Y=B$ gives
\[\L_A^\Delta(B) = \sum_n \rho_n\left(\frac2\Delta A\right)B\rho_n\left(\frac2\Delta A\right)= \sum_n \rho_n(X)Y\rho_n(X) =\L_X^2(Y)\in\A\] satisfying the following properties.
Note that because all operators are bounded, the sum defining $\L$ is always a finite sum.
\begin{enumerate}[label=(\roman*)] 
\item $\L_A(I) = I$ follows directly because $\sum_n \rho_n^2 = 1$. The norm being $1$ then follows from Theorem 1.3.3 of \cite{stormer2013positive} from noting that $\L_A^\Delta$ is positive so $\|\L_A\|=\|\L_A(I)\|.$
\item The commutator identity is similar to the proof of (ii) in \cite{kachkovskiy2016distance}, which is just a simple computation. The commutator inequality then uses (i).
\item $\|\L_A^\Delta(B)-B\|= \|\L_X^2(Y)-Y\|\leq C_{\rho}\|[X,Y]\| = \frac{2C_\rho}{\Delta}\|[A,B]\|$, where $C_\rho<12$  by (\ref{C_rho}).
\item The function $r_n(x)=\rho_n\left(\frac2\Delta x\right)$ is non-zero only in the open interval $((n-1)\Delta/2, (n+1)\Delta/2)$. This is an open interval of length $\Delta$. Because there is a distance of at least $\Delta$ between $\Omega_1$ and $\Omega_2$, it follows that $\chi_{\Omega_2}r_n$ and $\chi_{\Omega_1}r_n$ are never simultaneously non-zero. So,
\[E_{\Omega_2}(A)\L_A^\Delta(B)E_{\Omega_1}(A) = \sum_n (\chi_{\Omega_2}r_n)(A)B(\chi_{\Omega_1}r_n)(A)=0.\]
\item Because the sums defining $\L_{X_1}^{\Delta_1}$ and $\L_{X_2}^{\Delta_2}$ are finite, we easily see that these operators commute because $\rho_n\left(\frac2{\Delta_i}X_i\right)$ commute.
\item By Proposition \ref{properties}\ref{triple product} and Proposition \ref{properties}\ref{isometry},
\begin{align*}
\varphi(\L_A^\Delta(B)) &= \sum_n\varphi(r_n(A))\varphi(B)\varphi(r_n(A))= \sum_nr_n(\varphi(Y))\varphi(X) r_n(\varphi(Y)) \\
&= \L_{\varphi(A)}^\Delta(\varphi(B)).
\end{align*}

We now show that $\L_{-A}^\Delta = \L_{A}^\Delta$. We calculate
\[r_n(-A) = \rho\left(-\frac2\Delta A-n\right) = \rho\left(\frac2\Delta A+n\right) = r_{-n}(A).\]
Thus, by re-indexing, \[\L_{-A}^\Delta(B) = \sum_nr_{n}(-A)B r_{n}(-A) = \sum_nr_{-n}(A)B r_{-n}(A) = \L_{A}^\Delta(B)\]
as desired.

\end{enumerate}
\end{proof}

We will be directly using the following. The statement \ref{localization} is the localization property.
\begin{lemma}\label{normal localization lemma} Let $\A$ be a unital $C^\ast$-algebra and $A, B \in \A$ with $A$ normal. Let $\Delta > 0$ and $\L^\Delta$ as in Lemma \ref{localization lemma}.

Define $\L(B) = \L_{\Re(A)}^{\Delta/\sqrt2}\circ \L_{\Im(A)}^{\Delta/\sqrt2}(B)$. Then $\L$ is a completely positive map with norm $1$ that satisfies the following properties:
\begin{enumerate}[label=(\roman*)]
\item $\|\L(B)\| \leq \|B\|$, $\L(I) = I$.
\item For any $C\in \A$ that commutes with $A$, $\|[C,\L(B)]\| \leq \|[C,B]\|$.
\item $\|\L(B)-B\|\leq\frac{2\sqrt2C_{\rho}}{\Delta}(\|[A,B]\|+\|[A^\ast,B]\|)$, where $C_\rho<12$. 
\item If $\Omega_1, \Omega_2 \subset \C$ are sets such that $\dist(\Omega_1, \Omega_2) > \Delta$ then \[E_{\Omega_2}(A)\L(B)E_{\Omega_1}(A) = 0.\]
\item\label{localization symmetry} If $\varphi$ is a symmetry map on $\A$ so that $A$ is $\varphi$-symmetric or $\varphi$-antisymmetric then $\varphi(\L(B)) = \L(\varphi(B))$. 
\end{enumerate}
\end{lemma}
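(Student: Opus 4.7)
Since $A$ is normal, the real and imaginary parts $X := \Re(A)$ and $Y := \Im(A)$ are commuting self-adjoint elements of $\A$. The plan is to reduce each property to the corresponding statement in Lemma \ref{localization lemma} applied to $X$ and $Y$ separately. The key structural fact is that by Lemma \ref{localization lemma}(v), the maps $\L_X^{\Delta/\sqrt{2}}$ and $\L_Y^{\Delta/\sqrt{2}}$ commute, so $\L$ admits the expansion $\L(B) = \sum_{n,m} r_n(X) r_m(Y) B r_m(Y) r_n(X)$, where $r_n(x) = \rho(\tfrac{2\sqrt{2}}{\Delta}x - n)$.

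Properties (i) and (ii) are immediate from Lemma \ref{localization lemma}(i)--(ii) applied twice, using that any $C$ commuting with $A$ also commutes with $X$ and $Y$. For (iii), I would telescope
\[\|\L(B) - B\| \leq \|\L_X^{\Delta/\sqrt{2}}(\L_Y^{\Delta/\sqrt{2}}(B)) - \L_Y^{\Delta/\sqrt{2}}(B)\| + \|\L_Y^{\Delta/\sqrt{2}}(B) - B\|\]
and bound the first term using Lemma \ref{localization lemma}(iii) combined with the commutator identity $[X, \L_Y^{\Delta/\sqrt{2}}(B)] = \L_Y^{\Delta/\sqrt{2}}([X,B])$, which holds by Lemma \ref{localization lemma}(ii) because $X$ commutes with $Y$. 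This yields $\|[X, \L_Y^{\Delta/\sqrt{2}}(B)]\| \leq \|[X,B]\|$. Writing $[X,B] = \tfrac{1}{2}([A,B] + [A^\ast,B])$ and similarly for $Y$ gives $\|[X,B]\| + \|[Y,B]\| \leq \|[A,B]\| + \|[A^\ast,B]\|$, as required.

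For (iv), the key geometric observation is that $r_n$ is supported on a closed interval of length $\Delta/\sqrt{2}$, so $r_n(X) r_m(Y) = E_{\bar{Q}_{n,m}}(A) r_n(X) r_m(Y)$, where $\bar{Q}_{n,m}$ is a closed square of side $\Delta/\sqrt{2}$ and hence diameter $\Delta$. Here I use the joint spectral identity $E_I(X) E_J(Y) = E_{I\times J}(A)$ for intervals $I, J \subset \R$, which is valid because $X$ and $Y$ commute and $A = X + iY$. If both $\Omega_1 \cap \bar{Q}_{n,m}$ and $\Omega_2 \cap \bar{Q}_{n,m}$ were nonempty for some $(n,m)$, then $\dist(\Omega_1, \Omega_2) \leq \diam(\bar{Q}_{n,m}) = \Delta$, contradicting $\dist(\Omega_1, \Omega_2) > \Delta$. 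So for each $(n,m)$ at least one of $E_{\Omega_1}(A) r_n(X) r_m(Y)$ or $E_{\Omega_2}(A) r_n(X) r_m(Y)$ vanishes, killing the corresponding term of $E_{\Omega_2}(A)\L(B)E_{\Omega_1}(A)$.

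For (v), I would analyze the four cases according to whether $\varphi$ is linear or conjugate-linear and whether $A$ is $\varphi$-symmetric or $\varphi$-antisymmetric. Equating real and imaginary parts on both sides of $\varphi(A) = \pm A$ and using that $\varphi$ preserves self-adjointness yields $\varphi(X) \in \{X, -X\}$ and $\varphi(Y) \in \{Y, -Y\}$ in every case. Combined with $\L_{-T}^\Delta = \L_T^\Delta$ and $\varphi(\L_T^\Delta(B)) = \L_{\varphi(T)}^\Delta(\varphi(B))$ from Lemma \ref{localization lemma}(vi), this gives $\varphi(\L(B)) = \L(\varphi(B))$. The main obstacle is in (iv), where some care is needed to identify the ``spatial support'' of $r_n(X) r_m(Y)$ as a spectral projection of $A$ via the product rule $E_I(X) E_J(Y) = E_{I \times J}(A)$ and to observe that the strict inequality $\dist(\Omega_1, \Omega_2) > \Delta$ is necessary since the closed square $\bar Q_{n,m}$ has diameter exactly $\Delta$; everything else is a mechanical application of the real-variable lemma.
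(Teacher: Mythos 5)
Your proof is correct, and for parts (i)--(iii) and (v) it tracks the paper's argument essentially step for step: (iii) is the same telescoping plus the commutator-contraction estimate $\|[\Re(A),\L_{\Im(A)}^{\Delta/\sqrt2}(B)]\|\leq\|[\Re(A),B]\|$, and (v) is the same reduction to $\varphi(X)\in\{\pm X\}$, $\varphi(Y)\in\{\pm Y\}$ combined with $\L^\Delta_{-T}=\L^\Delta_T$ from Lemma \ref{localization lemma}(vi). One small point you state without justification: that ``any $C$ commuting with $A$ also commutes with $X$ and $Y$'' requires the Fuglede--Putnam theorem to pass from $[C,A]=0$ to $[C,A^\ast]=0$; the paper cites this explicitly in its proof of (ii).

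Where you genuinely diverge is part (iv). The paper argues locally: it fixes $z_1\in\Omega_1$, $z_2\in\Omega_2$, uses $\dist(\Omega_1,\Omega_2)>\Delta$ to conclude that either the real or the imaginary parts of $z_1,z_2$ differ by more than $\Delta/\sqrt{2}$, and then applies the single-variable localization property (Lemma \ref{localization lemma}(iv)) to $\L_{\Re(A)}^{\Delta/\sqrt2}$ or $\L_{\Im(A)}^{\Delta/\sqrt2}$ on small rectangles around $z_1,z_2$, finally patching over $\Omega_1,\Omega_2$. You instead expand $\L(B)=\sum_{n,m}r_n(X)r_m(Y)Br_m(Y)r_n(X)$, use the joint spectral identity $E_I(X)E_J(Y)=E_{I\times J}(A)$ to identify $r_n(X)r_m(Y)=E_{\bar Q_{n,m}}(A)\,r_n(X)r_m(Y)$ for a square $\bar Q_{n,m}$ of diameter $\Delta$, and kill every summand at once: no square can meet both $\Omega_1$ and $\Omega_2$. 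This is a clean global argument that makes the geometric picture (each summand lives in a $\Delta$-diameter square of $\sigma(A)$) explicit, at the cost of invoking the joint spectral calculus rather than only the one-variable Lemma \ref{localization lemma}(iv). One side remark of yours is slightly off: the strict inequality $\dist(\Omega_1,\Omega_2)>\Delta$ is not actually ``necessary'' for your argument, since $\rho$ vanishes at $\pm1$, so one can replace $\bar Q_{n,m}$ by its interior (which has diameter $<\Delta$) and thereby handle $\dist(\Omega_1,\Omega_2)\geq\Delta$. This does not affect the correctness of your proof of the stated lemma, which only claims the weaker $>\Delta$ version.
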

\begin{proof}
We abbreviate $\L_{\Re(A)}^{\Delta/\sqrt2}$ as $\L_{\Re}$ and $\L_{\Im(A)}^{\Delta/\sqrt2}$ as $\L_{\Im}$ when there is no need to recall the localization distance $\Delta/\sqrt 2$.
Recall that $\L_{\Re}$ and $\L_{\Im}$ commute because $A$ is normal. 
\begin{enumerate}[label=(\roman*)]
\item This follows directly from the same properties for $\L_{\Re}$ and $\L_{\Im}$. 
\item By the Fuglede-Putnam theorem, $[C,A] = 0$ implies that $[C,A^\ast]=0$. So, $[C, -]$ commutes with $\L_{\Re}$ and $\L_{\Im}$. So,
\[\|[C,\L(B)]\| = \|\L([C,B])\|\leq \|[C,B]\|.\]
\item 
\begin{align*}
\|\L(B)-B\|&\leq \|\L_{\Re(A)}^{\Delta/\sqrt2}(\L_{\Im}(B)) - \L_{\Im}(B)\| + \|\L_{\Im(A)}^{\Delta/\sqrt2}(B) - B\|\\
&\leq \frac{2 C_{\rho}}{\Delta/\sqrt2}\|[\Re(A), \L_{\Im}(B)]\|+\frac{2 C_{\rho}}{\Delta/\sqrt2}\|[\Im(A), B]\|\\
&\leq \frac{2\sqrt2C_{\rho}}{\Delta}(\|[A,B]\|+\|[A^\ast,B]\|),
\end{align*} 
\item Let $z_1 \in \Omega_1, z_2 \in \Omega_2$.
It suffices to show that $E_{R_1}(A)\L(B)E_{R_2}(A) = 0$ for rectangles $R_1, R_2$ with small enough diameters containing $z_1, z_2$, respectively, in their interior.

Because $|\Re(z_1)-\Re(z_2)|^2+|\Im(z_1)-\Im(z_2)|^2 > \Delta^2$, we see that \[\max(|\Re(z_1)-\Re(z_2)|^2,|\Im(z_1)-\Im(z_2)|^2) > \Delta^2/2.\] 
Suppose that $|\Re(z_1)-\Re(z_2)| > \Delta/\sqrt2$. 
Consider the projections $\Re(R_1), \Re(R_2)$ of $R_1, R_2$ onto the real axis $\R$. For $\diam(R_1), \diam(R_2)$ small enough, $\Re(R_1)$ and $\Re(R_2)$ are a distance at least $\Delta/\sqrt2$ away. So,
\begin{align*}
&E_{R_1}(A)\L(B)E_{R_2}(A) \\
&=E_{R_1}(A)\left(E_{\Re(R_1)}(\Re(A))\L_{\Re(A)}^{\Delta/\sqrt2}(\L_{\Im}(B))E_{\Re(R_2)}(\Re(A))\right)E_{R_2}(A) \\
&= 0
\end{align*}
by (iv) of Lemma \ref{localization lemma}.
By a similar calculation, $E_{R_1}(A)\L(B)E_{R_2}(A) =0$ in the case that $|\Im(z_1)-\Im(z_2)| > \Delta/\sqrt2$.
This shows that $E_{\Omega_2}(A)\L(B)E_{\Omega_1}(A)=0.$

\item The cases of symmetry or antisymmetry of $A$ with respect to $\varphi$ break down into cases of some combination of $\Re(A)$ and $\Im(A)$ being $\varphi$-symmetry or $\varphi$-antisymmetric by Proposition \ref{translation}. 

So, because $\L_{-X}^{\Delta/\sqrt2}=\L_{X}^{\Delta/\sqrt2}$ for any self-adjoint $X$, it follows that
\[\varphi(\L_{\Im(A)}\circ \L_{\Re(A)}(B)) = \L_{\pm \Im(A)}\circ \L_{\pm \Re(A)}(\varphi(B)) = \L(\varphi(B)).\]

\end{enumerate}
\end{proof}

So, we have a localization operator that respects symmetry and antisymmetries. However, it will not respect non-real phase symmetries in general. Suppose that $\zeta = \cos\theta + i\sin\theta$ and $\varphi(A) = \zeta A$ for some linear symmetry $\varphi$. Then \[\varphi(A) = (\cos\theta 
\Re(A)-\sin\theta \Im(A))+i(\sin\theta \Re(A) +\cos\theta \Im(A)).\] Hence,
\[\varphi(\Re(A)) = \Re(\varphi(A)) = \cos\theta 
\Re(A)-\sin\theta \Im(A)\]
and because $\varphi$ is linear,
\[\varphi(\Im(A)) = \Im(\varphi(A)) = \sin\theta \Re(A) +\cos\theta \Im(A).\]
So, we can express this using a rotation matrix:
\[\begin{pmatrix} \varphi(\Re(A)) \\ \varphi(\Im(A))\end{pmatrix} =\begin{pmatrix} (\cos\theta) I & (-\sin\theta) I \\ (\sin\theta) I & (\cos\theta) I \end{pmatrix}\begin{pmatrix} \Re(A) \\ \Im(A)\end{pmatrix}.\]

Suppose that $\zeta$ has order $n\neq 2.$ Then the orbit of $A$ under $\varphi$ is $A$, $\zeta A$, $\zeta^2 A$, $\dots$, $\zeta^{n-1}A$ and the real and imaginary parts of $\zeta^k A$ are gotten by using the rotation expression above.

With these remarks, we have the following localization lemma that respects the phase symmetries of $A$.
\begin{lemma}\label{phase normal localization lemma} Let $\A$ be a unital $C^\ast$-algebra and $A, B \in \A$ with $A$ normal. Let $\Delta > 0$ and $\L^\Delta$ as in Lemma \ref{localization lemma}. Let $n \in \N$ and define $\omega_n = e^{2\pi i/n}$. 

Define $\Re_k = \Re(\omega_n^k A), \Im_k = \Im(\omega_n^k A)$, indexed cyclically in $\{0, 1, \dots, n-1\}$. 
Define $\L_n$ as the composition: $\L_n = \Pi_{k=0}^{n-1}\L_{\Re_k}^{\Delta/\sqrt2}\circ\L_{\Im_k}^{\Delta/\sqrt2}$. Then $\L_n$ is a completely positive map with norm $1$ that satisfies the following properties:
\begin{enumerate}[label=(\roman*)]
\item $\|\L_n(B)\| \leq \|B\|$, $\L_n(I) = I$.
\item For any $C\in \A$ that commutes with $A$, $\|[C,\L_n(B)]\| \leq \|[C,B]\|$.
\item $\|\L_n(B)-B\|\leq \frac{2n\sqrt2C_{\rho}}{\Delta}(\|[A,B]\|+\|[A^\ast,B]\|)$, where $C_\rho<12$. 
\item If $\Omega_1, \Omega_2 \subset \C$ are sets such that $\dist(\Omega_1, \Omega_2) > \Delta$ then \[E_{\Omega_2}(A)\L_n(B)E_{\Omega_1}(A) = 0.\]
\item If $\varphi$ is a symmetry map on $\A$ so that $A$ has a phase symmetry with phase $\zeta$ so that the order of $\zeta$ divides $n$ then $\varphi(\L_n(B)) = \L_n(\varphi(B))$. 
\end{enumerate}
\end{lemma}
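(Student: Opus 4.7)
The strategy is to build on the $n=1$ case established in Lemma \ref{normal localization lemma} together with the commutativity of the component localizations. The first step is to observe that since $A$ is normal, each $\Re_k = \tfrac12(\omega_n^k A + \overline{\omega_n^k}\, A^\ast)$ and $\Im_k = \tfrac1{2i}(\omega_n^k A - \overline{\omega_n^k}\, A^\ast)$ is a real-linear combination of the two commuting elements $A$ and $A^\ast$, so all $\Re_k$ and $\Im_j$ commute pairwise. By Lemma \ref{localization lemma}(v), the $2n$ factors $\L_{\Re_k}^{\Delta/\sqrt 2}$ and $\L_{\Im_j}^{\Delta/\sqrt 2}$ pairwise commute, so the composition defining $\L_n$ is well defined independent of order.

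Parts (i)--(iv) follow from the corresponding properties of each factor. Part (i) is immediate since each factor is a unital completely positive map of norm $1$. For (ii), the Fuglede--Putnam theorem gives $[C,A^\ast]=0$ whenever $[C,A]=0$, so $C$ commutes with every $\Re_k,\Im_k$; hence $[C,\cdot]$ commutes with every factor, and iterating gives $[C,\L_n(B)] = \L_n([C,B])$ with norm at most $\|[C,B]\|$. For (iii), I apply the triangle inequality to the telescoping sum obtained by inserting the factors one at a time: using Lemma \ref{localization lemma}(iii) together with $\|[\Re_k,B]\|,\|[\Im_k,B]\| \leq \tfrac12(\|[A,B]\|+\|[A^\ast,B]\|)$, each of the $2n$ steps contributes at most $\tfrac{\sqrt 2\, C_\rho}{\Delta}(\|[A,B]\|+\|[A^\ast,B]\|)$; the intermediate commutators do not grow because $A$ and $A^\ast$ commute with every $\Re_k,\Im_k$, so part (ii) propagates the bound. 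For (iv), I use commutativity to rewrite $\L_n = \L_{\Re_0}^{\Delta/\sqrt 2}\circ \L_{\Im_0}^{\Delta/\sqrt 2}\circ \mathcal R$ with $\mathcal R$ the remaining composition; since $\Re_0=\Re(A)$ and $\Im_0=\Im(A)$, applying Lemma \ref{normal localization lemma}(iv) to $\mathcal R(B)$ yields the required vanishing at distance greater than $\Delta$.

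The main content is (v). Suppose $\varphi(A)=\zeta A$ with $\zeta^n=1$, and pick $m$ with $\omega_n^m=\zeta$. Since $\varphi$ is hermitian, $\varphi(\Re(M)) = \Re(\varphi(M))$ always, while $\varphi(\Im(M)) = \Im(\varphi(M))$ if $\varphi$ is linear and $-\Im(\varphi(M))$ if $\varphi$ is conjugate-linear. Thus in the linear case $\varphi(\omega_n^k A) = \omega_n^{k+m} A$ yields $\varphi(\Re_k)=\Re_{k+m}$ and $\varphi(\Im_k)=\Im_{k+m}$; in the conjugate-linear case $\varphi(\omega_n^k A) = \omega_n^{-k+m}A$ yields $\varphi(\Re_k)=\Re_{-k+m}$ and $\varphi(\Im_k)=-\Im_{-k+m}$. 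By Lemma \ref{localization lemma}(vi), $\varphi\circ \L_X^{\Delta/\sqrt 2} = \L_{\varphi(X)}^{\Delta/\sqrt 2}\circ \varphi$, and crucially $\L_{-X}^{\Delta/\sqrt 2}=\L_X^{\Delta/\sqrt 2}$, which absorbs the sign flip on the imaginary parts. Intertwining $\varphi$ through all $2n$ factors yields $\varphi(\L_n(B))$ as the composition of the same $2n$ factors $\{\L_{\Re_k}^{\Delta/\sqrt 2},\L_{\Im_k}^{\Delta/\sqrt 2}\}_k$, but reindexed by a bijection of $\Z/n\Z$ (either $k\mapsto k+m$ or $k\mapsto -k+m$), applied to $\varphi(B)$. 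By the commutativity established at the outset, this reordered composition equals $\L_n$, so $\varphi(\L_n(B)) = \L_n(\varphi(B))$. The main obstacle is the conjugate-linear case, where the sign flip on $\Im_k$ would break the argument were it not exactly cancelled by the identity $\L_{-X}^{\Delta/\sqrt 2}=\L_X^{\Delta/\sqrt 2}$.
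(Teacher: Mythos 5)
Your proof is correct and follows essentially the same approach as the paper: establish pairwise commutativity of the factors (so $\L_n$ is order-independent), inherit (i)--(iv) from each factor, and prove (v) by showing the symmetry map permutes the commuting factors. The one place where you diverge worth noting is part (v) for conjugate-linear $\varphi$. The paper handles it by writing $\varphi = \varphi_\ast \circ {}^\ast$ and appealing to the just-proved linear case for $\varphi_\ast$; but $\varphi_\ast(A) = \overline{\zeta}A^\ast$ is not a phase multiple of $A$, so $\varphi_\ast$ does not literally satisfy the hypothesis of the linear case, and one must still verify directly that $\varphi_\ast$ permutes the factors $\{\Re_k, \Im_k\}$ (which it does, via $k \mapsto -k+s$ with a sign flip on $\Im_k$ absorbed by $\L_{-X} = \L_X$). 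You avoid this detour by computing the action of the conjugate-linear $\varphi$ on $\Re_k, \Im_k$ directly, which makes the sign-flip cancellation explicit and is arguably cleaner and more self-contained than the paper's reduction. Both routes lead to the same computation in the end, since $\varphi$ and $\varphi_\ast$ agree on the self-adjoint $\Re_k, \Im_k$.
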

\begin{proof}
Note that because $A$ is normal, $\Re_k, \Im_k$ all commute. So, (i), (ii), and (iii) follow just as in Lemma \ref{normal localization lemma}. 

By Lemma \ref{localization lemma}\ref{localiation symmetric property} for $\L$ defined in that lemma, $\L_n = \L \circ \Pi_{k=1}^{n-1}\L_{\Re_k}^{\Delta/\sqrt2}$ so the localization property (iv) holds for $\L_n$ as well.

We now address (v). Suppose that $\varphi(A) = \zeta A$ and $\zeta^n = 1$. There is an $s\in \N$ so that $\zeta = \omega_n^s.$ Suppose that $\varphi$ is linear. Then
\[\varphi(\Re_k) = \varphi(\Re(\omega^k A))= \Re(\omega^k\varphi(A)) = \Re_{k+s}\]
\[\varphi(\Im_k) = \varphi(\Im(\omega^k A))= \Im(\omega^k\varphi(A)) = \Im_{k+s},\]
hence $\varphi\circ\L_{\Re_k}\circ \L_{\Im_k}=\L_{\Re_{k+s}}\circ \L_{\Im_{k+s}}\circ\varphi$. So, when moving $\varphi$ past all the terms $\L_{\Re_k}\circ \L_{\Im_k}$ in $\varphi(\L_n(B)),$ we see that it merely permutes the commuting operators whose composition is $\L_n$. So, $\varphi\circ \L_n = \L_n\circ \varphi$ as desired.

If $\varphi$ is conjugate-linear then $\varphi_\ast$ is linear. Because $\L$ is positive, it then is hermitian. Therefore, 

\[\varphi(\L(B))=\varphi_\ast(\L(B)^\ast) =\varphi_\ast(\L(B^\ast))=\L(\varphi_\ast(B^\ast))=\L(\varphi(B)).\] 
\end{proof}

\section{Almost Reducing Projections and Nearly Commuting Operators}\label{Almost Reducing Projections and Nearly Commuting Operators}

We first discuss the two counter-examples that play a key role in the historical precedent of the results discussed in this section.
Voiculescu showed that for $\omega_n = e^{2\pi i/n}$, the unitary matrices (referred to as ``Voiculescu's unitaries'' henceforth) defined by
\begin{align}\label{Vunitaries}
U_n = \bp 
0 & & & &  1 \\
1 & 0 & & & \\
 & 1 & \ddots & & \\
& & \ddots & \ddots &  \\
& & & 1 & 0  \\
\ep, \;\;\; V_n = \bp 
\omega_n & & & & \\
& \omega_n^2 & & & \\
& & \ddots & & \\
& & &  \omega_n^{n-1} &  \\
& & & &  \omega_n^n  \\
\ep\end{align}
are almost commuting but are not nearly commuting. That is to say,  $[U_n, V_n] = U_nV_n - V_nU_n$ has small operator norm but there are no nearby commuting unitary matrices $U_n', V_n'$. 
The result that $U_n, V_n$ are not nearby \emph{any} commuting matrices was obtained later by Exel and Loring (\cite{exel1989almost}).

Recall that the (finite dimensional) unilateral shift on $\C^n$ refers to the lower triangular portion of $U_n$. Focusing on the spectral subspace of $V_n$ corresponding to the portion of the unit circle in the upper half-plane, Voiculescu's argument proceeds by contradiction. On this subspace, $U_n$ acts as the unilateral shift, so the proof constructs a projection whose range is almost invariant under the finite dimensional unilateral shift.
Then framing this in terms of Theorem 3 from \cite{halmos1968quasitriangular}, Voiculescu concludes.

This cited result from Halmos shows that the unilateral shift $U$ on $\ell^2(\N)$ defined by $Ue_n = e_{n+1}$ is not quasitriangular, 
meaning there does not exist a sequence $\{E_n\}$ of finite dimensional projections that such that $E_n \to 1$ strongly and the $E_n$ are almost invariant under $U$: $\|(1-E_n)UE_n\| \to 0$.

Davidson in 1985 (\cite{davidson1985almost}), among other advances, gave the example of almost commuting self-adjoint and almost normal matrices that do not have nearby commuting matrices. 
Define $(n^2+1)\times(n^2+1)$ matrices by
\[A_n = 
\bp
0&&&&\\
&\frac1{n^2}&&&\\
&&\frac2{n^2}&&\\
&&&\ddots&\\
&&&&1
\ep, \;
B_n = 
\bp
0&&&&&&&&&\\
\frac1n&0&&&&&&&&\\
&\ddots&\ddots&&&&&&&\\
&&\frac{n-1}{n}&0&&&&&&\\
&&&1&0&&&&&\\
&&&&\ddots&\ddots&&&&\\
&&&&&1&0&&&\\
&&&&&&\frac{n-1}{n}&0&&\\
&&&&&&&\ddots&\ddots&\\
&&&&&&&&\frac1n&0
\ep. 
\]
Note that by using Berg's result  that almost normal weighted shift matrices are close to normal matrices, Davidson proved that this counter-example showed that there is a pair of almost commuting self-adjoint and normal matrices that are not nearby commuting matrices of which the first is self-adjoint.

Notice from the form of Davidson's counter-example that there is a large subspace on which $B_n$ acts as the unilateral shift. 
With this observation, Davidson's argument is a  streamlining and simplification of Voiculescu's argument and the cited result from Halmos, although some of the details in \cite{halmos1968quasitriangular} are omitted. 
A thorough treatment of Voiculescu's unitaries counter-example (but with Exel and Loring's proof), the relevant material in Halmos's \cite{halmos1968quasitriangular}, and Davidson's counter-example can be found in S{\o}rensen's Masters Thesis: \cite{sorensen2012almost}.

Davidson's simplification is fitting particularly because Voiculescu frames an estimate concerning an almost invariant subspace for $U_n$ as an almost invariant subspace of the unilateral shift $U$ on $\ell^2(\N)$ in order to apply Halmos's result. However, Halmos' result is proved by examining finite dimensional projections.

Davidson's argument is also simpler because it can be. 
The fact that the spectrum of $A_n$ lies on the line allows one to estimate the norm of commutator of $B_n$ and a relevant projection $F$ and not just the norm of $(1-F)B_nF$ because the spectrum of $A_n$ can be bisected into two subsets that are only close on one frontier. 
In particular, Davidson does not need the additional decomposition of a projection in (ii) of Theorem \ref{Fpmrange} below. He only uses (i) and (iii) for a set like $\Omega = (-\infty, a]$.

\vspace{0.05in}

We now begin discussing almost invariant / almost reducing subspaces as generalizations of Voiculescu's and Davidson's arguments. The first lemma below shows that if $A, B$ are nearby structured commuting elements $A', B'$ then there is a nearby almost-reducing (and hence almost invariant) subspace $R(F)$. 

The building up a commuting elements using projections is in line with the spirit of Davidson's reformulation ($Q'$) of Lin's theorem. However, it is not a direct corollary of it because given a normal element $A$, we cannot perform the ``pinching'' by a projection $P$: $PAP+(1-P)A(1-P)$ and retain the spectral structure of $A$. For a self-adjoint element this will remain self-adjoint, which is a central part of Davidson's formulation of Lin's theorem, which has been used in \cite{hastings2009making}, \cite{herrera2020hastings}, \cite{herrera2022constructing}. However, for other normal operators, like unitaries, this can fundamentally affect what is being studied. This is a part of why Hastings's bootstrapping of Lin's theorem in \cite{hastings2009making} does not work for almost commuting unitaries, for instance.

The proof given below follows the proof in Theorem 2.3 of \cite{davidson1985almost} with the additional structure of $F$ from \cite{voiculescu1983asymptotically}. 
Note that we prove these results for von Neumann algebras. This is the most natural context given our dependence on spectral projections.

\begin{lemma}\label{proj requirement} Let $\A$ be a von Neumann algebra and ${S}$ some collection of weakly continuous linear symmetry maps on $\A$.
Suppose that $U, B \in \A$ with $U$ unitary and ${S}$-symmetric. 

Let
$\Omega_0\subset\subset \Omega$ be arcs in the unit circle $\bS$, not equalling the entire unit circle.
Let $\Omega_-, \Omega_+$ be the left and right disjoint intervals whose union is $\Omega\setminus \Omega_0$, where the ``right''/``left'' directions are defined to be ``clockwise''/``counterclockwise'', respectively. For two sets $S_1, S_2 \subset \C$, let $\dist(S_1, S_2)$ denote the standard Euclidean distance between the two sets. Let $c>0$ be as in Theorem \ref{DK theorem}.

Suppose that there exist $U', B' \in \A$ commuting with $U'$ unitary and ${S}$-symmetric.
Then there is an ${S}$-symmetric projection $F=F_{\Omega_0, \Omega}\in \A$ with the following properties:
\begin{enumerate}[label=(\roman*)]
\item $
E_{\Omega_0}(U) \leq F \leq E_{\Omega}(U),
$ 
\item $F$ has the decomposition $F=F_{-} + E_{\Omega_0}(U) + F_+$, where $F_-, F_+$ are ${S}$-symmetric projections with \begin{align}\label{Fpmrange}
F_- \leq  E_{\Omega_-}(U),\; F_+ \leq E_{\Omega_+}(U).
\end{align}
\item 
$\displaystyle\|\,[F,B]\,\| \leq \left(\frac{24c}{d(\bS\setminus \Omega, \Omega_0)}+\max\left(\frac{16c}{d(\bS\setminus \Omega, \Omega_0)}, \frac{32c}{d(\Omega_-, \Omega_+)}\right)\right)\|U'-U\|\|B\|$\\ 
\textcolor{white}{\_} \hspace{4in} $+ \|B'-B\|.$
\end{enumerate}
Also, the projections $F, F_-, F_+$ do not depend on $B$ or $B'$.
\end{lemma}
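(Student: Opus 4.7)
The strategy is to extract an $S$-symmetric spectral projection of $U'$, rotate it into a sandwiched true projection via Davis--Kahan and Lemma \ref{almost proj}, and finally split the result into the left and right pieces using the three-projections lemma and a smoothing step.

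\emph{Constructing $\tilde F$.} First, pick an intermediate arc $\Omega_\dagger$ with $\Omega_0 \subset\subset \Omega_\dagger \subset\subset \Omega$ so that both $d(\bS \setminus \Omega_\dagger, \Omega_0)$ and $d(\Omega_\dagger, \bS \setminus \Omega)$ are at least $\tfrac{1}{2}d(\bS \setminus \Omega, \Omega_0)$ (for instance, extend $\Omega_0$ by $\tfrac{1}{2}d(\bS \setminus \Omega, \Omega_0)$ on each side). Set $F' := E_{\Omega_\dagger}(U')$. Since each $\varphi \in S$ is weakly continuous and linear, Proposition \ref{properties}\ref{spectral proj transformation} makes $F'$ an $S$-symmetric projection, and $[U', B'] = 0$ yields $[F', B'] = 0$. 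Theorem \ref{DK theorem} bounds both $\|E_{\Omega_0}(U)(I - F')\|$ and $\|(I - E_\Omega(U))F'\|$ by $\tfrac{2c}{d(\bS\setminus\Omega, \Omega_0)}\|U - U'\|$, and then Lemma \ref{almost proj} yields an $S$-symmetric projection $\tilde F$ with $E_{\Omega_0}(U) \leq \tilde F \leq E_\Omega(U)$ and $\|\tilde F - F'\| \leq \tfrac{10c}{d(\bS\setminus\Omega, \Omega_0)}\|U - U'\|$.

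\emph{Splitting into $F_\pm$.} Let $P := \tilde F - E_{\Omega_0}(U)$, which is an $S$-symmetric projection with $P \leq E_{\Omega_-}(U) + E_{\Omega_+}(U)$, and form the compressions $P_{\pm\pm} := E_{\Omega_\pm}(U) P E_{\Omega_\pm}(U)$. These are $S$-symmetric (even for anti-multiplicative $\varphi$, since $\varphi(EPE) = \varphi(E)\varphi(P)\varphi(E) = EPE$) and satisfy $P_{\pm\pm} \leq E_{\Omega_\pm}(U)$. Setting $\eta := \|E_{\Omega_-}(U)\tilde F E_{\Omega_+}(U)\|$, the identity $P^2 = P$ combined with $E_{\Omega_-}(U)E_{\Omega_+}(U) = 0$ gives $P_{\pm\pm}^2 - P_{\pm\pm} = -E_{\Omega_\pm}(U) P E_{\Omega_\mp}(U) P E_{\Omega_\pm}(U)$, whence $\|P_{\pm\pm}^2 - P_{\pm\pm}\| \leq \eta^2$; and by Lemma \ref{three projs lemma} with the triangle inequality, $\eta \leq \tfrac{4c}{d(\Omega_-, \Omega_+)}\|U - U'\| + 2\|\tilde F - F'\|$. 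Lemma \ref{Strung lemma} then extracts $S$-symmetric projections $F_\pm := f(P_{\pm\pm})$ for a real continuous $f$ vanishing at $0$; since $P_{\pm\pm} \in E_{\Omega_\pm}(U)\A E_{\Omega_\pm}(U)$, polynomial approximation gives $F_\pm \in E_{\Omega_\pm}(U)\A E_{\Omega_\pm}(U)$, hence $F_\pm \leq E_{\Omega_\pm}(U)$, with $\|F_\pm - P_{\pm\pm}\| \leq 2\eta^2$. Define $F := F_- + E_{\Omega_0}(U) + F_+$; pairwise orthogonality of the three summands gives (i) and (ii) at once, and $S$-symmetry of $F, F_\pm$ is clear.

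\emph{The commutator estimate.} For (iii), write $[F, B] = [F - F', B] + [F', B - B']$, using $[F', B'] = 0$. Because $F'$ is a projection, the identity $[F', X] = F'X(I - F') - (I - F')XF'$ presents $[F', X]$ as block off-diagonal on $R(F') \oplus R(I - F')$, whence $\|[F', X]\| \leq \|X\|$; this is what delivers the leading constant $1$ on the $\|B - B'\|$ term. Combining with $\|[F - F', B]\| \leq 2\|F - F'\|\|B\|$ and bounding $\|F - F'\| \leq \|F - \tilde F\| + \|\tilde F - F'\|$, where $\|F - \tilde F\| \leq \|F_- - P_{--}\| + \|F_+ - P_{++}\| + \eta \leq 4\eta^2 + \eta$ (the last $\eta$ coming from $\|P - P_{--} - P_{++}\| = \|E_-PE_+ + E_+PE_-\| = \eta$, which is the norm of a self-adjoint block off-diagonal operator), and inserting the estimates on $\eta$ and $\|\tilde F - F'\|$, assembles the inequality in (iii). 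Finally, note that $F, F_\pm$ depend only on $U, U'$, not on $B, B'$.

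\emph{Main obstacle.} The chief difficulty is orchestrating the two successive perturbations without blowing up constants. Lemma \ref{almost proj} is clean and yields a bound depending only on $d(\bS \setminus \Omega, \Omega_0)$, but the subsequent split forces us to control the off-diagonal $\eta$, which through Lemma \ref{three projs lemma} pulls in a $d(\Omega_-, \Omega_+)$ dependence and also inherits error from $\|\tilde F - F'\|$. The $\max$ in (iii) precisely reflects this dual dependence, and verifying the stated numerical constants requires careful bookkeeping of how the errors from Lemma \ref{almost proj} and Lemma \ref{Strung lemma} recombine.
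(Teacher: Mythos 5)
Your route differs genuinely from the paper's: the paper forms $X = E_{\Omega_-}(U)F'E_{\Omega_-}(U) + E_{\Omega_0}(U) + E_{\Omega_+}(U)F'E_{\Omega_+}(U)$ directly, shows $\|X - F'\|$ is linearly small using (\ref{F ineqs}), (\ref{Omega0 terms}), Lemma \ref{three projs lemma}, and (\ref{blockNorm}), and then applies Lemma \ref{Strung lemma} \emph{once}; the resulting $F = f(X)$ automatically commutes with $E_{\Omega_\pm}(U)$, so $F_\pm := FE_{\Omega_\pm}(U)$ give (ii) for free. You instead go through Lemma \ref{almost proj} first and then correct two compressions $P_{\pm\pm}$ separately with Lemma \ref{Strung lemma}. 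Both routes are structurally plausible, and you are careful about the $S$-symmetry at each stage and about why $F_\pm \leq E_{\Omega_\pm}(U)$.

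There is, however, a real gap in your error bound. Your final estimate has $\|F - \tilde F\| \leq 4\eta^2 + \eta$, and $\eta$ is of order $\|U'-U\|$. The $4\eta^2$ term is therefore \emph{quadratic} in $\|U'-U\|$, while the bound in (iii) is linear. You cannot close this to a linear bound without a case split: the paper handles this explicitly by checking whether $c_{\Omega,\Omega_0}\|U'-U\| < 1/4$ (in which case Lemma \ref{Strung lemma} applies and the linear bound is $\leq 2\|X^2 - X\| \leq 2\|X - F'\|$), and otherwise taking $F = E_{\Omega_0}(U)$, $F_\pm = 0$, in which case $\|F' - F\| \leq 1$ is trivially dominated by the linear expression. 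Your write-up omits this trichotomy entirely, so as written the argument does not yield (iii). A related smaller omission: Lemma \ref{Strung lemma} as stated requires $\|X\| \geq 1/2$, which need not hold for $P_{\pm\pm}$; you should either note that $F_\pm = 0$ suffices in that regime (which it does, since then $\|P_{\pm\pm}\| \leq 2\eta^2$ when $\eta^2 < 1/4$), or invoke the lemma only after the case split. You correctly flag the constant bookkeeping as unfinished; even after the case split the constants produced by your two-step route will be larger than the paper's, which is why the paper does the pinching in one step.
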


\begin{proof}
Let $U', B'$ be a pair of commuting elements with $U'$ unitary and ${S}$-symmetric. Let $\Omega'$ be the $\frac12\dist(\bS \setminus \Omega, \Omega_0)$ neighborhood of $\Omega_0$ in $\bS$.
So, $\Omega_0 \subset \Omega' \subset \Omega$ and the distance of each of these sets to the complement of the next set in the chain is $\frac12d(\bS \setminus \Omega, \Omega_0)$.

Now set $F' = E_{\Omega'}(U')$. We have that $F'$ is ${S}$-symmetric because the symmetries in ${S}$ are weakly continuous and linear.
We now proceed to perturb $F'$ to obtain $F$.

By Theorem \ref{DK theorem}, \[\|E_{\bS\setminus \Omega'}(U')E_{\Omega_0}(U)\|, \|E_{\bS \setminus \Omega}(U)E_{\Omega'}(U')\| \leq \frac{2c}{d(\bS\setminus \Omega, \Omega_0)}\|U'-U\|\]
so
\begin{align}\label{F ineqs}
\|(1-F')E_{\Omega_0}(U)\|, \|(1-E_{\Omega}(U))F'\| \leq \frac{2c}{d(\bS\setminus \Omega, \Omega_0)}\|U'-U\|.
\end{align}
Recall that for any two projections $F_1, F_2$, their product $F_1F_2$ and its adjoint $F_2F_1$ have the same norm. So, (\ref{F ineqs}) is equivalent to
\begin{align*}
\|E_{\Omega_0}(U)(1-F')\|, \|F'(1-E_{\Omega}(U))\| \leq \frac{2c}{d(\bS\setminus \Omega, \Omega_0)}\|U'-U\|.
\end{align*}

In order to obtain (ii), we filter $F'$ as in \cite{voiculescu1983asymptotically} before applying Davidson's lemma.
Observe that $\Omega_-, \Omega_0, \Omega_+$ form a partition of $\Omega$. We note that
\[E_{\Omega_0}(U') \leq F' \mbox{ and } F' \leq E_{\Omega}(U')= E_{\Omega_-}(U') + E_{\Omega_0}(U')+ E_{\Omega_+}(U'),\]
however we want a similar requirement to hold for $F$ with respect to $U$ and for the projection $F-E_{\Omega_0}(U)$ to decompose as the sum of two projections majorized, respectively, by the two spectral projections of $U$:  $E_{\Omega_-}(U), E_{\Omega_+}(U)$. 

Note that by the second inequality of Equation (\ref{F ineqs}), 
\begin{align}
\|F'-E_{\Omega}(U)F'E_{\Omega}(U)\| &\leq \|F'-E_{\Omega}(U)F'\|+\|E_{\Omega}(U)(F'-F'E_{\Omega}(U))\| \nonumber\\
&\leq \frac{4c}{d(\bS\setminus \Omega, \Omega_0)}\|U'-U\|.\label{F Omega}
\end{align}
Let \[X = E_{\Omega_-}(U)F'E_{\Omega_-}(U)+E_{\Omega_0}(U)+E_{\Omega_+}(U)F'E_{\Omega_+}(U).\] 
We obtain a few estimates to show that $X$ is close to $F'$ and construct from it the projection $F$ having the decomposition  $F_-+E_{\Omega}(U)+F_+$.

The norms of $E_{\Omega_0}(U)F'-E_{\Omega_0}(U)$ and its adjoint $F'E_{\Omega_0}(U)-E_{\Omega_0}(U)$ are controlled by Equation (\ref{F ineqs}). So because $\Omega_-, \Omega_0$, and $ \Omega_0, \Omega_+$ are disjoint,
\begin{align}
\nonumber \|E_{\Omega_-}(U)F'E_{\Omega_0}(U)\| &=
\|E_{\Omega_0}(U)F'E_{\Omega_-}(U)\|\leq \frac{2c}{d(\bS\setminus \Omega, \Omega_0)}\|U'-U\|,\\
\|E_{\Omega_+}(U)F'E_{\Omega_0}(U)\|&=\|E_{\Omega_0}(U)F'E_{\Omega_+}(U)\|
\leq \frac{2c}{d(\bS\setminus \Omega, \Omega_0)}\|U'-U\|. \label{Omega0 terms}
\end{align} 

We now calculate
\begin{align}
\nonumber E_{\Omega}&(U)F'E_{\Omega}(U) \\
\nonumber &= \left(E_{\Omega_-}(U)+E_{\Omega_0}(U)+E_{\Omega_+}(U)\right)F'\left(E_{\Omega_-}(U)+E_{\Omega_0}(U)+E_{\Omega_+}(U)\right)
\\
\nonumber&= X +E_{\Omega_0}(U)(F'-1)E_{\Omega_0}(U)
+E_{\Omega_-}(U)F'E_{\Omega_0}(U) +E_{\Omega_0}(U)F'E_{\Omega_-}(U)\\
\nonumber&\;\;\;\;\;\;\;\;+E_{\Omega_0}(U)F'E_{\Omega_+}(U)+E_{\Omega_+}(U)F'E_{\Omega_0}(U)\\
&\;\;\;\;\;\;\;\;
+E_{\Omega_-}(U)F'E_{\Omega_+}(U)
+E_{\Omega_+}(U)F'E_{\Omega_-}(U). \label{F', X}
\end{align}
We now show that the norms of all the terms except $X$ are small. Observe that $E_{\Omega_+}(U)F'E_{\Omega_-}(U)$ and $E_{\Omega_-}(U)F'E_{\Omega_+}(U)$ have norm controlled by Lemma \ref{three projs lemma} by
\[\|E_{\Omega_+}(U)E_{\Omega'}(U')E_{\Omega_-}(U)\| \leq \frac{4c}{d(\Omega_-, \Omega_+)}\|U'-U\|.\] 

Recall the fact that if $G_1, G_2, G_3 \in \A$ are orthogonal projections then for $S \in \A$,  
\begin{equation}\label{blockNorm}
\|G_1SG_2+G_2SG_3+G_3SG_1\| = \max(\|G_1SG_2\|, \|G_2SG_3\|, \|G_3SG_1\|)
\end{equation}
which can be seen by using a block operator matrix for ${S}$ and noting that each of the terms in the sum are in their own separate row and column. So, applying the identities in (\ref{F', X}) and (\ref{blockNorm}) provides
\begin{align}
\|\nonumber E_{\Omega}&(U)F'E_{\Omega}(U) -X-E_{\Omega_0}(U)(F'-1)E_{\Omega_0}(U)\|\\
&\leq\nonumber
\|E_{\Omega_-}(U)F'E_{\Omega_0}(U)+E_{\Omega_0}(U)F'E_{\Omega_+}(U)+E_{\Omega_+}(U)F'E_{\Omega_-}(U)\|\\
&\;\;\;+\nonumber
\|E_{\Omega_0}(U)F'E_{\Omega_-}(U)+E_{\Omega_-}(U)F'E_{\Omega_+}(U)+E_{\Omega_+}(U)F'E_{\Omega_0}(U)\|\\
&=2\max\left(\|E_{\Omega_0}(U)F'E_{\Omega_-}(U)\|, \|E_{\Omega_-}(U)F'E_{\Omega_+}(U)\|, \|E_{\Omega_+}(U)F'E_{\Omega_0}(U)\|\right).
\label{normTrick}
\end{align}
So, using this with the the estimates in (\ref{F Omega}), (\ref{Omega0 terms}), and the first part of (\ref{F ineqs}) we obtain
\begin{align}\nonumber
\|F'-X\| &\leq \left(\frac{6c}{d(\bS\setminus \Omega, \Omega_0)}+\max\left(\frac{4c}{d(\bS\setminus \Omega, \Omega_0)}, \frac{8c}{d(\Omega_-, \Omega_+)}\right)\right)\|U'-U\| \\
&= c_{\Omega,\Omega_0}\|U'-U\|.\label{F'XDiff}
\end{align}

Using $0 \leq X \leq 1$, $(F')^2-F=0$, and the identity $x^2-x= (x-1/2)^2-1/4$, we obtain
\begin{align}\nonumber
\|X^2-X\|&= \left\|\left(X-\frac12I\right)^2-\left(F'-\frac12I\right)^2\right\|\\
&\leq \left\|X-\frac12I\right\|\|X-F'\|
+\|X-F'\|\left\|F'-\frac12I\right\|\nonumber
\\
&\leq \|X-F'\|. \nonumber
\end{align}
We momentarily assume that $c_{\Omega',\Omega}\|U'-U\| < 1/4$ so $\|X^2-X\| < 1/4$ and $\|X\| \geq 1-\|X-F'\|> 1/2$.
Then by Lemma \ref{Strung lemma}, there is a projection $F$ that is a continuous real function $f$ of $X$ with $f(0)=0$ and $f(1)=1$ and \[\|F-X\| \leq 2\|X^2-X\| \leq 2\|F'-X\| \leq 2c_{\Omega,\Omega'}\|U'-U\|.\] 

Because $E_{\Omega_-}(U), E_{\Omega_0}(U), E_{\Omega
_+}(U)$ commute with $X$, they commute with $F$. 
We also have that $E_{\{1\}}(X) \leq F \perp E_{\{0\}}(X)$. Also, $R(E_{\Omega_0}(U))$ belongs to  $R(E_{\{1\}}(X))$, so $E_{\Omega_0}(U) \leq F$. Because $E_{\Omega}(U)X = X$ and $f(0)=0$, we have that $F \leq E_{\Omega}(U)$.

If we define $F_a = FE_{\Omega_a}(U)$ for $a = \pm$, then the $F_a \leq E_{\Omega_a}(U)$ are projections so that $F$ has the desired decomposition $F = F_- + E_{\Omega_0}(U) + F_+$.
Also,
\begin{align*} 
\|F'-F\| &\leq \|F'-X\|+\|X-F\| \leq 3c_{\Omega,\Omega'}\|U'-U\|.
\end{align*}

Recall that our derivation of this inequality only is valid when the assumption $c_{\Omega,\Omega
'}\|U'-U\| < 1/4$ held. 
For the case that $c_{\Omega,\Omega
'}\|U'-U\| \geq 1/4$, we instead choose $F = E_{\Omega_0}(U)$, $F_-=F_+=0$ so 
\[\|F'-F\| \leq 1 \leq 4c_{\Omega,\Omega'}\|U'-U\|.\] 
So, we have a projection $F$ with the the desired decomposition so that the estimate of $\|F'-F\|\leq 4c_{\Omega,\Omega'}\|U'-U\|$ holds without any assumption.

Since $[F',B']=0$, we see that
\begin{align*}\
|[F,B]\| &= \|[F-F',B]+[F',B-B']+[F',B']\| \\
&\leq \|[F-F',B]\|+\|[F'-1/2,B-B']\|\\
&\leq2\|F-F'\|\|B\|+\|B'-B\|
\end{align*}
which is the desired commutator inequality.

As for the symmetries, because $U$ and $U'$ are ${S}$-symmetric and ${S}$ is composed of weakly continuous linear symmetry maps, all spectral projections of $U$ and $U'$ are ${S}$-symmetric. Then by Proposition \ref{properties}\ref{triple product}, $X$ is ${S}$-symmetric as well. Then because $F$ is a real-continuous function of $X$, so is $F$. Because $F$ and $E_{\Omega_a}(U)$ are ${S}$-symmetric and commuting, $F_a$ are ${S}$-symmetric as well.
\end{proof}
\begin{remark}
This result can be extended to any normal operator $A$ with spectrum in some finite union of smooth curves in $\C$ along the lines of Section 16 of \cite{herrera2020hastings}. 

If we are interested in the case that $A$ has spectrum in some smooth simple closed or open curve in $\C$, then we can take a function on $\C$ that maps the curve into the unit circle so that it and its inverse are smooth enough. For instance, there are rational functions (like that of the Cayley transform) that work well when the spectrum is in a line that map into a compact proper subarc of the unit circle.
\end{remark}
\begin{remark}
Note that the condition (ii) is essential for the converse of this result to be true. (This is why Voiculescu included the condition (ii) in his construction.) That is, it is not enough for $F$ to be localized with respect to the spectrum of $U$ (i.e. (i)) and the range of $F$ to be an almost reducing subspace for $B$ (i.e. (iii)).

Roughly speaking, (ii) allows us to ``cut'' the spectrum of $U$ since the orthogonal complement of $F$ breaks with a complement due to $F_-$ and due to $F_+$ in an unentangled way. Otherwise, $FBF+(1-F)B(1-F)$ will commute with $F$ and be close to $B$, but may map one end of the complement of the spectrum of $U$ to the other end. We want our perturbation of $B$ to be localized in the spectrum of $U$ so that we can form $U'$ by combining eigenspaces, so this is not what we want.

Such a projection for Voiculescu's unitaries that satisfies (i) and (iii) can be constructed using Berg's gradual exchange lemma (\cite{berg1975approximation}) in the simplified form of \cite{herrera2022constructing}.
\end{remark}

We now discuss the converse of this result. The converse will provide a construction of nearby commuting operators given the existence of certain projections.
\begin{lemma}
\label{proj-converse}
Let $\A$ be a unital $C^\ast$-algebra and ${S}$ some collection of weakly continuous linear symmetry maps on $\A$.  Let $U, B \in \A$ with $U$ being unitary and ${S}$-symmetric.

Suppose that there is a constant $\epsilon(L)$ (explicitly depending on some parameter $L$ and implicitly depending on $U, B$)  so that for all arcs $\Omega_0 \subset\subset \Omega \subsetneq \bS$ with the left and right subarcs $\Omega_-, \Omega_+$ of $\Omega\setminus \Omega_0$, there is an ${S}$-symmetric projection $F=F_{\Omega_0, \Omega}\in \A$ with the following properties. 
 \begin{enumerate}
 \item $E_{\Omega_0}(U) \leq F\leq E_{\Omega}(U)$.
 \item $F$ can be decomposed as $F = F_- + E_{\Omega_0}(U), + F_+$, where $F_- \leq E_{\Omega_-}(U), F_+ \leq E_{\Omega_+}(U)$ are ${S}$-symmetric projections.
 \item \label{commutator-leqStatement}
 $F$ satisfies the commutator estimate
\begin{align}\label{commutator-leqconv}
\|\,[F,B]\,\| \leq \epsilon(L),
\end{align}
whenever $\dist(\bS\setminus \Omega, \Omega_0)$ and $\dist(\Omega_-, \Omega_+)$ are between $L$ and $2L$, for $L$ small enough.
\end{enumerate}

Then there is a $L_0>0$ so that for all $L\in (0, L_0)$, there are commuting $U'', B''\in \A$ with $U''$ unitary and ${S}$-symmetric satisfying
\begin{align}\label{ABestimates}
\|U''-U\| \leq \frac{11}2L, \; \|B''-B\| \leq \frac{17C_\rho}{L}\|[U,B]\| +2\epsilon(L).
\end{align}
The construction of $U''$ only depends on $U, L,$ and the projections. The map defining $B \mapsto B''$ is a completely positive linear map of norm $1$ that commutes with each symmetry in ${S}$.
\end{lemma}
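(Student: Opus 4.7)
The plan is to discretize $\bS$ at angular scale $\sim L$, apply the hypothesis at each sample arc to extract the projections, and assemble them into an orthogonal family summing to $I$ from which $U''$ is obtained by spectral substitution and $B''$ by pinching a localized $B$.

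Choose $N\sim \pi/L$ equally spaced sample points $z_k=e^{2\pi i k/N}$ on $\bS$, and alternate $N$ core arcs $\Omega_0^{(k)}$ (around $z_k$) with $N$ gap arcs $G_k$, each of arclength $\sim L$. Setting $\Omega^{(k)}:=G_{k-1}\cup\Omega_0^{(k)}\cup G_k$ gives $\Omega_-^{(k)}=G_{k-1}$, $\Omega_+^{(k)}=G_k$, and the two distance hypotheses both lie in $[L,2L]$. By the hypothesis, we obtain ${S}$-symmetric projections $F_k=F_{\Omega_0^{(k)},\Omega^{(k)}}$ with $\|[F_k,B]\|\leq\epsilon(L)$ and decomposition $F_k=F_-^{(k)}+E_{\Omega_0^{(k)}}(U)+F_+^{(k)}$ where $F_-^{(k)}\leq E_{G_{k-1}}(U)$, $F_+^{(k)}\leq E_{G_k}(U)$. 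Define
\[
P_k := (F_k-F_-^{(k)})+\bigl(E_{G_{k-1}}(U)-F_+^{(k-1)}\bigr).
\]
Both summands are projections with disjoint spectral supports (under $U$) in $\Omega_0^{(k)}\cup G_k$ and $G_{k-1}$, so $P_k$ is a projection with $P_k\leq E_{\Omega^{(k)}}(U)$. For $|j-k|\geq 2$ the spectral supports of $P_j,P_k$ are disjoint; for adjacent $k,k+1$, in the overlap region $G_k$ the projections restrict to orthogonal complements $F_+^{(k)}$ and $E_{G_k}(U)-F_+^{(k)}$ within $E_{G_k}(U)$, so $P_kP_{k+1}=0$. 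A direct sum computation gives $\sum_k P_k=I$, and each $P_k$ is ${S}$-symmetric because spectral projections of $U$ are (via Proposition \ref{properties}(xiv), using that $U$ is ${S}$-symmetric with ${S}$ a family of weakly continuous linear symmetries) and so are the $F_\pm^{(k)}$ by Lemma \ref{proj requirement}.

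Set $U'':=\sum_k z_k P_k$: this is unitary, ${S}$-symmetric with spectrum $\{z_k\}$, and since $P_k\leq E_{\Omega^{(k)}}(U)$ with $\Omega^{(k)}$ a $3L$-arc around $z_k$ the bound $\|U''-U\|\leq 3L/2\leq 11L/2$ follows. Let $\L$ be the localization operator of Lemma \ref{normal localization lemma} applied to the normal element $U$ at scale $\Delta=4\sqrt{2}L/17<L$, and set $B'':=\sum_k P_k\L(B)P_k$. Then $B''$ commutes with $U''$ by pinching; the map $B\mapsto B''$ is a composition of two CP-linear maps of norm $1$ and commutes with each symmetry in ${S}$ by Lemma \ref{normal localization lemma}(v) (using ${S}$-symmetry of $U$) together with ${S}$-symmetry of the $P_k$. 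Splitting $\|B''-B\|\leq\|B''-\L(B)\|+\|\L(B)-B\|$, Lemma \ref{normal localization lemma}(iii) with $\|[U^*,B]\|=\|[U,B]\|$ bounds the second term by $\tfrac{17C_\rho}{L}\|[U,B]\|$; by Lemma \ref{normal localization lemma}(iv) the cross-term $P_j\L(B)P_k$ vanishes for $|j-k|\geq 2$, and using $P_j\L(B)P_k=P_j[\L(B),P_k]$ for adjacent $j,k$ together with the orthogonal-range trick reduces $\|B''-\L(B)\|$ to $2\max_k\|[\L(B),P_k]\|$.

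\textbf{Main obstacle.} The technical heart is to bound $\|[\L(B),P_k]\|\leq\epsilon(L)$ using only $\|[F_k,B]\|\leq\epsilon(L)$. Writing $P_k=F_k(I-E_{G_{k-1}}(U))+(I-F_{k-1})E_{G_{k-1}}(U)$ and expanding, the commutator contains the expected $[\L(B),F_k]$ and $[\L(B),F_{k-1}]$ pieces (which are controlled by the hypothesis up to a localization error that is absorbed into the $\tfrac{17C_\rho}{L}$ term via $\|[\L(B),F_k]\|\leq\|[B,F_k]\|+2\|\L(B)-B\|$), but also a residual term $(I-F_k-F_{k-1})[\L(B),E_{G_{k-1}}(U)]$ whose inner commutator is not directly controlled by the hypothesis. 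The key observation is that $(I-F_k-F_{k-1})$ vanishes on $E_{\Omega_0^{(k-1)}\cup\Omega_0^{(k)}}(U)$-range, while the localization property (iv) restricts the ``output'' of $[\L(B),E_{G_{k-1}}(U)]$ to the $\Delta$-neighborhood of $G_{k-1}$, namely $E_{\Omega_0^{(k-1)}\cup G_{k-1}\cup\Omega_0^{(k)}}(U)$-range; combining these exactly produces the required cancellation, which is the most delicate piece of the construction.
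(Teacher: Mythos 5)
Your high-level framework matches the paper's: discretize $\bS$ at scale $L$, invoke the hypothesis to get projections $F_k$, assemble them into a spectral resolution, set $U''$ by spectral substitution, and pinch a localized $B$ to get $B''$. Your resolution of identity $\{P_k\}$ is a genuine variant of the paper's: the paper alternates $2n_0$ projections $F_k$ and $G_k$ separated by \emph{disjoint} gap arcs $\omega^k$, while you use $N$ projections $P_k$ with \emph{overlapping} arcs $\Omega^{(k)}$ whose interfaces are the complementary decomposition $F_+^{(k)} \oplus (E_{G_k}(U)-F_+^{(k)})$ of $E_{G_k}(U)$. The telescoping of the $F_+^{(k)}$ terms giving $\sum_k P_k = I$ is a nice observation, and the $\|U''-U\|$ estimate (though your intermediate $3L/2$ is optimistic — the overlap of the $\Omega^{(k)}$ costs roughly another factor of $2$ — it still lands under $11L/2$) is fine.

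The gap is in your ``main obstacle.'' You claim the residual $(I-F_k-F_{k-1})[\L(B),E_{G_{k-1}}(U)]$ vanishes by combining (a) $(I-F_k-F_{k-1})E_{\Omega_0^{(k-1)}\cup\Omega_0^{(k)}}(U)=0$ with (b) localization confining the output of $[\L(B),E_{G_{k-1}}(U)]$ to $E_{\Omega_0^{(k-1)}\cup G_{k-1}\cup\Omega_0^{(k)}}(U)$-range. But (a) and (b) together only kill the $\Omega_0$-piece of the output; the $G_{k-1}$-piece survives. Writing $E=E_{G_{k-1}}(U)$, your two observations reduce the residual to $-(I-F_k-F_{k-1})E\,\L(B)(1-E) = -(E-F_-^{(k)}-F_+^{(k-1)})\L(B)(1-E)$, which is not zero and has no a priori small norm — $(E-F_-^{(k)}-F_+^{(k-1)})$ and $E\L(B)(1-E)$ are both $O(1)$. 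So the cancellation is not ``exact,'' and your stated argument does not establish $\|[\L(B),P_k]\|\lesssim\epsilon(L)$. In fact $\|[\L(B),P_k]\|$ need not be small: $P_k$ carries the uncontrolled spectral projection $E_{G_{k-1}}(U)$ on one of its interfaces, and commutators with spectral projections of $U$ are exactly what the $F$'s are meant to replace.

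The correct route — which is what the paper does with $E_{j\pm 1}\tilde B E_j$, and which your decomposition also supports — is to bound $\|P_j\L(B)P_k\|$ \emph{directly} for $j=k\pm1$ rather than passing through $\|[\L(B),P_k]\|$. For instance, $P_{k+1} = (E_{G_k}(U)-F_+^{(k)}) + E_{\Omega_0^{(k+1)}}(U) + F_+^{(k+1)} \leq 1-F_k$, and by localization the part of $P_k$ that can interact with $P_{k+1}$ is $E_{\Omega_0^{(k)}}(U)+F_+^{(k)}\leq F_k$; hence $\|P_k\L(B)P_{k+1}\| \leq \|F_k\L(B)(1-F_k)\| \leq \|[\L(B),F_k]\|$, which is controlled. (Your attempted bound $\|P_j\L(B)P_k\|\leq\|[\L(B),P_k]\|$ from $P_jP_k=0$ is true but passes through the wrong intermediate.) Also note your choice $\Delta = 4\sqrt2 L/17$ is off: with it, $\|\L(B)-B\|$ alone already consumes the full $17C_\rho/L$ budget, leaving nothing for the pinching error $2\max\|P_j\L(B)P_k\|$, which itself contributes roughly $4\|\L(B)-B\| + 2\epsilon(L)$; you should instead take $\Delta$ close to $L$, as in the paper.
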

\begin{remark}
We prove this result in a non-exact form to avoid the difficulties of determining the precise needed restrictions on the size of $L$ in order for the construction to work. The reoccurring issue is that arc length on the circle and Euclidean distance are equivalent for arcs of length $\ell$ not too large and are asymptotically equal as $\ell \to 0^+$. Optimizing the estimate in this result would needlessly complicate the main idea of the proof.
\end{remark}
\begin{proof} 
In order to construct nearby commuting elements $U'', B''$ we will construct a spectral resolution of the identity to be the spectral measure for $U''$ and we will perturb $B$ so that the eigenspaces of $U''$ are invariant subspaces of $B''$. So, $U''$ and $B''$ will commute. Let $\delta = \|[U,B]\|$. Note that $\delta = \|U^\ast[U,B]U^\ast\| = \|[U^\ast, B]\|$.

In order for this construction to work, we localize $B$. 
Let $L\in (0,2/3)$ be further restricted later. Let $\Delta \in (0, L)$ be arbitrary. 
Define $\tilde B = \L(B)$ where $\L$ is the localization operator of Lemma \ref{normal localization lemma}. For later reference, note that using Lemma \ref{phase normal localization lemma} would work equally as well with an increased constant. 
Now, $B \mapsto \tilde B$ is a completely positive map with $\|\tilde B\| \leq \|B\|, \|\tilde B - B\| \leq \frac{4\sqrt2C_\rho}{\Delta}\delta,$ $\|[U, \tilde B]\| \leq \delta$, $E_{K_2}(U)\tilde B E_{K_1}(U) = 0$ if $\dist(K_1, K_2) > \Delta$, and 
if $\varphi \in {S}$ then
\[\varphi(\L (B)) =\L(\varphi(B)).\]
We chose $\Delta < L$ so that when we view $\tilde B$ as a block matrix with respect to the projections $E_j$ that we construct below from some $F_{\Omega_0, \Omega}$, $\tilde B$ will be bilateral tridiagonal with off-diagonal blocks having small norm.
This was the purpose of constructing $\tilde B$ from $B$ and this is how we will use that the projections $E_j$ are almost commuting with $B$.

\vspace{0.05in} 

``Length'' will always refer to arc length and ``distance'' will always refer to the Euclidean distance in $\C$. 
Consider $n_0\in \N$ (divisible by $4$) many closed arcs $\Omega^k$ in $\bS$ of length $4L$, indexed cyclically, with the index increasing moving counter-clockwise on the unit circle. 
We require that the sets $\Omega^k$ each have a fixed distance between them, distance of at most $3L$ and at least $2L$. 
We call the open arc between $\Omega^{k}$ and $\Omega^{k+1}$ a ``gap interval'' and denote it by $\omega^k$ which we require have length at most $3L$.  

For consistency with the construction in a later theorem, we choose the points $\pm 1, \pm i$ to belong to the center of some gap intervals and we choose $1 \in \omega^0$.
In order for this construction to work, we will need $2\pi$ to be  $n_0$ times the length of $\Omega^k\cup\omega^k$, which is at least $6L$. 
We also need the distance between consecutive $\Omega^k$ and the distance between consecutive $\omega^k$ to be at least $L$. These assumptions can all be made for $L$ small enough.

Note that $\omega^0, \Omega^1, \omega^1, \Omega^2, \dots, \omega^{n_0-1}, \Omega^{n_0}$ form a partition of $\bS$ into arcs, each having length at least $3L$ and at most $4L$. We  partition $\Omega^k$ into arcs $\Omega_-^k, \Omega_0^k, \Omega_+^k$, with $\Omega_-^k, \Omega_+^k$ having length $L$ and $\Omega^k_0$ being an open interval of length $2L$.
Later in the proof we will make use of the arcs
 \[\Omega^{n_0}_+\cup\omega^0\cup \Omega^1_-,\; \Omega^1_0, \;\Omega^1_{+}\cup \omega^1\cup\Omega^2_-, \;\Omega^2_0,\;\dots,\; \Omega^{n_1-1}_+\cup \omega^{n_1-1}\cup\Omega^{n_0}_-,\; \Omega^{n_0}_0\]
which form an alternative partition of $\bS$ where each $\omega^k$ interval has grown in length $L$ on each end. So, these intervals have length at most $3L+2L = 5L$.

By definition, $d(\Omega_-^k, \Omega_+^k) \leq 2L$ and if $L$ is sufficiently small that this distance can be made to be at least $L$. Note that $\dist(\omega^k, \omega^{k+1})> L$ and $\dist(\Omega^k, \Omega^{k+1}) > L$ as well. Consequently,  $\dist(\Omega^{k_1}_+\cup \omega^{k_1} \cup \Omega^{k_1+1}_-, \Omega^{k_2}) > L$ if $|k_2-k_1|\geq 1$ and $|k_2-(k_1+1)|\geq 1$ for $L$ small.

We choose $L_0>0$ so that all the smallness requirements on $L$ stated thus far in this proof in addition to the smallness restriction on $L$ of (\ref{commutator-leqStatement}.) in the statement of the lemma is equivalent to $L \in (0, L_0)$.

Apply the assumption of this lemma for $\Omega^k_0, \Omega^k$ to obtain ${S}$-symmetric projections $F_k = F_{\Omega^k_0, \Omega^k}$ which have the decomposition $F_{k,-}+E_{\Omega^k_0}(U) + F_{k,+}$, where $F_{k,-} \leq E_{\Omega^k_{-}}(U), F_{k,+} \leq E_{\Omega^k_+}(U)$ are also ${S}$-symmetric projections and 
\[ \|\,[F_k, B]\;\| \leq \epsilon(L).\]
Setting $F^c_{k,-} = E_{\Omega^k_{-}}(U) - F_{k,-}$ and $F^c_{k,+} = E_{\Omega^k_{+}}(U) - F_{k,+}$, we obtain the decomposition of $E_{\Omega^k}(U)$ as the sum $F^c_{k,-} + F_k + F^c_{k,+}$ of projections with orthogonal ranges. 

Because each $\varphi\in {S}$ is a weakly continuous linear symmetry and $U$ is ${S}$-symmetric, we see that all these projections are ${S}$-symmetric as well. Define $G_k = F_{k-1,+}^c + E_{\omega^k}(U) + F_{k,-}^c$. Then the projections $F_1, G_1, F_2, \dots, G_{n_0-1}, F_{n_0}, G_0$ form a resolution of the identity of ${S}$-symmetric projections so that 
\begin{align}\label{F and G leq}
E_{\omega^k}(U) \leq G_k \leq E_{\Omega^{k}_+\cup \omega^k \cup \Omega^{k+1}_-}(U), \;\;\; E_{\Omega^k_0}(U) \leq F_k \leq E_{\Omega^k}(U).
\end{align}
So, $F_1, G_1, \dots, F_{n_0}, G_0$ are:
\begin{align*}
&F_1,\;\; F_{1,+}^c + E_{\omega^1}(U) + F_{2,-}^c,\;\; F_2,\;\; F_{2,+}^c + E_{\omega^2}(U) + F_{3,-}^c,\;\; \dots, \\
&F_{n_0-1,+}^c + E_{\omega^{n_0}}(U) + F_{n_0,-}^c,\;\; F_{n_0},\;\; F_{n_0,+}^c + E_{\omega^0}(U) + F_{1,-}^c.
\end{align*}
For simplicity of notation, we relabel these projections as $E_j$ for $j = 1,\dots, 2n_0$.

If $E_j = F_k$ for some $k$, let $a_j$ be the center point of $\Omega^k_0$. If $E_j = G_k$ for some $k$, let $a_j$ be the center point of $\omega^k$.
Define \[U'' = \sum_j a_j E_j, \;\; B'' = \sum_j E_j \tilde B E_j.\] 

Then $U'', B''$ commute and the map $B \mapsto B''$ is completely positive which maps $I$ to itself, so it has norm $1$. Because the projections $E_j$ are ${S}$-symmetric, so is $U''$. Also, if  $\varphi \in {S}$ then
\begin{align*}
\varphi(B'') &= \varphi\left(\sum_k E_k \L(B)E_k\right)=\sum_k\varphi(E_k)\varphi(\L(B))\varphi(E_k) \\
&=\sum_kE_k\L(\varphi(B))E_k.
\end{align*}

We now need to estimate the norms of $U''-U, B''-B$.  We start with $U''-U$.
For an arc $I$, let $a_I$ denote its center.
Note that \[U'' = \sum_k \left(a_{\Omega^k}F_k+a_{\omega^k}(F_{k,
+}^c+E_{\omega^k}(U)+F_{k+1,-}^c)\right).\] 
However, because $E_{\Omega^k_-}(U) = F_{k,-}+F_{k,-}^c$ and $E_{\Omega^k_+}(U) = F_{k,+}+F_{k,+}^c$, $F_k = F_{k,-}+E_{\Omega_0^k}(U)+F_{k,+}$, and $\Omega^k$ is the disjoint union of $\Omega^k_-, \Omega^k_0, \Omega^k_+$ we have that
\begin{align*}
U'' &= \sum_k \left( a_{\Omega^k}(F_{k,-}+E_{\Omega^k_0}(U)+F_{k,+})+a_{\omega^k}E_{\omega^k}(U) + a_{\omega^k}(F^c_{k,+}+F^c_{k+1,-}) \right)\\
&= \sum_k \left( a_{\Omega^k}E_{\Omega^k}(U)+a_{\omega^k}E_{\omega^k}(U) -a_{\Omega^k}F^c_{k,-}+ (a_{\omega^k}-a_{\Omega^k})F^c_{k,+}+a_{\omega^k}F^c_{k+1,-} \right).
\end{align*}
So, because $\{\Omega^k, \omega^k\}_k$ form a partition of $\bS$ and because the distance of the center of each of intervals $\omega^k, \Omega^k$ it its end points is at most $2L$, we have
\[\|\sum_k \left( a_{\Omega^k}E_{\Omega^k}(U)+a_{\omega^k}E_{\omega^k}(U)\right)-U\|\leq 2L.\]
Also,
\begin{align*}
\sum_k& \left(-a_{\Omega^k}F^c_{k,-}+ (a_{\omega^k}-a_{\Omega^k})F^c_{k,+}+a_{\omega^k}F^c_{k+1,-} \right)\\
&=\sum_k \left(-a_{\Omega^k}F^c_{k,-}+ (a_{\omega^k}-a_{\Omega^k})F^c_{k,+}+a_{\omega^{k-1}}F^c_{k,-} \right)
\end{align*}
so
\begin{align*}
\|U''-U\| &\leq 2L + \max_k\max(| a_{\omega^k}-a_{\Omega^k}|, |a_{\omega^{k-1}}-a_{\Omega^k}|)\\
&< 2L + \left(\frac{3L}{2}+\frac{4L}{2}\right) = \frac{11}{2}L.
\end{align*}

We now estimate the norm of $B''-B$:
\[\|B''-B\|\leq \|B''-\tilde B\| + \|\tilde B - B\| \leq \|B''-\tilde B\| + \frac{4\sqrt2C_\rho}{\Delta}\delta.\]
We will now show that $\tilde B$ has the following bilateral block tridiagonal structure with respect to the projections $E_j$:
\[\bp 
\ast   &\ast& & & &\ast \\
\ast&  \ast &\ast&&&    \\
&\ast&\ast&\ddots&& \\
&&\ddots&\ddots&\ast& \\
& & &\ast&  \ast & \ast \\
\ast & & & & \ast& \ast
\ep\]
and that the off-diagonal terms are small in norm. We first will show the tridiagonal property.

By construction, $\tilde B$ satisfies the property that $E_{K_2}(U) \tilde B E_{K_1}(U) = 0$ if $\dist(K_1, K_2) > \Delta$. We assumed that $\Delta \leq L$ and that $L$ was small enough that $\dist(\Omega^{k_1}_+\cup \omega^{k_1} \cup \Omega^{k_1+1}_-, \Omega^{k_2}) > L$ if $|k_2-k_1|\geq 1$ and $|k_2-(k_1+1)|\geq 1$. So, we see by Equation (\ref{F and G leq}) that if $|j_2 - j_1| \geq 2$ then $E_{j_2}\tilde B E_{j_1} = 0.$ This shows the tridiagonal property.

We now show that the off-diagonal terms are small in norm. By assumption, the projections $F_k \geq 0$ satisfy
\begin{align}\label{invsub4H}\nonumber
\|(1-F_k)\tilde BF_k\|,\,\|F_k\tilde B(1-F_k)\| &\leq \|[F_k,\tilde B]\|\leq \|\tilde B-B\|+\|[F_k,B]\| \\
&\leq \frac{4\sqrt2C_\rho}{\Delta}\delta + \epsilon(L)= c_F.
\end{align}

Consider the projections $E_{j_1}, E_{j_2}$ for $|j_2-j_1| = 1$. Necessarily, one of $j_1, j_2$ will be even and hence the projection with that index will equal some $F_k$.
If $E_{j_1} = F_k$ for some $k$ then for $j_2 = j_1\pm 1$,
\[\|E_{j_2}\tilde BE_{j_1}\| = \|E_{j_2}(1-E_{j_1})\tilde BE_{j_1}\| \leq c_F.\] 
If $E_{j_2} = F_k$ for some $k$ then for $j_1 = j_2\pm 1$,
\[\|E_{j_2}\tilde BE_{j_1}\| = \|E_{j_2}\tilde B(1-E_{j_2})E_{j_1}\| \leq c_F.\] 

Now, because $B'' = \sum_k E_k \tilde B E_k$, we see that $\tilde B - B''$ can be expressed as the block matrix with respect to the $E_j$:
\[ \bp 
0   &\ast& & & & \\
&  0 &\ast&&&    \\
&&0&\ddots&& \\
&&&\ddots&\ast& \\
& & &&  0 & \ast \\
\ast & & & & & 0
\ep+\bp 
0   && & & &\ast \\
\ast&  0 &&&&    \\
&\ast&0&&& \\
&&\ddots&\ddots&& \\
& & &\ast&  0 &  \\
 & & & & \ast& 0
\ep\] with $\|\ast\| \leq c_F$.
Bounding each of these matrices separately shows that $\|B'' - \tilde B\| \leq 2c_F.$

Putting these estimates together, we obtain
\[\|U''-U\| < \frac{11}2L, \; \|B''-B\| \leq \frac{12\sqrt2C_\rho}{\Delta}\delta +2\epsilon(L)\]
under the assumption that $\Delta < L$ was arbitrary. So, we pick $\Delta$ close enough to $L$ to obtain
\begin{align}
\|U''-U\| \leq \frac{11}2L, \; \|B''-B\| \leq \frac{17C_\rho}{L}\delta +2\epsilon(L).
\end{align}
Note also that if $\varphi \in {S}$ then $\varphi$ commutes with $\L$ and preserves the $E_j$. So, $\varphi(B'') = \sum_j E_j \L(\varphi(B))E_j$. 
\end{proof}
We note that the previous lemma can be extended to include additional non-unital phase symmetries or conjugate-linear phase symmetries. The idea for the construction is based in the following observation: Suppose that we have two almost commuting real matrices $U, B$. Then if we were able to construct $U''$ so that it is real, then $[U'', B'']=0$ and $[U'', \overline{B''}] = \overline{[U'', B'']}=0$. So, $U''$ commutes with $B''$ and $\overline{B''}$. It then follows that $U''$ commutes with the matrix of the real entries of $B'': (B''+\overline{B''})/2$ gotten by symmetrization.  So, we can obtain $U'', B''$ both real. If $B$ were instead imaginary, then we would instead symmetrize as $(B''-\overline{B''})/2$. 

However, this symmetrization is actually unnecessary because the map that transforms $B$ into $B''$ is a positive linear map that commutes with the respective symmetries of $U$. So, if $B$ is real or imaginary then $B''$ is automatically real or imaginary.

We now extend this idea into the construction of $U''$ and $B''$.
The reader should think of this result as an addendum to the previous result.
\begin{lemma}\label{proj-converse extra sym}
Let $\A, U, B$ satisfy the assumptions of Lemma \ref{proj-converse}.
Suppose that one of the following cases of additional phase symmetries for $U$ and $B$ hold. Then we will have additional phase symmetries for $U''$ and $B''$ as well as described in each case. 
\begin{enumerate}[label=(\roman*)]
\item\label{conj-lin sym addition} Suppose that $\phi$ is a weakly continuous conjugate-linear symmetry map on $\A$ with order $2$ that commutes with each symmetry of ${S}$. Suppose that $\phi(U)=\zeta_0 U$ for $\zeta_0$ in the unit circle. Suppose that $\phi(B)=\eta_0 B$ (resp. $\phi_{\ast}(B)=\eta_0 B$) for $\eta_0\in \C$.

Then $U'', B''$ in Lemma \ref{proj-converse} can be chosen to satisfy $\phi(U'')=\zeta_0 U''$ and $\phi(B'')=\eta_0 B''$ (resp. $\phi_{\ast}(B'')=\eta_0 B''$).

\item\label{lin sym addition} Suppose that $\varphi$ is a weakly continuous linear symmetry map on $\A$ with finite order $n\geq 2$ that commutes with each symmetry of ${S}$. Suppose that $\varphi(U)=\zeta_1 U$ for $\zeta_1$ in the unit circle having order $n$. Suppose that $\varphi(B)=\eta_1 B$ (resp. $\varphi_{\ast}(B)=\eta_1 B$) for $\eta_1\in \C$.

Then $U'', B''$ in Lemma \ref{proj-converse} can be chosen so that $\varphi(U'')=\zeta_1 U''$ and $\varphi(B'')=\eta_1 B''$ (resp. $\varphi_{\ast}(B'')=\eta_1 B''$) satisfying the same inequality but when $n \geq 3$, $C_\rho$ will be replaced with $nC_\rho$. 

We additionally require $L_0$ to be smaller and
$\|[U,B]\|$ smaller in the second estimate in a way that depends on $n$.

\item Suppose that there are $\phi, \varphi$ as in both \ref{conj-lin sym addition} and \ref{lin sym addition} that additionally have the properties that they satisfy the dihedral commutation relation: 
$\phi\circ\varphi\circ \phi = \varphi^{-1}$. 

Then $U'', B''$ in Lemma \ref{proj-converse} can be chosen to satisfy each of the phase symmetries of  \ref{conj-lin sym addition} and \ref{lin sym addition}.

\end{enumerate}
\end{lemma}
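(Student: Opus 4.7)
The plan is to modify the construction of $U''$ and $B''$ in the proof of Lemma \ref{proj-converse} by rendering all ingredients equivariant under the group of additional phase symmetries---the cyclic group $\Z_n$ acting by rotation in case \ref{lin sym addition}, the reflection group $\Z_2$ in case \ref{conj-lin sym addition}, or the dihedral group $D_n$ in case (iii). Specifically, I would (a) select the arcs $\Omega^k$ and gaps $\omega^k$ to be permuted by this group action on $\bS$, (b) apply the hypothesis of Lemma \ref{proj-converse} only on a fundamental domain to produce projections $F_k$ there and extend to the remaining arcs by pushing forward via $\phi$ and/or $\varphi$, (c) choose the center points $a_j$ as the midpoints of the arcs (automatically equivariant), and (d) replace the localization operator $\L$ by one that commutes with the additional phase symmetries. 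The construction of $U''$ and the completely positive map $B \mapsto B''$ then produces operators inheriting the phase symmetries term-by-term.

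For case \ref{lin sym addition}, choose $n_0$ divisible by $n$ (requiring $L_0$ to be smaller) so that the arcs and gaps are permuted cyclically by the rotation $z \mapsto \zeta_1^{-1} z$ induced on $\sigma(U)$ by $\varphi$ via Proposition \ref{properties}(xv). Apply the hypothesis on $n_0/n$ arcs forming a fundamental domain and extend by $F_{\varphi(k)} = \varphi(F_k)$. Since $\varphi$ commutes with each element of ${S}$, the extended projections remain ${S}$-symmetric; the spectral containment $E_{\Omega^{k'}_0}(U) \leq F_{k'} \leq E_{\Omega^{k'}}(U)$ follows from Proposition \ref{properties}(x); and $\|[F_{k'}, B]\| = \|[F_k, B]\|$ by writing $B = \overline{\eta_1} \varphi(B)$ and using that $\varphi$ is an isometry (for the $\varphi_\ast$ case, first note $\|[F, B]\| = \|[F, B^\ast]\|$ and then convert $B^\ast$ via $\varphi_\ast(B) = \eta_1 B$). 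Use $\L_n$ in place of $\L$, which commutes with $\varphi$ by Lemma \ref{phase normal localization lemma}(v) since $\zeta_1^n = 1$; for $n = 2$ the operator $U$ is $\varphi$-antisymmetric and the simpler $\L$ already commutes with $\varphi$ by Lemma \ref{normal localization lemma}(v), explaining why the constant picks up the factor $n$ only when $n \geq 3$. The remainder of the proof of Lemma \ref{proj-converse} then proceeds unchanged, yielding $\varphi(U'') = \zeta_1 U''$ and $\varphi(B'') = \eta_1 B''$ by direct term-by-term computation on the formulas $U'' = \sum_j a_j E_j$ and $B'' = \sum_j E_j \L_n(B) E_j$.

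For case \ref{conj-lin sym addition}, the rotation trick is key: introduce $V = \zeta_0^{1/2} U$, which satisfies $\phi(V) = V$ and has the same spectral projections as $U$ up to relabeling via $E_K(V) = E_{\zeta_0^{-1/2} K}(U)$. In the $V$-labeling, the $\phi$-induced spectral reflection $z \mapsto \overline{\zeta_0 z}$ becomes ordinary complex conjugation, so place the two fixed points $\pm \zeta_0^{-1/2}$ of the original reflection at the centers of two antipodal gap intervals (analogous to $\pm 1, \pm i$ in Lemma \ref{proj-converse}) to make the arc-gap partition $\phi$-equivariant. Extend the fundamental-domain projections by $\phi$, and use the localization $\L_{\Re V}^{\Delta/\sqrt 2} \circ \L_{\Im V}^{\Delta/\sqrt 2}$, which commutes with $\phi$ by Lemma \ref{normal localization lemma}(v) applied to $\phi$-symmetric $V$ and which localizes effectively with respect to $U$'s spectrum via the relabeling above. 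Equivariance of midpoints yields $a_{\phi(j)} = \overline{\zeta_0 a_j}$, so $\phi(U'') = \sum_j \overline{a_j} E_{\phi(j)} = \zeta_0 U''$, while commutation of $\phi$ with the localization gives $\phi(B'') = \eta_0 B''$; the $\phi_\ast$ variant is handled with the same $\|[F, B]\| = \|[F, B^\ast]\|$ trick as above. Case (iii) combines both previous cases: pass to $V = \zeta_0^{1/2} U$ (so $\phi(V) = V$ and $\varphi(V) = \zeta_1 V$), choose $n_0$ divisible by $2n$ with arcs $D_n$-equivariant and the $n$ reflection axes of $D_n$ placed at gap centers, and use the analogue of $\L_n$ built from $V$, which commutes with $\varphi$ by Lemma \ref{phase normal localization lemma}(v) and with $\phi$ because $\phi$ sends $\Re(\omega_n^k V) \mapsto \Re(\omega_n^{-k} V)$ and $\Im(\omega_n^k V) \mapsto -\Im(\omega_n^{-k} V)$, so by $\L_{-A} = \L_A$ it merely permutes the commuting factors of the localization operator.

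The main obstacle is bookkeeping for extending the gap projections $G_k = F_{k,+}^c + E_{\omega^k}(U) + F_{k+1,-}^c$, since each mixes pieces of two adjacent $\Omega$-arcs and the reflection $\phi$ reverses orientation, sending the rightmost subarc $\Omega_+^k$ to the leftmost subarc $\Omega_-^{\pi(k)}$ of the reflected $\Omega$-arc. Consequently $\phi(F_{k,+}) = F_{\pi(k),-}$; for a gap fixed by $\phi$, the two end-pieces of $G_k$ get swapped and $\phi(G_k) = G_k$ automatically, while for a non-fixed gap one obtains $\phi(G_k) = G_{k'}$ for the reflected gap index $k'$. A careful placement of the fundamental domain along reflection axes ensures these extensions are consistent and that $F_1, G_1, F_2, \dots, G_0$ remains a resolution of the identity; in the dihedral case, consistency across overlapping group paths from the fundamental domain to any arc follows from the relation $\phi \varphi \phi = \varphi^{-1}$.
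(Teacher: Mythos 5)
Your proposal follows essentially the same approach as the paper: (a) make the arc/gap partition equivariant under the relevant symmetry group, (b) invoke the hypothesis of Lemma~\ref{proj-converse} only on a fundamental domain and push the projections around via $\phi$ and/or $\varphi$, (c) verify that conditions (1.)--(3.) hold for the pushed-forward projections (including the isometry argument for the commutator norm and the adjoint trick for the $\varphi_\ast$ variant), (d) swap in a symmetry-respecting localization operator, and (e) read off the phase symmetries of $U''$ and $B''$ term-by-term. The paper also performs your $\zeta_0^{1/2}$-rotation trick (it replaces $U$ by $\zeta_0^{1/2}U$ to reduce to $\zeta_0=1$), also notes that $n=2$ can use the simpler $\L$, and also identifies the orientation-reversal issue with $F_{k,\pm}$ as the main bookkeeping delicacy. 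One small point where you are slightly more careful than the paper: in case (iii) you directly verify that the $\L_n$ factors $\Re_k, \Im_k$ are permuted (with signs) by $\phi$ and then invoke $\L_{-A}=\L_A$, rather than citing Lemma~\ref{phase normal localization lemma}(v) for a conjugate-linear symmetry -- this is a useful bit of explicitness.
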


\begin{remark}
Note that although additional symmetries of $B$ are required, the construction of $U''$ depends only on $U$, $L$ (resp. $L_\ast$), the projections, and the symmetries $\phi$ and/or $\varphi$ (depending on the case). We need $B$ to satisfy a phase symmetry with respect to the symmetries so that the projection commutator inequality in (\ref{commutator-leqStatement}.) hold for our projections constructed from the symmetries.  The map $B \mapsto B''$ now commutes with the symmetries of ${S}$ and the additional symmetries considered in this statement.

Also, observe that because the construction of $B \mapsto B''$ in the previous scenario did not involve the symmetries of ${S}$, there was no need to require them to commute. However, the projections formed in this lemma are constructed from the symmetries $\phi$ and/or $\varphi$ and so we require these symmetries to satisfy certain commutation relations (when they are both present) and that they commute with all the former symmetries. The projections formed are ${S}$-symmetric but the symmetries $\phi, \varphi$ permute the constructed projections.
\end{remark}

\begin{proof}
We constructed the sets $\Omega^k, \Omega^k_-, \Omega^k_0, \Omega^k_+$ so that they are invariant under the symmetries $z\mapsto \overline {z}, z\mapsto -\overline {z}, z\mapsto -z$ up to a relabeling to account for changes in orientation. However, our definitions of the projections $E_j$ will need to change so that $U''$ will have the required phase symmetries. Besides that, the construction will not change. For Case (ii), we will need the center points of some additional $\omega^k$ to be fixed which will require $L$ to be smaller, but we will get to that later. Let $m = n_0/4\in \N$.

Observe that $1 \in \omega^0$, $i \in \omega^{m}$, $-1 \in \omega^{2m}$, and $-i \in \omega^{3m}$. These are the only sets that do not lie in a quadrant of $\R^2=\C$. The intervals $\Omega^k$ for $k \in (rm, (r+1)m]$ and $\omega^k$ for $k \in (rm, (r+1)m)$ lie in the $(r+1)$th quadrant.

With these remarks we proceed to the cases.

\vspace{0.05in}
\noindent \underline{Case (i)}: Without loss of generality, we can assume that $\zeta_0=1$ by replacing $U$ with $\zeta_0^{1/2}U$ since $\phi$ is conjugate linear. So, $U$ has a $\phi$ conjugate-linear symmetry. 

Consider the sets $\omega^0, \Omega^1, \omega^1, \Omega^2, \dots, \omega^{4m-1}, \Omega^{4m}=\Omega^0$. The sets in the collections $\{\Omega^k\}_k$ and $\{\omega^k\}_k$ are permuted by conjugation. More precisely, we have $\overline{\omega^k} = \omega^{-k+4m}$,  $\overline{\Omega^k} = \Omega^{-k+4m+1}$, $\overline{\Omega^k_0} = \Omega^{-k+4m+1}_0$, $\overline{\Omega^k_+} = \Omega^{-k+4m+1}_-$, $\overline{\Omega^k_-} = \Omega^{-k+4m+1}_+$. In particular, $\overline{\omega^0}=\omega^0$ and $\overline{\omega^{2m}}=\omega^{2m}$.

We claim that we only need to show that conditions (1.), (2.), and (3.) are satisfied when instead of using 
\[F_{\overline{\Omega_0}, \overline{\Omega}}, \;\;
F_{\overline{\Omega_0}, \overline{\Omega},-},\;\; F_{\overline{\Omega_0}, \overline{\Omega},+},\]
we use the projections 
\[\phi(F_{\Omega_0, \Omega}),\;\; \phi(F_{\Omega_0, \Omega, +}),\;\; \phi(F_{\Omega_0, \Omega, -}),\] 
respectively. Notice the interchange in $\pm$ due to the reflection's change in orientation. 

This assertion is true because for $\Omega_0, \Omega$ in the upper half-plane we apply the assumption to get $F_{k} = F_{\Omega^k_0, \Omega^k}$ for $k = 1, \dots, 2m$ as in the previous lemma. Then define the rest of the projections by 
\begin{align}\label{(i) prof def}
F_{-k+4m+1} = \phi(F_k),\; F_{-k+4m+1,-}= \phi(F_{k,+}),\; F_{-k+4m+1,+}= \phi(F_{k,-})
\end{align} 
for $k = 1, \dots, 2m$.
If (1.), (2.), and (3.) hold then the these projections are just as good as any projections satisfying these assumptions so the construction given previously works exactly as it did before.

Notice that because $\phi$ has order $2$ and so does the index change $k\mapsto -k+4m+1$, we see that Equation (\ref{(i) prof def}) holds for all $k$.

So, what we will show is that these conditions hold. We then obtain $U'', B''$ with the properties of the previous theorem. We then show that $U''$ is $\phi$-symmetric by our choice of the projections. We then check that $B''$ satisfies the $\eta_0$ phase symmetry from $\phi$.

\begin{enumerate}
\item By Proposition \ref{properties}\ref{spectral proj transformation} and \ref{proj inequality}, if we apply the conjugate-linear symmetry map $\phi$ to (1.) for $F_{\Omega_0, \Omega}$ then we get 
\[E_{\overline{\Omega_0}}(U)=\phi(E_{\Omega_0}(U))\leq \phi(F) \leq \phi(E_{\Omega}(U))=E_{\overline{\Omega}}(U).\]

\item Applying $\phi$ to Condition (2.) for $F$, we get $\phi(F)$ can be decomposed as
$\phi(F) = \phi(F_+) + \phi(E_{\Omega_0}(U)) + \phi(F_-)$,
where $\phi(F_-) \leq \phi(E_{\Omega_-}(U))=E_{\overline{\Omega}_+}(U)$ and $\phi(F_+) \leq E_{\overline{\Omega}_-}(U)$.

Because $\phi$ commutes with every symmetry of ${S}$, $\phi(F)$, $\phi(F_-)$, $\phi(F_+)$ are ${S}$-symmetric as well.

\item Note that $\phi$ does not change norms. If $\phi(B) = \eta_0 B$ with $|\eta_0|=1$, we have that 
\[\|[\phi(F),B]\|=\|\phi([F,B])\|=\|[F,B]\| \leq \epsilon(L).\]
For the situation that $\phi_{\ast}(B) = \eta_0 B$ with $|\eta_0|=1$ then $\phi(B) = \overline{\eta_0}B^\ast$ so because $\phi(F)^\ast = \phi(F)$, 
\begin{align*}
\|[\phi(F),B]\|&=\|[\phi(F),B]^\ast\|=\|[\phi(F),B^\ast]\|=\|\phi([F,B])\|\\
&=\|[F,B]\| \leq\epsilon(L).
\end{align*}
\end{enumerate}
So, we showed the three conditions.

The projections $F_k$ for $k=1, \dots, 4m$ are (in this order):
\[F_1, F_2, \dots, F_{m-1}, F_{2m}, \phi(F_{2m}), \phi(F_{2m-1}), \dots, \phi(F_2), \phi(F_1).\]
The expression $G_k = F_{k,+}^c+E_{\omega^k}(U)+F_{k+1,-}^c$ holds for all $k$. For $k=2m+1, \dots, 4m$, this looks like 
\[G_k = F_{k,+}^c+E_{\omega^k}(U)+F_{k+1,-}^c= \phi(F_{-k-1+4m,-}^c)+E_{\overline{\omega^{-k+4m}}}(U)+\phi(F_{-k+4m,+}^c).\]
The projections $F_1, G_1, \dots, F_{4m}, G_{4m}=G_0$ are (in this order):
\begin{align*}
&F_1,\; F_{1,+}^c+E_{\omega^1}(U)+F_{2,-}^c,\; F_2,\; F_{2,+}^c+E_{\omega^2}(U)+F_{3,-}^c,\; \dots, \\
&F_{2m-1,-}^c+E_{\omega^{2m-1}}(U)+F_{2m,-}^c,\; F_{2m},\; F_{2m,+}^c+E_{\omega^{2m}}(U)+\phi(F_{2m,+}^c),\; \phi(F_{2m}), \\
&\phi(F_{2m,-}^c)+E_{\overline{\omega^{2m-1}}}(U)+\phi(F_{2m-1,+}^c),\; \phi(F_{2m-1}),\; \dots, \phi(F_2), \\
&\phi(F_{1,-}^c)+E_{\overline{\omega^{2}}}(U)+\phi(F_{2,+}^c),\; \phi(F_1),\; \phi(F_{1,-}^c)+E_{\omega^0}(U)+F_{1,-}^c.
\end{align*}
Recall that \[U'' = \sum_k \left(a_{\Omega^k}F_k+a_{\omega^k}G_k\right).\]
Note that 
\[\overline{a_{\Omega^k}}=a_{\overline{\Omega^k}}=a_{\Omega^{-k+4m+1}}\] and
$\overline{a_{\omega^k}} = a_{\omega^{-k+4m}}$.
Also,
\begin{align*}
\phi(G_k)&=\phi(F_{k,
+}^c+E_{\omega^k}(U)+F_{k+1,-}^c) \\
&= F_{-k+4m+1,
-}^c+E_{\omega^{-k+4m}}(U)+F_{-k+4m,+}^c=G_{-k+4m}.
\end{align*}
So,
\begin{align*}
\phi(U'') &=  \sum_k \left(\overline{a_{\Omega^k}}\phi(F_k)+\overline{a_{\omega^k}}\phi(G_k)\right)\\
&\sum_k\left(a_{\Omega^{-k+4m+1}}F_{-k+4m+1}+a_{\omega^{-k+4m}}G_{-k+4m}\right)=U'',
\end{align*}
because this is a permutation of the terms in the sum defining $U''$. Also,
\begin{align*}
\phi(B'') &= \phi\left(\sum_k (F_k\L(B)F_k + G_k\L(B)G_k)\right) \\
&= \sum_k (F_{4m+1-k}\L(\phi(B))F_{4m+1-k} + G_{4m-k}\L(\phi(B))G_{4m-k}).
\end{align*}
So, $\phi$ commutes with the positive linear map $B \mapsto B''$. So, $B''$  satisfies the required phase symmetries.

This is what we wanted to show. Observe the symmetries were preserved simply because $\phi$ permuted the $F_k$, permuted the $G_k$, permuted the $a_{\Omega^k}F_k$, and permuted the $a_{\omega^k}G_k$.

\vspace{0.05in}
\noindent \underline{Case (ii)}: In this case, we also need to use the form of the localization operator that respects the phase symmetries of $U$ in the case $n \neq 2$. ($n=2$ phase symmetries are just antisymmetries which are respected by the localization operator used in the original construction.) So, when $n \geq 3$, we define $\tilde B = \L_n(B)$ where $\L_n$ is defined in Lemma \ref{phase normal localization lemma}. 

In order to obtain the required phase symmetry we need the sets $\Omega^k, \omega^k$ to be invariant, up to reindexing, under the rotation by $2\pi/n$. When $n = 2, 4$ this is already the case based on how we originally defined the $\omega^k$ to contain $\pm1, \pm i$. 

However, for general $n \geq 3$, we will need to add additional requirements to these sets. Let $\xi_n = e^{2\pi i/n}$. We require the $n$th roots of unity $1, \xi_n, \xi_n^2, \dots, \xi_n^{n-1}$ to be the center of some of the intervals $\omega^k$. For consistency with the construction earlier, we require also $\pm 1, \pm i$ to be the center of some of the $\omega^k$.

We will need $2\pi$ to be  $n_0$ times the length of $\Omega^k\cup\omega^k$, where $n_0$ is a multiple of $n$. For consistency with our construction in the other cases, we assume that $n_0$ is also a multiple of $2$. This can be satisfied when $L$ is sufficiently small.

Let $p = n_0/n\in \N$. Because the order of $\zeta$ is $n$, we see that the points $1, \zeta, \dots, \zeta^{n-1}$ are distinct. So, there is an $s \in \{1, \dots, n-1\}$ such that $\zeta^s = \xi_n$. Therefore, $\varphi^s(U) = \xi_n U$ and $A\in \mathcal A$ has a $\zeta$-phase symmetry with respect to $\varphi$ if and only if it has a $\xi_n$-phase symmetry with respect to the symmetry map $\varphi^s$.

If $\Omega$ is an arc and $\varphi(U) = \zeta U$ then since $\varphi$ is linear,
\[\varphi(E_{\Omega}(U)) = E_{\Omega}(\varphi(U)) = E_{\Omega}(\zeta U) = E_{\zeta^{-1}\Omega}(U).\]
So, we need  the sets to be permuted by rotations by $\zeta^{-1}$. However,  we see that it suffices to show that the sets are invariant under a rotation by $\xi_n$.

We now proceed as in Case (i). 
Consider the sets $\omega^0, \Omega^1, \omega^1, \Omega^2, \dots, \omega^{np-1}, \Omega^{np}$. Because the collection of sets $\{\Omega^k\}_k, \{\omega^k\}_k$ are invariant under rotation by $\xi_n$, we have 
\[\xi_n\cdot\omega^k = \omega^{k+p},  \xi_n\cdot\Omega^k = \Omega^{k+p}, \xi_n\cdot\Omega^k_0 = \Omega^{k+p}_0, \xi_n\cdot\Omega^k_- = \Omega^{k+p}_-, \xi_n\cdot\Omega^k_+ = \Omega^{k+p}_+.\]
So, if $R_k$ is one of these sets then 
\[\varphi^s(E_{R_k}(X)) = E_{\zeta^{-s}R_k}(X) = E_{R_{k-p}}(X).\]
Consider the arc $I_r=\{e^{i\theta}\in\bS: r\frac{2\pi}n< \theta < (r+1)\frac{2\pi}n\}$. So, except for the $n$th roots of unity, the arcs $I_0, \dots, I_{n-1}$ cover $\bS$. Moreover, each $\Omega^k$ is a subset of one of the $I_r$.

We claim that we only need to show that conditions (1.), (2.), and (3.) are satisfied when instead of using 
$F_{\xi_n^{-r}\cdot\Omega_0, \xi_n^{-r}\cdot\Omega}$ and associated $F_{\xi_n^{-r}\cdot\Omega_0, \xi_n^{-r}\cdot\Omega,-}$, $F_{\xi_n^{-r}\cdot\Omega_0, \xi_n^{-r}\cdot\Omega,+}$,
we use $\varphi^{rs}(F_{\Omega_0, \Omega}), \varphi^{rs}(F_{\Omega_0, \Omega, -}), \varphi^{rs}(F_{\Omega_0, \Omega, +})$, respectively. 
Notice that there is no interchange in $\pm$ because rotations preserve orientation. 
Verifying all these conditions is essentially identical to what we did before. 

We now explain how we use this. For the $\Omega^k$ in the fundamental arc $I_0$, we use projections $F_k, F_{k,-}, F_{k,+}$ guaranteed by conditions 1-3. For $\Omega^k \in I_r$ in general, there is a unique $r$ in $\{0, 1, \dots, n-1\}$ (equal to $r$ modulo $p$) so that $\Omega^{k-rsp}\subset I_0$. We define $F_k = \varphi^{-rs}(F_{k-rp})$, $F_{k,-} = \varphi^{-rs}(F_{k-rp,-})$, $F_{k,+} = \varphi^{-rs}(F_{k-rp,+})$.
Notice that because $\varphi^s$ has order $n$ and so does the index change $k\mapsto k-p$, we see that $\varphi^s(F_k) = F_{k-p}$ holds for all $k$, where the index $k$ is interpreted modulo $n_0$.

The projections $F_k$ for $k = 1, \dots, n_0=np$  are (in this order):
\begin{align*}
&F_1, \dots, F_{p},\;\; \\
&\varphi^{-s}(F_{1}), \dots, \varphi^{-s}(F_{p}), \\
&\varphi^{-2s}(F_{1}), \dots, \varphi^{-2s}(F_{p}),\\ 
&\;\;\;\;\;\dots, \\
&\varphi^{-(n-1)s}(F_{1}), \dots, \varphi^{-(n-1)s}(F_{p}). 
\end{align*}
Note that we could assert that the number of compositions of $\varphi$ in the above belong to $\{0, \dots, n-1\}$ by reducing each of the terms $0, -s, -2s, \dots, -(n-1)s$ modulo $n$. However, we see no need to do this.

The expression $G_k = F_{k,+}^c+E_{\omega^k}(U)+F_{k+1,-}^c$ holds for all $k$. For $k=rp+1, \dots, (r+1)p$, this looks like 
\[G_k = F_{k,+}^c+E_{\omega^k}(U)+F_{k+1,-}^c= \varphi^{-rs}(F_{{k-rp},+}^c)+E_{\omega^k}(U)+\varphi^{-rs}(F_{k+1-rp,-}^c).\]
The projections $F_1, G_1, \dots, F_{4m}, G_{4m}=G_0$ are (in this order):
\begin{align*}
&F_1,\; F_{1,+}^c+E_{\omega^1}(U)+F_{2,-}^c,\; \dots, F_p, F_{p,+}^c+E_{\omega^p}(U)+\varphi^{-s}(F_{1,-}^c),   \; \\
&\varphi^{-s}(F_1),\; \varphi^{-s}(F_{1,+}^c)+E_{\omega^{p+1}}(U)+\varphi^{-s}(F_{2,-}^c),\; \dots, \varphi^{-s}(F_p), \\
&\;\;\;\;\;\varphi^{-s}(F_{p,+}^c)+E_{\omega^{2p}}(U)+\varphi^{-2s}(F_{1,-}^c),\;\; \\
&\dots\\
&\varphi^{-(n-1)s}(F_1),\; \varphi^{-(n-1)s}(F_{1,+}^c)+E_{\omega^{(n-1)p+1}}(U)+\varphi^{-(n-1)s}(F_{2,-}^c),\; \dots, \varphi^{-(n-1)s}(F_p), \\
&\;\;\;\;\;\varphi^{-(n-1)s}(F_{p,+}^c)+E_{\omega^{np}}(U)+F_{1,-}^c. 
\end{align*}
Recall that \[U'' = \sum_k \left(a_{\Omega^k}F_k+a_{\omega^k}G_k\right).\]
Note that 
\[\xi_n{a_{\Omega^k}}=a_{\xi_n\Omega^k}=a_{\Omega^{k+p}}\] and
$\xi_n{a_{\omega^k}} = a_{\omega^{k+p}}$. 
Also,
\begin{align*}
\varphi^s(G_k)&=\varphi^s(F_{k,
+}^c+E_{\omega^k}(U)+F_{k+1,-}^c) \\
&= F_{k-p,
+}^c+E_{\omega^{k-p}}(U)+F_{k+1-p,-}^c=G_{k-p}.
\end{align*}
So,
\begin{align*}
\varphi^s(U'') &= \sum_k\left(a_{\Omega^{k}}F_{k-p}+a_{\omega^{k}}G_{k-p}\right)\\
&=\xi_n\sum_k\left(a_{\Omega^{k-p}}F_{k-sp}+a_{\omega^{k-p}}G_{k-p}\right)= \xi_n U''.
\end{align*}
Iterating this provides $\varphi(U'') = \zeta U''$.
Also,
\begin{align*}
\varphi^s(B'') &= \sum_k \left(F_{k-p}\L_n(\varphi(B))F_{k-p} + G_{k-p}\L_n(\varphi(B))G_{k-p}\right).
\end{align*}
This guarantees that $B \mapsto B''$ commutes with $\varphi^s$, and hence also with $\varphi$. Hence, $B''$ satisfies the required phase symmetries.

\vspace{0.05in}
\noindent \underline{Case (iii)}: As in Case (i), we may assume that $\zeta_0 = 1$ and write $\zeta$ in place of $\zeta_1$. We define $\tilde B = \L_n(B)$ where $\L_n$ is defined in Lemma \ref{phase normal localization lemma}. 
We will require that $2\pi$ be $n_0$ times the length of $\Omega^k\cup\omega^k$, where $n_0$ is a multiple of $2n$.  This can be satisfied when $L$ is sufficiently small.

In order to obtain the required phase symmetries we need the sets $\Omega^k, \omega^k$ to be invariant, up to reindexing, under the rotation by $2\pi/n$ and by the reflection about the real-axis (i.e. under the $2n$ element dihedral group). 
Let $\xi_n = e^{2\pi i/n}$. We require the $2n$th roots of unity $1, e^{\pi i/(2n)}, \xi_1, e^{3\pi i/(2n)}, \xi_2, \dots, e^{(2n-1)\pi/(2n)}$ and their conjugates to be the center of some of the intervals $\omega^{k}$. 
Let $p = n_0/n\in \N$. As in Case (ii), let $\zeta^s = \xi_n$.

Consider the sets $\omega^0, \Omega^1, \omega^1, \Omega^2, \dots, \omega^{2np-1}, \Omega^{2np}$.
Consider the arc $I_r=\{e^{i\theta}\in\bS: r\frac{2\pi}{2n}< \theta < (r+1)\frac{2\pi}{2n}\}$. So, except for the $2n$th roots of unity, the arcs $I_0, \dots, I_{2n-1}$ cover $\bS$. Moreover, each $\Omega^k$ is a subset of one of the $I_r$. 

Let $R_\varphi$ denote the rotation of the plane around the origin by $\zeta$ and $R_\phi$ the reflection of the complex plane across the real axis (i.e. conjugation). The $2n$ transformations of the plane composing the dihedral group $D_n$ are generated by these actions and can be expressed as $R_\varphi^aR_\phi^b$ for $a= 0, \dots, n-1$, $b=0, 1$. 
Let $\pi^{a,b} \in D_n$ be defined as $\pi^{a,b}= R_\varphi^{-a}R_\phi^{b}$ so that
\[\varphi^a\circ\phi^b(E_{\Omega}(U)) = \varphi^a(E_{R_\phi^{b}(\Omega)}(U)) = E_{R_\varphi^{-a}R_\phi^{b}(\Omega)}(U) = E_{\pi^{a,b}(\Omega)}(U)\]
and since $R_\phi R_\varphi = R_\varphi^{-1}R_\phi$,
\[\pi^{a,b} \pi^{c,d} = R_\varphi^{-a}\left(R_\phi^{b}R_\varphi^{-c}\right)R_\phi^{d} = R_\varphi^{-a}\left(R_\varphi^{(1-2b)c}R_\phi^{b}\right)R_\phi^{d} = \pi^{a+(1-2b)c, b+d}.\]
By assumption, $\varphi, \phi$ and $R_\varphi, R_\phi$ satisfy the same dihedral relations.

We claim that we only need to show that conditions (1.), (2.), and (3.) are satisfied when instead of using 
$F_{\pi^{a,b}(\Omega_0), \pi^{a,b}(\Omega)}$ and associated $F_{\pi^{a,b}(\Omega_0), \pi^{a,b}(\Omega),-}$, $F_{\pi^{a,b}(\Omega_0), \pi^{a,b}(\Omega),+}$,
we use $\varphi^a\circ\phi^b(F_{\Omega_0, \Omega}), \varphi^a\circ\phi^b(F_{\Omega_0, \Omega, -(1-2b)}), \varphi^a\circ\phi^b(F_{\Omega_0, \Omega}, +(1-2b))$, respectively. 
Notice that there is a interchange in $\pm$ exactly when $b=1$ due to the reflection $R_\phi$ reversing orientation. 
Verifying all these conditions is essentially identical to what we did before. 
The sets Consider the sets $\Omega^1, \omega^1, \Omega^2, \dots, \omega^{2np-1}, \Omega^{2np}, \omega^0$ are (in this order):
\begin{align*}
&\Omega^1, \omega^1, \dots, \omega^{p/2-1}, \Omega^{p/2}, \omega^{p/2},
\\
&R_\varphi R_\phi(\Omega^{p/2}), R_\varphi R_\phi(\omega^{p/2-1}),  \dots, R_\varphi R_\phi(\omega^1), R_\varphi R_\phi(\Omega^1), R_\varphi R_\phi(\omega^0)
\\
&R_\varphi(\Omega^1), R_\varphi(\omega^1), \dots, R_\varphi(\omega^{p/2-1}), R_\varphi(\Omega^{p/2}), R_\varphi(\omega^{p/2}), 
\\
&R_\varphi^2 R_\phi(\Omega^{p/2}), R_\varphi^2 R_\phi(\omega^{p/2-1}),  \dots, R_\varphi^2 R_\phi(\omega^1), R_\varphi^2 R_\phi(\Omega^1), R_\varphi^2R_\phi(\omega^0), 
\\
&R_\varphi^2(\Omega^1), R_\varphi^2(\omega^1), \dots, R_\varphi^2(\omega^{p/2-1}), R_\varphi^2(\Omega^{p/2}), R_\varphi^2(\omega^{p/2}),
\\
&\;\;\;\;\;\;\;\;\;\dots
\\
&R_\varphi^{n-1}(\Omega^1), R_\varphi^{n-1}(\omega^1), \dots, R_\varphi^{n-1}(\omega^{p/2-1}), R_\varphi^{n-1}(\Omega^{p/2}), R_\varphi^{n-1}(\omega^{p/2}),
\\
&R_\phi(\Omega^{p/2}), R_\phi(\omega^{p/2-1}),  \dots, R_\phi(\omega^1), R_\phi(\Omega^1), \omega^0.
\end{align*}
The first line corresponds to the arcs in the fundamental domain $I_0$. To get the arcs in the arc $I_1$, we reflect the arcs in $I_0$ across the line passing through $e^{2\pi/(2n)}$. Then to get all the other arcs we rotate the arcs in $I_0, I_1$ by applying $R_\varphi$ which multiplies by $\xi_n$. Note that $R_\phi(\omega^0) = \omega^0$ and likewise if $\omega^k$ is an arc containing $e^{2\pi r/(2n)}$ then the reflection $R_\varphi^rR_\phi$ across the line that passes through that point will preserve $\omega^k$ (but reverse its orientation).

We now explain how we use this. For the $\Omega^k$ in the fundamental arc $I_0$, we use projections $F_k, F_{k,-}, F_{k,+}$ guaranteed by conditions 1-3. For $\Omega^k$ in general, there are unique $a\in \{0, \dots, n-1\}$ and $b \in \{0, 1\}$ so that $\pi^{a,b}(\Omega^k)\subset I_0$. Moreover, there is a unique $k_{a,b}$ such that $\pi^{a,b}(\Omega^k) = \Omega^{k_{a,b}}$.
So, we define $F_k = \varphi^a\circ\phi^b(F_{k_{a.b}})$, $F_{k,-} = \varphi^a\circ\phi^b(F_{k_{a,b},-(1-2b)})$, $F_{k,+} = \varphi^a\circ\phi^b(F_{k_{a,b},+(1-2b)})$.

The projections $F_k$ for $k = 1, \dots, n_0=np$  are (in this order):
\begin{align*}
&F_1, \dots, F_{p/2},\\
&\varphi^{-s}\circ\phi(F_{p/2}), \dots, \varphi^{-s}\circ\phi(F_{1}),
\\
&\varphi^{-s}(F_1), \dots, \varphi^{-s}(F_{p/2}),\\
&\varphi^{-2s}\circ\phi(F_{p/2}), \dots, \varphi^{-2s}\circ\phi(F_{1}),\\
&\varphi^{-2s}(F_1), \dots, \varphi^{-2s}(F_{p/2}),\\
&\;\;\;\;\;\dots. \\
&\varphi^{-(n-1)s}(F_1), \dots, \varphi^{-(n-1)s}(F_{p/2}),\\
&\phi(F_{p/2}), \dots, \phi(F_{1}).
\end{align*}
The transformations $R_\varphi, R_\phi$ induce index changes as follows:
\[R_\varphi(\omega^k) = \omega^{k+p}, \;\;\; R_\phi(\omega^k) = \omega^{np-k},\]
\[R_\varphi(\Omega^k) = \Omega^{k+p}, \;\;\; R_\phi(\Omega^k) = \Omega^{np+1-k}.\]
The maps $\varphi, \phi$ induce index changes as follows:
\[\varphi^{-s}(F_k) = F_{k+p}, \;\;\; \phi(F_k) = F_{np+1-k}.\]

The expression $G_k = F_{k,+}^c+E_{\omega^k}(U)+F_{k+1,-}^c$ holds for all $k$. It is clear from our list of projections and sets above that $\varphi^{-s}(G_k) = G_{k+p}$. The dihedral commutation relation implies that 
\[\phi \circ\varphi^{-rs} = \varphi^{rs}\circ \phi = \varphi^{rs-ns}\circ \phi = \varphi^{-(n-r)s}\circ \phi\]
hence
\begin{align*}
\phi(G_k)&=\phi(F_{k,
+}^c+E_{\omega^k}(U)+F_{k+1,-}^c) \\
&= F_{np+1-k,
-}^c+E_{\omega^{np-k}}(U)+F_{np-k,+}^c=G_{np-k}.
\end{align*}
So,
\begin{align*}
\varphi^{-s}(U'') &= \sum_k\left(a_{\Omega^{k}}F_{k+p}+a_{\omega^{k}}G_{k+p}\right)\\
&=\xi_n^{-1}\sum_k\left(a_{\Omega^{k+p}}F_{k+p}+a_{\omega^{k+p}}G_{k+p}\right)= \xi_n^{-1} U'',
\end{align*}
\begin{align*}
\phi(U'') &=  \sum_k \left(\overline{a_{\Omega^k}}\phi(F_k)+\overline{a_{\omega^k}}\phi(G_k)\right)\\
&\sum_k\left(a_{\Omega^{np+1-k}}F_{np+1-k}+a_{\omega^{np-k}}G_{np-k}\right)=U'',
\end{align*}
Iterating this gives $\varphi(U'') = \zeta U''$.

As in the cases (i), (ii), the positive linear map $B \mapsto B''$ commutes with $\varphi$ and $\phi$ so $B''$ satisfies the required phase symmetries.
This completes the proof.
\end{proof}

We now state these results to the case when $U$ is self-adjoint instead of being unitary. We will instead refer to this operator as $X$. The only non-degenerate phase symmetries for self-adjoint operators are symmetries and antisymmetries, so we could simply apply what we proved to the Cayley transform of $X$ to obtain the following. We however will just skim over the details of how to modify the proofs given above so these results hold so as to obtain better estimates.
\begin{lemma}\label{proj requirement sa} Let $\A$ be a von Neumann algebra and ${S}$ some collection of weakly continuous linear symmetry maps on $\A$. Suppose that $X, B \in \A$ with $X$ self-adjoint and ${S}$-symmetric. 

Let
$\Omega_0\subset\subset \Omega$ be intervals in $\R$.
Let $\Omega_-, \Omega_+$ be the left and right disjoint intervals whose union is $\Omega\setminus \Omega_0$. 

Suppose that there exist commuting $X', B' \in \A$ with $X'$ self-adjoint and ${S}$-symmetric.
Then there is an ${S}$-symmetric projection $F=F_{\Omega_0, \Omega}\in \A$ with the following properties:
\begin{enumerate}[label=(\roman*)]
\item $
E_{\Omega_0}(X) \leq F \leq E_{\Omega}(X),
$ 
\item $F$ has the decomposition $F=F_{-} + E_{\Omega_0}(X) + F_+$, where $F_-, F_+$ are ${S}$-symmetric projections with \begin{align}
F_- \leq  E_{\Omega_-}(X),\; F_+ \leq E_{\Omega_+}(X).
\end{align}
\item 
$\displaystyle\|\,[F,B]\,\| \leq \left(\frac{24}{d(\R\setminus \Omega, \Omega_0)}+\max\left(\frac{16}{d(\R\setminus \Omega, \Omega_0)}, \frac{32}{d(\Omega_-, \Omega_+)}\right)\right)\|X'-X\|\|B\|$\\ 
\textcolor{white}{\_} \hspace{4in} $+ \|B'-B\|.$
\end{enumerate}
Also, the projections do not depend on $B$ or $B'$.
\end{lemma}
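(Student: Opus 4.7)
The plan is to mimic the proof of Lemma \ref{proj requirement} essentially verbatim, but with intervals on $\R$ replacing arcs in $\bS$, and taking advantage of the fact that disjoint intervals on $\R$ are separated by a strip so the Davis--Kahan constant can be taken as $c=1$ (by Theorem \ref{DK theorem} and Lemma \ref{three projs lemma}).

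First I would fix the commuting pair $X', B'$ and let $\Omega'$ be the $\frac{1}{2}\dist(\R\setminus\Omega, \Omega_0)$-neighborhood of $\Omega_0$ in $\R$, so $\Omega_0 \subset \Omega' \subset \Omega$ with each consecutive pair separated by a strip of width $\frac{1}{2}\dist(\R\setminus \Omega,\Omega_0)$. Set $F' = E_{\Omega'}(X')$; weak continuity and linearity of each $\varphi \in S$ together with $X'$ being $S$-symmetric imply $F'$ is $S$-symmetric. The Davis--Kahan theorem on $\R$ (constant $1$) yields $\|(1-F')E_{\Omega_0}(X)\|$ and $\|(1-E_\Omega(X))F'\|$ are each at most $\frac{2}{d(\R\setminus\Omega,\Omega_0)}\|X'-X\|$.

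Next I would perform the filtering/decomposition step as in Lemma \ref{proj requirement}: set
\[X_0 = E_{\Omega_-}(X)F'E_{\Omega_-}(X)+E_{\Omega_0}(X)+E_{\Omega_+}(X)F'E_{\Omega_+}(X),\]
and estimate $\|F'-X_0\|$ by the same expansion of $E_\Omega(X)F'E_\Omega(X)$, using Lemma \ref{three projs lemma} with $c=1$ to bound the cross-term $\|E_{\Omega_+}(X)F'E_{\Omega_-}(X)\|$ by $\frac{4}{d(\Omega_-,\Omega_+)}\|X'-X\|$, and the block-norm identity (\ref{blockNorm}) to combine the remaining off-diagonal pieces. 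This produces the analogue of (\ref{F'XDiff}) with constant
\[c_{\Omega,\Omega_0}=\frac{6}{d(\R\setminus\Omega,\Omega_0)}+\max\!\left(\frac{4}{d(\R\setminus\Omega,\Omega_0)},\frac{8}{d(\Omega_-,\Omega_+)}\right).\]

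Since $0\leq X_0\leq 1$, $(F')^2=F'$, and $x^2-x=(x-\tfrac12)^2-\tfrac14$, we get $\|X_0^2-X_0\|\leq \|X_0-F'\|$. Provided $c_{\Omega,\Omega_0}\|X'-X\|<1/4$, Lemma \ref{Strung lemma} applies and produces a projection $F$ which is a real continuous function $f$ of $X_0$ with $f(0)=0, f(1)=1$ and $\|F-X_0\|\leq 2\|X_0^2-X_0\|\leq 2\|F'-X_0\|$. Since $X_0$ commutes with each of $E_{\Omega_-}(X), E_{\Omega_0}(X), E_{\Omega_+}(X)$, so does $F$; moreover $E_{\Omega_0}(X)\leq E_{\{1\}}(X_0)\leq F$ and $F\leq E_{\Omega}(X)$ from $E_\Omega(X)X_0=X_0$ and $f(0)=0$. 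Setting $F_\pm = F E_{\Omega_\pm}(X)$ gives the required decomposition. In the degenerate case $c_{\Omega,\Omega_0}\|X'-X\|\geq 1/4$ we take $F=E_{\Omega_0}(X)$, $F_\pm=0$, so the bound $\|F-F'\|\leq 4c_{\Omega,\Omega_0}\|X'-X\|$ holds unconditionally.

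The commutator bound in (iii) then follows from
\[\|[F,B]\|\leq 2\|F-F'\|\|B\| + \|B'-B\|\]
since $[F',B']=0$, and substituting the above estimate on $\|F-F'\|$ gives precisely the stated constants. For the $S$-symmetry of $F, F_\pm$: every spectral projection of the $S$-symmetric $X$ and $X'$ is $S$-symmetric by weak continuity, linearity, and Proposition \ref{properties}\ref{spectral proj transformation}; hence $F'$ and $X_0$ are $S$-symmetric (using Proposition \ref{properties}\ref{triple product} for $X_0$), hence so is $F=f(X_0)$, hence so are the $F_\pm$. The main technical wrinkle — which is exactly the same as in Lemma \ref{proj requirement} — is the filtering step producing $X_0$ and the associated bookkeeping of the cross-terms, but since intervals on $\R$ are genuinely separated by strips, the Davis--Kahan constant $c$ drops out and one gets the cleaner numerics stated in (iii).
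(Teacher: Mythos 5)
Your proposal is correct and takes exactly the same approach as the paper, which literally states that "the proof of Lemma \ref{proj requirement} word-for-word proves this result except that arcs in $\bS$ are replaced with intervals in $\R$ and in this case $c=1$." You have simply written out the details of that word-for-word transcription, including the neighborhood $\Omega'$, the filtering operator $X_0$, the application of Lemma \ref{Strung lemma}, the degenerate case, and the $S$-symmetry bookkeeping, all matching the proof of Lemma \ref{proj requirement}.
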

\begin{proof}
The proof of Lemma \ref{proj requirement} word-for-word proves this result except that arcs in $\bS$ are replaced with intervals in $\R$ and in this case $c = 1$.
\end{proof}

\begin{lemma}
\label{proj-converse sa}
Let $\A$ be a unital $C^\ast$-algebra and ${S}$ some collection of weakly continuous linear symmetry maps on $\A$.  Let $X, B \in \A$ with $X$ being self-adjoint and ${S}$-symmetric.

Suppose that there is a constant $\epsilon(L)$ (explicitly depending on some parameter $L$ and implicitly depending on $X, B$) so that for all intervals $\Omega_0 \subset\subset \Omega \subset\subset \R$ with the left and right subintervals $\Omega_-, \Omega_+$ of $\Omega\setminus \Omega_0$, there is an ${S}$-symmetric projection $F=F_{\Omega_0, \Omega}\in \A$ with the following properties.  
\begin{enumerate}
\item $E_{\Omega_0}(X) \leq F\leq E_{\Omega}(X)$.
\item $F$ can be decomposed as $F = F_- + E_{\Omega_0}(X) + F_+$, where $F_- \leq E_{\Omega_-}(X)$, $F_+ \leq E_{\Omega_+}(X)$ are ${S}$-symmetric projections.
 \item $F$ satisfies the commutator estimate
\begin{align}\label{commutator-leqconv sa}
\|\,[F,B]\,\| \leq \epsilon(L),
\end{align}
whenever $\dist(\R\setminus \Omega, \Omega_0)$ and $\dist(\Omega_-, \Omega_+)$ are at least $L$ and at most $L_0$.
\end{enumerate}
Then for any $L\in (0, L_0]$, there are commuting $X'', B''\in \A$ with $X''$ self-adjoint and ${S}$-symmetric satisfying
\begin{align}\label{ABestimates sa}
\|X''-X\| \leq 5L, \; \|B''-B\| \leq \frac{6C_\rho}{L}\|[X,B]\|+2\|B\|\epsilon(L).
\end{align}
The construction of $X''$ only depends on $X, L,$ and the projections. The map defining $B \mapsto B''$ is a completely positive linear map of norm $1$ that commutes with each symmetry in ${S}$.

\vspace{0.05in}

In each of the following cases that we also have that $X$ has an antisymmetry and $B$ has a phase symmetry, $X''$ and $B''$ have additional symmetry as well:
\begin{enumerate}[label=(\roman*)]
\item Suppose that $\phi$  is a weakly continuous conjugate-linear symmetry map on $\A$ with order $2$ that commutes with each symmetry of ${S}$. If $\phi(X)=-X$  and $\phi(B)=\eta B$ (resp. $\phi_{\ast}(B)=\eta B$), then $X'', B''$ can be chosen so that $\phi(X'')=- X''$ and $\phi(B'')=\eta B''$ (resp. $\phi_{\ast}(B'')=\eta B''$).

\item Suppose that  $\varphi$ is a weakly continuous linear symmetry map on $\A$ with order $2$ that commutes with each symmetry of ${S}$. If $\varphi(X)=-X$ and $\varphi(B)=\eta B$ (resp. $\varphi_{\ast}(B)=\eta B$), then $X'', B''$ can be chosen so that $\varphi(X'')=-X''$ and $\varphi(B'')=\eta B''$ (resp. $\varphi_{\ast}(B'')=\eta B''$).
\end{enumerate}
\end{lemma}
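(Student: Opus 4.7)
The argument parallels Lemmas \ref{proj-converse} and \ref{proj-converse extra sym}, but with arcs in $\bS$ replaced by intervals in $\R$. The setup is in fact simpler: the spectrum of $X$ lies in a bounded interval (so only finitely many intervals are needed and there is no ``wrap around''), and the localization operator $\L_X^\Delta$ of Lemma \ref{localization lemma} automatically respects antisymmetry through the identity $\L_{-X}^\Delta = \L_X^\Delta$, so no phase-aware variant is needed for cases (i), (ii).

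\textbf{Main construction.} Fix $\Delta < L$ and set $\tilde B = \L_X^\Delta(B)$; Lemma \ref{localization lemma} gives $\|\tilde B - B\| \leq \frac{2C_\rho}{\Delta}\|[X,B]\|$, gives $E_{K_2}(X) \tilde B E_{K_1}(X) = 0$ when $\dist(K_1, K_2) > \Delta$, and ensures $\tilde B$ inherits all symmetries of $B$ since each $\varphi \in S$ fixes $X$. Cover the (bounded) spectrum of $X$ by finitely many closed intervals $\Omega^k$ of length $4L$, separated by gap intervals $\omega^k$ of length between $L$ and $2L$, and partition each $\Omega^k = \Omega^k_- \cup \Omega^k_0 \cup \Omega^k_+$ with $\Omega^k_\pm$ of length $L$ and $\Omega^k_0$ of length $2L$. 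Apply the hypothesis to obtain ${S}$-symmetric projections $F_k = F_{\Omega^k_0, \Omega^k}$ with the decomposition $F_k = F_{k,-} + E_{\Omega^k_0}(X) + F_{k,+}$ and commutator bound $\|[F_k, B]\| \leq \epsilon(L)$. Set $F^c_{k,\pm} = E_{\Omega^k_\pm}(X) - F_{k,\pm}$ and $G_k = F^c_{k,+} + E_{\omega^k}(X) + F^c_{k+1,-}$, relabel the collection $\{F_k, G_k\}$ as a resolution of the identity $\{E_j\}$, let $a_j$ be the center of the corresponding interval, and define
\[
X'' = \sum_j a_j E_j, \qquad B'' = \sum_j E_j \tilde B E_j.
\]

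\textbf{Estimates and $S$-symmetry.} The bound $\|X'' - X\| \leq 5L$ follows by the same rearrangement as in Lemma \ref{proj-converse}: the ``main'' piece $\sum_k (a_{\Omega^k} E_{\Omega^k}(X) + a_{\omega^k} E_{\omega^k}(X))$ is within $2L$ of $X$, and the remaining $F^c_{k,\pm}$ corrections are each scaled by at most $|a_{\omega^k} - a_{\Omega^k}| \leq 3L$. The tridiagonal-block structure of $\tilde B$ with respect to $\{E_j\}$, coming from Lemma \ref{localization lemma}(iv) since $\dist(\Omega^{k_1}, \Omega^{k_2}) > \Delta$ for $|k_1-k_2| \geq 2$, gives $\|B'' - \tilde B\| \leq 2(\frac{2C_\rho}{\Delta}\|[X,B]\| + \epsilon(L))$, and optimizing $\Delta$ yields the claimed bound. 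All $E_j$ are ${S}$-symmetric since $X$ is, and $B \mapsto B''$ is completely positive of norm $1$ and commutes with each $\varphi \in {S}$ because both $\L_X^\Delta$ and conjugation by each $E_j$ do.

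\textbf{Antisymmetry cases and main obstacle.} For cases (i) and (ii), where $\phi(X) = -X$ or $\varphi(X) = -X$, first arrange that $\{\Omega^k, \omega^k\}$ is invariant under $t \mapsto -t$, by choosing one gap $\omega^0$ to be centered at $0$; this requires $L_0$ small enough that such a symmetric partition with gap lengths in $[L, 2L]$ exists. Apply the hypothesis only for intervals with positive center, and for a negative-center interval $\Omega^k = -\Omega^{k'}$ set $F_k = \phi(F_{k'})$, $F_{k,-} = \phi(F_{k',+})$, $F_{k,+} = \phi(F_{k',-})$ (swapping $\pm$ because $\phi$ reverses orientation on $\R$), and analogously for $\varphi$. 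By Proposition \ref{properties}(xiv)--(xv), $\phi(E_\Omega(X)) = E_{-\Omega}(X)$, so conditions (1) and (2) transfer; condition (3) transfers via $\|[\phi(F_{k'}), B]\| = \|\phi([F_{k'}, B])\| = \|[F_{k'}, B]\|$ (using $\phi$ isometric and the phase symmetry of $B$ to handle $\phi_\ast$ when relevant, exactly as in Case (i) of Lemma \ref{proj-converse extra sym}). Then $\phi$ (resp.\ $\varphi$) permutes the terms in $X''$ with the sign flip $a_I \mapsto -a_{-I}$, yielding $\phi(X'') = -X''$ (resp.\ $\varphi(X'') = -X''$), and $B \mapsto B''$ commutes with $\phi$ (resp.\ $\varphi$), delivering the phase symmetry for $B''$. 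The main technical obstacle is the orientation bookkeeping for the $\pm$ subscripts under the reflection, which must be tracked carefully so that the negative-side projections $F_{-k,\pm}$ actually majorize the correct spectral projections $E_{\Omega^k_\pm}(X)$; once that is handled the rest of the argument is a direct translation of Lemma \ref{proj-converse extra sym} Case (i) to the real line.
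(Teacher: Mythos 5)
Your proposal is correct and follows essentially the same route as the paper's own proof, which is itself a translation of Lemmas \ref{proj-converse} and \ref{proj-converse extra sym} from arcs to intervals: you cover $\sigma(X)$ by finitely many length-$4L$ intervals $\Omega^k$ and gaps $\omega^k$ with $\omega^0$ centered at $0$, form $X'' = \sum_j a_j E_j$ and $B'' = \sum_j E_j \L_X^\Delta(B) E_j$, derive the $5L$ and $\tfrac{6C_\rho}{L}\|[X,B]\|$ estimates from the tridiagonal structure, and handle the antisymmetry by defining the negative-index projections as $\phi$ (resp.\ $\varphi$) images of the positive-index ones with the $\pm$ labels swapped. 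The only cosmetic deviation is that you allow gap lengths in $[L,2L]$ where the paper fixes them at $2L$ (which is permissible on $\R$ since there is no wrap-around constraint), and, like the paper, you reduce case (i) to case (ii) by noting that $\L_{-X}^\Delta = \L_X^\Delta$ already respects the antisymmetry, so no phase-aware localization is needed.
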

\begin{proof}
The proof of this result follows from arguments similar to that of the proof of Lemma \ref{proj-converse} and Lemma \ref{proj-converse extra sym}. We outline the changes. 

We let $\Omega^k$ be closed intervals in $\R$ of length $4L$ separated by open intervals $\omega^k$ of length $2L$. 
We require the center of $\omega^0$ to contain $0$. We let $k$ range over $\Z$, although only finitely many of the relevant spectral projections will be non-zero for any particular $X$. So, we have intervals
$\dots, \Omega^{-1}, \omega^{-1}, \Omega^0, \omega^0, \Omega^1, \omega^1, \Omega^2, \dots$ partition $\R$ into intervals.

Note that there is no additional limitation on the size of $L$ and there is no range of values that the length of the $\omega^k$ must satisfy because arc-length is identical to distance in $\R$.

The definitions of the subintervals of $\Omega^k$ and the projections $F_k$ and $G_k$ are identical to that given previously and we get projections: $E_j, j \in \Z$:
\[\dots, F_{-1}, G_{-1}, F_0, G_0, F_1, G_1, F_2, \dots\] that form a resolution of the identity. Again, only finitely many of these are non-zero.

The definitions of $U''$ and $B''$ are the same, except that we call the first operator $X''$. The only difference in the estimates are the following.
First, we instead apply the localization operator of Lemma \ref{localization lemma}. This introduces the numerical constant $2C_\rho$ instead of $4\sqrt2C_\rho$ into our estimates.

The second difference in the estimates is based on the distance between the center of an interval to its endpoint. The distance from the center of $\omega^k$ to its endpoints is $L$ and the distance of the center of $\Omega^k$ to its endpoints is $2L$.
So, $\|X''-X\| \leq 2L+(L+2L)=5L$.

So, the same arguments for the second estimate give for $\delta = \|[X,B]\|$:
\begin{align*}
    \|X''-X\| \leq 5L, \; \|B''-B\| \leq \frac{6C_\rho}{L}\delta +2\|B\|\epsilon(L).
\end{align*}

We now discuss the symmetries. As before, all the linear symmetries of $X$ pass to $X''$ and the map $B \mapsto B$ commutes with all these linear symmetries.

We now address the antisymmetry in the construction. By letting $\varphi=\phi_\ast$, we may reduce Case (i) to Case (ii). So, suppose that we are in Case (ii). The intervals are symmetric about $0$ up to a relabeling: with $-\omega^k = \omega^{-k}$ and $-\Omega^k = \Omega^{-k+1}$. Notice that both these index changes have order $2$.

As before, it is easy to show that we can use $\varphi(F_{\Omega_0, \Omega})$, $\varphi(F_{\Omega_0, \Omega,+})$, $\varphi(F_{\Omega_0, \Omega,-})$ instead of $F_{-\Omega_0, -\Omega}$, $F_{-\Omega_0, -\Omega, -}$, $F_{-\Omega_0, -\Omega, +}$, respectively.

Let $F_k = F_{\Omega_0^k, \Omega^k}, F_{k,-}, F_{k,+}$ be the projections guaranteed to exist for $k \geq 1$. These correspond to projections in $\R_{>0}$. We know that $\varphi$ acting on a spectral projection of $X$ satisfies:
\[\varphi(E_{\Omega}(X)) = E_{\Omega}(-X) = E_{-\Omega}(X).\]
So, the projections $F_1$, $F_2$, \dots are orthogonal $F_k \leq E_{(0, \infty)}(X)$ for $k \geq 1$. So, $\varphi(F_1)$, $\varphi(F_2)$, \dots are orthogonal and $\varphi(F_k) \leq E_{(-\infty,0)}(X)$ for $k \geq 1$. This shows that the projections $F_{k_1}, \varphi(F_{k_2})$ for $k_1, k_2 \geq 1$ are orthogonal.

We then define $F_k, F_{k,-}, F_{k,+}$ for $k \leq 0$ by $F_k = \varphi(F_{-k+1})$, $F_{k,-} = \varphi(F_{-k+1,+})$, $F_{k,+} = \varphi(F_{-k+1,-})$. 
Because $\varphi$ has order $2$, it follows that the identities $\varphi(F_k)= F_{-k+1}$, $\varphi(F_{k,-})= F_{-k+1,+}$, $\varphi(F_{k,+})= F_{-k+1,-}$ hold for all $k$.

The projections: 
\begin{align*}
\dots, F_{-1},\;\; &G_{-1},\;\; F_0, 
\\ &\;\;G_0, \\
F_1&,\;\; G_1,\;\; F_2, \dots\end{align*} 
are (in this order):
\begin{align*}
\dots\;\;\; \varphi(F_2),\;\;\; \varphi(F_{2,-})+E_{\omega^{-1}}&(X)+\varphi(F_{1,+}),\;\;\; \varphi(F_1),\\ 
&\varphi(F_{1,-})+E_{\omega^{0}}(X)+F_{1,-},\\
&\;\;\;\;\;\;\;\;\;\;\;F_1, \;\;\; F_{1,+}+E_{\omega^1}(X)+F_{2,-}, \;\;\; F_2,\;\;\; \dots.
\end{align*}
Then because \[-a_{\Omega^{k}} = a_{-\Omega^{k}}= a_{\Omega^{-k+1}}\]
and $-a_{\omega^k} = a_{\omega^{-k}}$, we have that
\begin{align*}
\varphi(X'') &= \sum_k (a_{\Omega^k}F_{-k+1}+a_{\omega^k}G_{-k})\\
&= -\sum_k (a_{\Omega^{-k+1}}F_{-k+1}+a_{\omega^{-k}}G_{-k}) = -X''.
\end{align*}
Also, 
\[\varphi(B'') = \sum_k (F_{-k+1}\L(\varphi(B))F_{-k+1}+G_{-k}\L(\varphi(B))G_{-k})\]
so $B\mapsto B''$ commutes with $\varphi$. It is clear that this map also satisfies the remaining desired properties as well. This completes the proof.
\end{proof}

\section{The Symmetry Bootstrap}
\label{SymmBootstrap}

In this section we will use the results of the previous two sections to bootstrap symmetry for almost commuting operators.
We prove the self-adjoint results first:
\begin{thm}\label{self-adjoint bootstrap}
Let $\A$ be a von Neumann algebra and ${S}$ some collection of weakly continuous linear symmetry maps on $\A$. Let $\varphi$ be a weakly continuous linear symmetry map on $\A$ with order $2$ that commutes with each symmetry of ${S}$. 
Let $X, B \in \A$ with $X$ being self-adjoint, ${S}$-symmetric and with $\|X\|, \|B\| \leq 1$. Suppose further that $\varphi(X)=-X$ and $\varphi(B)=\eta B$ (resp. $\varphi_{\ast}(B)=\eta B$) for a phase $\eta \in \C$. 

If there are commuting $X', B' \in \A$ with $X'$ self-adjoint and ${S}$-symmetric and with
\[\varepsilon = \max(\|X'-X\|, \|B'-B\|),\]
then there are commuting $X'', B'' \in \A$ with $X''$ self-adjoint and ${S}$-symmetric,
\[\|X''- X\|, \|B''-B\| \leq 45\sqrt{\varepsilon},\]
$\varphi(X'')=-X''$, and $\varphi(B'')=\eta B''$ (resp. $\varphi_{\ast}(B'')=\eta B''$).
The operator $X''$ constructed only depends on $X, X', \varphi, \varepsilon$. The map defining $B \mapsto B''$ is a completely positive linear map of norm $1$ that commutes with each symmetry in ${S}$.
\end{thm}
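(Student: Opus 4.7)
The plan is to feed the hypothesized commuting pair $(X',B')$ into Lemma~\ref{proj requirement sa} to manufacture a family of ${S}$-symmetric projections that almost commute with $B$, and then invoke Lemma~\ref{proj-converse sa} (specifically its antisymmetry case (ii)) to convert those projections into commuting $(X'',B'')$ carrying all of the required symmetry and antisymmetry structure. The single free scale parameter $L$ (the interval spacing) will be chosen at the end to balance the two resulting error contributions at order $\sqrt{\varepsilon}$.

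First, because $X'$ and $B'$ commute, the commutator $[X,B]$ is already small:
\[
\|[X,B]\| \leq \|[X-X',B]\| + \|[X',B-B']\| \leq 2\varepsilon\|B\| + 2\|X'\|\varepsilon \leq 4\varepsilon + 2\varepsilon^{2}.
\]
Next, for every pair $\Omega_0 \subset\subset \Omega \subset \R$ of intervals with $\dist(\R\setminus\Omega,\Omega_0) \geq L$ and $\dist(\Omega_-,\Omega_+) \geq L$, Lemma~\ref{proj requirement sa} applied to $(X,B,X',B')$ produces an ${S}$-symmetric projection $F_{\Omega_0,\Omega}$ with the required range-decomposition $F_- + E_{\Omega_0}(X) + F_+$ and, since $\tfrac{24}{L}+\max(\tfrac{16}{L},\tfrac{32}{L}) = \tfrac{56}{L}$, the commutator bound
\[
\|[F_{\Omega_0,\Omega},B]\| \;\leq\; \frac{56\,\varepsilon}{L}\|B\| + \varepsilon \;\leq\; \frac{56\,\varepsilon}{L} + \varepsilon \;=:\; \epsilon(L).
\]
Because $\varphi$ is weakly continuous and linear and commutes with each element of ${S}$, the images $\varphi(F_{\Omega_0,\Omega})$ are again ${S}$-symmetric, furnishing exactly the projections on the reflected intervals in $(-\infty,0)$ that the case (ii) construction of Lemma~\ref{proj-converse sa} builds via $F_k = \varphi(F_{-k+1})$.

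Applying Lemma~\ref{proj-converse sa} in its case (ii) to this family of projections yields commuting $(X'',B'')$ with $X''$ self-adjoint and ${S}$-symmetric, $\varphi(X'')=-X''$, $\varphi(B'')=\eta B''$ (respectively $\varphi_\ast(B'')=\eta B''$), and with $B \mapsto B''$ a completely positive linear map of norm $1$ commuting with each symmetry of ${S}$. Its quantitative output is $\|X''-X\| \leq 5L$ and
\[
\|B''-B\| \;\leq\; \frac{6C_\rho}{L}\|[X,B]\| + 2\|B\|\,\epsilon(L) \;\leq\; \frac{24C_\rho + 112}{L}\,\varepsilon + 2\varepsilon \;\leq\; \frac{400\,\varepsilon}{L} + 2\varepsilon,
\]
using $C_\rho < 12$ and $\|B\|\leq 1$. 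Choosing $L = 9\sqrt{\varepsilon}$ delivers $5L = 45\sqrt{\varepsilon}$ and $\tfrac{400\,\varepsilon}{L}+2\varepsilon = \tfrac{400}{9}\sqrt{\varepsilon} + 2\varepsilon \leq 45\sqrt{\varepsilon}$ provided $\varepsilon \leq 25/324$. In the complementary regime $\varepsilon > 25/324$ one has $45\sqrt\varepsilon > 1$, so the trivial choice $X''=B''=0$ (which is ${S}$-symmetric, $\varphi$-antisymmetric, and $\eta$-phase-symmetric) already satisfies the stated estimate.

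The only genuine obstacle is constant bookkeeping: one must verify that $L=9\sqrt\varepsilon$ lies below the threshold $L_0$ of Lemma~\ref{proj-converse sa} (which again is handled by the trivial-regime case for larger $\varepsilon$) and that the distance hypotheses of Lemma~\ref{proj requirement sa} match the interval partitioning used inside Lemma~\ref{proj-converse sa}. Every structural conclusion — the ${S}$-symmetry of $X''$, the $\varphi$-antisymmetry of $X''$, the $\eta$-phase symmetry of $B''$, and the complete positivity, norm-one, and ${S}$-commutation of $B \mapsto B''$ — propagates automatically from Lemma~\ref{proj-converse sa}(ii).
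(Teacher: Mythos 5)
Your argument tracks the paper's proof essentially line for line: you feed the hypothesized commuting pair into Lemma~\ref{proj requirement sa} to obtain the $S$-symmetric projections, then invoke case~(ii) of Lemma~\ref{proj-converse sa} and balance the two errors by choosing the scale $L$ proportional to $\sqrt{\varepsilon}$. The only difference is a slightly different choice of $L$ (you take $L=9\sqrt\varepsilon$; the paper takes $L=\sqrt{402\varepsilon/5}$) with the same final constant $45$.

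One genuine, if small, defect: in the large-$\varepsilon$ regime you set $X''=B''=0$, but then the map $B\mapsto B''$ is the zero map, which is not unital and has norm $0$, violating the ``completely positive linear map of norm $1$'' clause of the conclusion. Once $45\sqrt\varepsilon>1$, the correct fallback is $X''=0$ together with $B''=B$, i.e.\ the identity map on $B$; this is unital, completely positive, commutes with every symmetry in $S$ and with $\varphi$, and trivially satisfies $[X'',B'']=0$ with $\|B''-B\|=0$.

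A secondary bookkeeping remark: you correctly derive $\|[X,B]\|\le 4\varepsilon + 2\varepsilon^2$ (since you only know $\|X'\|\le 1+\varepsilon$), but then silently discard the $2\varepsilon^2$ in the display that produces $\frac{24C_\rho+112}{L}\varepsilon+2\varepsilon$; keeping it, with $L=9\sqrt\varepsilon$ and $C_\rho<12$, the bound reads $\tfrac{400}{9}\sqrt\varepsilon + 16\varepsilon^{3/2}+2\varepsilon$, and the inequality $\le 45\sqrt\varepsilon$ then needs $16\varepsilon+2\sqrt\varepsilon\le 5/9$, which fails at your claimed threshold $\varepsilon=25/324$. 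This costs nothing because the trivial regime already covers $\varepsilon>1/2025$, but the stated threshold as written is too generous.
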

\begin{proof}
Let $\delta = \|[X,B]\|$.
Note that 
\[\delta \leq \|[X',B']\| + 2\|X'-X\|+2\|B'-B\| \leq 4\varepsilon.\]

Then we can apply Lemma \ref{proj requirement sa} to obtain certain projections $F$ that satisfy conditions (i), (ii), and (iii). The last of which is:
\begin{align*}
\|\,[F,B]\,\| &\leq \left(\frac{24}{d(\R\setminus \Omega, \Omega_0)}+\max\left(\frac{16}{d(\R\setminus \Omega, \Omega_0)}, \frac{32}{d(\Omega_-, \Omega_+)}\right)\right)\varepsilon+\varepsilon
\end{align*}
Note that if $\dist(\R\setminus \Omega, \Omega_0), \dist(\Omega_-, \Omega_+)\geq L$ and $L \leq 1$ then
\begin{align*}
\|\,[F,B]\,\| &\leq \left(\frac{24}{L} + \frac{32}{L} \right)\varepsilon + \varepsilon= \frac{56}{L}\varepsilon+\varepsilon
\end{align*}

We then see that this provides the assumed projections for Lemma \ref{proj-converse sa} with $\epsilon(L)= 56\varepsilon/L+\varepsilon$. So, applying Lemma \ref{proj-converse sa} to $X, B$, we obtain commuting $X'', B''$ with the required symmetries and from (\ref{ABestimates sa}) we obtain
\begin{equation}\label{bootstrap sa}
\|X''-X\| \leq 5L, \;   \|B''-B\| \leq \frac{6C_\rho}{L}\delta +\frac{112}{L}\varepsilon+2\varepsilon.
\end{equation}
We use (\ref{C_rho}) and $\delta \leq 4 \varepsilon$ to obtain
\[\|X''-X\| \leq 5L, \; \|B''-B\| \leq \frac{402}{L}\varepsilon.\]
Choosing $L = \sqrt{402\varepsilon/5}$, we obtain 
\[\|X'-X\|, \|B'-B\| \leq 45\sqrt{\varepsilon}.\]

This inequality is valid when $402\varepsilon/5 \leq  1$ which happens when
$\varepsilon\leq 5/402$. 
Note that $45\sqrt{5/402}>1$, so when $\varepsilon > 5/402$, we can choose $X'' = 0$ to obtain the desired estimate.
\end{proof}
\begin{remark}
The process we just outlined is what we refer to as ``projection symmetry bootstrapping''.
What we learn from this is that we got different commuting operators $X'', B''$ that satisfy an estimate with a different constant and half the exponent. 

We will not take the constant in the inequality too seriously for two reasons. The first is that we demonstrated that it is not \emph{too} large for the self-adjoint case. We expect similar types of estimates for the unitary case.  The second reason is that we see that this bootstrapping worsens the exponent so we do not expect the estimates gotten to be optimal anyway. There are good reasons to believe that the optimal exponent for many almost commuting operator problems with symmetries should be $\alpha = 1/2$ as in \cite{kachkovskiy2016distance}.

Note that this loss in exponent is similar to how there is a loss in the optimal asymptotic estimate when using Davidson's projection reformulation of Lin's theorem as discussed in the introduction of \cite{herrera2020hastings}. 
\end{remark}

As a consequence we obtain Lin's theorem with linear symmetries and a conjugate-linear symmetry and Lin's theorem with linear symmetries and a linear antisymmetry:
\begin{thm}\label{Symmetry Lin}
Suppose that $\A$ is a von Neumann algebra and ${S}$ is an admissible collection of weakly continuous linear symmetry maps on $\A$ so that $(\A, S)$ has TR rank 1. Let $\epsilon_\ell(\delta)$ be as in Theorem \ref{Linear Lin}. 

Let $\phi$ be a weakly continuous conjugate-linear symmetry map on $\A$ with order 2 that commutes with every symmetry map in ${S}$.
(Resp. let $\varphi$ be a weakly continuous linear symmetry map on $\A$ with order 2 that commutes with every symmetry map in ${S}$.)

Then if $A \in \A$ is an $S\cup\{\phi\}$-symmetric (resp. ${S}$-symmetric and $\varphi$-antisymmetric) contraction, there is a normal $A''\in \A$ that is $S\cup\{\phi\}$-symmetric (resp. ${S}$-symmetric and $\varphi$-antisymmetric) and satisfies
\[\|A''-A\| \leq 90\sqrt{\epsilon_\ell\left(\|[A^\ast,A]\|\right)}.\]
\end{thm}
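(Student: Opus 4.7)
The plan is to reduce the result to two tools already proven: Lin's theorem with linear symmetries (Theorem \ref{Linear Lin}), which handles the linear $S$-symmetries, and the self-adjoint Symmetry Bootstrap (Theorem \ref{self-adjoint bootstrap}), which injects the additional $\phi$-symmetry or $\varphi$-antisymmetry. First I would write $A = X + iY$ with $X = \Re(A)$, $Y = \Im(A)$, both self-adjoint contractions with $\|[X,Y]\| = \tfrac12\|[A^\ast,A]\|$. By Proposition \ref{translation}, in both cases $X$ and $Y$ are $S$-symmetric. In the conjugate-linear case ($A$ is $\phi$-symmetric), part (iii) of that proposition gives that $X$ is $\phi$-symmetric and $Y$ is $\phi$-antisymmetric; in the linear antisymmetric case ($A$ is $\varphi$-antisymmetric), part (ii) gives that both $X$ and $Y$ are $\varphi$-antisymmetric.

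Next, applying Theorem \ref{Linear Lin} to the $S$-symmetric contraction $A$ produces a normal $A' = X' + iY' \in \A$ with $X', Y'$ commuting self-adjoint and $S$-symmetric (the latter because each map in $S$ is linear, so Proposition \ref{translation}(i) passes the $S$-symmetry of $A'$ to $X', Y'$), satisfying $\max(\|X'-X\|,\|Y'-Y\|) \leq \|A'-A\| \leq \epsilon_\ell(\|[A^\ast,A]\|) =: \varepsilon$. I would then apply the self-adjoint bootstrap (Theorem \ref{self-adjoint bootstrap}) to the pair $(Y, X)$ in the roles of $(X, B)$: the antisymmetry hypothesis $\phi(Y) = -Y$ (resp.\ $\varphi(Y) = -Y$) holds, and $X$ has a phase symmetry, namely $\phi(X) = X$ with $\eta = 1$ in the conjugate-linear case or $\varphi(X) = -X$ with $\eta = -1$ in the linear case. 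The bootstrap then produces commuting self-adjoint $Y''$, $X''$ with $\|Y''-Y\|,\|X''-X\| \leq 45\sqrt{\varepsilon}$. The $S$-symmetries survive because $Y''$ depends only on $Y, Y', \phi$ (resp.\ $\varphi$) and $\varepsilon$, and the map $X \mapsto X''$ is a completely positive linear map of norm $1$ commuting with every symmetry in $S$.

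Finally, setting $A'' := X'' + iY''$ gives a normal element that is $S$-symmetric (by Proposition \ref{translation}(i) applied to each linear multiplicative $S$-symmetry), $\phi$-symmetric in the conjugate-linear case (Proposition \ref{translation}(iii)), and $\varphi$-antisymmetric in the linear case (Proposition \ref{translation}(ii)). The norm estimate is
\[\|A''-A\| \leq \|X''-X\| + \|Y''-Y\| \leq 90\sqrt{\varepsilon} = 90\sqrt{\epsilon_\ell(\|[A^\ast,A]\|)},\]
which matches the stated bound.

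The main subtlety is that Theorem \ref{self-adjoint bootstrap} is literally stated only for a linear antisymmetric $\varphi$, not a conjugate-linear $\phi$. However, its proof passes entirely through Lemma \ref{proj requirement sa} and Lemma \ref{proj-converse sa}, and the latter already treats case (i) with a conjugate-linear $\phi$ in a completely parallel fashion to case (ii) with a linear $\varphi$. Hence the same proof of the bootstrap theorem, with the only change being to invoke Lemma \ref{proj-converse sa}(i) in place of (ii), yields the conjugate-linear version with the identical constant $45\sqrt{\varepsilon}$. The remaining work is purely bookkeeping: checking that all the $S$-symmetries are preserved through each step, which is immediate from the equivariance built into the two lemmas above.
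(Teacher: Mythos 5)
Your proof is correct and takes essentially the same route as the paper: decompose $A$ into self-adjoint parts, apply Theorem~\ref{Linear Lin} to get a first $S$-symmetric normal approximant, run the self-adjoint Symmetry Bootstrap on the antisymmetric part against the other part, and reassemble; the paper additionally inserts a preliminary reduction via Proposition~\ref{S0 Reduction Prop} which your more direct argument shows is unnecessary. You have also correctly flagged (and resolved) the fact that Theorem~\ref{self-adjoint bootstrap} is stated only for a linear $\varphi$ while its proof via Lemma~\ref{proj-converse sa}(i) covers the conjugate-linear case with the same constant — the paper glosses over this by citing ``Theorem~\ref{self-adjoint bootstrap}(i)'' which does not literally exist.
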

\begin{proof}
By Proposition \ref{S0 Reduction Prop} and Proposition \ref{properties}\ref{B subalg}, we may replace $\A$ with $\A_{S}$ so that there is a set of symmetries $S_1$ containing at most two symmetries, each of order $2$, one $\phi \in S_1$, conjugate linear. There may also be a linear anti-multiplicative symmetry $\varphi\in S_1$. We know that $(\A_{S}, S_1)$ has TR rank 1 by Theorem \ref{finite TR rank 1}. Given $A \in \A_{S}$ that is $S_1$-symmetric, we need to find $A' \in \A_{S}$ normal and $S_1$-symmetric. 

Let $X = \Im(A), B = \Re(A)$. Then $X, B \in \A_{S}$ are both $S_\ast = S_1 \setminus \{\phi\}$-symmetric, $B$ is $\phi$-symmetric, and $X$ is $\phi$-antisymmetric.

By Theorem \ref{Linear Lin}, there is a normal $A' \in \A$ with $\|A'-A\| \leq \varepsilon_\ell(\|[A^\ast, A]\|)$ that is $S_\ast$-symmetric. Because $S_\ast$ is either empty or contain a linear symmetry, we know that $X'=\Im(A')$, $B' = \Re(A')$ are $S_\ast$-symmetric.

We now use Theorem \ref{self-adjoint bootstrap}(i), to conclude that there are commute self-adjoint $X'', B''$ that $S_\ast$-symmetric, $X''$ is $\phi$-antisymmetric, $B''$ is $\phi$-symmetric, and
\begin{align*}
\|X''-X\|, \|B''-B\| &\leq 45\sqrt{\max(\|X'-X\|, \|B'-B\|)} \\
&\leq 45 \sqrt{\epsilon_\ell(\|[A^\ast, A]\|)}.
\end{align*} 
Then $A'' = B'' + iX''$ is normal, is $S_\ast$-symmetric, is $\phi$-symmetric, and satisfies
\[\|A''-A\| \leq  \|X''-X\|+ \|B''-B\|.\] Hence $A''$ is the desired operator.
\end{proof}
We also have the analogous result for finite dimensional $C^\ast$-algebras:
\begin{thm}\label{Symmetry Lin fin dim}
Suppose that $\A$ is a finite dimensional $C^\ast$-algebra and ${S}$ is an admissible collection of linear symmetry maps on $\A$. Let $\epsilon_\ell(\delta)$ be as in Theorem \ref{Linear Lin}. 

Let $\phi$ be a conjugate-linear symmetry map on $\A$ that commutes with every symmetry map in ${S}$.
(Resp. let $\varphi$ be a linear symmetry map on $\A$ that commutes with every symmetry map in ${S}$.)

Then if $A \in \A$ is an $S\cup\{\phi\}$-symmetric (resp. ${S}$-symmetric and $\varphi$-antisymmetric) contraction, there is a normal $A''\in \A$ that is $S\cup\{\phi\}$-symmetric (resp. ${S}$-symmetric and $\varphi$-antisymmetric) and satisfies
\[\|A''-A\| \leq 90\sqrt{\epsilon_\ell\left(\|[A^\ast,A]\|\right)}.\]
\end{thm}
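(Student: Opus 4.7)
The plan is to mirror the proof of Theorem \ref{Symmetry Lin} almost verbatim, exploiting three automatic features of the finite-dimensional setting: every finite dimensional $C^\ast$-algebra is a von Neumann algebra; every $\R$-linear map on a finite dimensional space is automatically norm- (and hence weakly-) continuous; and by Theorem \ref{finite TR rank 1}, the pair $(\A, {S})$ has TR rank 1 for \emph{any} collection of symmetry maps ${S}$. Thus the extra hypotheses that were imposed by hand in Theorem \ref{Symmetry Lin} all come for free here.

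First I would apply Proposition \ref{S0 Reduction Prop} to replace the original collection ${S}\cup\{\phi\}$ (resp.\ ${S}\cup\{\varphi\}$) by a pair $(S_0, S_1)$ where $S_0$ consists of linear multiplicative symmetry maps and $S_1$ consists of at most two commuting symmetry maps of order $2$ (restricted to $\A_{S_0}$), each commuting with every element of $S_0$. Since $\phi$ is conjugate-linear (resp.\ $\varphi$ is a non-identity linear antimultiplicative or a linear multiplicative antisymmetry), after the reduction we may assume the hypothesis element lies in $S_1$. Replacing $\A$ by the unital $C^\ast$-subalgebra $\A_{S_0}$ (which is finite dimensional, hence has TR rank 1 with respect to any symmetry collection by Theorem \ref{finite TR rank 1}), the problem reduces to the case where we must find a normal $S_1$-symmetric element of $\A_{S_0}$ near an $S_1$-symmetric contraction $A\in\A_{S_0}$.

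Next, decompose $A=B+iX$ with $B=\Re(A)$ and $X=\Im(A)$ self-adjoint. By Proposition \ref{translation}, the linear-multiplicative symmetries in $S_0$ and any linear anti-multiplicative symmetry present in $S_1$ pass from $A$ to both $X$ and $B$; in the conjugate-linear case, $\phi$ makes $B$ symmetric and $X$ antisymmetric; in the linear antisymmetry case, $\varphi$ makes both $X$ and $B$ antisymmetric. Apply Theorem \ref{Linear Lin} (which is exactly the case of a set of linear symmetries only) to obtain a normal $A'\in\A_{S_0}$ that is $S_\ast$-symmetric, where $S_\ast$ denotes the subset of $S_0\cup S_1$ consisting of linear symmetry maps other than the distinguished $\phi$ or $\varphi$. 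Setting $X'=\Im(A')$, $B'=\Re(A')$, these commute, are self-adjoint and $S_\ast$-symmetric, and satisfy $\max(\|X'-X\|,\|B'-B\|)\leq \epsilon_\ell(\|[A^\ast,A]\|)$.

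Finally, apply Theorem \ref{self-adjoint bootstrap} with the collection $S_\ast$ of linear symmetry maps and the distinguished order-$2$ map $\phi$ (resp.\ $\varphi$) — checking that $\phi(X)=-X$, $\phi(B)=B$ (equivalently $\phi$-phase symmetry with phase $\eta=1$), and that $\phi$ commutes with each element of $S_\ast$, so the hypotheses of the bootstrap hold. This produces commuting self-adjoint $X'',B''\in\A$ that are $S_\ast$-symmetric, with $\phi(X'')=-X''$ and $\phi(B'')=B''$ (resp.\ the $\varphi$-analogue), and satisfying
\[
\|X''-X\|,\ \|B''-B\|\ \leq\ 45\sqrt{\epsilon_\ell(\|[A^\ast,A]\|)}.
\]
Then $A''=B''+iX''$ is normal, satisfies every symmetry in $S_0\cup S_1$ (by Proposition \ref{S0 Reduction Prop}(vi), equivalently every symmetry in the original ${S}\cup\{\phi\}$ or ${S}\cup\{\varphi\}$), and obeys $\|A''-A\|\leq\|X''-X\|+\|B''-B\|\leq 90\sqrt{\epsilon_\ell(\|[A^\ast,A]\|)}$. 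There is no real obstacle beyond bookkeeping: the only thing to verify is that each invocation's hypotheses (weak continuity, TR rank 1, admissibility, commutativity with the distinguished symmetry) survives the reduction to $\A_{S_0}$, all of which are either automatic in finite dimensions or guaranteed by Proposition \ref{S0 Reduction Prop}.
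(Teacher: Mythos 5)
Your route — specializing the proof of Theorem \ref{Symmetry Lin} to finite dimensions by hand rather than invoking it — is legitimate and differs somewhat from the paper, which simply observes that $(\A, S\cup\{\phi^2\})$ (resp.\ $(\A, S\cup\{\varphi^2\})$) automatically has TR rank $1$ in finite dimensions and then cites Theorem \ref{Symmetry Lin} directly, with Proposition \ref{S0 Reduction Prop} supplying the order-$2$ restriction of the distinguished symmetry on the relevant fixed subalgebra. Your treatment of the conjugate-linear $\phi$ case is sound: $A$ is $\phi$-symmetric, so $A\in\A_{S\cup\{\phi\}}$ and the Proposition \ref{S0 Reduction Prop} reduction applies cleanly, after which the $B+iX$ decomposition, Theorem \ref{Linear Lin}, and Theorem \ref{self-adjoint bootstrap} go through.

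The ``resp.'' case, however, has a genuine gap. You apply Proposition \ref{S0 Reduction Prop} to $S\cup\{\varphi\}$ and conclude that ``the problem reduces to \dots\ an $S_1$-symmetric contraction $A\in\A_{S_0}$.'' But $A$ is $\varphi$-\emph{anti}symmetric, so $A$ is not $(S\cup\{\varphi\})$-symmetric, and Proposition \ref{S0 Reduction Prop}(vi) does not place $A$ in $\A_{S_0}$ as an $S_1$-symmetric element. Concretely: if $\varphi$ is linear multiplicative the construction puts $\varphi$ into $S_0$, and then $\varphi(A)=-A$ forces $A\notin\A_{S_0}$ (and your claim ``we may assume the hypothesis element lies in $S_1$'' contradicts Proposition \ref{S0 Reduction Prop}(i), which says $S_1$ contains no linear multiplicative maps). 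If instead $\varphi$ is linear anti-multiplicative and $S$ contains some anti-multiplicative $\tau$, then $\tau\circ\varphi\in S_0$ while $(\tau\circ\varphi)(A)=\tau(-A)=-A\neq A$, so again $A\notin\A_{S_0}$. Either way the reduction as you state it breaks. The repair is precisely the paper's: reduce $S\cup\{\varphi^2\}$ rather than $S\cup\{\varphi\}$. The map $\varphi^2$ is linear multiplicative, commutes with everything, and \emph{does} fix $A$; keep $\varphi$ separate as the distinguished antisymmetry map, which then has order $2$ on $\A_{S_0}$ because $\varphi^2\in S_0$. With that change the rest of your argument (decompose $A=B+iX$, apply Theorem \ref{Linear Lin} to get $A'$, apply Theorem \ref{self-adjoint bootstrap} with $\varphi$, reassemble $A''=B''+iX''$) is correct.
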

\begin{proof}
Since $\mathcal A$ is finite-dimensional, $(\mathcal A, S \cup \{\phi^2\})$ (resp. $(\mathcal A, S \cup \{\varphi^2\})$) has TR rank 1. Therefore, the result follows from Theorem \ref{Symmetry Lin} using Proposition \ref{S0 Reduction Prop} and Theorem \ref{finite TR rank 1}.   
\end{proof}

We have the following:
\begin{corollary}\label{sa normal sym}
Suppose that $\A$ is a finite dimensional $C^\ast$-algebra and ${S}$ is an admissible collection of linear symmetry maps on $\A$. Let $\varphi$ be a linear symmetry map on $\A$.

Let $X, B \in \A$ be ${S}$-symmetric contractions, where $X$ is self-adjoint and $\varphi$-antisymmetric and $B$ is $\varphi$-symmetric (resp. $\varphi$-antisymmetric, resp. $\varphi_\ast$-symmetric). Suppose that that there exist commuting ${S}$-symmetric $X', B' \in \A$, where $X'$ is self-adjoint and 
\[\varepsilon = \max(\|X'-X\|, \|B'-B\|).\]

Then there exist commuting ${S}$-symmetric $X'', B''' \in \A$ so that $X''$ is self-adjoint and $\varphi$-antisymmetric, $B'''$ is normal and is $\varphi$-symmetric (resp. $\varphi$-antisymmetric, resp. $\varphi_\ast$-symmetric) and
\begin{align}\nonumber
\|X''-X\|&\leq 45\sqrt{\varepsilon} \\ \|B'''-B\| &\leq 45\sqrt{\varepsilon} + \epsilon_{\ell}\left(\|[B^\ast, B]\|+180\sqrt{\varepsilon}\right).\label{sa normal ineq}
\end{align}
\end{corollary}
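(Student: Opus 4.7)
The plan is a two-stage reduction: first use the self-adjoint bootstrap of Theorem~\ref{self-adjoint bootstrap} to convert $(X,B)$ into a commuting pair $(X'',B'')$ in which $X''$ already has all the required symmetry properties, and then apply a known version of Lin's theorem with symmetries inside the commutant of $X''$ to replace $B''$ by a nearby normal $B'''$ without disturbing $X''$ or the symmetries.

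For step one, apply Theorem~\ref{self-adjoint bootstrap} directly with the given $\varphi$, $S$, $X$, $B$, $X'$, $B'$: the hypotheses $\varphi(X)=-X$ and $\varphi(B)=\eta B$ (respectively $\varphi_\ast(B)=\eta B$) with an appropriate phase $\eta$ are exactly what is given in each of the three cases. This produces commuting $X'',B''\in\mathcal{A}$ with $X''$ self-adjoint, $S$-symmetric, $\varphi$-antisymmetric, and $B''$ $S$-symmetric with the same $\varphi$-type phase symmetry as $B$, satisfying $\|X''-X\|,\|B''-B\|\le 45\sqrt{\varepsilon}$. At this point $X''$ already has every property we want.

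For step two, set $\mathcal{B}=\{A\in\mathcal{A}:[A,X'']=0\}$, the relative commutant of $X''$ in $\mathcal{A}$, which is a finite-dimensional unital $C^\ast$-subalgebra containing $B''$. Each $\psi\in S$ satisfies $\psi(X'')=X''$, while $\varphi$ (and $\varphi_\ast$, when relevant) satisfies $\varphi(X'')=-X''$; applying such a map to $[A,X'']=0$ gives $[\psi(A),\pm X'']=0$ in the multiplicative case and $[\pm X'',\psi(A)]=0$ in the anti-multiplicative case, and in either situation $\psi(A)\in\mathcal{B}$. Thus $S\cup\{\varphi\}$ (and $\{\varphi_\ast\}$ in case (c)) restricts to an admissible collection of weakly continuous symmetry maps on $\mathcal{B}$. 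Now apply Theorem~\ref{Linear Lin} to $B''\in\mathcal{B}$ with this restricted admissible linear collection in case (a) (where $B''$ is $(S\cup\{\varphi\})$-symmetric); finite-dimensionality gives TR rank one by Theorem~\ref{finite TR rank 1}, yielding a normal $B'''\in\mathcal{B}$ with the desired symmetries and
\[\|B'''-B''\|\le\epsilon_\ell\bigl(\|[(B'')^\ast,B'']\|\bigr).\]
In cases (b) and (c) one replaces Theorem~\ref{Linear Lin} by Theorem~\ref{Symmetry Lin fin dim} applied to $B''$ with either the linear antisymmetry $\varphi$ or the conjugate-linear symmetry $\varphi_\ast$, absorbing the resulting constants into the same form of bound for $\varepsilon$ small. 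Since $B'''\in\mathcal{B}$, it automatically commutes with $X''$.

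To assemble the final inequality, use the commutator perturbation estimate
\[\|[(B'')^\ast,B'']\|\le\|[B^\ast,B]\|+2\|(B''-B)^\ast\|\,\|B''\|+2\|B\|\,\|B''-B\|\le\|[B^\ast,B]\|+180\sqrt{\varepsilon},\]
together with monotonicity of $\epsilon_\ell$ and the triangle inequality $\|B'''-B\|\le\|B'''-B''\|+\|B''-B\|$, to obtain (\ref{sa normal ineq}). The main obstacle is verifying that $\mathcal{B}$ is invariant under every symmetry being used, particularly when some are anti-multiplicative; the verification is short but must handle both the multiplicative and anti-multiplicative cases separately since $\varphi([A,X''])$ expands differently in each. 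All other steps amount to bookkeeping of known estimates.
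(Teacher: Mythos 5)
Your proposal is correct and follows essentially the same route as the paper: apply Theorem~\ref{self-adjoint bootstrap} to get commuting $X'',B''$ with the right symmetries, pass to the relative commutant $\mathcal B$ of $X''$, check that $\mathcal B$ is invariant under all the relevant symmetry maps (handling the sign change coming from multiplicativity vs.\ anti-multiplicativity exactly as you do), apply Lin's theorem with symmetries inside $\mathcal B$ to replace $B''$ by a normal $B'''$, and finish with the commutator perturbation bound $\|[(B'')^\ast,B'']\|\le\|[B^\ast,B]\|+4\|B''-B\|\le\|[B^\ast,B]\|+180\sqrt\varepsilon$. If anything you are a bit more careful than the paper's own proof in distinguishing which Lin-with-symmetries theorem is used: the paper uniformly invokes Theorem~\ref{Symmetry Lin fin dim}, but as you note, case (a) (linear $\varphi$-symmetry) falls under Theorem~\ref{Linear Lin}, which is in fact what produces the exact estimate $\epsilon_\ell(\cdot)$ appearing in~(\ref{sa normal ineq}).

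One remark: your phrase ``absorbing the resulting constants into the same form of bound'' in cases (b) and (c) glosses over the fact that Theorem~\ref{Symmetry Lin fin dim} gives $\|B'''-B''\|\le 90\sqrt{\epsilon_\ell(\cdot)}$, which is not of the form $\epsilon_\ell(\cdot)$ with a modified constant -- the square root changes the asymptotics near zero. This is not a defect of your argument in particular: the paper's own proof applies Theorem~\ref{Symmetry Lin fin dim} and states the conclusion with $\epsilon_\ell$, so the discrepancy between the stated bound and the actual output of the cited theorem in cases (b) and (c) is already present in the paper. Beyond that, your argument is sound; just be aware that a rigorous statement would write the bound as a function of the form $\epsilon_\ell$ \emph{or} $90\sqrt{\epsilon_\ell}$ depending on the case, or use a single weaker universal modulus.
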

\begin{proof}
We first construct $X''$ and $B''$ as in Theorem \ref{self-adjoint bootstrap}. 
Let $\B$ be the unital $C^\ast$-subalgebra of all elements that commute with $X''$. Note that $B'' \in \B$. Because $X''$ is ${S}$-symmetric, each symmetry $\tilde\varphi \in {S}$ has $\B$ as an invariant set:
\[[C, X''] = 0 \Leftrightarrow 0=\tilde\varphi([C,X'']) = \pm [\tilde\varphi(C), X''],\]
where the sign is determined by whether $\varphi$ is multiplicative or anti-multiplicative. Likewise, $\varphi$ and $\varphi_\ast$ also have $\B$ as invariant set. So, we may restrict all relevant symmetries to $\mathcal B$. 

Because the map $B \mapsto B''$ has norm $1$, $B''$ is a contraction. Also, $B''$ is $S$-symmetric and $\varphi$-symmetric (resp. $\varphi$-antisymmetric, resp. $\varphi_\ast$-symmetric) by Theorem \ref{self-adjoint bootstrap}.
We then obtain $B'''$ by applying Theorem \ref{Symmetry Lin fin dim} to $B''$.
We conclude with estimating
\[\|[(B'')^\ast, B'']\| \leq \|[B^\ast, B]\| + 4\|B''-B\| \leq \|[B^\ast, B]\|+180\sqrt{\varepsilon}.\]
\end{proof}

See the introduction for consequences of these results for two and three almost commuting self-adjoint matrices.

We now prove the following theorem, which is the symmetry bootstrap for a unitary operator. We do not provide numerical estimates, but as in the self-adjoint case we do not expect them to be too large.
\begin{thm}\label{unitary bootstrap}
Let $\A$ be a von Neumann algebra and ${S}$ some collection of weakly continuous linear symmetry maps on $\A$. Let $U, B \in \A$ with $U$ being unitary and ${S}$-symmetric and $\|B\| \leq 1$. Suppose that there are commuting $U', B' \in \A$ with $U'$ unitary and ${S}$-symmetric with
\[\varepsilon = \max(\|U'-U\|, \|B'-B\|).\]
Suppose further that we have the additional phase symmetries for $U$ and $B$ in one of three cases given below. 

Then for $\|[U,B]\|$ small enough there are commuting $U'', B'' \in \A$ with ${S}$-symmetric $U''$ unitary, $U''$ constructed only depends on $U, U',\varepsilon, \varphi, \phi$,  and 
\[\|U''- U\|, \|B''-B\| \leq Const. \sqrt{\varepsilon},\]
where Const. is a universal constant. The map defining $B \mapsto B''$ is a completely positive linear map of norm $1$ that commutes with each symmetry in ${S}$.
Additional symmetries are also had in each case:
\begin{enumerate}[label=(\roman*)]
\item\label{conj-lin sym addition boot} Suppose that $\phi$ is a weakly continuous conjugate-linear symmetry map on $\A$ with order $2$ that commutes with each symmetry of ${S}$. Suppose that $\phi(U)=\zeta_0 U$ for $\zeta_0$ in the unit circle. Suppose that $\phi(B)=\eta_0 B$ (resp. $\phi_{\ast}(B)=\eta_0 B$) for $\eta_0\in \C$.

Then $U'', B''$ can be chosen so that $\phi(U'')=\zeta_0 U''$ and $\phi(B'')=\eta_0 B''$ (resp. $\phi_{\ast}(B'')=\eta_0 B''$).

\item\label{lin sym addition boot} Suppose that $\varphi$ is a weakly continuous linear symmetry map on $\A$ with finite order $n\geq 2$ that commutes with each symmetry of ${S}$. Suppose that $\varphi(U)=\zeta_1 U$ for $\zeta_1$ in the unit circle having order $n$. Suppose that $\varphi(B)=\eta_1 B$ (resp. $\varphi_{\ast}(B)=\eta_1 B$) for $\eta_1\in \C$.

Then $U'', B''$ can be chosen so that $\varphi(U'')=\zeta_1 U''$ and $\varphi(B'')=\eta_1 B''$ (resp. $\varphi_{\ast}(B'')=\eta_1 B''$), where Const. and how small $\|[U,B]\|$ is required to be depend on $n$.

\item Suppose that there are $\phi, \varphi$ as in both \ref{conj-lin sym addition boot} and \ref{lin sym addition boot} that additionally have the properties that they satisfy the dihedral commutation relation: 
$\phi\circ\varphi\circ \phi = \varphi^{-1}$. 

Then $U'', B''$ can be chosen to satisfy both of the phase symmetries of \ref{conj-lin sym addition boot} and \ref{lin sym addition boot}.

\end{enumerate}
\end{thm}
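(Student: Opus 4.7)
The strategy mirrors Theorem~\ref{self-adjoint bootstrap} but in the unitary setting, combining Lemma~\ref{proj requirement} (to extract the projections $F_{\Omega_0,\Omega}$ from the commuting pair $U',B'$) with Lemma~\ref{proj-converse} or Lemma~\ref{proj-converse extra sym} (to assemble the commuting pair $U'',B''$). First set $\delta = \|[U,B]\|$ and observe the triangle-inequality bound $\delta \le \|[U',B']\| + 2\|U'-U\|\|B\| + 2\|B'-B\|\|U\| \le 4\varepsilon$. Apply Lemma~\ref{proj requirement} to $U,B$ with the commuting witness $U',B'$: for every pair $\Omega_0 \subset\subset \Omega \subsetneq \bS$ with $\dist(\bS\setminus\Omega,\Omega_0),\dist(\Omega_-,\Omega_+) \in [L,2L]$ and $L \le 1$, we obtain an $S$-symmetric projection $F_{\Omega_0,\Omega}$ satisfying conditions (i) and (ii) of Lemma~\ref{proj-converse} and the commutator bound
\[
\|[F_{\Omega_0,\Omega},B]\| \;\le\; \Bigl(\tfrac{24c}{L}+\tfrac{32c}{L}\Bigr)\varepsilon + \varepsilon \;=\; \tfrac{56c}{L}\varepsilon+\varepsilon \;=:\; \epsilon(L).
\]

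Plug these projections into Lemma~\ref{proj-converse}. For any $L\in(0,L_0)$ this produces commuting $U'',B'' \in \A$ with $U''$ unitary and $S$-symmetric and estimates
\[
\|U''-U\| \le \tfrac{11}{2}L, \qquad \|B''-B\| \le \tfrac{17 C_\rho}{L}\delta + 2\epsilon(L) \le \tfrac{17 C_\rho}{L}(4\varepsilon) + \tfrac{112 c}{L}\varepsilon + 2\varepsilon.
\]
Optimizing by choosing $L = K\sqrt{\varepsilon}$ for an appropriate universal constant $K$ (balancing the $L$ and $1/L$ terms) yields both bounds $\le \mathrm{Const.}\sqrt{\varepsilon}$. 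The restriction $L < L_0$ translates into a smallness requirement on $\varepsilon$, and hence on $\|[U,B]\|$ after using $\delta \le 4\varepsilon$; in the regime where this fails the conclusion is trivial for a universal constant large enough to allow taking $U''=U'$ or any nearby unitary.

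For the three phase-symmetry cases we replace Lemma~\ref{proj-converse} by Lemma~\ref{proj-converse extra sym}. The input projections $F_{\Omega_0,\Omega}$ provided by Lemma~\ref{proj requirement} are already $S$-symmetric; the additional symmetries $\phi$ and/or $\varphi$ act only by permuting the arcs, so the hypothesis of Lemma~\ref{proj-converse extra sym} is verified by applying Lemma~\ref{proj requirement} only on a fundamental domain and transporting the resulting projections to the whole circle by $\phi$ (case (i)), the cyclic group $\langle\varphi\rangle$ (case (ii)), or the dihedral group $\langle\phi,\varphi\rangle$ (case (iii)). This gives the same commutator estimate $\epsilon(L)$ with at most an $n$-dependent multiplicative constant, and the same optimization $L \sim \sqrt{\varepsilon}$ applies.

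The main obstacle, and the reason the smallness hypothesis $\|[U,B]\|$ small enough is needed here (unlike in Theorem~\ref{self-adjoint bootstrap}), is that the unitary construction requires $L$ to lie in an interval $(0,L_0)$ whose length is controlled by the geometric requirement that the arcs $\Omega^k,\omega^k$ partition $\bS$ compatibly with the symmetry group acting on $\bS$. In case (ii) we need the partition to be invariant under rotation by $e^{2\pi i/n}$, forcing $n_0$ to be a multiple of $n$; in case (iii) we need invariance under the full dihedral group $D_n$, forcing $n_0$ to be a multiple of $2n$ and placing the $2n$-th roots of unity at centers of gap arcs. Consequently $L_0$ shrinks with $n$ and the multiplicative constant in the final estimate, as well as the required smallness of $\|[U,B]\|$, depend on $n$ in cases (ii) and (iii), exactly as the theorem asserts. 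The map $B\mapsto B''$ being completely positive of norm one and commuting with every symmetry in $S$ (and with $\phi,\varphi$ in the respective cases) is inherited directly from the corresponding conclusions of Lemmas~\ref{proj-converse} and \ref{proj-converse extra sym}.
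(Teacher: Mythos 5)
Your proposal is correct and follows essentially the same route as the paper: extract the $S$-symmetric projections $F_{\Omega_0,\Omega}$ from the commuting pair $(U',B')$ via Lemma~\ref{proj requirement}, feed them into Lemma~\ref{proj-converse} (or Lemma~\ref{proj-converse extra sym} for the phase-symmetry cases), bound $\delta = \|[U,B]\| \le 4\varepsilon$, and optimize $L \sim \sqrt{\varepsilon}$. Your extra remarks on how the smallness hypothesis on $\|[U,B]\|$ and the $n$-dependence of the constant arise from the geometric constraints on the arc partition are accurate and go slightly beyond the paper's terse proof, which simply cites the two lemmas and absorbs everything into unspecified constants.
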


\begin{proof}
Let $\delta = \|[U,B]\|$.

We apply Lemma \ref{proj requirement} to obtain the existence of certain projections $F$ that satisfy conditions (i), (ii), and (iii) of that lemma. The last of which implies 
\begin{align*}
\|\,[F,B]\,\| &\leq \frac{Const.}{L}\varepsilon+\varepsilon
\end{align*}
when $\dist(\ell\setminus \Omega, \Omega_0), \dist(\Omega_-, \Omega_+)$ are between $L$ and $2L$ for $L \leq L_0$.

So we apply Lemmas \ref{proj-converse} and
\ref{proj-converse extra sym}. This, with the restriction that $L_0 \leq 1$, we obtain provides $\epsilon(L) =  Const.\varepsilon/L$.
Hence there are commuting $U'', B''$ with the desired symmetries so that
\begin{align*}
\|U''-U\| \leq Const.L, \; \|B''-B\| \leq \frac{Const.}{L}\delta +\frac{Const}{L}\varepsilon.
\end{align*}

Note that 
\[\delta = \|[U,B]\| \leq \|[U',B']\| + 2\|U'-U\|+2\|B'-B\| \leq 4\varepsilon.\] So, $\|B''-B\| \leq \frac{Const}{L}\varepsilon$.
We choose $L = \sqrt{Const. \varepsilon}$ to obtain
\begin{align*}
\|U''-U\|, \|B''-B\| \leq Const. \sqrt{\varepsilon}.
\end{align*}
\end{proof}

\section{Lin's Theorem with Rotational and Dihedral Symmetries}\label{rot-dih Lin section}

We now will extend Lin's theorem to include rotational and dihedral symmetries. We will first provide a generalization of the statement and proof of Lemma 6.2 of \cite{loring2016almost} concerning the symmetries of the polar decomposition.
\begin{lemma}\label{polarLemma}
Let $\varphi$ be a symmetry map on a unital $C^\ast$-algebra $\A$. Let $A \in \A$ be invertible and $A = U|A|$ be its polar decomposition.
\begin{enumerate}[label=(\roman*)] 
\item Suppose that $\varphi$ is multiplicative. Then $\varphi(A) = \zeta A$ if and only if $\varphi(U)=\zeta U$ and $\varphi(|A|)=|A|$.
\item Suppose that $\varphi$ is anti-multiplicative. Then if $\varphi(A) = \zeta A$, it follows that $\varphi(U)=\zeta U$ and $P = \frac12(|A|+|A^\ast|)$ is $\varphi$-symmetric with
\[\|A-UP\| \leq \frac12\sqrt{\|[A^\ast, A]\|},\]
\[\|[U,P]\| \leq \sqrt{\|[A^\ast, A]\|}.\]
\end{enumerate}
\end{lemma}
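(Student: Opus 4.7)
The plan is to exploit uniqueness of the polar decomposition after applying $\varphi$ to $A = U|A|$, and then to handle the anti-multiplicative case using the intertwining identity $U|A| = |A^{\ast}|U$ (equivalently, the right polar decomposition $A = |A^{\ast}|U$).

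For part (i), applying the multiplicative $\varphi$ gives $\varphi(A) = \varphi(U)\varphi(|A|)$. Since $\varphi$ preserves unitarity and positivity (Proposition \ref{properties}), $\varphi(U)$ is unitary and $\varphi(|A|)$ is positive; since $A$ is invertible, so is $|A|$, so $\varphi(|A|)$ is positive and invertible. Comparing with the polar decomposition $\zeta A = (\zeta U)|A|$ and invoking uniqueness of the polar decomposition for invertible elements yields $\varphi(U) = \zeta U$ and $\varphi(|A|) = |A|$. The converse is immediate by multiplying. For part (ii), applying the anti-multiplicative $\varphi$ gives $\varphi(A) = \varphi(|A|)\varphi(U)$; the right polar decomposition of $\zeta A$ is $|A^{\ast}|(\zeta U)$, so uniqueness now forces $\varphi(|A|) = |A^{\ast}|$ and $\varphi(U) = \zeta U$. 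By the same token (or by directly computing $\varphi(|A^{\ast}|^{2}) = \varphi(AA^{\ast}) = \varphi(A^{\ast})\varphi(A) = A^{\ast}A = |A|^{2}$), one obtains $\varphi(|A^{\ast}|) = |A|$, so $\varphi(P) = \tfrac{1}{2}(|A^{\ast}| + |A|) = P$.

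For the norm estimate, a direct substitution gives $A - UP = U|A| - \tfrac{1}{2}U(|A| + |A^{\ast}|) = \tfrac{1}{2}U(|A| - |A^{\ast}|)$, reducing matters to the key inequality
\[
\bigl\||A| - |A^{\ast}|\bigr\|^{2} \leq \bigl\|[A^{\ast},A]\bigr\|.
\]
To prove this, set $D = |A| - |A^{\ast}|$ (self-adjoint) and $C = [A^{\ast},A] = |A|^{2} - |A^{\ast}|^{2}$, and note the algebraic identity $C = |A|D + D|A^{\ast}|$. Since $D$ is self-adjoint, there is a sequence of unit vectors $v_{n}$ with $\langle Dv_{n},v_{n}\rangle \to \pm\|D\|$, and a standard computation shows $\|Dv_{n} \mp \|D\|v_{n}\| \to 0$. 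Substituting into $\langle Cv_{n},v_{n}\rangle = \langle Dv_{n}, |A|v_{n}\rangle + \langle |A^{\ast}|v_{n}, Dv_{n}\rangle$ yields $\langle Cv_{n},v_{n}\rangle \approx \pm \|D\| \langle (|A| + |A^{\ast}|)v_{n},v_{n}\rangle$. The approximate eigenvector relation combined with positivity of $|A|$ and $|A^{\ast}|$ forces $\langle (|A| + |A^{\ast}|)v_{n},v_{n}\rangle \geq \|D\| - o(1)$, so $|\langle Cv_{n},v_{n}\rangle| \geq \|D\|^{2} - o(1)$ and therefore $\|C\| \geq \|D\|^{2}$.

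Finally, for the commutator bound, the intertwining identity $U|A| = |A^{\ast}|U$ gives $[U,|A|] = (|A^{\ast}| - |A|)U$ and $[U,|A^{\ast}|] = U(|A^{\ast}| - |A|)$, hence
\[
[U,P] = \tfrac{1}{2}\bigl((|A^{\ast}| - |A|)U + U(|A^{\ast}| - |A|)\bigr),
\]
and the triangle inequality with $\|U\| = 1$ delivers $\|[U,P]\| \leq \||A| - |A^{\ast}|\| \leq \sqrt{\|[A^{\ast},A]\|}$. The main obstacle in this proof is the self-adjoint inequality $\|D\|^{2} \leq \|C\|$; everything else is either an algebraic identity, the uniqueness of the polar decomposition, or a direct application of the properties of symmetry maps already established.
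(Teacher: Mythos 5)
Your proof is correct, and it takes a genuinely self-contained route at two points where the paper instead cites external results. For the bound $\|A-UP\|\le\frac12\sqrt{\|[A^\ast,A]\|}$, which after writing $A-UP=\frac12 U(|A|-|A^\ast|)$ reduces to $\||A|-|A^\ast|\|^2\le\|[A^\ast,A]\|$, the paper appeals to Ando's square-root estimate $\|\sqrt{X}-\sqrt{Y}\|\le\sqrt{\|X-Y\|}$ for positive $X,Y$; you instead prove the special case needed directly via the identity $C=|A|D+D|A^\ast|$ (with $D=|A|-|A^\ast|$, $C=|A|^2-|A^\ast|^2$) and an approximate-eigenvector argument for the self-adjoint $D$, which is essentially the classical operator-theoretic proof of that inequality. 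For the commutator bound, the paper passes through $\|[U,|A|^2]\|=\|[A^\ast,A]\|$ and then invokes Pedersen's inequality $\|[U,\sqrt{T}]\|\le\sqrt{\|[U,T]\|}$; you instead use the intertwining relation $U|A|=|A^\ast|U$ to get the exact identities $[U,|A|]=(|A^\ast|-|A|)U$ and $[U,|A^\ast|]=U(|A^\ast|-|A|)$, so that $\|[U,P]\|\le\||A|-|A^\ast|\|$ and the same square-root inequality finishes. Your version is arguably cleaner here, since it avoids the extra Pedersen citation and funnels everything through a single inequality. For part (i) you argue via uniqueness of the polar decomposition for invertibles, while the paper just computes $\varphi(|A|)=\sqrt{\varphi(A)^\ast\varphi(A)}=|A|$ and $\varphi(U)=\varphi(A)\varphi(|A|)^{-1}$; both work, and you also note the (trivial) converse that the paper leaves implicit. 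All steps check out.
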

\begin{proof}
The polar decomposition satisfies the property that $U = A|A|^{-1}$ is unique and because $A(A^\ast A) = (AA^\ast)A$, it follows that $U = |A^\ast|^{-1}A$. The first part of the lemma follows from 
\[\varphi(|A|)= \varphi(\sqrt{A^\ast A}) = \sqrt{\varphi(A)^\ast\varphi(A)}=|A|\]
and
\[\varphi(U) = \varphi(A|A|^{-1}) = \varphi(A)\varphi(|A|)^{-1} = \zeta U.\]

For the second part, 
\[\varphi(|A|) = \sqrt{\varphi(A)\varphi(A)^\ast}=|A^\ast|\]   
so
\[\varphi(U) = \varphi(A|A|^{-1})=\varphi(|A|)^{-1}\varphi(A) = \zeta |A^\ast|^{-1}A = \zeta U\]
and $\varphi(P)=P$. 

We then use the square root estimate in (\cite{ando1988comparison}) to see that
\[\|P-|A|\|=\frac12\|\sqrt{A^\ast A}-\sqrt{AA^\ast}\| \leq \frac12\sqrt{\|A^\ast A - AA^\ast\|}.\]
From $A = U|A|$, we see that
\[\|A^\ast A - AA^\ast\| = \||A|^2-U|A|^2U^\ast\| = \|[U, |A|^2]\|.\]
Using the same reasoning with $A = |A^\ast|U$, we obtain $\|A^\ast A - AA^\ast\|=\|[U, |A^\ast|^2]\|$.
Then using the square root commutator with a unitary inequality proven in 
(\cite{pedersen1993commutator}), we see that
\[\|[U, |A|]\|=\|[U, \sqrt{|A|^2}]\|\leq \sqrt{\|[U, |A|^2]\|}=\sqrt{\|[A^\ast , A]\|} .\]
For the same reason, $\|[U, |A^\ast|]\|\leq \sqrt{\|[A^\ast , A]\|}$. So, we obtain the desired estimate for $\|[U,P]\|$.
\end{proof}
We now can prove Lin's theorem with rotational or dihedral symmetries for invertible elements:
\begin{thm}\label{Rotational-Dihedral Lin}
Suppose that $\A$ is a von Neumann algebra and ${S}$ is an admissible collection of weakly continuous linear symmetry maps on $\A$ so that $(\A, S)$ has TR rank 1. Let $\epsilon_\ell(\delta)$ be as in Theorem \ref{Linear Lin}. 

Then if $A \in \A$ is an $S$-symmetric contraction that is invertible, there is a normal $S$-symmetric $A''\in \A$ that satisfies
\[\|A''-A\| \leq \frac12\|[A^\ast, A]\|^{1/2}+Const.\sqrt{\epsilon_\ell\left(\|[A,A^\ast]\|^{1/2}\right)}\]
with additional symmetry as in one of the following two cases:
\begin{enumerate}[label=(\roman*)]
\item Suppose that $\varphi$ is a weakly continuous linear symmetry map on $\A$ with finite order $n\geq 2$ that commutes with each symmetry of ${S}$. Suppose that $A$ also satisfies $\varphi(A)=\zeta A$ for $\zeta$ in the unit circle having order $n$. 

Then $A''$ can be chosen so that $\varphi(A'')=\zeta A''$, where the Const. in the above estimate depends only on $n$.

\item Suppose that $\varphi$ is a weakly continuous linear symmetry map on $\A$ with finite order $n\geq 2$ that commutes with each symmetry of ${S}$. Suppose that $\phi$ is a weakly continuous conjugate-linear symmetry map on $\A$ with order $2$ that commutes with each symmetry of ${S}$. Suppose that $\varphi, \phi$ satisfy the dihedral commutation relation: 
$\phi\circ\varphi\circ \phi = \varphi^{-1}$.

Suppose that $A$ also satisfies $\varphi(A)=\zeta_1 A$ and $\phi(A) = \zeta_2 A$ for $\zeta_1, \zeta_2$ in the unit circle with $\zeta_1$ having order $n$. 

Then $A''$ can be chosen so that $\varphi(A'')=\zeta_1 A''$ and $\phi(A'')=\zeta_2A''$, where the Const. in the above estimate depends only on $n$.

\end{enumerate}
This result applies also to any non-invertible $A_0$ that can be approximated by invertible elements $A$ of $\mathcal A$ satisfying the above assumptions.
\end{thm}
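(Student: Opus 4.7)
The strategy is to pass to the polar decomposition $A = U|A|$, replace $|A|$ by a symmetrized positive part $P$, produce a nearby commuting pair $(U_0, P_0)$ using Lin's theorem with linear symmetries (Theorem \ref{Linear Lin}), and then invoke the unitary Symmetry Bootstrap (Theorem \ref{unitary bootstrap}) to preserve the rotational or dihedral phase symmetries.

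First I would set $P = \frac{1}{2}(|A| + |A^*|)$. By Lemma \ref{polarLemma} this gives $\|A - UP\| \leq \frac{1}{2}\|[A^*, A]\|^{1/2}$ and $\|[U, P]\| \leq \|[A^*, A]\|^{1/2}$. Moreover the symmetries pass as follows: for every $\widetilde{\varphi} \in S$, $U$ is $\widetilde{\varphi}$-symmetric (Lemma \ref{polarLemma} gives this in both the multiplicative and anti-multiplicative cases), and $P$ is $\widetilde{\varphi}$-symmetric (directly when $\widetilde{\varphi}$ is multiplicative, and using $\widetilde{\varphi}(|A|) = |A^*|$ when $\widetilde{\varphi}$ is anti-multiplicative). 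The same lemma yields $\varphi(U) = \zeta U$ and $\varphi(P) = P$ in case (i); in case (ii) additionally $\phi(U) = \zeta_2 U$ and $\phi(P) = P$.

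Next I would produce the nearby commuting pair. Form $M = UP$, and if $S$ contains any anti-multiplicative symmetries replace $M$ by its $S$-symmetrization $\widetilde{M}$ (e.g.\ $\widetilde{M} = \tfrac{1}{2}(UP + PU)$). Then $\widetilde{M}$ is $S$-symmetric, $\|\widetilde{M} - M\| = O(\|[U,P]\|) = O(\|[A^*, A]\|^{1/2})$, and $\|[\widetilde{M}^*, \widetilde{M}]\| = O(\|[A^*, A]\|^{1/2})$ (since $\|[M^*, M]\| = \|[U, P^2]\| \leq 2\|P\|\|[U, P]\|$). By Theorem \ref{Linear Lin} there is a normal $S$-symmetric $N' \in \A$ with $\|N' - \widetilde{M}\| \leq \epsilon_\ell(c\|[A^*, A]\|^{1/2})$; after a further small perturbation within the $S$-symmetric elements using TR rank $1$, we may assume $N'$ is invertible. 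Its polar decomposition $N' = U_0 P_0$ has $U_0$ unitary, $P_0 = |N'|$ positive self-adjoint, and $[U_0, P_0] = 0$. Since $N'$ is normal, $|N'| = |N'^*|$, so applying Lemma \ref{polarLemma} to $N'$ shows $U_0, P_0$ are each $S$-symmetric without further symmetrization.

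Now apply Theorem \ref{unitary bootstrap} to $(U, P)$ with the pair $(U_0, P_0)$ and the additional phase symmetries of $\varphi$ in case (i) (resp.\ $\varphi$ and $\phi$ jointly, with their dihedral commutation, in case (ii)). This produces commuting $(U'', P'')$ with $U''$ unitary and $S$-symmetric, $P''$ self-adjoint (since the map $B \mapsto B''$ is completely positive) and $S$-symmetric, and with the rotational or dihedral phase symmetries preserved. Set $A'' = U'' P''$: it is normal (being a product of commuting $U''$ unitary and $P''$ self-adjoint), and $\varphi(A'') = \varphi(U'')\varphi(P'') = \zeta A''$ (and similarly for $\phi$ in case (ii)). The estimate assembles as
\begin{equation*}
\|A'' - A\| \leq \|A - UP\| + \|U - U''\|\|P\| + \|U''\|\|P - P''\| \leq \tfrac{1}{2}\|[A^*, A]\|^{1/2} + \text{Const.}\sqrt{\epsilon_\ell(\|[A, A^*]\|^{1/2})}.
\end{equation*}
For the non-invertible closure statement, approximate $A_0$ by invertible $A_n$ satisfying the same hypotheses, apply the construction to each, and pass to a weak limit.

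The main obstacle is careful bookkeeping of the symmetries through every step, particularly in case (ii) ensuring the dihedral relation $\phi \circ \varphi \circ \phi = \varphi^{-1}$ on $\A$ is respected through the polar decomposition, the symmetrization of $M$, and the polar decomposition of $N'$, so that the hypotheses of the dihedral case of the bootstrap (Theorem \ref{unitary bootstrap}(iii)) are genuinely met at its invocation; additionally, one must ensure invertibility persists at every stage so that the polar decompositions produce honest unitaries rather than partial isometries.
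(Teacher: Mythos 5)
Your overall strategy — polar decompose $A = U|A|$, replace $|A|$ by the symmetrized positive part $P = \tfrac12(|A|+|A^*|)$ via Lemma \ref{polarLemma}, produce a nearby \emph{commuting} pair, then invoke Theorem \ref{unitary bootstrap} to restore the rotational or dihedral phase symmetries — matches the paper exactly. The difference is in how the intermediate commuting pair is obtained, and that is where you have a genuine gap.

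You form $\widetilde M = \tfrac12(UP + PU)$, apply Theorem \ref{Linear Lin} to get a normal $S$-symmetric $N'$ near $\widetilde M$, and then polar-decompose $N' = U_0 P_0$ to obtain the unitary/positive pair. This fails on two counts. First, $N'$ need not be invertible, and your fix (``a further small perturbation using TR rank $1$'') does not help: TR rank $1$ gives density of invertibles among $S$-symmetric elements, but the perturbed element is no longer normal, and there is no guarantee that normal, invertible, $S$-symmetric elements are dense among the normal $S$-symmetric ones (in a general von Neumann algebra the spectrum of $N'$ can be, say, a disk containing $0$ in its interior, so no shift $N' + cI$ is invertible). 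Second, even if $N'$ is invertible, the map $N' \mapsto (U_0, P_0)$ is \emph{not} uniformly continuous near operators with small lower bound: writing $U_0 = N'|N'|^{-1}$ and $U = (UP)P^{-1}$, the difference involves $\|P^{-1}\|$, which is uncontrolled because $A$ is only assumed invertible, not bounded below by a universal constant. So $\|U_0 - U\|$ is not controlled by $\|N' - UP\|$ with a constant independent of $A$, and the estimate you want at the end of the proof does not assemble. The paper sidesteps both problems by citing Theorems 2.3 and 2.4 of \cite{hastings2010almost} (with their Lin's theorem replaced by Theorem \ref{Linear Lin}): that construction encodes the pair $(U,P)$ into an almost-normal operator whose spectrum is kept bounded away from $0$ (and a Lipschitz-controlled inverse transformation recovers $U', P'$), which is precisely what makes the estimate $\|U'-U\|, \|P'-P\| \leq 6\,\epsilon_\ell\!\left(\tfrac{5}{16}\|[U,P]\|\right)$ come out cleanly. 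Your proposal needs to replace the step ``polar-decompose $N'$'' with such a bounded-away-from-zero encoding (or a direct appeal to the cited theorems) for the argument to close.
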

\begin{proof}
By replacing $A$ with $\zeta_2^{1/2}A$, we may assume that $\zeta_2=1$. We write $\zeta=\zeta_1$.
Let $U, P \in \A$ be as in Lemma \ref{polarLemma}. In particular, $U, P$ are $S$-symmetric and $U, P$ satisfy phase symmetries due to $A$ satisfying phase symmetries in the respective case. Namely, in Case (i), $\varphi(U)=\zeta U$, $\varphi(P)=P$ and in Case (ii), $\varphi(U)=\zeta U$, $\varphi(P)=P$, $\phi(U)=U$, $\phi(P)=P$.

Applying the construction and estimates obtained in the proofs of Theorem 2.4 and 2.3 of \cite{hastings2010almost} and using our Lin's theorem with symmetries in place of Theorem 2.2 of \cite{hastings2010almost}, we see that there are commuting $U', P'$ which satisfy
\[\|U'-U\|, \|P'-P\|\leq 6\epsilon_\ell\left(\frac5{16}\|[U,P]\|\right)\leq 6\epsilon_\ell\left(\frac5{16}\|[A,A^\ast]\|^{1/2}\right).\]
and are additionally $S$-symmetric.

We now apply Theorem \ref{unitary bootstrap} with $B=P, B'=P'$ to obtain commuting unitary $U''$ and positive contraction $P''$ satisfying
\[\|U''-U\|, \|P''-P\|\leq Const.\sqrt{\epsilon_\ell\left(\frac5{16}\|[A,A^\ast]\|^{1/2}\right)}.\]

Moreover, $U'', P''$ are $S$-symmetric and satisfy the respective phase symmetries inherited from $U, P$ in each case. So, $A''= U''P''$ is a normal $S$-symmetric element of $\A$ that satisfies the appropriate phase symmetries: $\varphi(A'')=\zeta A''$, $\phi(A'')=A''$.
The desired estimate then follows from
\[\|A''-A\| \leq \|U''-U\|+\|P''-P\|+\|UP-A\|.\]
\end{proof}
\begin{remark}
If all the symmetry maps in $S$ are multiplicative then we can instead use Theorem \ref{Sym Lin KS} to obtain the refined estimate
\[\|A''-A\|\leq Const.\|[A^\ast, A]\|^{1/8}.\]
\end{remark}
\begin{remark}
In general, if we have any collection of linear phase symmetries: $\varphi_\alpha(A) = \zeta_\alpha A$ and any collection of conjugate-linear phase symmetries $\phi_\beta(A) = \zeta_\beta A$ then we can use a method similar to that of prior sections to reduce to the case of the theorem above provided that each of the $\zeta_\alpha$, $\zeta_{\beta_1}\overline{\zeta_{\beta_2}}$ have finite order and the appropriate order and conjugation relations between the $\varphi_\alpha, \phi_\beta$ hold. The value of Const. in the estimate of $\|A''-A\|$ will depend on the gcd of the order of these complex numbers. We omit a careful statement of this result.

As an example, if $\phi_1(A)=\zeta_1A, \phi_2(A)=\zeta_2A$ then if we define $\varphi = \phi_2\circ \phi_1$ then $\varphi$ is a linear symmetry. The appropriate commutation relations between $\phi_1, \phi_2$ will allow us to consider $\varphi, \phi_1$ phase symmetries instead of $\phi_!, \phi_2$ phase symmetries.

In particular, if two of the $\zeta_\beta$ phases do not have a rational angle between them then the associated reflections on $\C$ will generate an infinite subgroup of the orthogonal group $O(2)$. So, we should not expect the reduction to work in this case.
\end{remark}

\begin{example}
We now include an example to illustrate how it is necessary for the estimate to depend on $n$. Let $A\in M_n(\C)$ and $W=\diag(1, \zeta, \zeta^2, \dots, \zeta^{n-1})$ for $\zeta=e^{2\pi i/n}$. Then $W^{-1}AW = \zeta A$ if and only if $A$ is a bilateral weighted shift matrix, meaning that $Ae_k = a_ke_{k+1}$ with indices labeled cyclically so that $Ae_n = a_ne_1$.

It can be easily shown that when $A$ has this special form, 
\[\|[A^\ast, A]\| = \max\left||a_{k+1}|^2-|a_k|^2\right|\] and $A$ is normal if and only if the absolute values of the weights are constant. (See Section 5 of \cite{herrera2022constructing} for the details.)

Let $A$ be a bilateral weighted shift matrix with $a_k \geq 0$, $a_1=0$, $|a_{k+1}^2-a_k^2|\leq 1$, with $\|A\|=\max_k|a_k|$ maximized. Then $\|A\|\geq Const.\sqrt{n}$. 
Because $A$ is a bilateral weighted shift matrix, $W^{-1}AW = \zeta A$ and any normal bilateral weighted shift matrix $A'$ must satisfy $\|A'-A\| \geq \|A\|/2 \geq Const.\sqrt{n}$.

Expressing this inequality in a scaling invariant form, this shows that it is possible for the following inequality to hold
\[\inf_{A' \mbox{ normal}:\, W^{-1}A'W=\zeta A'}\|A'-A\| \geq Const.\sqrt{n \|[A^\ast, A]\|}.\]

In the infinite dimensional setting of $\A = B(\mathcal H)$ with $\mathcal H$ separable spanned by $e_k, k \in \Z$ and with $\zeta$ having infinite order, we can define $We_k = \zeta^k e_{k}$ to be a diagonal unitary operator and $A$ to satisfy $W^{-1}AW = \zeta A$. Then $A$ is a weighted shift operator: $Ae_k = a_ke_{k+1}$. Suppose that $a_0=0$ and $\|A\|=\sup_k|a_k|=1$.

By \cite[Theorem 2]{berg1975approximation}, if $\lim_{k\to -\infty}a_k = \lim_{k\to\infty}a_k$ then there is guaranteed to exist nearby normal operators in $B(\mathcal H)$.
However, the above arguments show that it is possible for $\|[A^\ast, A]\|$ to be arbitrarily small while any normal $A'$ that satisfies $W^{-1}A'W = \zeta A'$ will be at least a distance of $1/2$. 
\end{example}

\newpage

\vspace{1in}

\textbf{ACKNOWLEDGEMENTS}. The author would like to thank Eric A. Carlen for support during the writing of this paper and Terry A. Loring for a helpful discussions on his work on this topic and the status of the relevant open problem.

This research was partially supported by NSF grant DMS-1764254.

\renewcommand{\biblistfont}{%
	\normalfont
	\normalsize
}

\phantomsection

\addcontentsline{toc}{chapter}{Bibliography}

\bibliography{almostcommuting.bib}

\end{document}